\documentclass[11pt]{amsart}

\usepackage{
    amsmath,
    amsfonts,
    amssymb,
    amsthm,
    amscd,
    booktabs,
    comment,
    enumitem,
    etoolbox,
    gensymb,    
    mathtools,
    mathdots
}
\usepackage[usenames,dvipsnames]{xcolor}
\usepackage[all]{xy}

\makeatletter
\@namedef{subjclassname@2020}{%
  \textup{2020} Mathematics Subject Classification}
\makeatother

\newtoggle{comments}
\newtoggle{details}
\newtoggle{detailsnote}

\iftoggle{comments}{%
  \newcommand{\acomments}[1]{
    \ \\
    {\color{red}
      \textbf{AS:} #1
    }
    \ \\
    }
  \newcommand{\question}[1]{
    \ \\
    {\color{blue}
      \textbf{Question:} #1
    }
    \ \\
    }
}{%
  \newcommand{\acomments}[1]{}
  \newcommand{\question}[1]{}
}

\iftoggle{details}{%
  \newcommand{\details}[1]{
      \ \\
      {\color{OliveGreen}
        \textbf{Details:} #1
      }
      \\
  }
}{%
  \newcommand{\details}[1]{}
}

\usepackage[T1]{fontenc}
\usepackage{bbm}                     
\usepackage[colorlinks=true, linkcolor=blue, citecolor=blue, urlcolor=blue, breaklinks=true]{hyperref}
\usepackage{dsfont}

\DeclareFontFamily{OT1}{pzc}{}
\DeclareFontShape{OT1}{pzc}{m}{it}{<-> s * [1.10] pzcmi7t}{}
\DeclareMathAlphabet{\mathpzc}{OT1}{pzc}{m}{it}

\usepackage[capitalize]{cleveref}   

\crefname{defin}{Definition}{Definitions}
\crefname{eg}{Example}{Examples}
\crefname{lem}{Lemma}{Lemmas}
\crefname{theo}{Theorem}{Theorems}
\crefname{equation}{}{}
\crefname{enumi}{}{}

\newcommand\go{\mathsf{I}}              

\usepackage{tikz}
\usetikzlibrary{arrows,patterns}
\usepackage{tikz-cd}

\tikzset{pinhead/.style={gray,fill=yellow!40!white}}
\newcommand{\pin}[3]{
    \path (#1) node[inner sep=1.6pt] (x) {} to (#2) node[rectangle,rounded corners,draw,pinhead,inner sep=2.5pt](y) {$\color{black}\scriptstyle#3$};
    \draw[Triangle Cap-,thick,gray] (x)--(y);
    \fill[black] (#1) circle (2pt);
}

\newcommand{\pinpin}[4]{
\path (#1) node[inner sep=1.6pt] (x) {} to (#3) node[rectangle,rounded corners,draw,pinhead,inner sep=2.5pt](y) {$\color{black}\scriptstyle#4$};
    \draw[Triangle Cap-,thick,gray] (x)--(#2)--(y);
    \fill[black] (#1) circle (2pt);
    \fill[black] (#2) circle (2pt);
}

\newcommand\singdot[2][red]{
    \filldraw[fill=white, draw=#1] (#2) circle (1.8pt)
}
\newcommand\multdot[4][red]{
    \filldraw[fill=white, draw=#1] (#2) circle (1.8pt) node[anchor=#3] {{\color{#1} \dotlabel{#4}}}
}

\newcommand{\dotlabel}[1]{$\scriptstyle{#1}$}

\newcommand{\token}[3]{
    \filldraw[blue] (#1) circle (1.5pt) node[anchor=#2] {\dotlabel{#3}}
}

\newcommand{\xtoken}[1]{
    \filldraw[green] (#1) circle (1.5pt)
}
\newcommand\teleport[2]{
    \draw[blue] (#1) -- (#2);
    \filldraw[blue] (#1) circle (1.5pt);
    \filldraw[blue] (#2) circle (1.5pt);
}

\tikzset{wei/.style={draw=red,double=red!40!white,double distance=1.5pt,thin}}
\tikzset{anchorbase/.style={>=To,baseline={([yshift=-0.5ex]current bounding box.center)}}}

\newcommand{\xtokstrand}[1][a]{
    \begin{tikzpicture}[centerzero]
        \draw (0,-0.2) -- (0,0.2);
        \xtoken{0,0};
    \end{tikzpicture}
}

\tikzset{anchorbase/.style={>=To,baseline={([yshift=-0.5ex]current bounding box.center)}}}
\tikzset{ 
    centerzero/.style={>=To,baseline={([yshift=-0.5ex](#1))}},
    centerzero/.default={0,0}
}
\tikzset{wipe/.style={white,line width=4pt}}

\newtheorem{theo}{Theorem}[section]

\newtheorem{prop}[theo]{Proposition}
\newtheorem{lem}[theo]{Lemma}
\newtheorem{cor}[theo]{Corollary}

\theoremstyle{definition}
\newtheorem{defn}[theo]{Definition}
\newtheorem{rmk}[theo]{Remark}
\newtheorem{egg}[theo]{Example}

\numberwithin{equation}{section}
\allowdisplaybreaks

\setenumerate[1]{label=(\alph*)}          

\begin{document}

\title{Higher-level affine wreath product algebras}

\author{Thomas Moran}
\address[T.M.]{
  Department of Mathematics and Statistics \\
  University of Ottawa \\
  Ottawa, ON K1N 6N5, Canada
}
\urladdr{\href{https://sites.google.com/view/tmoran/}{https://sites.google.com/view/tmoran/}}
\email{tmora083@uottawa.ca}

\begin{abstract}
We define and study two new classes of algebras, called \emph{higher-level affine wreath product algebras} and \emph{higher-level affine Frobenius Hecke algebras}. They depend on a Frobenius superalgebra and are defined, respectively, as path algebras of the \emph{higher-level affine wreath product category} and \emph{higher-level affine Frobenius Hecke category}. Our constructions produce a broad range of new higher-level algebras under a unified framework. Special cases include higher-level analogues of the degenerate affine Hecke algebra and affine Sergeev algebras, both of which appear to be new.
\end{abstract}

\subjclass[2020]{20C08, 18M30, 18M05}

\keywords{Hecke algebra, Frobenius algebra, monoidal category, supercategory}

\ifboolexpr{togl{comments} or togl{details}}{%
  {\color{magenta}DETAILS OR COMMENTS ON}
}{%
}

\maketitle
\thispagestyle{empty}

\tableofcontents

\section{Introduction}

\subsection{Quantum wreath products}
Affine Hecke algebras and their degenerate counterparts play an important role in the representation theory of Lie algebras. In recent years, several modified versions of these algebras have appeared in the literature. For example, in \cite{Savage}, Savage defined the affine wreath product algebras, which are generalizations of degenerate affine Hecke algebras. They depend on a Frobenius superalgebra, and have been used in the categorification of lattice Heisenberg algebras in \cite{Savage2}. Affine wreath product algebras provide a simultaneous unification and generalization of existing constructions in the literature that were previously treated separately. For example, special cases include the degenerate affine Hecke algebras, affine Sergeev algebras \cite{Nazarov}, wreath Hecke algebras \cite{Wan-Wang}, and affine zigzag algebras \cite{Kleshchev-Muth}.
\\ \indent A quantum analogue of the affine wreath product algebras was defined by Rosso and Savage in \cite{Rosso-Savage}. These are called affine Frobenius Hecke algebras (or quantum affine wreath algebras), and depend on a symmetric Frobenius superalgebra. Special cases of the affine Frobenius Hecke algebras include affine Yokonuma--Hecke algebras \cite{Chlouveraki-d'Andecy} and affine Hecke algebras. Both affine wreath product algebras and affine Frobenius Hecke algebras are special cases of quantum wreath products, in the sense of \cite{Lai-Nakano-Ziqing}.

\subsection{Higher-level Hecke algebras} 
In recent years, several new classes of higher-level Hecke-type algebras have appeared in the literature. Loosely speaking, these algebras are obtained by introducing red strands, which interact with the black strands corresponding to the original Hecke algebra. They are closely related to the cyclotomic quotients of Hecke algebras, and have been used to categorify tensor products of simple modules over a quantum group (see \cite{Webster}). Examples of such higher-level Hecke algebras include the tensor product algebras from \cite[Def.~4.7]{Webster}, and the higher-level affine Hecke algebras (or the type \(F\) affine Hecke algebras) from \cite[Def.~2.7]{Maksimau-Stroppel} and \cite[Def.~5.5]{Webster2}. Another recent example is the affine Schur category from \cite{Song-Wang2}.
\\ \indent It is natural to ask whether one can combine the above-mentioned approaches by defining higher-level versions of the affine wreath product algebras and affine Frobenius Hecke algebras. In this paper, we answer this question in the affirmative by constructing such algebras. We expect that these constructions will lead to new developments in categorification and representation theory.

\subsection{Higher-level affine wreath product algebras}

Fix a commutative ground ring $ \Bbbk $. To every Frobenius superalgebra \(A\), we associate the \emph{higher-level affine wreath product category} $ \mathcal{LAW}(A) $ and the \emph{higher-level affine wreath product algebra} $ W_{d,\mathbf{Q}}^{\operatorname{aff}}(A) $, which is a path algebra of $ \mathcal{LAW}(A) $. The algebra $ W_{d,\mathbf{Q}}^{\operatorname{aff}}(A) $ can simultaneously be viewed as a higher-level version of the affine wreath product algebra from \cite{Savage}, and as a degenerate version of the higher-level affine Frobenius Hecke algebra discussed below. This construction yields a broad range of new higher-level algebras under a unified framework. For example, by specializing \(A\), we obtain the following:
\begin{enumerate}
\item[\textbullet] When $ A = \Bbbk $, the higher-level affine wreath product algebra is a higher-level version of the degenerate affine Hecke algebra. This algebra appears to be new, although it does implicitly appear as a path algebra of the affine Schur category from \cite{Song-Wang2}.
\item[\textbullet] If $ A = \operatorname{Cl} $ is the two-dimensional Clifford superalgebra (see Example \ref{Cliff_parity_pres}), then the higher-level affine wreath product algebra yields a higher-level version of the affine Sergeev algebra. (See \cite[§3]{Nazarov} for the definition of the affine Sergeev algebra.) This construction appears to be new.
\item[\textbullet] When $ A = \Bbbk G $ is the group algebra of a finite group \(G\) (see Example \ref{Group_algebra}), then the higher-level affine wreath product algebra is a higher-level version of the wreath Hecke algebra. (See \cite[Def.~2.4]{Wan-Wang} for the definition of the wreath Hecke algebra.) This construction appears to be new.
\item[\textbullet] When \(A\) is a certain skew-zigzag algebra (see \cite[§5]{Couture} and \cite[§3]{Huerfano-Khovanov}), the higher-level affine wreath product algebra yields a higher-level version of the affine zigzag algebra. (See \cite[Def.~4.4]{Kleshchev-Muth} for the definition of the affine zigzag algebra.) This construction appears to be new.
\end{enumerate}

In Theorem \ref{monoidal_functor}, we state and prove the existence of a strict monoidal functor $ \Phi \colon \mathcal{LAW}(A) \rightarrow \mathcal{AW}(A) $, where $ \mathcal{AW}(A) $ is the affine wreath product category (see Definition \ref{pluto2}). We use this functor to determine a $ \Bbbk $-basis of the morphism spaces $ \mathcal{LAW}(A) $, which is given in Theorem \ref{HL-basis}. We also use the functor $ \Phi $ to obtain a polynomial representation of $ \mathcal{LAW}(A) $ (see Corollary \ref{poly}). In Section \ref{Cyclotomic_quotients}, we define cyclotomic quotients of the algebras $ W_{d,\mathbf{Q}}^{\operatorname{aff}}(A) $. We relate these to cyclotomic wreath product algebras in Theorem \ref{cyclotomic_aff_thm}. 

\subsection{Higher-level affine Frobenius Hecke algebras}

In this paper, we also define higher-level versions of the affine Frobenius Hecke algebras. To every symmetric Frobenius superalgebra \(A\) and scalar $ z \in \Bbbk $, we associate the \emph{higher-level affine Frobenius Hecke category} $ \mathcal{LAH}(A,z) $ and the \emph{higher-level affine Frobenius Hecke algebra} $ H_{d,\mathbf{Q}}^{\operatorname{aff}}(A,z) $, which is a path algebra of $ \mathcal{LAH}(A,z) $. The algebra $ H_{d,\mathbf{Q}}^{\operatorname{aff}}(A,z)$ can simultaneously be viewed as a higher-level version of the affine Frobenius Hecke algebra, as a quantum version of the higher-level affine wreath product algebra discussed above, or as a Frobenius algebra generalization of the higher-level affine Hecke algebra from \cite[Def.~2.7]{Maksimau-Stroppel}. This construction yields a broad range of new higher-level algebras under a unified framework. For example, by specializing \(A\), we obtain the following:
\begin{enumerate}
\item[\textbullet] When $ A = \Bbbk $, the higher-level affine Frobenius Hecke algebra recovers the higher-level affine Hecke algebra from \cite[Def.~2.7]{Maksimau-Stroppel}. See Remark \ref{usualaff} for more details.
\item[\textbullet] When $ A = \Bbbk G $ is the group algebra of a finite group \(G\) (see Example \ref{Group_algebra}), we obtain a higher-level version of the affine Yokonuma--Hecke algebra. (See \cite[Def.~3.1]{Chlouveraki-d'Andecy} for the definition of the affine Yokonuma--Hecke algebra.) This construction appears to be new.
\end{enumerate}

In Theorem~\ref{HL-basis_quantum}, we give a $ \Bbbk $-basis of the morphism spaces $ \mathcal{LAH}(A,z) $. In Section \ref{Cyclotomic_quotients_quantum}, we define cyclotomic quotients of the algebras $ H_{d,\mathbf{Q}}^{\operatorname{aff}}(A,z) $. We relate these to cyclotomic Frobenius Hecke algebras in Theorem \ref{quantum_cyclotomic_thm}.

\subsection{Directions of future research}

Taking \(A\) to be the rank-two Clifford superalgebra, our constructions yield a higher-level version of the affine Sergeev algebra. In a future paper, we will define a higher-level version of the degenerate spin affine Hecke algebra, and we will show that it is Morita superequivalent to the higher-level affine Sergeev algebra. This result will be a generalization of \cite[Thm.~4.1]{Wang}. We now list some other potential directions of future research: 

\begin{enumerate}
\item[\textbullet] Using the techniques of this paper, it should be possible to construct a higher-level version of the quantum wreath products from \cite{Lai-Nakano-Ziqing}. Then the higher-level affine wreath product algebra and higher-level affine Frobenius Hecke algebra should follow as special cases of such an algebra.
\item[\textbullet] In \cite{Davidson-Kujawa-Muth}, Davidson, Kujawa and Muth defined the affine Frobenius web category $ \mathbf{Web}^{\operatorname{aff}}_{A} $, which depends on a Frobenius superalgebra \(A\). It should be possible to define a higher-level version of $ \mathbf{Web}^{\operatorname{aff}}_{A} $. The resulting category will then contain both $ \mathbf{Web}^{\operatorname{aff}}_{A} $ and $ \mathcal{LAW}(A) $ as monoidal subcategories. Furthermore, when $ A = \Bbbk $, the resulting category should be related to the affine Schur category from \cite{Song-Wang2}.
\end{enumerate}

\subsection*{Note on the arXiv version} \label{arXiv}

For the interested reader, the tex file of the arXiv version of this paper includes hidden details of some straightforward computations and arguments that are omitted in the pdf file. These details can be displayed by switching the \texttt{details} toggle to true in the tex file and recompiling.

\subsection*{Acknowledgements} I would like to thank Alistair Savage for his guidance, and for his very helpful comments and suggestions whilst working on this project.

\section{Preliminaries} \label{Preliminaries_paper}

In this section, we recall some basic facts about superalgebras, monoidal supercategories, and monoids.

\subsection{Superalgebras and supermodules}

As mentioned in the previous section, $ \Bbbk $ is a fixed commutative ground ring. We define $ \mathbb{N} $ to be the set of non-negative integers, and we define $ \mathbb{N}_{+} $ to be the set of positive integers. All algebras and modules in this paper are associative superalgebras and supermodules. For simplicity, we will often drop the prefix ``super''. For a homogeneous element $ f $ in an algebra or module, we let $\overline{f} \in \mathbb{Z}_{2} $ denote its parity. When we write an equation involving parities of elements, we implicitly assume these elements are homogeneous; we then extend by linearity. We define the \emph{center} of an algebra \(F\) to be
\begin{equation} \label{centerdef}
Z(F) := \{f \in F : fg = (-1)^{\bar{f}\bar{g}}gf \ \text{for all} \ g \in F\}.
\end{equation}
We define the \emph{even center} of \(F\) to be the set of those elements in $ Z(F) $ that are even. We say that two elements $ f,g \in F $ \emph{commute} if $ fg = (-1)^{\bar{f}\bar{g}}gf $. Furthermore, we say that an element $ f \in F $ is \emph{central} if $ f \in Z(F) $, and we say that \(F\) is \emph{commutative} if $ Z(F) = F $. 
\\ \indent We say that $ f \in F $ is a \emph{left zero-divisor} if there exists a nonzero $ g \in F $ such that $ fg = 0 $. Similarly, we say that $ f \in F $ is a \emph{right zero-divisor} if there exists a nonzero $ g \in F $ such that $ gf = 0 $.

\begin{defn} \label{uncharted_general}
We say that an element $ f \in F $ is \emph{regular} if \(f\) is neither a left nor right zero-divisor in \(F\). So \(f\) is regular if and only if the maps $ F \rightarrow F $, $ g \mapsto fg $, and $ F \rightarrow F $, $ g \mapsto gf $, are injective.
\end{defn}

\begin{defn} \label{multi_pen}
We define $ \mathcal{E}_{F} $ to be the set of elements in $ F $ that are even, central, and regular. We also define the set $ \widehat{\mathcal{E}}_{F} := \mathcal{E}_{F} \cup \{\go\} $, where $ \go $ is a formal symbol.
\end{defn}

For algebras $ F $ and $ F' $, multiplication in the superalgebra $ F \otimes F' $ is defined by
\begin{equation}
    (f' \otimes g) (f \otimes g') = (-1)^{\bar f \bar g} f'f \otimes gg'
\end{equation}
for homogeneous $f,f' \in F$, $g,g' \in F'$.

\subsection{Monoidal supercategories}

Throughout this paper, we will work with \emph{strict monoidal supercategories}, in the sense of \cite{Brundan-Ellis}. We refer the reader to \cite[§2]{Brundan-Savage-Webster2} for a summary of this topic well adapted to the current work, or to \cite{Brundan-Ellis} for a thorough treatment. We summarize here a few crucial properties that play an important role in the present paper.
\\ \indent A \emph{supercategory} means a category enriched in the category of vector superspaces with parity-preserving morphisms.  Thus, its morphism spaces are vector superspaces and composition is parity-preserving.  In a \emph{strict monoidal supercategory}, morphisms satisfy the \emph{super interchange law}:
\begin{equation}\label{interchange}
    (f' \otimes g) \circ (f \otimes g')
    = (-1)^{\bar f \bar g} (f' \circ f) \otimes (g \circ g').
\end{equation}
We denote the unit object by $ \mathds{1} $ and the identity morphism of an object $X$ by $1_X$.  We will use the usual calculus of string diagrams, representing the horizontal composition $f \otimes g$ (resp.\ vertical composition $f \circ g$) of morphisms $f$ and $g$ diagrammatically by drawing $f$ to the left of $g$ (resp.\ drawing $f$ above $g$).  Care is needed with horizontal levels in such diagrams due to the signs arising from the super interchange law:
\begin{equation}\label{intlaw}
    \begin{tikzpicture}[anchorbase]
        \draw (-0.5,-0.5) -- (-0.5,0.5);
        \draw (0.5,-0.5) -- (0.5,0.5);
        \filldraw[fill=white,draw=black] (-0.5,0.15) circle (5pt);
        \filldraw[fill=white,draw=black] (0.5,-0.15) circle (5pt);
        \node at (-0.5,0.15) {$\scriptstyle{f}$};
        \node at (0.5,-0.15) {$\scriptstyle{g}$};
    \end{tikzpicture}
    \quad=\quad
    \begin{tikzpicture}[anchorbase]
        \draw (-0.5,-0.5) -- (-0.5,0.5);
        \draw (0.5,-0.5) -- (0.5,0.5);
        \filldraw[fill=white,draw=black] (-0.5,0) circle (5pt);
        \filldraw[fill=white,draw=black] (0.5,0) circle (5pt);
        \node at (-0.5,0) {$\scriptstyle{f}$};
        \node at (0.5,0) {$\scriptstyle{g}$};
    \end{tikzpicture}
    \quad=\quad
    (-1)^{\bar f\bar g}\
    \begin{tikzpicture}[anchorbase]
        \draw (-0.5,-0.5) -- (-0.5,0.5);
        \draw (0.5,-0.5) -- (0.5,0.5);
        \filldraw[fill=white,draw=black] (-0.5,-0.15) circle (5pt);
        \filldraw[fill=white,draw=black] (0.5,0.15) circle (5pt);
        \node at (-0.5,-0.15) {$\scriptstyle{f}$};
        \node at (0.5,0.15) {$\scriptstyle{g}$};
    \end{tikzpicture}
    \ .
\end{equation}
A strict monoidal supercategory $ \mathcal{C} $ with one generating object $\go$ gives rise to a \emph{tower of algebras}
\begin{equation*}
\operatorname{End}_{\mathcal{C}}(\go^{\otimes n}), \quad n \in \mathbb{N}.
\end{equation*}
We use this idea to introduce various families of algebras in an extremely efficient way, giving a presentation of $ \mathcal{C} $ with a small number of generating morphisms and relations.  If we then wish to have a presentation of the endomorphism algebras of $ \mathcal{C} $ \emph{as superalgebras}, we use the following result.

\begin{prop} \label{zebra}
Suppose $ \mathcal{C} $ is a strict $ \Bbbk $-linear monoidal supercategory with one generating object $\go$ and generating morphisms $f_i \in \operatorname{End}_{\mathcal{C}}(\go^{\otimes n_{i}}) $, $i \in I$, subject to the relations $R_j \in \operatorname{End}_{\mathcal{C}}(\go^{\otimes m_{j}})$, $j \in J$.  Then $\operatorname{End}_{\mathcal{C}}(\go^{\otimes n})$ is generated as an algebra by the elements
\begin{equation}
 1_{\go}^{\otimes k} \otimes f_{i} \otimes 1_{\go}^{\otimes (n-n_{i}-k)},\quad i \in I,\ 0 \leq k \leq n-n_{i},
\end{equation}
    subject to the relations
    \begin{equation} \label{zebra1}
        1_{\go}^{\otimes k} \otimes R_j \otimes 1_{\go}^{\otimes (n-m_{j}-k)},\quad j \in J,\ 0 \leq k \leq n-m_j,
    \end{equation}
    and the relations
    \begin{equation} \label{zebra2}
        \left( 1_{\go}^{\otimes k_{1}} \otimes f_i \otimes 1_{\go}^{\otimes k_2} \right)
        \left( 1_{\go}^{\otimes l_{1}} \otimes f_j \otimes 1_{\go}^{\otimes l_2} \right)
        - (-1)^{\overline{f_i} \ \overline{f_j}}
        \left( 1_{\go}^{\otimes l_{1}} \otimes f_j \otimes 1_{\go}^{\otimes l_2} \right)
        \left( 1_{\go}^{\otimes k_{1}} \otimes f_i \otimes 1_{\go}^{\otimes k_2} \right)
    \end{equation}
    for $i,j \in I$, $k_1+n_i+k_2 = n = l_1 + n_j + l_2$, $k_2 \geq n_j + l_2$.
\end{prop}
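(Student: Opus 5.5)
The plan is to compare $\operatorname{End}_{\mathcal{C}}(\go^{\otimes n})$ with the abstract superalgebra $B_n$ defined by the stated generators and relations, and to show that the obvious map $\pi\colon B_n \to \operatorname{End}_{\mathcal{C}}(\go^{\otimes n})$ is an isomorphism. Write $f_i^{(k)}$ for the formal generator of $B_n$ corresponding to $1_{\go}^{\otimes k}\otimes f_i\otimes 1_{\go}^{\otimes(n-n_i-k)}$. Then $\pi$ is well defined because each relation \eqref{zebra1} holds in $\mathcal{C}$, being a tensor product of $R_j$ with identities. Each relation \eqref{zebra2} also holds in $\mathcal{C}$: it is the super interchange law \eqref{interchange} applied to two generators sitting in disjoint blocks of strands. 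The condition $k_2 \ge n_j + l_2$ says precisely that $f_i$ lies entirely to the left of $f_j$.

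For surjectivity, I use that the morphisms of the strict monoidal supercategory freely generated by the $f_i$ are spanned by vertical composites of tensor products of generators and identities. Since the generators are all endomorphisms of powers of $\go$ and the only generating object is $\go$, every morphism $\go^{\otimes n}\to\go^{\otimes n}$ is a linear combination of diagrams built from boxes $f_i$. Any such diagram can be put in generic position, with exactly one box at each height, using \eqref{intlaw}; the signs introduced are harmless. A diagram in generic position is literally a product of the elements $1^{\otimes k}\otimes f_i\otimes 1^{\otimes(\cdot)}$. Hence these elements generate the algebra.

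For injectivity, I construct an inverse by building a strict monoidal supercategory $\mathcal{B}$ with objects $\go^{\otimes n}$, $\operatorname{End}(\go^{\otimes n}) = B_n$, and zero morphisms between different powers. (Morphisms in $\mathcal{C}$ between different tensor powers vanish, since all generators preserve the number of strands.) The tensor product $B_m\times B_n\to B_{m+n}$ is defined on generators by $f_i^{(k)}\otimes 1\mapsto f_i^{(k)}$ and $1\otimes f_j^{(l)}\mapsto f_j^{(m+l)}$. On words, $x\otimes y := \iota_L(x)\,\iota_R(y)$, where $\iota_L$ and $\iota_R$ are the shift homomorphisms $B_m\to B_{m+n}$ and $B_n\to B_{m+n}$. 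These shifts are well defined because shifted relations of type \eqref{zebra1} and \eqref{zebra2} are again relations of the same type.

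The main obstacle is verifying that $\mathcal{B}$ is a strict monoidal supercategory, i.e., that the super interchange law holds. This reduces to the super-commutation $\iota_R(y)\iota_L(x) = (-1)^{\bar x\bar y}\iota_L(x)\iota_R(y)$ for generators $x,y$, which is exactly \eqref{zebra2}; it then extends multiplicatively to all homogeneous elements by induction on word length. Associativity and bifunctoriality are immediate from the definitions on generators. Once $\mathcal{B}$ is a strict monoidal supercategory in which the $f_i$ satisfy the $R_j$, the universal property of the presentation of $\mathcal{C}$ gives a monoidal superfunctor $\mathcal{C}\to\mathcal{B}$. This functor sends $1^{\otimes k}\otimes f_i\otimes 1^{\otimes(\cdot)}$ to $f_i^{(k)}$, so it is inverse to $\pi$ on generators, and therefore $\pi$ is an isomorphism.
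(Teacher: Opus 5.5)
Your proof is correct, but it does not follow the paper's route. The paper gives no argument of its own. It invokes a super version of Liu's presentation theorems \cite[Thm.~5.2, Thm.~5.4]{Liu} and notes only that the relations \eqref{zebra2} encode the super interchange law.

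You prove the statement directly:
\begin{itemize}
\item surjectivity, by putting diagrams in generic position;
\item injectivity, by building a strict monoidal supercategory $\mathcal{B}$ from the abstract algebras $B_n$ with $x\otimes y=\iota_L(x)\iota_R(y)$, and then using the universal property of $\mathcal{C}$ to produce a left inverse to $\pi$.
\end{itemize}

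What your approach buys is that it supplies the ``super version'' the paper takes for granted, since Liu works in the non-super setting. The sign in \eqref{zebra2} is exactly what makes $\mathcal{B}$ satisfy the super interchange law, so you see precisely where the sign conventions enter. Your construction also exploits the special shape of the situation. There is a single generating object and every generator is an endomorphism, so all morphism spaces between different powers of $\go$ vanish and $\mathcal{B}$ is determined by the algebras $B_n$ alone. The paper's citation is shorter and places the result within a general theory of presentations of linear monoidal categories, but it leaves the super modifications to the reader.

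One point you should make explicit. The relations \eqref{zebra1} only make sense in $B_n$ after writing $R_j$ as a noncommutative polynomial in the generators $f_i^{(k)}$, and such an expression is well defined only modulo \eqref{zebra2}. The clean way to handle this has three steps:
\begin{enumerate}
\item Build $\mathcal{B}$ first using only the algebras presented by \eqref{zebra2}.
\item Define the relation elements as the images of $1_{\go}^{\otimes k}\otimes R_j\otimes 1_{\go}^{\otimes(n-m_j-k)}$ under the induced functor from the free monoidal supercategory.
\item Quotient by the ideals these elements generate. These ideals are stable under $\iota_L$ and $\iota_R$, so the tensor product descends to the quotient.
\end{enumerate}
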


\begin{proof}
This follows from a \emph{super} version of \cite[Thm.~5.2, Thm.~5.4]{Liu}. Note that the relations \cref{zebra2} correspond to the super interchange law \cref{intlaw}.
\end{proof}

The \emph{reversed supercategory} $ \mathcal{C}^{\operatorname{rev}} $ of $ \mathcal{C} $ is the strict monoidal supercategory with the same objects and morphisms as $ \mathcal{C} $, where the composition of morphisms is the same as in $ \mathcal{C} $, but the tensor product is given by $ X \otimes^{\operatorname{rev}} Y = Y \otimes X $, $ f \otimes^{\operatorname{rev}} g = (-1)^{\bar{f}\bar{g}}(g \otimes f) $, for objects $ X, Y $ and morphisms $ f,g $.

\subsection{Monoids}

For a given set \(X\), we define $ F(X) $ to be the free monoid on \(X\). That is, $ F(X) $ is the set of all words in \(X\), and the product of any two words is their concatenation. We denote the length of a word $ w \in F(X) $ by $ |w| $. A \emph{subword} of $ w $ is an element in $ F(X) $ obtained by deleting some (not necessarily consecutive) letters in \(w\).

\begin{defn} \label{snowing}
Let $ u,v \in F(X) $. Then a $ (u,v) $-\emph{shuffle} is a word $ w \in F(X) $ satisfying the following two properties:
\begin{enumerate}
\item[\textbullet] The length of $ w $ is $ |w| = |u| + |v| $.
\item[\textbullet] The element $ w $ contains both $ u $ and $ v $ as subwords.
\end{enumerate}
We define $ F(X)_{u,v} $ to be the set of all $ (u,v) $-shuffles.
\end{defn}

\begin{egg}
Let $ X = \{a,b,c\} $. Then the word $ abcb $ is an $ (ac,bb) $-shuffle. The words $ cbab $ and $ acbbc $ are not $ (ac,bb) $-shuffles.
\end{egg}

\section{Frobenius superalgebras and teleporters} \label{Frob_algebras}

In this section, we recall some facts about Frobenius superalgebras. We will also define the teleporter morphisms in this section.

From here onwards, we fix a Frobenius superalgebra $ A \label{magnesium} $ with homogeneous trace map $ \operatorname{tr} \colon A \rightarrow \Bbbk \label{hydrogen} $ of parity $ \varepsilon \label{traceparity} \in \mathbb{Z}_{2} $. Let $ \psi \colon A \rightarrow A \label{Nakayama} $ be the Nakayama automorphism of \(A\), which is uniquely determined by the condition
\begin{equation} \label{supersymmetric}
\operatorname{tr}(ab) = (-1)^{\overline{a}\overline{b}}\operatorname{tr}(b\psi(a)), \quad a,b \in A.
\end{equation}
We say that \(A\) is \emph{symmetric} if $ \psi = \operatorname{id} $. We assume that \(A\) is a free $ \Bbbk $-module, and we fix a homogeneous basis $ B \label{teller} $ of \(A\). The definition of a Frobenius superalgebra implies that \(A\) has a homogeneous basis $ B^{\vee} := \{b^{\vee} : b \in B\} \label{zellers} $, called the \emph{left dual basis} of $ B $, satisfying
\begin{equation*}
\operatorname{tr}(b^{\vee}c) = \delta_{b,c}, \quad b,c \in B.
\end{equation*}
It follows that
\begin{equation} \label{expansion}
\sum_{b \in B} \operatorname{tr}(b^{\vee}a)b = a = \sum_{b \in B} \operatorname{tr}(ab)b^{\vee}, \quad a \in A.
\end{equation}
Note that 
\begin{equation} \label{dualparity}
\overline{b} + \overline{b^{\vee}} = \varepsilon, \quad b \in B.
\end{equation}
The left dual basis $ (B^{\vee})^{\vee} $ of $ B^{\vee} $ is given by
\begin{equation} \label{dualdual}
(b^{\vee})^{\vee} = (-1)^{\overline{b} + \varepsilon\overline{b}}\psi^{-1}(b), \quad b \in B.
\end{equation}
We now give some examples of Frobenius algebras.

\begin{egg}[Ground ring]
The ground ring $ \Bbbk $ is trivially a symmetric Frobenius algebra, with trace map $ \operatorname{tr} \colon \Bbbk \rightarrow \Bbbk $ given by the identity map.
\end{egg}

\begin{egg}[Group algebra of a finite group] \label{Group_algebra}
Let $ \Bbbk G $ be the group algebra of a finite group \(G\). Then $ \Bbbk G $ is a symmetric Frobenius algebra with trace map $ \operatorname{tr} \colon \Bbbk G \rightarrow \Bbbk $ given by $ \operatorname{tr}(g) = \delta_{e_{G},g} $, $ g \in G $. Here, $ e_{G} $ is the identity element of \(G\).
\end{egg}

\begin{egg}[Clifford superalgebra with a parity-preserving trace map] \label{Cliff_parity_pres}
Let $ \operatorname{Cl} $ be the rank-two Clifford superalgebra, which is generated by a single odd element $ c \in \operatorname{Cl} $ satisfying $ c^{2} = 1 $, and has basis $ \{1,c\} $. Then $ \operatorname{Cl} $ is a Frobenius algebra with trace map $ \operatorname{tr} \colon \operatorname{Cl} \rightarrow \Bbbk $ given by $ 1 \mapsto 1 $, $ c \mapsto 0 $. Here, we have $ \varepsilon = \bar{0} $, and the Nakayama automorphism $ \psi \colon \operatorname{Cl} \rightarrow \operatorname{Cl} $ is given by $ c \mapsto -c $. 
\end{egg}

\begin{egg}[Clifford superalgebra with a parity-reversing trace map]
\label{Cliff_parity_rev}
As in the previous example, let $ \operatorname{Cl} $ be the rank-two Clifford superalgebra. Then $ \operatorname{Cl} $ is a symmetric Frobenius algebra with trace map $ \operatorname{tr} \colon \operatorname{Cl} \rightarrow \Bbbk $ given by $ 1 \mapsto 0 $, $ c \mapsto 1 $. Here, we have $ \varepsilon = \bar{1} $.
\end{egg}

\begin{egg}[Grassmann superalgebra]
Let $ \Lambda $ be the rank-two Grassmann superalgebra, which is generated by a single odd element $ x $ satisfying $ x^{2} = 0 $, and has basis $ \{1,x\} $. Then $ \Lambda $ is a symmetric Frobenius algebra, with trace map $ \operatorname{tr} \colon \Lambda \rightarrow \Bbbk $ given by $ 1 \mapsto 0 $, $ x \mapsto 1 $. Here, we have $ \varepsilon = \bar{1} $.
\end{egg}

Let $ n \in \mathbb{N}_{+} $. Then the symmetric group $ S_{n} $ acts on $ A^{\otimes n} $ by superpermutation of the factors. If $ \mathbf{a} \in A^{\otimes n} $ and $ w \in S_{n} $, then we denote the resulting action by $ w(\mathbf{a}) $. Explicitly, the simple transposition $ s_{i} $ acts on $ A^{\otimes n} $ by
\begin{equation*}
s_{i}(a_{(1)} \otimes \cdots \otimes a_{(n)}) = (-1)^{\overline{a_{(i)}} \ \overline{a_{(i+1)}}} a_{(1)} \otimes \cdots \otimes a_{(i-1)} \otimes a_{(i+1)} \otimes a_{(i)} \otimes a_{(i+2)} \otimes \cdots \otimes a_{(n)}.
\end{equation*}
For $ a \in A $ and $ 1 \leq i \leq n $, we define
\begin{align}
a_{i} &:= 1^{\otimes (i-1)} \otimes a \otimes 1^{\otimes (n-i)} \in A^{\otimes n}, \label{helium}
\\ \psi_{i} &:= \operatorname{id}^{\otimes (i-1)} \otimes \psi \otimes \operatorname{id}^{\otimes (n-i)} \colon A^{\otimes n} \rightarrow A^{\otimes n}. \label{lithium}
\end{align}
We also define 
\begin{equation} \label{wicked}
t_{i,j} := \sum_{b \in B} (-1)^{\varepsilon\overline{b}}b_{i}b_{j}^{\vee} \in A^{\otimes n}, \quad 1 \leq i,j \leq n, \ i \neq j. 
\end{equation}
The elements $ t_{i,j} $ are independent of the choice of basis \(B\) and have parity $ \varepsilon $.

\begin{defn}
The \emph{Frobenius tower category} $ \mathpzc{tower}(A) $ is the strict $ \Bbbk $-linear monoidal supercategory generated by one object $ \go $, with generating morphisms
\begin{equation*}
\begin{tikzpicture}[centerzero]
      \draw[-] (0,-0.2) -- (0,0.2);
      \token{0,0}{west}{a};
\end{tikzpicture}
\colon \go \rightarrow \go,
 \quad a \in A,
\end{equation*}
called \emph{tokens}, and relations
\begin{equation} 
\label{tokrel1}
\begin{tikzpicture}[anchorbase, thick]
        \draw[-] (0,0) -- (0,0.7);
        \token{0,0.35}{west}{1};
\end{tikzpicture}
=
\begin{tikzpicture}[anchorbase, thick]
        \draw[-] (0,0) -- (0,0.7);
\end{tikzpicture}
\ ,\quad \lambda\
\begin{tikzpicture}[anchorbase, thick]
        \draw[-] (0,0) -- (0,0.7);
        \token{0,0.35}{west}{a};
\end{tikzpicture}
+ \mu\
\begin{tikzpicture}[anchorbase, thick]
        \draw[-] (0,0) -- (0,0.7);
        \token{0,0.35}{west}{b};
\end{tikzpicture}
=
\begin{tikzpicture}[anchorbase, thick]
        \draw[-] (0,0) -- (0,0.7);
        \token{0,0.35}{west}{\lambda a + \mu b};
\end{tikzpicture}
,\quad
\begin{tikzpicture}[anchorbase, thick]
        \draw[-] (0,0) -- (0,0.7);
        \token{0,0.2}{east}{b};
        \token{0,0.45}{east}{a};
\end{tikzpicture}
=
\begin{tikzpicture}[anchorbase, thick]
        \draw[-] (0,0) -- (0,0.7);
        \token{0,0.35}{west}{ab};
\end{tikzpicture}
\ ,\quad a,b \in A,\ \lambda,\mu \in \Bbbk.
\end{equation}
We declare the parity of a token $ \begin{tikzpicture}[centerzero]
      \draw[-] (0,-0.2) -- (0,0.2);
      \token{0,0}{west}{a};
\end{tikzpicture} $ to be equal to the parity of \(a\).
\end{defn}

The relation \cref{tokrel1} together with Proposition \ref{zebra} imply that we have an isomorphism of algebras 
\begin{equation} \label{tensorisomorp}
A^{\otimes n} \xrightarrow{\cong} \operatorname{End}_{\mathpzc{tower}(A)}(\go^{\otimes n}),
\end{equation}
sending $ a_{i} $, $ a \in A $, to the token labelled \(a\) on strand \(i\). Here and throughout this paper, we label strands from \emph{left to right}. We define the \emph{teleporters}
\begin{align}
    \begin{tikzpicture}[centerzero]
        \draw (-0.2,-0.3) -- (-0.2,0.3);
        \draw (0.2,-0.3) -- (0.2,0.3);
        \teleport{-0.2,0.1}{0.2,-0.1};
    \end{tikzpicture}
    := \sum_{b \in B} (-1)^{\varepsilon \bar{b}}
    \begin{tikzpicture}[centerzero]
        \draw (-0.2,-0.3) -- (-0.2,0.3);
        \draw (0.2,-0.3) -- (0.2,0.3);
        \token{-0.2,0.1}{east}{b};
        \token{0.2,-0.1}{west}{b^\vee};
    \end{tikzpicture} \ , \label{teleporter66} 
    \qquad
    \begin{tikzpicture}[centerzero]
        \draw (-0.2,-0.3) -- (-0.2,0.3);
        \draw (0.2,-0.3) -- (0.2,0.3);
        \teleport{-0.2,-0.1}{0.2,0.1};
    \end{tikzpicture}
    := \sum_{b \in B} (-1)^{\varepsilon \bar{b}}
    \begin{tikzpicture}[centerzero]
        \draw (-0.2,-0.3) -- (-0.2,0.3);
        \draw (0.2,-0.3) -- (0.2,0.3);
        \token{-0.2,-0.1}{east}{b^\vee};
        \token{0.2,0.1}{west}{b};
    \end{tikzpicture} 
    \ .
\end{align}
Again, the teleporters are independent of the choice of basis \(B\), and they have parity $ \varepsilon $. We have that the tokens ``teleport'' past the teleporters, in the sense of the following lemma.

\begin{lem} \label{teleportt}
Let $ a \in A $. Then
\begin{align}
    \begin{tikzpicture}[anchorbase]
        \draw[-] (0,-0.5) --(0,0.5);
        \draw[-] (0.5,-0.5) -- (0.5,0.5);
        \token{0,0.3}{east}{a};
        \teleport{0,0.1}{0.5,-0.1};
    \end{tikzpicture}
    &= (-1)^{\varepsilon \bar{a}}\
    \begin{tikzpicture}[anchorbase]
        \draw[-] (0,-0.5) --(0,0.5);
        \draw[-] (0.5,-0.5) -- (0.5,0.5);
        \token{0.5,-0.3}{west}{a};
        \teleport{0,0.1}{0.5,-0.1};
    \end{tikzpicture}
    \ ,&
    \begin{tikzpicture}[anchorbase]
        \draw[-] (0,-0.5) --(0,0.5);
        \draw[-] (0.5,-0.5) -- (0.5,0.5);
        \token{0,-0.3}{east}{\psi(a)};
        \teleport{0,0.1}{0.5,-0.1};
    \end{tikzpicture}
    &= (-1)^{\varepsilon \bar{a}}\
    \begin{tikzpicture}[anchorbase]
        \draw[-] (0,-0.5) --(0,0.5);
        \draw[-] (0.5,-0.5) -- (0.5,0.5);
        \token{0.5,0.3}{west}{a};
        \teleport{0,0.1}{0.5,-0.1};
    \end{tikzpicture}
    \ , \label{teleporter1}
    \\
    \begin{tikzpicture}[anchorbase]
        \draw[-] (0,-0.5) --(0,0.5);
        \draw[-] (0.5,-0.5) -- (0.5,0.5);
        \token{0,-0.3}{east}{a};
        \teleport{0,-0.1}{0.5,0.1};
    \end{tikzpicture}
    &= (-1)^{\varepsilon \bar{a}}\
    \begin{tikzpicture}[anchorbase]
        \draw[-] (0,-0.5) --(0,0.5);
        \draw[-] (0.5,-0.5) -- (0.5,0.5);
        \token{0.5,0.3}{west}{a};
        \teleport{0,-0.1}{0.5,0.1};
    \end{tikzpicture}
    \ ,&
    \begin{tikzpicture}[anchorbase]
        \draw[-] (0,-0.5) --(0,0.5);
        \draw[-] (0.5,-0.5) -- (0.5,0.5);
        \token{0,0.3}{east}{a};
        \teleport{0,-0.1}{0.5,0.1};
    \end{tikzpicture}
    &= (-1)^{\varepsilon \bar{a}}\
    \begin{tikzpicture}[anchorbase]
        \draw[-] (0,-0.5) --(0,0.5);
        \draw[-] (0.5,-0.5) -- (0.5,0.5);
        \token{0.5,-0.3}{west}{\psi(a)};
        \teleport{0,-0.1}{0.5,0.1};
    \end{tikzpicture}
    \ . \label{teleporter2}
\end{align}
\end{lem}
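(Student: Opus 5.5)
By the isomorphism \cref{tensorisomorp}, it suffices to check each identity in $A\otimes A$, since $\operatorname{End}_{\mathpzc{tower}(A)}(\go^{\otimes 2})\cong A^{\otimes 2}$. The plan is to translate every diagram into $A^{\otimes 2}$, expand both sides in the basis $B$ (or $B^\vee$) using \cref{expansion}, and compare coefficients after reindexing.

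First I translate the diagrams, taking care with the super interchange law \cref{intlaw}. A token on the left strand placed above a token on the right strand composes as $(f\otimes 1)\circ(1\otimes g)=f\otimes g$ with no sign. Hence the first teleporter is the element $t_{1,2}=\sum_{b}(-1)^{\varepsilon\bar b}\, b\otimes b^\vee$. The second teleporter carries a sign $(-1)^{\bar b\,\overline{b^\vee}}$ from moving the right token below. A token labelled $a$ on a strand above or below the teleporter is then left or right multiplication by $a_1$ or $a_2$. For example, the first identity of \cref{teleporter1} becomes
\[
\sum_b (-1)^{\varepsilon\bar b}\, ab\otimes b^\vee \;=\; (-1)^{\varepsilon\bar a}\sum_b (-1)^{\varepsilon \bar b}(-1)^{\bar a\,\overline{b^\vee}}\, b\otimes b^\vee a ,
\]
where the sign on the right comes from computing $t_{1,2}\,a_2$ in $A\otimes A$.

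Next I prove this identity, which is the key step. I write $ab=\sum_c \operatorname{tr}(c^\vee ab)\,c$ and $b^\vee a=\sum_c \operatorname{tr}(b^\vee a c)\,c^\vee$ using \cref{expansion}. Both sides then become double sums over $b,c$ of terms $c\otimes b^\vee$ (respectively $b\otimes c^\vee$). Swapping the names $b\leftrightarrow c$ on one side, I compare the scalar coefficients $\operatorname{tr}(c^\vee a b)$ and $\operatorname{tr}(c^\vee a b)$ with their attendant signs. The terms are nonzero only when the parities satisfy $\bar c+\bar a+\bar b=\varepsilon+\dots$, that is, when $\overline{c^\vee}+\bar a+\bar b=\varepsilon$. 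Using this and \cref{dualparity}, I reduce the sign discrepancy to $0$. Equivalently, one can pair both sides against $\operatorname{tr}(c^\vee\,\cdot)\otimes\operatorname{tr}(\cdot\, d)$ and check equality.

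The second identity of \cref{teleporter1} follows the same pattern: it compares $\psi(a)_1$ multiplied on the right with $a_2$ multiplied on the left. Here I move $\psi(a)$ across the trace using \cref{supersymmetric}, $\operatorname{tr}(x\psi(a))=(-1)^{\bar a\bar x}\operatorname{tr}(ax)$, and this produces exactly the Nakayama twist. For \cref{teleporter2}, I do the analogous computation for the second teleporter, written $\sum_b \pm\, b^\vee\otimes b$. Alternatively, I rewrite this teleporter in terms of the dual basis $B^\vee$ via \cref{dualdual}, which expresses it as the first teleporter for the basis $B^\vee$ up to $\psi$ and signs, and then invoke the already-proved identities together with basis-independence.

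The main obstacle is purely sign bookkeeping. This involves the interchange-law signs in translating diagrams, the signs $(-1)^{\varepsilon\bar b}$, and the parity constraint from nonvanishing traces. I would handle it by always restricting to the terms where the trace is nonzero, so that every parity can be eliminated in favour of $\bar a$, $\bar b$ and $\varepsilon$.
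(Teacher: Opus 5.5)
Your strategy for \cref{teleporter1} is the same as the paper's: work in $A\otimes A$ via \cref{tensorisomorp}, expand with \cref{expansion}, remove signs using the parity constraint from nonvanishing traces, and let \cref{supersymmetric} produce the $\psi$-twist. However, the one computation you write out has a sign error, and with it the displayed identity is false.

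Since $(f'\otimes g)(f\otimes g')=(-1)^{\bar f\bar g}f'f\otimes gg'$ and here $f=1$, you get
\[
t_{1,2}\,a_2=\sum_{b}(-1)^{\varepsilon\bar b}(b\otimes b^\vee)(1\otimes a)=\sum_b(-1)^{\varepsilon\bar b}\, b\otimes b^\vee a ,
\]
with no factor $(-1)^{\bar a\,\overline{b^\vee}}$. Diagrammatically, $a$ sits on the same strand as $b^\vee$, below the left token $b$, which is the sign-free configuration of \cref{intlaw}.

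With your extra factor the identity fails. Take $A=\operatorname{Cl}$ with the even trace of Example~\ref{Cliff_parity_pres}, so $B=B^\vee=\{1,c\}$ and $t_{1,2}=1\otimes 1+c\otimes c$. For $a=c$ the left side is $c\otimes 1+1\otimes c$, while your right side is $1\otimes c-c\otimes 1$; these differ as soon as $2\neq 0$ in $\Bbbk$. After expanding and swapping $b\leftrightarrow c$, your version leaves an uncancelled factor $(-1)^{\bar a(\bar c+\varepsilon)}$, so you cannot reduce the sign discrepancy to $0$.

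A factor $(-1)^{\bar a\,\overline{b^\vee}}$ does genuinely appear, but in the \emph{second} identity of \cref{teleporter1}, where $\psi(a)_1$ multiplies $b\otimes b^\vee$ from the right. Also, the parity constraint you want is simply $\bar a+\bar b+\bar c=\bar 0$ whenever $\operatorname{tr}(c^\vee ab)\neq 0$.

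Once the sign is corrected, your coefficient comparison reduces to $(-1)^{\varepsilon\bar b}=(-1)^{\varepsilon(\bar a+\bar c)}$ on the support of $\operatorname{tr}(c^\vee ab)$. That is exactly the paper's argument.

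For \cref{teleporter2}, the paper applies the isomorphism $\mathpzc{tower}(A)\to\mathpzc{tower}(A)^{\operatorname{rev}}$ to \cref{teleporter1}. Your dual-basis route also works:
\begin{enumerate}
\item By \cref{dualdual} and basis-independence of $t_{1,2}$, the second teleporter equals $\sum_b(-1)^{\bar b}\,b^\vee\otimes b=(-1)^{\varepsilon}(\operatorname{id}\otimes\psi)(t_{1,2})$.
\item Apply the algebra automorphism $\operatorname{id}\otimes\psi$ of $A\otimes A$ to the two identities of \cref{teleporter1}, substituting $\psi^{-1}(a)$ for $a$ in the second one. This gives \cref{teleporter2}.
\end{enumerate}
Your route shows explicitly why $\psi$ moves to the opposite side, at the cost of one more sign computation. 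The paper's reversal trick avoids computing the second teleporter explicitly.
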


\begin{proof}
We compute that
\begin{equation*}
\begin{tikzpicture}[anchorbase]
        \draw[-] (0,-0.5) --(0,0.5);
        \draw[-] (0.5,-0.5) -- (0.5,0.5);
        \token{0,0.3}{east}{a};
        \teleport{0,0.1}{0.5,-0.1};
\end{tikzpicture}
\stackrel{\cref{expansion}}{=} \sum_{b,c \in B} (-1)^{\varepsilon \bar{b}}
\begin{tikzpicture}[centerzero]
        \draw (0,-0.5) -- (0,0.5);
        \draw (0.5,-0.5) -- (0.5,0.5);
        \token{0,0.1}{east}{\operatorname{tr}(c^{\vee}ab)c};
        \token{0.5,-0.1}{west}{b^\vee};
\end{tikzpicture}
= \sum_{b,c \in B} (-1)^{\varepsilon(\bar{a} + \bar{c})}
\begin{tikzpicture}[centerzero]
        \draw (0,-0.5) -- (0,0.5);
        \draw (0.5,-0.5) -- (0.5,0.5);
        \token{0,0.1}{east}{c};
        \token{0.5,-0.1}{west}{\operatorname{tr}(c^{\vee}ab)b^{\vee}};
\end{tikzpicture}
\stackrel{\cref{expansion}}{=} (-1)^{\varepsilon \bar{a}}\
\begin{tikzpicture}[anchorbase]
        \draw[-] (0,-0.5) --(0,0.5);
        \draw[-] (0.5,-0.5) -- (0.5,0.5);
        \token{0.5,-0.3}{west}{a};
        \teleport{0,0.1}{0.5,-0.1};
\end{tikzpicture} \ ,
\end{equation*}
where in the second equality, we used the fact that $ \operatorname{tr}(c^{\vee}ab) = 0 $ unless $ \bar{a} + \bar{b} + \bar{c} = \bar{0} $. Next, we have
\begin{align*}
\begin{tikzpicture}[anchorbase]
        \draw[-] (0,-0.5) --(0,0.5);
        \draw[-] (0.5,-0.5) -- (0.5,0.5);
        \token{0,-0.3}{east}{\psi(a)};
        \teleport{0,0.1}{0.5,-0.1};
\end{tikzpicture}
\stackrel{\cref{expansion}}{=} \sum_{b,c \in B} &(-1)^{\varepsilon\bar{b}+\varepsilon\bar{a}+\bar{a}\bar{b}}
\begin{tikzpicture}[centerzero]
        \draw (0,-0.5) -- (0,0.5);
        \draw (0.5,-0.5) -- (0.5,0.5);
        \token{0,0.1}{east}{\operatorname{tr}(c^{\vee}b\psi(a))c};
        \token{0.5,-0.1}{west}{b^{\vee}};
\end{tikzpicture}
\stackrel{\cref{supersymmetric}}{=} \sum_{b,c \in B} (-1)^{\varepsilon \bar{b} + \bar{a}\bar{c}}
\begin{tikzpicture}[centerzero]
        \draw (0,-0.5) -- (0,0.5);
        \draw (0.5,-0.5) -- (0.5,0.5);
        \token{0,0.1}{east}{c};
        \token{0.5,-0.1}{west}{\operatorname{tr}(ac^{\vee}b)b^{\vee}};
\end{tikzpicture}
\\ &= \sum_{b,c \in B} (-1)^{\varepsilon\bar{c}+\varepsilon{\bar{a}}+\bar{a}\bar{c}}
\begin{tikzpicture}[centerzero]
        \draw (0,-0.5) -- (0,0.5);
        \draw (0.5,-0.5) -- (0.5,0.5);
        \token{0,0.1}{east}{c};
        \token{0.5,-0.1}{west}{\operatorname{tr}(ac^{\vee}b)b^{\vee}};
\end{tikzpicture}
\stackrel{\cref{expansion}}{=} (-1)^{\varepsilon \bar{a}}\
\begin{tikzpicture}[anchorbase]
        \draw[-] (0,-0.5) --(0,0.5);
        \draw[-] (0.5,-0.5) -- (0.5,0.5);
        \token{0.5,0.3}{west}{a};
        \teleport{0,0.1}{0.5,-0.1};
\end{tikzpicture} \ ,
\end{align*}
where in the third equality, we used the fact that $ \operatorname{tr}(ac^{\vee}b) = 0 $ unless $ \bar{a}+\bar{b}+\bar{c} = \bar{0} $. The relation \cref{teleporter2} is then obtained by computing the image of \cref{teleporter1} under the isomorphism 
\begin{equation*}
\mathpzc{tower}(A) \rightarrow \mathpzc{tower}(A)^{\operatorname{rev}}, 
\quad 
\begin{tikzpicture}[centerzero]
      \draw[-] (0,-0.2) -- (0,0.2);
      \token{0,0}{west}{d};
\end{tikzpicture} 
\mapsto 
\begin{tikzpicture}[centerzero]
      \draw[-] (0,-0.2) -- (0,0.2);
      \token{0,0}{west}{d};
\end{tikzpicture} \ , \qquad d \in A. \qedhere
\end{equation*}
\end{proof}

\section{Affine wreath product algebras} \label{affine_wreath_product}

In this section, we provide a recap of the Frobenius polynomial algebras, the Frobenius Demazure operators, and the affine wreath product algebras. The affine wreath product algebras were first defined and studied by Savage in \cite{Savage}. In \cite{Savage}, it is assumed that the Frobenius algebra carries an even trace map. However, the affine wreath product algebras were generalized in \cite{Mendonca} to allow for trace maps of arbitrary parity. In this paper, since \(A\) can carry either an even or odd trace map, we will use the definition provided in \cite{Mendonca}.

\begin{defn} \label{summer}
We define the \emph{Frobenius polynomial category} $ \mathpzc{Pol}(A) $ to be the strict $ \Bbbk $-linear monoidal category generated by one object $ \go $, with generating morphisms
\begin{equation*}
\begin{tikzpicture}[centerzero]
      \draw[-] (0,-0.2) -- (0,0.2);
      \singdot{0,0};
\end{tikzpicture} \colon \go \rightarrow \go,
\ , \quad
\begin{tikzpicture}[centerzero]
      \draw[-] (0,-0.2) -- (0,0.2);
      \token{0,0}{west}{a};
\end{tikzpicture} \colon \go \rightarrow \go,
\quad a \in A,
\end{equation*}
subject to the relations \cref{tokrel1} and
\begin{equation} \label{fox}
\begin{tikzpicture}[centerzero]
      \draw[-] (0,-0.3) -- (0,0.3);
      \token{0,0.12}{east}{a};
      \singdot{0,-0.12};
\end{tikzpicture}
= (-1)^{\varepsilon\bar{a}} \
\begin{tikzpicture}[centerzero]
      \draw[-] (0,-0.3) -- (0,0.3);
      \token{0,-0.12}{west}{\psi(a)};
      \singdot{0,0.12};
\end{tikzpicture}
\ , \quad a \in A.
\end{equation}
We refer to the generator $ \begin{tikzpicture}[centerzero]
      \draw[-] (0,-0.2) -- (0,0.2);
      \singdot{0,0};
\end{tikzpicture} $ as the \emph{dot}, and declare it to be of parity $ \varepsilon $. The parity of the token $ \begin{tikzpicture}[centerzero]
      \draw[-] (0,-0.2) -- (0,0.2);
      \token{0,0}{west}{a};
\end{tikzpicture} $ is equal to the parity of $ a $. For $ n \in \mathbb{N} $, we define the \emph{Frobenius polynomial algebra}
\begin{equation*}
\operatorname{Pol}_{n}(A) := \operatorname{End}_{\mathpzc{Pol}(A)}(\go^{\otimes n}).
\end{equation*}
\end{defn}

\begin{prop} \label{Polrel}
As a superalgebra, $ \operatorname{Pol}_{n}(A) $ is isomorphic to the free product of $ A^{\otimes n} $ and the free associative superalgebra on the generators $ x_{1},\ldots, x_{n} $ of parity $ \varepsilon $, subject to the relations
\begin{align}
        x_{i}x_{j} &= (-1)^{\varepsilon}x_{j}x_{i}, & 1 \leq i,j \leq n, \ i \neq j  \label{F1_1} \\
        \mathbf{a}x_{i} &= (-1)^{\varepsilon\overline{\mathbf{a}}}x_{i}\psi_{i}(\mathbf{a}), & 1 \leq i \leq n, \ \mathbf{a} \in A^{\otimes n}. \label{F1_2}
\end{align}
Under this isomorphism, $ a_{i} $, for $ 1 \leq i \leq n $, corresponds to a token labelled $ a $ on strand $ i $, and $ x_{i} $, for $ 1 \leq i \leq n $, corresponds to a dot on strand $ i $. 
\end{prop}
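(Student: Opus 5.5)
The plan is to apply Proposition \ref{zebra} to the presentation of $\mathpzc{Pol}(A)$ in Definition \ref{summer}, and then simplify the resulting presentation until it becomes the one in the statement. The generating morphisms all lie in $\operatorname{End}(\go)$: the dot and the tokens labelled $a \in A$. By Proposition \ref{zebra}, $\operatorname{Pol}_n(A)$ is generated by the elements $x_i$ (a dot on strand $i$) and $a_i$ (a token labelled $a$ on strand $i$), for $1 \le i \le n$ and $a\in A$. The relations come in three kinds:
\begin{itemize}
\item the relations \cref{tokrel1} and \cref{fox}, placed on each strand $i$;
\item the super interchange relations \cref{zebra2}, which say that generators on distinct strands supercommute;
\item nothing else, since every generator involves a single strand.
\end{itemize}

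Next I would isolate the token part. By \cref{tensorisomorp}, the subalgebra generated by the $a_i$, subject to \cref{tokrel1} on each strand and the interchange relations among tokens, is exactly $A^{\otimes n}$. So $\operatorname{Pol}_n(A)$ is the free product of $A^{\otimes n}$ with the free superalgebra on $x_1,\dots,x_n$ (each of parity $\varepsilon$), modulo the remaining relations. These are of three types:
\begin{itemize}
\item dot/dot interchange for $i\ne j$, which is precisely $x_ix_j=(-1)^{\varepsilon\varepsilon}x_jx_i=(-1)^{\varepsilon}x_jx_i$, i.e.\ \cref{F1_1};
\item token/dot interchange on distinct strands, $a_jx_i=(-1)^{\varepsilon\bar a}x_ia_j$ for $j\ne i$;
\item relation \cref{fox} on strand $i$, which reads $a_ix_i=(-1)^{\varepsilon\bar a}x_i\psi(a)_i$.
\end{itemize}

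It remains to show that the last two families together are equivalent to \cref{F1_2}. Note that $\psi_i(a_j)=a_j$ for $j\neq i$ and $\psi_i(a_i)=\psi(a)_i$. So both families are the special cases $\mathbf a=a_j$ of \cref{F1_2}, which gives one direction. Conversely, every pure tensor $\mathbf a\in A^{\otimes n}$ is, up to sign, a product $\mathbf a=(a_{(1)})_1\cdots(a_{(n)})_n$. Moving $x_i$ leftward through these factors one at a time, and using that $\psi_i$ is an algebra automorphism of $A^{\otimes n}$, gives $\mathbf a x_i=(-1)^{\varepsilon\bar{\mathbf a}}x_i\psi_i(\mathbf a)$. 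Here the signs multiply correctly because $\bar{\mathbf a}=\sum_k\overline{a_{(k)}}$. Linearity then extends this to all $\mathbf a$. Finally, the correspondence of generators (tokens to $a_i$, dots to $x_i$) is built into the construction.

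The only delicate point, and the step I expect to be the main obstacle, is justifying that Proposition \ref{zebra} permits splitting off $A^{\otimes n}$ as a genuine free-product factor. Concretely, the relations among tokens alone must present $A^{\otimes n}$ via \cref{tensorisomorp}, with no extra relations imposed from the dot side. This should be immediate, since the relations were partitioned by which generators they involve, so I do not expect it to cause real difficulty.
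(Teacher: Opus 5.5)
Your proposal is correct and takes the same approach as the paper, whose entire proof is the remark that the statement follows from Proposition \ref{zebra}. Your unpacking supplies the details the paper leaves implicit: you identify the token-only relations with $A^{\otimes n}$ via \cref{tensorisomorp}, and you recover \cref{F1_2} from its single-strand cases using that $\psi_i$ is an (even) algebra automorphism of $A^{\otimes n}$.
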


\begin{proof}
This follows from Proposition \ref{zebra}.
\end{proof}

From here onwards, we identify $ \operatorname{Pol}_{n}(A) $ with the algebra given in Proposition \ref{Polrel}. The algebra $ \operatorname{Pol}_{n}(A) $ has basis
\begin{align} \label{Poly_base}
&\{\mathbf{a}x_{1}^{k_{1}}\cdots x_{n}^{k_{n}} : \mathbf{a} \in B^{\otimes n}, \ k_{1},\ldots, k_{n} \in \mathbb{N}\},
\end{align}
where recall that \(B\) is some basis of \(A\). We define $ x := x_{1} \in \operatorname{Pol}_{1}(A) $. Then the following lemma is easy to check.

\begin{lem} \label{identification}
We have an isomorphism of superalgebras $ \operatorname{Pol}_{n}(A) \rightarrow \operatorname{Pol}_{1}(A)^{\otimes n} $ that sends $ x_{i} $ to $ 1^{\otimes i-1} \otimes x \otimes 1^{\otimes n-i} $, and sends $ \mathbf{a} \in A^{\otimes n} $ to itself.
\end{lem}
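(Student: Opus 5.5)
We construct the map on generators and then show it is bijective by comparing bases. By Proposition \ref{Polrel}, $\operatorname{Pol}_{n}(A)$ is presented as the free product of $A^{\otimes n}$ with the free superalgebra on $x_1,\ldots,x_n$ (each of parity $\varepsilon$), subject to \cref{F1_1} and \cref{F1_2}.

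\emph{Step 1: define the map.} We set $X_i := 1^{\otimes (i-1)} \otimes x \otimes 1^{\otimes (n-i)}$ in $\operatorname{Pol}_1(A)^{\otimes n}$. We regard $A^{\otimes n} = A \otimes \cdots \otimes A$ as a subalgebra of $\operatorname{Pol}_1(A)^{\otimes n}$ via the inclusion $A \subseteq \operatorname{Pol}_1(A)$ in each factor. This inclusion is injective because $\operatorname{Pol}_1(A)$ has basis $\{bx^k\}$ by \cref{Poly_base}. The universal property of the free product then gives a superalgebra homomorphism $\Theta\colon A^{\otimes n} * \Bbbk\langle x_1,\ldots,x_n\rangle \to \operatorname{Pol}_1(A)^{\otimes n}$, with $\Theta(x_i) = X_i$ and $\Theta(\mathbf a) = \mathbf a$.

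\emph{Step 2: check the relations.} We verify that $\Theta$ kills \cref{F1_1} and \cref{F1_2}.

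For \cref{F1_1}, take $i<j$. The multiplication rule in a tensor product of superalgebras gives $X_iX_j = 1\otimes\cdots\otimes x\otimes\cdots\otimes x\otimes\cdots$ with no sign. Computing $X_jX_i$ instead moves the $x$ in slot $i$ past the $x$ in slot $j$, which produces the sign $(-1)^{\varepsilon\varepsilon} = (-1)^{\varepsilon}$. Hence $X_iX_j = (-1)^{\varepsilon}X_jX_i$, as required.

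For \cref{F1_2}, it suffices by linearity and multiplicativity to take a pure tensor $\mathbf a = a_{(1)}\otimes\cdots\otimes a_{(n)}$. Inside $\operatorname{Pol}_1(A)$, the relation \cref{fox} reads $ax = (-1)^{\varepsilon\bar a}x\psi(a)$. We compute $\mathbf a X_i$ and $X_i\psi_i(\mathbf a)$ factorwise. There are three sources of signs:
\begin{itemize}
\item[(i)] commuting $x$ past the factors $a_{(j)}$ with $j>i$ when forming $\mathbf aX_i$, giving $(-1)^{\varepsilon\sum_{j>i}\overline{a_{(j)}}}$;
\item[(ii)] commuting $x$ past the factors with $j<i$ when forming $X_i\psi_i(\mathbf a)$, giving $(-1)^{\varepsilon\sum_{j<i}\overline{a_{(j)}}}$;
\item[(iii)] the sign $(-1)^{\varepsilon\overline{a_{(i)}}}$ from \cref{fox} in slot $i$.
\end{itemize}
Since $\psi$ is even, these combine to the total $(-1)^{\varepsilon\bar{\mathbf a}}$, which is exactly what \cref{F1_2} requires. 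This sign bookkeeping is the only delicate point, and I expect it to be the main place an error could slip in. Having checked both relations, $\Theta$ descends to a homomorphism $\Theta\colon\operatorname{Pol}_n(A)\to\operatorname{Pol}_1(A)^{\otimes n}$.

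\emph{Step 3: bijectivity.} By \cref{Poly_base}, $\operatorname{Pol}_n(A)$ has basis $\mathbf a x_1^{k_1}\cdots x_n^{k_n}$ with $\mathbf a \in B^{\otimes n}$. For $\mathbf a = b_{(1)}\otimes\cdots\otimes b_{(n)}$, we have
\[
\Theta(\mathbf a x_1^{k_1}\cdots x_n^{k_n}) = \pm\, b_{(1)}x^{k_1}\otimes\cdots\otimes b_{(n)}x^{k_n}.
\]
Because $\{bx^k : b\in B,\ k\in\mathbb N\}$ is a basis of $\operatorname{Pol}_1(A)$ (the case $n=1$ of \cref{Poly_base}), these images form, up to sign, a basis of the tensor product $\operatorname{Pol}_1(A)^{\otimes n}$ over $\Bbbk$; this uses that each factor is a free $\Bbbk$-module. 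Thus $\Theta$ sends a basis bijectively onto a basis up to signs, so it is an isomorphism. By construction it sends $x_i$ to $1^{\otimes (i-1)}\otimes x\otimes 1^{\otimes (n-i)}$ and $\mathbf a$ to itself, which is the statement of Lemma \ref{identification}.
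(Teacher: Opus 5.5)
Your proof is correct and matches the paper's approach: the paper gives no argument beyond calling the lemma easy to check, and your route is the routine verification it leaves to the reader. You define the map on the generators of the presentation in Proposition~\ref{Polrel}, verify \cref{F1_1} and \cref{F1_2} using the Koszul signs of the super tensor product, and match the basis \cref{Poly_base} of $\operatorname{Pol}_n(A)$ with the tensor-product basis $\{b_{(1)}x^{k_1}\otimes\cdots\otimes b_{(n)}x^{k_n}\}$ of $\operatorname{Pol}_1(A)^{\otimes n}$ up to sign; your Step~2 bookkeeping checks out, since the three contributions sum to $\varepsilon\sum_j\overline{a_{(j)}}=\varepsilon\bar{\mathbf a}$.
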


The superpermutation action of the symmetric group $ S_{n} $ on $ A^{\otimes n} $ extends naturally to an action on $ \operatorname{Pol}_{n}(A) $ via superalgebra isomorphisms, where
\begin{equation*}
w(x_{i}) = x_{w(i)}, \quad 1 \leq i \leq n, \ w \in S_{n}.
\end{equation*} 
We denote this action by $ w(f) $, for $ w \in S_{n} $, $ f \in \operatorname{Pol}_{n}(A) $.

\begin{defn}[{\cite[§4.1]{Savage}}, {\cite[§4.1]{Mendonca}}] \label{autumn}
We define the \emph{Frobenius Demazure operators} (or the \emph{Frobenius divided difference operators}) to be the $ \Bbbk $-linear operators $ \partial_{i} \colon \operatorname{Pol}_{n}(A) \rightarrow  \operatorname{Pol}_{n}(A) $, $ 1 \leq i \leq n-1 $, determined by 
\begin{equation} \label{autumn22}
\partial_{i}(\mathbf{a}) = 0, \quad \partial_{i}(x_{j}) = 
\begin{cases}
t_{i,i+1} &\text{if} \ j = i,
\\ -t_{i+1,i} &\text{if} \ j = i+1,
\\ 0 &\text{otherwise},
\end{cases}
\end{equation}
for all $ \mathbf{a} \in A^{\otimes n} $, $ 1 \leq j \leq n $, and the twisted Leibniz rule
\begin{equation} \label{leibniz2}
\partial_{i}(fg) = \partial_{i}(f)g + s_{i}(f)\partial_{i}(g), \quad f,g \in \operatorname{Pol}_{n}(A).
\end{equation}
\end{defn}

We define $ Z(\operatorname{Pol}_{n}(A))^{S_{n}} $ to be the set of those elements in $ Z(\operatorname{Pol}_{n}(A)) $ that are invariant under the action of the symmetric group $ S_{n} $. 

\begin{lem} \label{treat2}
Let $ n \in \mathbb{N}_{+} $ and $ 1 \leq i \leq n-1 $. Then $ Z(\operatorname{Pol}_{n}(A))^{S_{n}} \subseteq \operatorname{ker}(\partial_{i}) $.
\end{lem}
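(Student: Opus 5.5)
The plan is to set $g := \partial_i(f)$ for $f \in Z(\operatorname{Pol}_n(A))^{S_n}$ and show that $g$ satisfies an intertwining relation with $x_i$ and $x_{i+1}$. A degree argument in the basis \cref{Poly_base} then forces $g = 0$. We may assume $f$ is homogeneous, since $Z(\operatorname{Pol}_n(A))^{S_n}$ is spanned by its homogeneous components: the $S_n$-action and centrality both respect parity.

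\emph{Step 1.} Since $f$ is central, $f x_i = (-1)^{\varepsilon \bar f} x_i f$. Apply $\partial_i$ to both sides and use the twisted Leibniz rule \cref{leibniz2}, together with $s_i(f) = f$ and $\partial_i(x_i) = t_{i,i+1}$, $s_i(x_i) = x_{i+1}$. The left side gives
\[
\partial_i(f x_i) = g\, x_i + f\, t_{i,i+1}.
\]
The right side gives
\[
(-1)^{\varepsilon\bar f}\partial_i(x_i f) = (-1)^{\varepsilon\bar f}\bigl(t_{i,i+1} f + x_{i+1} g\bigr).
\]
Since $f$ is central and $t_{i,i+1}$ has parity $\varepsilon$, we have $f t_{i,i+1} = (-1)^{\varepsilon \bar f} t_{i,i+1} f$. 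The teleporter terms therefore cancel, leaving
\[
g\, x_i = (-1)^{\varepsilon \bar f}\, x_{i+1}\, g .
\]

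\emph{Step 2.} Next I show that this relation forces $g=0$. I expand $g$ in the basis $\{\mathbf{a}x_1^{k_1}\cdots x_n^{k_n}\}$ of \cref{Poly_base} and grade by the exponent $k_i$ of $x_i$.

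Right multiplication by $x_i$ sends $\mathbf{a}x_1^{k_1}\cdots x_n^{k_n}$ to $\pm\,\mathbf{a}x_1^{k_1}\cdots x_i^{k_i+1}\cdots x_n^{k_n}$, using only the sign relations \cref{F1_1}. This map is injective on basis elements and raises the $x_i$-degree by exactly one.

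Left multiplication by $x_{i+1}$ is computed with \cref{F1_2}: we move $x_{i+1}$ past $\mathbf{a}$, which twists $\mathbf{a}$ by $\psi_{i+1}^{\pm1}$ up to sign, and then past the other dots up to sign. In particular it does not change the $x_i$-degree.

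Now suppose $g \neq 0$, and let $m$ be the largest $x_i$-degree among the terms of $g$. Then $g x_i$ has a nonzero component in $x_i$-degree $m+1$, whereas $x_{i+1} g$ has no such component. This contradicts Step 1, so $g = 0$.

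The main obstacle is the bookkeeping in Step 2. One must check that left multiplication by $x_{i+1}$ really preserves the $x_i$-grading after rewriting into normal form; this holds because $\psi$ is an automorphism and the dot relations only introduce signs. One must also check that the top-degree part of $g x_i$ cannot cancel, which follows from injectivity of $k_i \mapsto k_i+1$ on basis elements. Both points follow from Lemma~\ref{identification}, viewing $\operatorname{Pol}_n(A) \cong \operatorname{Pol}_1(A)^{\otimes n}$ with basis $\{a x^k\}$ in each factor. The sign computations in Step 1 are routine.
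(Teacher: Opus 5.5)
Your proof is correct and follows essentially the same route as the paper: apply $\partial_i$ to a commutation relation with $f$, use $s_i(f)=f$ and centrality to get an intertwining relation for $\partial_i(f)$, and finish by comparing $x_i$-degrees in the basis \cref{Poly_base}. The only difference is the test element. The paper commutes $f$ past an arbitrary even $g$ and then takes $g=x_i^2$, obtaining $\partial_i(f)x_i^2=x_{i+1}^2\partial_i(f)$ with no signs and no need to reduce to homogeneous $f$. You use $x_i$ directly, which requires the reduction to homogeneous $f$ and the cancellation of the $t_{i,i+1}$ terms, and you handle both correctly.
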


\begin{proof}
Let $ f \in Z(\operatorname{Pol}_{n}(A))^{S_{n}} $, and let $ g \in \operatorname{Pol}_{n}(A) $ be an even element. Then since $ f $ satisfies $ s_{i}(f) = f $, we obtain by \cref{leibniz2} that $ \partial_{i}(fg) = \partial_{i}(f)g + f\partial_{i}(g) $. Furthermore, since \(f\) is central, we compute that
\begin{equation*}
\partial_{i}(fg) = \partial_{i}(gf) = \partial_{i}(g)f + s_{i}(g)\partial_{i}(f) = f\partial_{i}(g) + s_{i}(g)\partial_{i}(f).
\end{equation*}
Thus $ \partial_{i}(f)g = s_{i}(g)\partial_{i}(f) $. In particular, by taking $ g = x_{i}^{2} $, we obtain that
\begin{equation} \label{email}
\partial_{i}(f)x_{i}^{2} =  x_{i+1}^{2}\partial_{i}(f).
\end{equation}
Then, by using the basis \cref{Poly_base}, it is straightforward to see that the condition \cref{email} implies that $ \partial_{i}(f) = 0 $.
\end{proof}

\details{Suppose $ \partial_{i}(f) \neq 0 $. We define $ F $ to be the subalgebra of $ \operatorname{Pol}_{n}(A) $ generated by $ A^{\otimes n} $ and $ x_{1},\ldots,x_{i-1},x_{i+1},\ldots,x_{n} $. Then $ \operatorname{Pol}_{n}(A) $ is free as a left $ F $-module, with basis $ \{x_{i}^{k} : k \in \mathbb{N}\} $. We denote the degree of an element $ g \in \operatorname{Pol}_{n}(A) $, when written as a polynomial in $ x_{i} $ with coefficients in $ F $, by $ \operatorname{deg}(g) $. Then since $ \partial_{i}(f) \neq 0 $, we have by \cref{email} that 
\begin{equation*}
\operatorname{deg}(\partial_{i}(f)) + 2 = \operatorname{deg}(\partial_{i}(f)x_{i}^{2}) = \operatorname{deg}(x_{i+1}^{2}\partial_{i}(f)) = \operatorname{deg}(x_{i+1}^{2}) + \operatorname{deg}(\partial_{i}(f)) = \operatorname{deg}(\partial_{i}(f)),
\end{equation*}
which is a contradiction. Thus $ \partial_{i}(f) = 0 $.}

\begin{defn} \label{pluto2}
We define the \emph{affine wreath product category} $ \mathcal{AW}(A) $ to be the strict $ \Bbbk $-linear monoidal supercategory generated by one object $ \go $ and morphisms
\begin{equation*}
    \begin{tikzpicture}[centerzero, thick]
        \draw[-] (-0.2,-0.2) -- (0.2,0.2);
        \draw[-] (0.2,-0.2) -- (-0.2,0.2);
    \end{tikzpicture}
      \ \colon
    \go \otimes \go \to \go \otimes \go,
    \quad
    \begin{tikzpicture}[anchorbase, thick]
        \draw[-] (0,-0.3) -- (0,0.3);
        \token{0,0}{west}{a};
    \end{tikzpicture}
    \colon \go \to \go, \quad
 \begin{tikzpicture}[anchorbase, thick]
        \draw[-] (0,0) -- (0,0.6);
        \singdot{0,0.3};
\end{tikzpicture}
\ \colon
\go \to \go, \quad a \in A,
\end{equation*}
modulo the relations \cref{tokrel1} and
\begin{gather} 
\label{braid2}
\begin{tikzpicture}[anchorbase, thick]
        \draw[-] (0.2,-0.5) to[out=up,in=down] (-0.2,0) to[out=up,in=down] (0.2,0.5);
        \draw[-] (-0.2,-0.5) to[out=up,in=down] (0.2,0) to[out=up,in=down] (-0.2,0.5);
\end{tikzpicture}
\ =\
\begin{tikzpicture}[anchorbase, thick]
        \draw[-] (-0.2,-0.5) -- (-0.2,0.5);
        \draw[-] (0.2,-0.5) -- (0.2,0.5);
\end{tikzpicture}
\ ,\quad
\begin{tikzpicture}[anchorbase, thick]
        \draw[-] (0.4,-0.5) -- (-0.4,0.5);
        \draw[-] (0,-0.5) to[out=up, in=down] (-0.4,0) to[out=up,in=down] (0,0.5);
        \draw[-] (-0.4,-0.5) -- (0.4,0.5);
\end{tikzpicture}
\ =\
\begin{tikzpicture}[anchorbase, thick]
        \draw[-] (0.4,-0.5) -- (-0.4,0.5);
        \draw[-] (0,-0.5) to[out=up, in=down] (0.4,0) to[out=up,in=down] (0,0.5);
        \draw[-] (-0.4,-0.5) -- (0.4,0.5);
\end{tikzpicture}
\ ,\quad
\begin{tikzpicture}[centerzero, thick]
        \draw[-] (0.3,-0.4) -- (-0.3,0.4);
        \draw[-] (-0.3,-0.4) -- (0.3,0.4);
        \token{-0.15,-0.2}{east}{a};
\end{tikzpicture}
\ =\
\begin{tikzpicture}[centerzero, thick]
        \draw[-] (0.3,-0.4) -- (-0.3,0.4);
        \draw[-] (-0.3,-0.4) -- (0.3,0.4);
        \token{0.15,0.2}{west}{a};
\end{tikzpicture}
\ ,
\\ 
\label{affwreath}
\begin{tikzpicture}[centerzero, thick]
        \draw[-] (0.3,-0.4) -- (-0.3,0.4);
        \draw[-] (-0.3,-0.4) -- (0.3,0.4);
        \singdot{-0.15,-0.2};
\end{tikzpicture}
\ =\
\begin{tikzpicture}[centerzero, thick]
        \draw[-] (-0.3,-0.4) -- (0.3,0.4);
        \draw[-] (0.3,-0.4) -- (-0.3,0.4);
        \singdot{0.171,0.228};
\end{tikzpicture}
\ - \ 
\begin{tikzpicture}[centerzero]
        \draw (-0.2,-0.4) -- (-0.2,0.4);
        \draw (0.2,-0.4) -- (0.2,0.4);
        \teleport{-0.2,0.1}{0.2,-0.1};
\end{tikzpicture}
\ ,\quad
\begin{tikzpicture}[centerzero, thick]
        \draw[-] (0,-0.4) -- (0,0.4);
        \token{0,0.15}{west}{a};
        \singdot{0,-0.15};
\end{tikzpicture}
\ = (-1)^{\varepsilon \bar{a}} \
\begin{tikzpicture}[centerzero, thick]
        \draw[-] (0,-0.4) -- (0,0.4);
        \token{0,-0.15}{west}{\psi(a)};
        \singdot{0,0.15};
\end{tikzpicture}
\ ,\quad a \in A.
\end{gather}
The morphism $ \begin{tikzpicture}[anchorbase, thick]
        \draw[-] (0,0) -- (0,0.6);
        \singdot{0,0.3};
    \end{tikzpicture} $
is called the \emph{dot}, which is of parity $ \varepsilon $. The black crossing is even, and the parity of a token $ \begin{tikzpicture}[anchorbase, thick]
        \draw[-] (0,-0.2) -- (0,0.2);
        \token{0,0}{west}{a};
    \end{tikzpicture} $ is equal to the parity of \(a\). For $ n \in \mathbb{N} $, we define the \emph{affine wreath product algebra} to be
\begin{equation*}
W_{n}^{\operatorname{aff}}(A) := \operatorname{End}_{\mathcal{AW}(A)}(\go^{\otimes n}).
\end{equation*}
\end{defn}

It follows from \cref{braid2} that
\begin{equation} \label{tokencross2}
\begin{tikzpicture}[centerzero, thick]
        \draw[-] (0.3,-0.4) -- (-0.3,0.4);
        \draw[-] (-0.3,-0.4) -- (0.3,0.4);
        \token{0.15,-0.2}{west}{a};
\end{tikzpicture}
\ =\
\begin{tikzpicture}[centerzero, thick]
        \draw[-] (0.3,-0.4) -- (-0.3,0.4);
        \draw[-] (-0.3,-0.4) -- (0.3,0.4);
        \token{-0.15,0.2}{east}{a};
\end{tikzpicture} \ , \qquad a \in A.
\end{equation}
Furthermore, we have by \cref{braid2} and \cref{affwreath} that
\begin{equation} \label{affwreath223}
\begin{tikzpicture}[centerzero, thick]
        \draw[-] (0.3,-0.4) -- (-0.3,0.4);
        \draw[-] (-0.3,-0.4) -- (0.3,0.4);
        \singdot{-0.15,0.2};
\end{tikzpicture}
\ =\
\begin{tikzpicture}[centerzero, thick]
        \draw[-] (-0.3,-0.4) -- (0.3,0.4);
        \draw[-] (0.3,-0.4) -- (-0.3,0.4);
        \singdot{0.171,-0.228};
\end{tikzpicture}
\ - \
\begin{tikzpicture}[centerzero]
        \draw (-0.2,-0.4) -- (-0.2,0.4);
        \draw (0.2,-0.4) -- (0.2,0.4);
        \teleport{-0.2,-0.1}{0.2,0.1};
\end{tikzpicture} \ .
\end{equation}

\begin{prop} 
The algebra $ W_{n}^{\operatorname{aff}}(A) $ is isomorphic to the free product of $ A^{\otimes n} $ and the free associative superalgebra on the generators $ x_{1}, \ldots, x_{n}, \sigma_{1},\ldots,\sigma_{n-1} $, subject to the relations \cref{F1_1}, \cref{F1_2}, and
\begin{align}
\sigma_{i}\sigma_{j} &= \sigma_{j}\sigma_{i}, & 1 \leq i,j \leq n-1, |i-j| > 1, \label{mars1}
\\ \sigma_{i}\sigma_{i+1}\sigma_{i} &= \sigma_{i+1}\sigma_{i}\sigma_{i+1}, & 1 \leq i \leq n-2, \label{mars2}
\\ \sigma_{i}^{2} &= 1, & 1 \leq i \leq n-1, \label{mars3}
\\ \sigma_{i}\mathbf{a} &= s_{i}(\mathbf{a})\sigma_{i}, & \mathbf{a} \in A^{\otimes n}, 1 \leq i \leq n-1. \label{mars4}
\\ x_{j}\sigma_{i} &= \sigma_{i}x_{j}, & 1 \leq i \leq n-1, \ 1 \leq j \leq n, \ j \neq i,i+1, \label{ruler3} \\
        \sigma_{i}x_{i} &= x_{i+1}\sigma_{i} - t_{i,i+1}, & 1 \leq i \leq n-1. \label{ruler4}
\end{align}
Here, the $ x_{i} $ are of parity $ \varepsilon $ and the $ \sigma_{i} $ are even. Under this isomorphism, $ a_{i} $, for $ 1 \leq i \leq n $, corresponds to the token labelled $ a $ on the $ i $-th strand, $ x_{i} $, for $ 1 \leq i \leq n $, corresponds to the dot on the $ i $-th strand, and $ \sigma_{i} $, for $ 1 \leq i \leq n-1 $, corresponds to the black crossing of the $ i $-th and $ (i+1) $-th strands.
\end{prop}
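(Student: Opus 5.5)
The plan is to apply Proposition \ref{zebra} to the presentation of $\mathcal{AW}(A)$ in Definition \ref{pluto2}, exactly as in the proof of Proposition \ref{Polrel}. The generating morphisms are the tokens (in $\operatorname{End}(\go)$), the dot (in $\operatorname{End}(\go)$), and the crossing (in $\operatorname{End}(\go^{\otimes 2})$). Proposition \ref{zebra} then gives $W_n^{\operatorname{aff}}(A)$ as the algebra generated by the elements $a_i$, $x_i$ and $\sigma_i$. These are a token on strand $i$, a dot on strand $i$, and a crossing of strands $i,i+1$. The defining relations are of two kinds: translates $1^{\otimes k}\otimes R\otimes 1^{\otimes l}$ of the defining relations, and super interchange relations \cref{zebra2} between generators on disjoint sets of strands.

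Next I would match these relations with the listed ones. The token relations \cref{tokrel1} on each strand, together with the interchange relations between tokens on different strands, give a copy of $A^{\otimes n}$ by \cref{tensorisomorp}. This is why we may take the free product with $A^{\otimes n}$ rather than with generators $a_i$ for each $a$. The interchange relations then split as follows.
\begin{itemize}
\item Dot with dot gives \cref{F1_1}, with sign $(-1)^{\varepsilon\varepsilon}=(-1)^\varepsilon$.
\item Dot with token on another strand, combined with the last relation of \cref{affwreath} on the same strand, gives \cref{F1_2}. Note that $\psi_i$ only acts on the $i$-th factor, and the sign $(-1)^{\varepsilon\bar{\mathbf a}}$ comes from commuting past the other tensor factors.
\item Crossing with crossing on disjoint strands gives \cref{mars1}.
\item Crossing with dot gives \cref{ruler3}. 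The crossing is even, so no sign appears.
\item Crossing with tokens on strands away from $i,i+1$ gives the corresponding part of \cref{mars4}.
\end{itemize}
The translates of \cref{braid2} give \cref{mars3} and \cref{mars2}, plus the token-sliding relation $\sigma_i a_i=a_{i+1}\sigma_i$. Combined with its consequence \cref{tokencross2} ($\sigma_i a_{i+1}=a_i\sigma_i$), this yields \cref{mars4} for pure tensors $\mathbf a$. Here I must check that $s_i$ includes the superpermutation sign $(-1)^{\bar a_{(i)}\bar a_{(i+1)}}$. That sign arises when the product $a_i b_{i+1}$ is rewritten as $b_{i+1}a_i$ via interchange before or after sliding. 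Finally, the first relation of \cref{affwreath}, read algebraically with strands labelled left to right, says $\sigma_i x_i = x_{i+1}\sigma_i - t_{i,i+1}$. Here the teleporter with the top token on the left is exactly $\sum_b(-1)^{\varepsilon\bar b}b_i b^\vee_{i+1}=t_{i,i+1}$ by \cref{wicked}. This is \cref{ruler4}.

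Conversely, every listed relation holds in $W_n^{\operatorname{aff}}(A)$, since each was derived from relations in the category. So the two presentations define mutually inverse homomorphisms, sending $a_i,x_i,\sigma_i$ to the indicated diagrams.

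The main obstacle is bookkeeping of super signs. This comes up in translating the diagrammatic token/dot relation and the teleporter into algebraic form with the correct height conventions from \cref{intlaw}. It also comes up in verifying that \cref{mars4}, with superpermutation signs, is equivalent to the strand-by-strand sliding relations. I would handle this by checking on pure tensors $\mathbf a=a_{(1)}\otimes\cdots\otimes a_{(n)}$, factoring $\mathbf a$ as a product of the $(a_{(j)})_j$ in order, and tracking signs from interchange.
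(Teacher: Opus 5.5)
Your proposal is correct and follows the paper's proof, which consists only of invoking Proposition \ref{zebra}; your relation-by-relation matching (tokens assembling into $A^{\otimes n}$, the teleporter giving $t_{i,i+1}$ in \cref{ruler4}, and \cref{mars4} following from the strand-wise sliding relations) is the bookkeeping that this citation leaves implicit. For \cref{mars4} you can skip the explicit sign-tracking: $s_i$ is an algebra automorphism of $A^{\otimes n}$, so the relation is multiplicative in $\mathbf a$ and only needs checking on the generators $a_j$.
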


\begin{proof}
This follows from Proposition \ref{zebra}.
\end{proof}

We have an algebra homomorphism $ \Bbbk S_{n} \rightarrow  W_{n}^{\operatorname{aff}}(A) $ given by $ s_{i} \mapsto \sigma_{i} $, and we denote the image of $ w \in S_{n} $ under this map by $ \sigma_{w} $. Recall that \(B\) is a basis of \(A\). Then we have the following basis of $ W_{n}^{\operatorname{aff}}(A) $; see \cite[Cor.~4.8]{Savage} and \cite[Thm.~4.2.3]{Mendonca}.

\begin{prop} \label{midnight2}
The algebra $ W_{n}^{\operatorname{aff}}(A) $ has basis
\begin{equation*}
\{\mathbf{a}x_{1}^{k_{1}}\cdots x_{n}^{k_{n}}\sigma_{w}: \mathbf{a} \in B^{\otimes n}, \ k_{1},\ldots, k_{n} \in \mathbb{N}, \ w \in S_{n}\}.
\end{equation*}
\end{prop}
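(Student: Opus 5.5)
The proof has two halves: the proposed basis spans $W_{n}^{\operatorname{aff}}(A)$, and it is linearly independent.

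\emph{Spanning.} Let $S$ be the $\Bbbk$-span of the elements $\mathbf{a}x_{1}^{k_{1}}\cdots x_{n}^{k_{n}}\sigma_{w}$. Since $1 \in S$, it suffices to show that $S$ is stable under left multiplication by the generators $\mathbf{a} \in A^{\otimes n}$, $x_{j}$ and $\sigma_{i}$.
\begin{itemize}
\item For tokens and dots this is immediate. Relation \cref{F1_2} moves $x_{j}$ past $\mathbf{a}$ at the cost of applying $\psi_{j}$ and a sign. Relation \cref{F1_1} reorders the dots, again up to sign.
\item For $\sigma_{i}$, relation \cref{mars4} moves $\sigma_{i}$ past $\mathbf{a}$. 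Relations \cref{ruler3} and \cref{ruler4} then move $\sigma_{i}$ rightward past the monomial $x_{1}^{k_{1}}\cdots x_{n}^{k_{n}}$.
\end{itemize}
Each application of \cref{ruler4} produces the leading term, with $x_{i}$ replaced by $x_{i+1}$, together with correction terms involving $t_{i,i+1}$. These correction terms have strictly lower total polynomial degree and no crossing in front. By induction on total degree we get
\[
\sigma_{i} f = s_{i}(f)\sigma_{i} + (\text{element of } \operatorname{Pol}_{n}(A)).
\]
Finally $\sigma_{i}\sigma_{w}$ is either $\sigma_{s_{i}w}$ or is reduced using \cref{mars1}--\cref{mars3}. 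Hence $S$ is a left ideal containing $1$, so $S = W_{n}^{\operatorname{aff}}(A)$.

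\emph{Linear independence.} This is the main obstacle. I would build a faithful representation, namely the polynomial representation on $\operatorname{Pol}_{n}(A)$:
\begin{itemize}
\item $\operatorname{Pol}_{n}(A)$ acts on itself by left multiplication;
\item $\sigma_{i}$ acts by $f \mapsto s_{i}(f) - \partial_{i}(f)$, using the Frobenius Demazure operators of Definition \ref{autumn}, possibly with sign or ordering conventions adjusted so that \cref{ruler4} comes out with the correct sign.
\end{itemize}
One must check the defining relations of the presentation above.
\begin{itemize}
\item Relations \cref{F1_1}, \cref{F1_2} hold automatically. Relation \cref{mars4} reduces to $\partial_{i}(\mathbf{a}g) = s_{i}(\mathbf{a})\partial_{i}(g)$, which follows from \cref{leibniz2}.
\item Relation \cref{ruler4} is the twisted Leibniz rule applied to $x_{i}g$ together with \cref{autumn22}.
\item The involutivity \cref{mars3} and the braid relation \cref{mars2} require checking identities such as $\partial_{i}s_{i} + s_{i}\partial_{i}$ and $\partial_{i}^{2}$ on generators. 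Here one uses the teleporting property of $t_{i,i+1}$ (Lemma \ref{teleportt}) to verify them on $x_{j}$, $\mathbf{a}$, and then extends via the Leibniz rule. This verification is the computationally heavy step.
\end{itemize}

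Granting the representation, suppose $\sum c\,\mathbf{a}x^{\mathbf{k}}\sigma_{w} = 0$. Act on elements of the form $x_{1}^{N}x_{2}^{2N}\cdots x_{n}^{nN}$ with $N$ large. The term $\sigma_{w}$ sends this to $x_{w(1)}^{N}\cdots x_{w(n)}^{nN}$ plus terms whose $x$-degree exponent vectors are lower in a suitable dominance order. Therefore distinct $w$, and distinct $(\mathbf{a},\mathbf{k})$, give leading terms that are distinct basis vectors of $\operatorname{Pol}_{n}(A)$ from \cref{Poly_base}. Taking a maximal $w$ (or maximal leading exponent) among the terms with nonzero coefficient, and comparing the coefficients of the leading monomial, forces that coefficient to vanish. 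Iterating shows all coefficients are zero.

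Alternatively, if the earlier citations are permitted, one simply invokes \cite[Cor.~4.8]{Savage} for even trace and \cite[Thm.~4.2.3]{Mendonca} in general. The plan above is the self-contained version of their argument.
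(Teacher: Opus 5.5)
Your proposal is correct, and it does more than the paper does. The paper gives no argument of its own for this proposition: it simply cites \cite[Cor.~4.8]{Savage} and \cite[Thm.~4.2.3]{Mendonca}, which is your closing alternative.

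Your self-contained route has two parts: spanning from the relations, then independence via the action $\sigma_i\mapsto s_i-\partial_i$ on $\operatorname{Pol}_n(A)$ and the test vector $x_1^Nx_2^{2N}\cdots x_n^{nN}$. These are exactly the ingredients the paper uses later in Proposition~\ref{Polyrepp-1}, but in the reverse logical order. There the basis is taken as known and faithfulness of $P$ is deduced. You use only the existence of $P$, which does not depend on the basis, and obtain the basis and the faithfulness of $P$ at the same time. The citation buys brevity and outsources the delicate verifications. Your route is independent of the literature but obliges you to actually check the relations for $P$. No sign adjustment is needed: $(s_i-\partial_i)(x_ig)=x_{i+1}(s_i-\partial_i)(g)-t_{i,i+1}g$ is exactly \cref{ruler4}.

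Three points should be tightened.

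\textbf{Spanning.} \cref{ruler3} and \cref{ruler4} alone do not move $\sigma_i$ past powers of $x_{i+1}$. You also need $\sigma_ix_{i+1}=x_i\sigma_i+t_{i+1,i}$. This follows by conjugating \cref{ruler4} by $\sigma_i$, using \cref{mars3}, \cref{mars4} and $s_i(t_{i,i+1})=t_{i+1,i}$; alternatively, quote \cref{movein}.

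\textbf{Independence.} The quantity to maximize is total $x$-degree, not $w$. Each $\partial_j$ lowers total degree by one. So $\mathbf{a}x^{\mathbf{k}}\sigma_w$ sends $x^{\mathbf{m}}$, with $\mathbf{m}=(N,2N,\dots,nN)$, to $\mathbf{a}x^{\mathbf{k}}w(x^{\mathbf{m}})$ plus terms of smaller total degree. For $N>\max_j k_j$, the elements $\mathbf{a}x^{\mathbf{k}}w(x^{\mathbf{m}})$ are, up to sign, distinct elements of the basis \cref{Poly_base}: the exponent of $x_{w(j)}$ lies in $[jN,(j+1)N)$, which recovers $w$ and then $\mathbf{k}$. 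Now compare top-degree components among the terms with $|\mathbf{k}|$ maximal.

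\textbf{The relations for $P$.} $T_i:=s_i-\partial_i$ is not a derivation, so \cref{mars2} cannot literally be extended by the Leibniz rule. Write $\lambda_f$ for left multiplication by $f$, and use $T_i\lambda_f=\lambda_{s_i(f)}T_i-\lambda_{\partial_i(f)}$.
\begin{itemize}
\item First prove \cref{mars3}: $T_i^2$ commutes with $\lambda_f$ for every generator $f$, and $T_i(1)=1$, so $T_i^2$ is the identity.
\item For \cref{mars2}, show that for each generator $f$, both $T_iT_{i+1}T_i\lambda_f$ and $T_{i+1}T_iT_{i+1}\lambda_f$ equal $\lambda_{s_is_{i+1}s_i(f)}$ composed with the respective triple product, plus the same correction term. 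This uses $T_i^2=1$ and $w(t_{k,l})=t_{w(k),w(l)}$.
\item Hence the set of $g$ on which the two triple products agree contains $1$ and is stable under left multiplication by generators, so it is all of $\operatorname{Pol}_n(A)$.
\item The same device gives \cref{mars1}.
\end{itemize}
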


Recall that we have the Frobenius polynomial algebra $ \operatorname{Pol}_{n}(A) $ from Definition \ref{summer}. There is a natural algebra homomorphism $ \operatorname{Pol}_{n}(A) \rightarrow W_{n}^{\operatorname{aff}}(A) $ that maps $ \mathbf{a} \in A^{\otimes n} $ and $ x_{i} $ to the elements in $ W_{n}^{\operatorname{aff}}(A) $ of the same name. By Proposition \ref{midnight2}, this homomorphism is injective, and so we may view $ \operatorname{Pol}_{n}(A) $ as a subalgebra of $ W_{n}^{\operatorname{aff}}(A) $.
 
\begin{lem}[{\cite[Lem.~4.1.1]{Mendonca}, \cite[Lem.~4.1]{Savage}}]
For all $ f \in \operatorname{Pol}_{n}(A) $ and $ 1 \leq i \leq n-1 $, we have in $ W_{n}^{\operatorname{aff}}(A) $ that
\begin{equation}
\sigma_{i}f = s_{i}(f)\sigma_{i} - \partial_{i}(f). \label{movein}
\end{equation}
\end{lem}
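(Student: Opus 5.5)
We prove \cref{movein} by showing that the set
\[
S := \{ f \in \operatorname{Pol}_{n}(A) : \sigma_{i}f = s_{i}(f)\sigma_{i} - \partial_{i}(f) \}
\]
is a subalgebra of $\operatorname{Pol}_{n}(A)$ containing the algebra generators. By Proposition \ref{Polrel}, these generators are the elements of $A^{\otimes n}$ and $x_{1},\ldots,x_{n}$. Since both sides of \cref{movein} are $\Bbbk$-linear in $f$, the set $S$ is a $\Bbbk$-submodule. It contains $1$, because $s_{i}(1) = 1$, $\sigma_{i} \cdot 1 = \sigma_{i}$, and $\partial_{i}(1) = 0$.

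\textbf{The generators lie in $S$.}
\begin{enumerate}
\item[\textbullet] For $\mathbf{a} \in A^{\otimes n}$, we have $\partial_{i}(\mathbf{a}) = 0$ by \cref{autumn22}, so membership is exactly \cref{mars4}.
\item[\textbullet] For $x_{j}$ with $j \neq i,i+1$, we have $s_{i}(x_{j}) = x_{j}$ and $\partial_{i}(x_{j}) = 0$, so membership is \cref{ruler3}.
\item[\textbullet] For $x_{i}$, we have $s_{i}(x_{i}) = x_{i+1}$ and $\partial_{i}(x_{i}) = t_{i,i+1}$, so membership is precisely \cref{ruler4}.
\item[\textbullet] For $x_{i+1}$, we need $\sigma_{i}x_{i+1} = x_{i}\sigma_{i} + t_{i+1,i}$. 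Multiply \cref{ruler4} on the left and on the right by $\sigma_{i}$ and use $\sigma_{i}^{2} = 1$ from \cref{mars3}. This gives $x_{i}\sigma_{i} = \sigma_{i}x_{i+1} - \sigma_{i}t_{i,i+1}\sigma_{i}$. By \cref{mars4}, $\sigma_{i}t_{i,i+1}\sigma_{i} = s_{i}(t_{i,i+1})$.
\end{enumerate}

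\textbf{Computing $s_{i}(t_{i,i+1})$.} From \cref{wicked} and the superpermutation action,
\[
s_{i}(t_{i,i+1}) = \sum_{b \in B} (-1)^{\varepsilon\bar{b} + \bar{b}\,\overline{b^{\vee}}} b^{\vee}_{i} b_{i+1}.
\]
We must identify this with $t_{i+1,i}$, which we do by re-expanding in the dual basis $B^{\vee}$ using \cref{dualdual}, \cref{dualparity}, and basis independence of $t$. Alternatively, it is cleaner to work directly with \cref{affwreath223}, which is the diagrammatic form of $\sigma_{i}x_{i+1} = x_{i}\sigma_{i} + (\text{teleporter})$, and to check that the teleporter with endpoints at the lower left and upper right corresponds to $t_{i+1,i}$ in $A^{\otimes n}$. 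Reading off the heights of the tokens, that teleporter is $\sum_{b}(-1)^{\varepsilon\bar{b}} b_{i+1} b^{\vee}_{i}$ up to interchange signs, which is $t_{i+1,i}$. Either route gives the required relation with the correct sign.

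\textbf{Closure under products.} Let $f,g \in S$ be homogeneous. Since $\sigma_{i}$ is even, no super signs arise when it moves past $f$ or $g$. We compute
\begin{align*}
\sigma_{i}fg &= \bigl(s_{i}(f)\sigma_{i} - \partial_{i}(f)\bigr)g \\
&= s_{i}(f)\bigl(s_{i}(g)\sigma_{i} - \partial_{i}(g)\bigr) - \partial_{i}(f)g \\
&= s_{i}(fg)\sigma_{i} - \bigl(\partial_{i}(f)g + s_{i}(f)\partial_{i}(g)\bigr) \\
&= s_{i}(fg)\sigma_{i} - \partial_{i}(fg).
\end{align*}
The third equality uses that $s_{i}$ is an algebra automorphism, and the last uses the twisted Leibniz rule \cref{leibniz2}. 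Hence $fg \in S$, so $S$ is a subalgebra containing all generators, and therefore $S = \operatorname{Pol}_{n}(A)$.

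The main obstacle is the sign bookkeeping in identifying $s_{i}(t_{i,i+1})$, equivalently the teleporter in \cref{affwreath223}, with $t_{i+1,i}$ when $\varepsilon = \bar{1}$ and $\psi \neq \operatorname{id}$. This case requires careful use of \cref{dualdual}. All the other steps are formal.
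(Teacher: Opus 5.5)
Your proof is correct and follows the standard argument behind the cited results (the paper itself only cites them). You verify the identity on the generators $A^{\otimes n}$ and $x_1,\ldots,x_n$, then show that the set of $f$ satisfying it is closed under products, because $s_i$ is an algebra automorphism and $\partial_i$ obeys the twisted Leibniz rule \cref{leibniz2}.

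The sign issue you flag as the main obstacle is not one. Since $s_i$ is an algebra automorphism of $A^{\otimes n}$ with $s_i(b_i)=b_{i+1}$ and $s_i(b^\vee_{i+1})=b^\vee_i$, you get $s_i(t_{i,i+1})=\sum_{b\in B}(-1)^{\varepsilon\bar b}b_{i+1}b^\vee_i=t_{i+1,i}$ directly from \cref{wicked}. Equivalently, supercommute $b^\vee_i$ and $b_{i+1}$ in your displayed formula. Neither \cref{dualdual} nor $\psi$ is needed.
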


\begin{prop} \label{aff_center_result}
The center of $ W_{n}^{\operatorname{aff}}(A) $ is equal to $ Z(\operatorname{Pol}_{n}(A))^{S_{n}} $.
\end{prop}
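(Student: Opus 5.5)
We prove the two inclusions separately. For $Z(\operatorname{Pol}_{n}(A))^{S_{n}} \subseteq Z(W_{n}^{\operatorname{aff}}(A))$, let $f$ be in the left-hand side. By Proposition \ref{Polrel} and the relations defining $W_{n}^{\operatorname{aff}}(A)$, the algebra is generated by $\operatorname{Pol}_{n}(A)$ and the even elements $\sigma_{1},\ldots,\sigma_{n-1}$. Since $f$ is central in $\operatorname{Pol}_{n}(A)$, it supercommutes with every element of $\operatorname{Pol}_{n}(A)$. For the crossings we apply \cref{movein}, which gives $\sigma_{i}f = s_{i}(f)\sigma_{i} - \partial_{i}(f)$. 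Here $s_{i}(f)=f$ by invariance, and $\partial_{i}(f)=0$ by Lemma \ref{treat2}. Hence $\sigma_{i}f = f\sigma_{i}$, and since $\sigma_{i}$ is even this is the required supercommutation. A homogeneous element that supercommutes with a set of homogeneous generators supercommutes with the whole algebra, so $f$ is central.

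For the reverse inclusion, let $z$ be central; we may assume $z$ is homogeneous. By Proposition \ref{midnight2} we write $z=\sum_{w\in S_{n}} f_{w}\sigma_{w}$ with $f_{w}\in\operatorname{Pol}_{n}(A)$, using that $\operatorname{Pol}_{n}(A)$ embeds and that $W_{n}^{\operatorname{aff}}(A)$ is a free left $\operatorname{Pol}_{n}(A)$-module with basis $\{\sigma_{w}\}$.

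The key step is to show that $f_{w}=0$ for $w\neq 1$. We iterate \cref{movein}, which yields $\sigma_{w} g = w(g)\sigma_{w} + (\text{terms } h_{u}\sigma_{u} \text{ with } \ell(u)<\ell(w))$ for $g \in \operatorname{Pol}_{n}(A)$. We then compare $z x_{j}^{N}$ with $x_{j}^{N} z$, up to the appropriate sign, for suitable even powers $N$ so that signs are harmless. Choose $w$ of maximal length with $f_{w}\neq 0$. Comparing coefficients of $\sigma_{w}$ gives $f_{w}x_{w(j)}^{N} = \pm x_{j}^{N} f_{w}$ in $\operatorname{Pol}_{n}(A)$. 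Here $x_{j}^{2}$ is central up to the twist by $\psi$, and $x_{j}^{N}$ with $N$ a multiple of $2\cdot\operatorname{ord}$ is not needed; it suffices to argue by degree in the basis \cref{Poly_base}, as in the proof of Lemma \ref{treat2}. If $w(j)\neq j$, the degree in $x_{j}$ on the two sides differs by $N$, which forces $f_{w}=0$. Choosing $j$ with $w(j)\neq j$ gives a contradiction. I expect this leading-term and degree argument to be the main obstacle: one must check that the lower-length correction terms cannot contribute to the $\sigma_{w}$-coefficient, and that left multiplication by $x_{j}^{2}$ behaves well with respect to the basis, using regularity of dots implied by the basis.

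It follows that $z=f_{1}\in\operatorname{Pol}_{n}(A)$. Such a $z$ supercommutes with all of $\operatorname{Pol}_{n}(A)$, so it lies in $Z(\operatorname{Pol}_{n}(A))$. Finally, $\sigma_{i}z=z\sigma_{i}$, and \cref{movein} gives $\sigma_{i}z = s_{i}(z)\sigma_{i}-\partial_{i}(z)$. Comparing $\sigma_{i}$-coefficients and $1$-coefficients in the basis of Proposition \ref{midnight2} yields $s_{i}(z)=z$ for every $i$, so $z\in Z(\operatorname{Pol}_{n}(A))^{S_{n}}$.
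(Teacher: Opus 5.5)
Your proof is correct and follows the paper's argument. The forward inclusion goes through Lemma \ref{treat2} and \cref{movein}; for the reverse, a central element is first shown to lie in $\operatorname{Pol}_{n}(A)$, and then $\sigma_i$-coefficients are compared using Proposition \ref{midnight2}. The one difference is that you spell out the leading-term $x_j$-degree argument for $z \in \operatorname{Pol}_{n}(A)$, which the paper only cites as an argument similar to \cite[Lem.~4.12]{Savage}. That argument works as you describe: left multiplication by $x_j^N$ raises the $x_j$-degree by exactly $N$, while right multiplication by $x_{w(j)}^N$ with $w(j)\neq j$ leaves it unchanged.
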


\begin{proof}
Let $ f \in Z(\operatorname{Pol}_{n}(A))^{S_{n}} $. Then \(f\) commutes with the elements of $ \operatorname{Pol}_{n}(A) $. Furthermore, Lemma \ref{treat2} and \cref{movein} imply that $ \sigma_{i}f = f\sigma_{i} $ for all $ 1 \leq i \leq n-1 $, from which we obtain that $ f $ lies in the center of $ W_{n}^{\operatorname{aff}}(A) $.
\\ \indent For the reverse inclusion, let $ f \in Z(W_{n}^{\operatorname{aff}}(A)) $. Then, by a similar argument to \cite[Lem.~4.12]{Savage}, we have that $ f \in Z(\operatorname{Pol}_{n}(A)) $. Furthermore, \cref{movein} implies that $ f\sigma_{i} = \sigma_{i}f = s_{i}(f)\sigma_{i} - \partial_{i}(f) $ for all $ 1 \leq i \leq n-1 $. Thus, by Proposition \ref{midnight2}, we have $ s_{i}(f) = f $ as required.
\end{proof}

For a more explicit description of the center, we refer the reader to \cite[Thm.~4.14]{Savage}, \cite[Thm.~4.3.1]{Mendonca}, and \cite[Thm.~4.3.4]{Mendonca_thesis}.

\section{Higher-level affine wreath product algebras} \label{higher_level_section}

In this section, we define and study the higher-level affine wreath product category and the higher-level affine wreath product algebras. 

\subsection{Higher-level affine wreath product algebras}

We set $ x := x_{1} \in \operatorname{Pol}_{1}(A) $. Given $ n \in \mathbb{N}_{+} $ and $ 1 \leq i \leq n $, we have an injective algebra homomorphism
\begin{equation}
\operatorname{Pol}_{1}(A) \rightarrow \operatorname{Pol}_{n}(A), \quad a \mapsto a_{i}, \ x \mapsto x_{i}, \qquad a \in A.
\end{equation}
Here, $ a_{i} $ is defined as in \cref{helium}. We denote the image of an element $ f \in \operatorname{Pol}_{1}(A) $ under this homomorphism by $ f_{i} $. Explicitly, if $ f = \sum_{k \geq 0} a_{(k)}x^{k} $, $ a_{(k)} \in A $, then 
\begin{equation} \label{fog}
f_{i} = \sum_{k \geq 0} (a_{(k)})_{i}(x_{i})^{k} \in \operatorname{Pol}_{n}(A).
\end{equation}
Recall that, for an algebra $ F $, we have the sets $ \mathcal{E}_{F} $ and $ \widehat{\mathcal{E}}_{F} $ from Definition \ref{multi_pen}.

\begin{lem} \label{uncharted5678}
Let $ n \in \mathbb{N} $, $ 1 \leq i \leq n $, and $ f \in \mathcal{E}_{\operatorname{Pol}_{1}(A)} $. Then $ f_{i} \in \mathcal{E}_{\operatorname{Pol}_{n}(A)} $.
\end{lem}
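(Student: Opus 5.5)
We need to show three things about $f_i$: it is even, it is central in $\operatorname{Pol}_{n}(A)$, and it is regular. The natural tool is Lemma \ref{identification}. It identifies $\operatorname{Pol}_{n}(A)$ with $\operatorname{Pol}_{1}(A)^{\otimes n}$, and under this identification $f_i$ corresponds to $1^{\otimes(i-1)} \otimes f \otimes 1^{\otimes(n-i)}$. This holds because the map sends $x_i$ to $1\otimes\cdots\otimes x\otimes\cdots\otimes 1$ and $a_i$ to $1\otimes\cdots\otimes a\otimes\cdots\otimes 1$, and it is multiplicative. So it suffices to prove the three properties for $1\otimes\cdots\otimes f\otimes\cdots\otimes 1$ in the super tensor product $P^{\otimes n}$, where $P := \operatorname{Pol}_{1}(A)$.

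\emph{Evenness.} The isomorphism and the embedding $P\to\operatorname{Pol}_n(A)$ are parity preserving, so $f_i$ is even.

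\emph{Centrality.} I would check that $f_i$ commutes with the generators $a_j$ and $x_j$ of $\operatorname{Pol}_{n}(A)$. For $j=i$ this follows because $f$ is central in $P$ and the embedding $g \mapsto g_i$ is a homomorphism. For $j\neq i$ the generator lives in a different tensor factor. By the super sign rule it then commutes with $1\otimes\cdots\otimes f\otimes\cdots\otimes 1$, up to the sign $(-1)^{\bar f \bar g}$, and this sign is $1$ since $f$ is even. Since $f_i$ is even, its super commutator is zero on generators, and hence it is central.

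\emph{Regularity.} This is the main obstacle. Because $P$ is free over $\Bbbk$ with basis $\{b x^k\}$ by \cref{Poly_base}, I would write $P^{\otimes n} \cong P^{\otimes(i-1)} \otimes P \otimes P^{\otimes (n-i)}$ as $\Bbbk$-modules. Left multiplication by $1\otimes f\otimes 1$ acts, up to signs, on a pure tensor $u\otimes p\otimes v$ as $u \otimes fp \otimes v$. More precisely, since $f$ is even, the sign from passing $f$ past $u$ is trivial, so the map is exactly $\operatorname{id}\otimes L_f\otimes\operatorname{id}$, where $L_f$ denotes left multiplication by $f$ on $P$. Tensoring an injective map with the identity of a free $\Bbbk$-module preserves injectivity, since a free module is flat, or concretely by working coordinatewise in a basis. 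Here $P^{\otimes(i-1)}$ and $P^{\otimes(n-i)}$ are free by \cref{Poly_base}, so $\operatorname{id}\otimes L_f\otimes\operatorname{id}$ is injective. The same argument applied to right multiplication $R_f$ gives that right multiplication by $f_i$ is injective. Therefore $f_i$ is regular.

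The only subtle point is to make sure that the sign conventions do not interfere, and evenness of $f$ handles this. Freeness over $\Bbbk$ is needed for the injectivity argument because $\Bbbk$ is an arbitrary commutative ring.
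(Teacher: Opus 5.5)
Your proposal is correct and follows essentially the same route as the paper. Evenness and centrality are immediate, and regularity comes from identifying left multiplication by $f_i$ with $\operatorname{Id}\otimes L_f\otimes\operatorname{Id}$ on $\operatorname{Pol}_1(A)^{\otimes n}$, then using that free $\Bbbk$-modules are flat; the only cosmetic difference is that you check right multiplication separately, whereas the paper deduces it from the centrality of $f_i$.
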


\begin{proof}
We must show that $ f_{i} $ is even, central, and regular in $ \operatorname{Pol}_{n}(A) $. Since \(f\) is even and central, it is straightforward to see that $ f_{i} $ is also even and central. Next, to show that $ f_{i} $ is regular in $ \operatorname{Pol}_{n}(A) $, it suffices to show that $ f_{i} $ is not a left zero-divisor in $ \operatorname{Pol}_{n}(A) $ (since $ f_{i} $ is central). It follows by the regularity of \(f\) that the map 
\begin{equation*}
L \colon \operatorname{Pol}_{1}(A) \rightarrow \operatorname{Pol}_{1}(A), \ g \mapsto fg 
\end{equation*}
is injective. If \(M\) is a flat $ \Bbbk $-module, and $ \varphi \colon M \rightarrow M $ is an injective $ \Bbbk $-module homomorphism, then the map
\begin{equation} \label{water_bottle}
\operatorname{Id}_{M^{\otimes (i-1)}} \otimes \varphi \otimes \operatorname{Id}_{M^{\otimes (n-i)}} \colon M^{\otimes n} \rightarrow M^{\otimes n}
\end{equation}
is injective. Applying this with $ M = \operatorname{Pol}_{1}(A) $ (which is a free $ \Bbbk $-module and hence a flat $ \Bbbk $-module) and $ \varphi = L $, and using the identification of Lemma \ref{identification}, we see that the injective map \cref{water_bottle} corresponds to multiplication on the left by $ f_{i} $. Thus $ f_{i} $ is not a left zero-divisor in $ \operatorname{Pol}_{n}(A) $.
\end{proof}

\details{We prove the following assertion: If \(M\) is a flat $ \Bbbk $-module, and $ \varphi \colon M \rightarrow M $ is an injective $ \Bbbk $-module homomorphism, then the map \cref{water_bottle} is injective.
\\ \indent Since \(M\) is flat, it follows that $ M^{\otimes (i-1)} $ is also flat. Hence the map
\begin{equation} \label{calendar}
\operatorname{Id}_{M^{\otimes (i-1)}} \otimes \varphi \colon M^{\otimes i} \rightarrow M^{\otimes i}
\end{equation}
is injective. Furthermore, since $ M^{\otimes (n-i)} $ is flat, and the map \cref{calendar} is injective, it follows that the map
\begin{equation*}
(\operatorname{Id}_{M^{\otimes (i-1)}} \otimes \varphi) \otimes \operatorname{Id}_{M^{\otimes (n-i)}} \colon M^{\otimes n} \rightarrow M^{\otimes n}
\end{equation*}
is injective. Therefore, the map \cref{water_bottle} is injective.}

\begin{defn} \label{grab}
We define the \emph{higher-level affine wreath product category}  $ \mathcal{LAW}(A) $ to be the strict $ \Bbbk $-linear monoidal supercategory defined as follows. The objects are generated by the elements in the set $ \widehat{\mathcal{E}}_{\operatorname{Pol}_{1}(A)} $. The morphisms are generated by
\begin{gather}
    \begin{tikzpicture}[centerzero, thick]
        \draw[-] (-0.2,-0.2) -- (0.2,0.2);
        \draw[-] (0.2,-0.2) -- (-0.2,0.2);
    \end{tikzpicture}
      \ \colon
    \go \otimes \go \to \go \otimes \go,
    \quad
    \begin{tikzpicture}[anchorbase, thick]
        \draw[-] (0,-0.3) -- (0,0.3);
        \token{0,0}{west}{a};
    \end{tikzpicture}
    \colon \go \to \go, \quad
 \begin{tikzpicture}[anchorbase, thick]
        \draw[-] (0,0) -- (0,0.6);
        \singdot{0,0.3};
\end{tikzpicture}
\ \colon
\go \to \go, \label{Gen1} \\
\begin{tikzpicture}[centerzero, thick]
        \draw[-] (0.2,-0.2) -- (-0.2,0.2);
        \draw[-, wei] (-0.2,-0.2) -- (0.2,0.2);
        \node at (-0.2,-0.4) {\tiny $ Q $};
\end{tikzpicture}
\ \colon Q \otimes \go \to \go \otimes Q, \qquad
\begin{tikzpicture}[centerzero, thick]
        \draw[-] (-0.2,-0.2) -- (0.2,0.2);
        \draw[-,wei] (0.2,-0.2) -- (-0.2,0.2);
        \node at (0.2,-0.4) {\tiny $ Q $};
\end{tikzpicture}
\colon \go \otimes Q \to Q \otimes \go, \label{Gen2}
\end{gather}
for all $ a \in A $, $ Q \in \mathcal{E}_{\operatorname{Pol}_{1}(A)} $. The crossings are even, the dot is of parity $ \varepsilon $, and the parity of a token $ \begin{tikzpicture}[anchorbase, thick]
        \draw[-] (0,-0.2) -- (0,0.2);
        \token{0,0}{west}{a};
    \end{tikzpicture} $ is equal to the parity of \(a\). Before stating the relations, we say a bit more on our diagrammatic conventions. When a dot is labelled by a multiplicity, we mean that we are taking its power under vertical composition. Recall that we have the element $ x = x_{1} \in \operatorname{Pol}_{1}(A) $. If $ f = \sum_{r\geq 0} a_{(r)}x^{r} \in \operatorname{Pol}_{1}(A) $, $ a_{(r)} \in A $, then we pin \(f\) to a string by defining
\begin{equation}
    \begin{tikzpicture}[centerzero, thick]
        \draw[-] (0,-0.5) -- (0,0.5);
             \pin{0,0}{-.7,0}{f};
    \end{tikzpicture}
    = \begin{tikzpicture}[centerzero, thick]
        \draw[-] (0,-0.5) -- (0,0.5);
             \pin{0,0}{.7,0}{f};
    \end{tikzpicture}\
    :=
\sum_{r\geq 0} \
\begin{tikzpicture}[centerzero, thick]
        \draw[-] (-0.3,-0.5) -- (-0.3,0.5);
        \multdot{-0.3,-0.2}{west}{r};
        \token{-0.3,0.2}{west}{a_{(r)}};
\end{tikzpicture} \ .
\end{equation}
We set $ x := x_{1} \in \operatorname{Pol}_{2}(A) $ and $ y := x_{2} \in \operatorname{Pol}_{2}(A) $. If $ f = \sum_{b,c \in B} \sum_{r,s \geq 0} \lambda_{b,c,r,s} (b \otimes c)x^{r}y^{s} \in \operatorname{Pol}_{2}(A) $, $\lambda_{b,c,r,s} \in \Bbbk $, then we define
\begin{equation} \label{red}
\begin{tikzpicture}[centerzero, thick]
        \draw[-] (-0.3,-0.5) -- (-0.3,0.5);
        \draw[-] (0.3,-0.5) -- (0.3,0.5);
        \pinpin{-0.3,0}{0.3,0}{-1,0}{f};
\end{tikzpicture} 
=
\begin{tikzpicture}[centerzero, thick]
        \draw[-] (-0.3,-0.5) -- (-0.3,0.5);
        \draw[-] (0.3,-0.5) -- (0.3,0.5);
        \pinpin{0.3,0}{-0.3,0}{1,0}{f};
\end{tikzpicture} 
:=
\sum_{b,c \in B} \sum_{r,s \geq 0} \lambda_{b,c,r,s}
\begin{tikzpicture}[centerzero, thick]
        \draw[-] (-0.3,-0.5) -- (-0.3,0.5);
        \draw[-] (0.3,-0.5) -- (0.3,0.5);
        \token{-0.3,0.2}{east}{b};
        \token{0.3,0.2}{west}{c};
        \multdot{-0.3,-0.2}{east}{r};
        \multdot{0.3,-0.2}{west}{s};
\end{tikzpicture} \ .
\end{equation}
The relations on the morphisms are given by \cref{tokrel1}, \cref{braid2}, \cref{affwreath}, and
\begin{gather}
\begin{tikzpicture}[centerzero, thick]
        \draw[-] (-0.3,-0.4) -- (0.3,0.4);
        \draw[-, wei] (0.3,-0.4) -- (-0.3,0.4);
        \token{-0.15,-0.2}{east}{a};
        \node at (0.3,-0.6) {\tiny $ Q $};
\end{tikzpicture}
= 
\begin{tikzpicture}[centerzero, thick]
        \draw[-] (-0.3,-0.4) -- (0.3,0.4);
        \draw[-, wei] (0.3,-0.4) -- (-0.3,0.4);
        \token{0.15,0.2}{west}{a};
        \node at (0.3,-0.6) {\tiny $ Q $};
\end{tikzpicture}
\ , \quad
\begin{tikzpicture}[centerzero, thick]
        \draw[-] (0.3,-0.4) -- (-0.3,0.4);
        \draw[-, wei] (-0.3,-0.4) -- (0.3,0.4);
        \token{0.15,-0.2}{west}{a};
        \node at (-0.3,-0.6) {\tiny $ Q $};
\end{tikzpicture}
= 
\begin{tikzpicture}[centerzero, thick]
        \draw[-] (0.3,-0.4) -- (-0.3,0.4);
        \draw[-, wei] (-0.3,-0.4) -- (0.3,0.4);
        \token{-0.15,0.2}{east}{a};
        \node at (-0.3,-0.6) {\tiny $ Q $};
\end{tikzpicture} \ , \label{hw1}
\\ 
\begin{tikzpicture}[centerzero, thick]
        \draw[-] (-0.3,-0.4) -- (0.3,0.4);
        \draw[-, wei] (0.3,-0.4) -- (-0.3,0.4);
        \singdot{-0.15,-0.2};
        \node at (0.3,-0.6) {\tiny $ Q $};
\end{tikzpicture}
= 
\begin{tikzpicture}[centerzero, thick]
        \draw[-] (-0.3,-0.4) -- (0.3,0.4);
        \draw[-, wei] (0.3,-0.4) -- (-0.3,0.4);
        \singdot{0.15,0.2};
        \node at (0.3,-0.6) {\tiny $ Q $};
\end{tikzpicture}
\ , \quad
\begin{tikzpicture}[centerzero, thick]
        \draw[-] (0.3,-0.4) -- (-0.3,0.4);
        \draw[-, wei] (-0.3,-0.4) -- (0.3,0.4);
        \singdot{0.15,-0.2};
        \node at (-0.3,-0.6) {\tiny $ Q $};
\end{tikzpicture}
= 
\begin{tikzpicture}[centerzero, thick]
        \draw[-] (0.3,-0.4) -- (-0.3,0.4);
        \draw[-, wei] (-0.3,-0.4) -- (0.3,0.4);
        \singdot{-0.15,0.2};
        \node at (-0.3,-0.6) {\tiny $ Q $};
\end{tikzpicture} \ , \label{hw2}
\\ 
\begin{tikzpicture}[centerzero, thick]
        \draw[-] (-0.2,-0.5) to[out=up,in=down] (0.2,0) to[out=up,in=down] (-0.2,0.5);
        \draw[-, wei] (0.2,-0.5) to[out=up,in=down] (-0.2,0) to[out=up,in=down] (0.2,0.5);
        \node at (0.2,-0.7) {\tiny $ Q $};
\end{tikzpicture}
=
\begin{tikzpicture}[centerzero, thick]
        \pin{-0.2,0}{-1,0}{Q};
        \draw[-] (-0.2,-0.5) -- (-0.2,0.5);
        \draw[-, wei] (0.2,-0.5) -- (0.2,0.5);
        \node at (0.2,-0.7) {\tiny $ Q $};
\end{tikzpicture} \ , \label{hw3}
\quad
\begin{tikzpicture}[centerzero, thick]
        \draw[-] (0.2,-0.5) to[out=up,in=down] (-0.2,0) to[out=up,in=down] (0.2,0.5);
        \draw[-, wei] (-0.2,-0.5) to[out=up,in=down] (0.2,0) to[out=up,in=down] (-0.2,0.5);
        \node at (-0.2,-0.7) {\tiny $ Q $};
\end{tikzpicture}
\ = \
\begin{tikzpicture}[centerzero, thick]
        \draw[-, wei] (-0.2,-0.5) -- (-0.2,0.5);
        \draw[-] (0.2,-0.5) -- (0.2,0.5);
        \node at (-0.2,-0.7) {\tiny $ Q $};
        \pin{0.2,0}{1,0}{Q};
\end{tikzpicture} \ , 
\\ 
\begin{tikzpicture}[centerzero, thick]
        \draw[-] (0.4,-0.5) -- (-0.4,0.5);
        \draw[-] (0,-0.5) to[out=up, in=down] (-0.4,0) to[out=up,in=down] (0,0.5);
        \draw[-, wei] (-0.4,-0.5) -- (0.4,0.5);
        \node at (-0.4,-0.7) {\tiny $ Q $};
\end{tikzpicture}
\ =\
\begin{tikzpicture}[centerzero, thick]
        \draw[-] (0.4,-0.5) -- (-0.4,0.5);
        \draw[-] (0,-0.5) to[out=up, in=down] (0.4,0) to[out=up,in=down] (0,0.5);
        \draw[-, wei] (-0.4,-0.5) -- (0.4,0.5);
        \node at (-0.4,-0.7) {\tiny $ Q $};
\end{tikzpicture} \ , \label{hw5}
\quad
\begin{tikzpicture}[centerzero, thick]
        \draw[-] (0,-0.5) to[out=up, in=down] (-0.4,0) to[out=up,in=down] (0,0.5);
        \draw[-] (-0.4,-0.5) -- (0.4,0.5);
        \draw[-, wei] (0.4,-0.5) -- (-0.4,0.5);
        \node at (0.4,-0.7) {\tiny $ Q $};
\end{tikzpicture}
\ =\
\begin{tikzpicture}[centerzero, thick]
        \draw[wipe] (0,-0.5) to[out=up, in=down] (0.4,0) to[out=up,in=down] (0,0.5);
        \draw[-] (0,-0.5) to[out=up, in=down] (0.4,0) to[out=up,in=down] (0,0.5);
        \draw[-] (-0.4,-0.5) -- (0.4,0.5);
        \draw[-, wei] (0.4,-0.5) -- (-0.4,0.5);
        \node at (0.4,-0.7) {\tiny $ Q $};
\end{tikzpicture}
\ , 
\\ \label{green}
\begin{tikzpicture}[centerzero, thick]
        \draw[-] (0.4,-0.5) -- (-0.4,0.5);
        \draw[-] (-0.4,-0.5) -- (0.4,0.5);
        \node at (0,-0.7) {\tiny $ Q $};
        \draw[-, wei] (0,-0.5) to[out=up, in=down] (-0.4,0) to[out=up,in=down] (0,0.5);
\end{tikzpicture}
\ =\
\begin{tikzpicture}[centerzero, thick]
        \draw[-] (2,-0.5) -- (1.2,0.5);
        \draw[-] (1.2,-0.5) -- (2,0.5);
        \node at (1.6,-0.7) {\tiny $ Q $};
        \draw[-, wei] (1.6,-0.5) to[out=up, in=down] (2,0) to[out=up,in=down] (1.6,0.5);
\end{tikzpicture}
\ - \
\begin{tikzpicture}[centerzero, thick]
        \node at (0,-0.7) {\tiny $ Q $};
        \draw[-] (-0.6,-0.5) -- (-0.6,0.5);
        \draw[-, wei] (0,-0.5) -- (0,0.5);
        \draw[-] (0.6,-0.5) -- (0.6,0.5);
        \pinpin{0.6,0}{-0.6,0}{1.8,0}{\partial_{1}(Q_{1})};
\end{tikzpicture} \ ,
\end{gather}
for all $ a \in A $, $ Q \in \mathcal{E}_{\operatorname{Pol}_{1}(A)} $. In the relation \cref{green}, the element $ Q_{1} \in \operatorname{Pol}_{2}(A) $ is defined as in \cref{fog}, and the map $ \partial_{1} \colon \operatorname{Pol}_{2}(A) \rightarrow \operatorname{Pol}_{2}(A) $ is the Frobenius Demazure operator from Definition \ref{autumn}. This concludes the definition of $ \mathcal{LAW}(A) $.
\end{defn}

\begin{rmk} \label{int_rmk}
The relation \cref{green} is equivalent to
\begin{equation} \label{green_alternative}
\begin{tikzpicture}[centerzero, thick]
        \draw[-] (0.4,-0.5) -- (-0.4,0.5);
        \draw[-] (-0.4,-0.5) -- (0.4,0.5);
        \node at (0,-0.7) {\tiny $ Q $};
        \draw[-, wei] (0,-0.5) to[out=up, in=down] (-0.4,0) to[out=up,in=down] (0,0.5);
\end{tikzpicture}
\ =\
\begin{tikzpicture}[centerzero, thick]
        \draw[-] (2,-0.5) -- (1.2,0.5);
        \draw[-] (1.2,-0.5) -- (2,0.5);
        \node at (1.6,-0.7) {\tiny $ Q $};
        \draw[-, wei] (1.6,-0.5) to[out=up, in=down] (2,0) to[out=up,in=down] (1.6,0.5);
\end{tikzpicture}
\ + \
\begin{tikzpicture}[centerzero, thick]
        \node at (0,-0.7) {\tiny $ Q $};
        \draw[-] (-0.6,-0.5) -- (-0.6,0.5);
        \draw[-, wei] (0,-0.5) -- (0,0.5);
        \draw[-] (0.6,-0.5) -- (0.6,0.5);
        \pinpin{0.6,0}{-0.6,0}{1.8,0}{\partial_{1}(Q_{2})};
\end{tikzpicture} \ , \qquad Q \in \mathcal{E}_{\operatorname{Pol}_{1}(A)}.
\end{equation}
Indeed, let $ Q \in \mathcal{E}_{\operatorname{Pol}_{1}(A)} $. Then by Lemma \ref{uncharted5678}, the elements $ Q_{1} $ and $ Q_{2} $ lie in the center of $ \operatorname{Pol}_{2}(A) $. Hence their sum $ Q_{1} + Q_{2} $ also lies in the center of $ \operatorname{Pol}_{2}(A) $. Furthermore, we have that $ Q_{1} + Q_{2} $ is symmetric, and so Lemma \ref{treat2} yields that $ \partial_{1}(Q_{1}+Q_{2}) = 0 $. Thus $ \partial_{1}(Q_{1}) = -\partial_{1}(Q_{2}) $.
\end{rmk}

Recall that, for a given set \(X\), $ F(X) $ is the free monoid on \(X\). Then the set of objects in $ \mathcal{LAW}(A) $ is $ F(\widehat{\mathcal{E}}_{\operatorname{Pol}_{1}(A)}) $. 

\begin{defn} \label{fluffy}
Given a positive integer $ d \in \mathbb{N}_{+} $ and a word $ \mathbf{Q} \in F(\mathcal{E}_{\operatorname{Pol}_{1}(A)}) \subseteq F(\widehat{\mathcal{E}}_{\operatorname{Pol}_{1}(A)}) $, we define $ \Gamma_{d,\mathbf{Q}} $ to be the set of all $ (\go^{d},\mathbf{Q}) $-shuffles in $ F(\widehat{\mathcal{E}}_{\operatorname{Pol}_{1}(A)}) $. That is, 
\begin{equation} \label{fluffy2}
\Gamma_{d,\mathbf{Q}} = F(\widehat{\mathcal{E}}_{\operatorname{Pol}_{1}(A)})_{\go^{d},\mathbf{Q}},
\end{equation}
where $ F(\widehat{\mathcal{E}}_{\operatorname{Pol}_{1}(A)})_{\go^{d},\mathbf{Q}} $ is given in Definition \ref{snowing}.
\end{defn}

\begin{defn} \label{HLAWPA}
Let $ d \in \mathbb{N}_{+} $ and $ \mathbf{Q} \in F(\mathcal{E}_{\operatorname{Pol}_{1}(A)}) $. Then we define the $ (d,\mathbf{Q}) $-\emph{affine wreath product algebra} to be 
\begin{equation} \label{HLAWPA2}
W_{d,\mathbf{Q}}^{\operatorname{aff}}(A) := \operatorname{End}_{\operatorname{Add}(\mathcal{LAW}(A))}\left(\bigoplus_{\mathbf{i} \in \Gamma_{d,\mathbf{Q}}} \mathbf{i}\right).
\end{equation}
Here, $ \operatorname{Add}(\mathcal{LAW}(A)) $ is the additive envelope of $ \mathcal{LAW}(A) $. We say that $ |\mathbf{Q}| $ is the \emph{level} of $ W_{d,\mathbf{Q}}^{\operatorname{aff}}(A) $.
\end{defn}

Using language similar to that of \cite{Maksimau-Stroppel}, we will often refer to the algebra $ W_{d,\mathbf{Q}}^{\operatorname{aff}}(A) $ as a \emph{higher-level affine wreath product algebra}. An arbitrary element in $ W_{d,\mathbf{Q}}^{\operatorname{aff}}(A) $ is a $ \Bbbk $-linear combination of diagrams containing tokens, dots and crossings. The parameter \(d\) corresponds to the number of black strands in the diagrams of $ W_{d,\mathbf{Q}}^{\operatorname{aff}}(A) $, and the level $ |\mathbf{Q}| $ corresponds to the number of red strands. The product of two diagrams in $ W_{d,\mathbf{Q}}^{\operatorname{aff}}(A) $ is their vertical composition if this is defined, and if their vertical composition is not defined, then the product is zero.

\begin{egg}
Suppose $ d = 2 $, and suppose that $ \mathbf{Q} = QQ' \in F(\mathcal{E}_{\operatorname{Pol}_{1}(A)}) $ for some $ Q, Q' \in \mathcal{E}_{\operatorname{Pol}_{1}(A)} $. Then two elements of $ W_{d,\mathbf{Q}}^{\operatorname{aff}}(A) $ are given below:
\begin{equation*} 
D_{1} = 
\begin{tikzpicture}[centerzero, thick]
         \draw[-] (0.6,-0.7) to[out=up, in=down] (0.9,0) to[out=up, in=down] (0.3,0.7);
         \draw[-] (0.9,-0.7) to[out=up, in=down] (0,0.7);
         \draw[-,wei] (0,-0.7) to[out=up, in=down] (0.6,0.7);
         \draw[-,wei] (0.3,-0.7) to[out=up, in=down] (0.9,0.7);
         \singdot{0.9,0};
         \node at (0,-0.9) {\tiny $ Q $};
         \node at (0.3,-0.9) {\tiny $ Q' $};
\end{tikzpicture}
+ 3
\begin{tikzpicture}[centerzero, thick]
         \draw[-] (0,-0.7) to[out=up, in=down] (0.3,0.7);
         \draw[-,wei] (0.3,-0.7) to[out=up, in=down] (-0.3,0.7);
         \draw[-,wei] (0.6,-0.7) -- (0.6,0.7);
         \draw[-] (0.9,-0.7) -- (0.9,0.7);
         \token{0.9,0.3}{west}{a};
         \singdot{0.9,-0.2};
         \node at (0.3,-0.9) {\tiny $ Q $};
         \node at (0.6,-0.9) {\tiny $ Q' $};
\end{tikzpicture} \ , \quad 
D_{2} = 
\begin{tikzpicture}[centerzero, thick]
         \draw[-] (0.6,-0.7) to[out=up, in=down] (0,0.7);
         \draw[-] (0,-0.7) to[out=up, in=down] (0.9,0.7);
         \draw[-,wei] (0.9,-0.7) to[out=up, in=down] (0.6,0.7);
         \draw[-,wei] (0.3,-0.7) to[out=up, in=down] (0,0) to[out=up, in=down] (0.3,0.7);
         \node at (0.3,-0.9) {\tiny $ Q $};
         \node at (0.9,-0.9) {\tiny $ Q' $};
\end{tikzpicture} \ ,
\end{equation*}
where $ a \in A $. We then have in $ W_{d,\mathbf{Q}}^{\operatorname{aff}}(A) $ that
\begin{equation*}
D_{1}D_{2} = 3 \
\begin{tikzpicture}[centerzero, thick]
         \draw[-] (0.6,-0.7) to[out=up, in=down] (0,7/30) to[out=up, in=down] (0.3,0.7);
         \draw[-] (0,-0.7) to[out=up, in=down] (0.9,0.7);
         \draw[-,wei] (0.9,-0.7) to[out=up, in=down] (0.6,0.7);
         \draw[-,wei] (0.3,-0.7) to[out=up, in=down] (0,-7/30) to[out=up, in=down] (0.3,7/30) to[out=up, in=down] (0,0.7);
         \node at (0.3,-0.9) {\tiny $ Q $};
         \node at (0.9,-0.9) {\tiny $ Q' $};
         \token{0.88,0.5294}{west}{a};
         \singdot{0.8,0.35};
\end{tikzpicture}
, \quad
D_{2}D_{1} = 0.
\end{equation*}
\end{egg}

\begin{rmk} \label{pen}
Suppose that $ \mathbf{Q} \in F(\mathcal{E}_{\operatorname{Pol}_{1}(A)}) $ is the empty word. Then it will follow from the basis theorem (Theorem \ref{HL-basis}) that, in this case, $ W_{d,\mathbf{Q}}^{\operatorname{aff}}(A) $ is isomorphic to the affine wreath product algebra $ W_{d}^{\operatorname{aff}}(A)$ from Definition \ref{pluto2}.
\end{rmk}

We define the following algebra, which is obtained by taking $ A = \Bbbk $ in Definition \ref{HLAWPA}.

\begin{defn} \label{A=K_case}
Let $ d \in \mathbb{N}_{+} $ and $ \mathbf{Q} \in F(\mathcal{E}_{\operatorname{Pol}_{1}(\Bbbk)}) $. Then we define the $ (d,\mathbf{Q}) $-\emph{degenerate affine Hecke algebra} to be $ W_{d,\mathbf{Q}}^{\operatorname{aff}}(\Bbbk) $.
\end{defn}

The $ (d,\mathbf{Q}) $-degenerate affine Hecke algebra should be thought of as a degenerate version of the higher-level affine Hecke algebra, which was defined by Maksimau and Stroppel in \cite[§2]{Maksimau-Stroppel} and by Webster in \cite[§5]{Webster2}. The algebra $ W_{d,\mathbf{Q}}^{\operatorname{aff}}(\Bbbk) $ does not appear to have been explicitly defined before, but it does implicitly appear as a path algebra of the affine Schur category from \cite{Song-Wang2}.

\subsection{A monoidal functor} \label{monoidal_functor}

We have a natural functor $ \mathcal{AW}(A) \rightarrow \mathcal{LAW}(A) $ that sends the generating objects and generating morphisms in $ \mathcal{AW}(A) $ to the ones of the same name in $ \mathcal{LAW}(A) $. The following theorem provides a left inverse for this functor.

\begin{theo} \label{fruit}
There is a strict $ \Bbbk $-linear monoidal functor
\begin{equation*}
\Phi \colon \mathcal{LAW}(A) \rightarrow \mathcal{AW}(A)
\end{equation*}
which sends $ \go $ to $ \go $ and $ Q \in \mathcal{E}_{\operatorname{Pol}_{1}(A)} $ to $ \mathds{1} $ (the unit object of $ \mathcal{AW}(A) $), and sends the generating morphisms as follows:
\begin{gather*}
\Phi \left(\begin{tikzpicture}[centerzero, thick]
      \draw[-] (0.6,-0.4) -- (0.6,0.4);
      \token{0.6,0}{west}{a};
\end{tikzpicture}\right) = \begin{tikzpicture}[centerzero, thick]
      \draw[-] (0.6,-0.4) -- (0.6,0.4);
      \token{0.6,0}{west}{a};
\end{tikzpicture}, 
\quad 
\Phi \left(\begin{tikzpicture}[centerzero, thick]
      \draw[-] (0.6,-0.4) -- (0.6,0.4);
      \singdot{0.6,0};
\end{tikzpicture}\right) = 
\begin{tikzpicture}[centerzero, thick]
      \draw[-] (0.6,-0.4) -- (0.6,0.4);
      \singdot{0.6,0};
\end{tikzpicture} \ ,
\quad 
\Phi \left(\begin{tikzpicture}[centerzero, thick]
      \draw[-] (0.9,-0.4) -- (0.3,0.4);
      \draw[-] (0.3,-0.4) -- (0.9,0.4);
\end{tikzpicture}\right) 
=
\begin{tikzpicture}[centerzero, thick]
      \draw[-] (0.9,-0.4) -- (0.3,0.4);
      \draw[-] (0.3,-0.4) -- (0.9,0.4);
\end{tikzpicture} \ ,
\\ \Phi \left(\begin{tikzpicture}[centerzero, thick]
      \draw[-] (0.9,-0.4) -- (0.3,0.4);
      \draw[-,wei] (0.3,-0.4) -- (0.9,0.4);
      \node at (0.3,-0.6) {\tiny $ Q $};
\end{tikzpicture}\right) 
=
\begin{tikzpicture}[centerzero, thick]
      \draw[-] (0.6,-0.4) -- (0.6,0.4);
\end{tikzpicture} \ ,
\quad 
\Phi \left(\begin{tikzpicture}[centerzero, thick]
      \draw[-] (0.3,-0.4) -- (0.9,0.4);
      \draw[-,wei] (0.9,-0.4) -- (0.3,0.4);
      \node at (0.9,-0.6) {\tiny $ Q $};
\end{tikzpicture}\right) 
= 
\begin{tikzpicture}[centerzero, thick]
        \draw[-] (0,-0.5) -- (0,0.5);
             \pin{0,0}{.7,0}{Q};
\end{tikzpicture} \ ,
\end{gather*}
for all $ a \in A $, $ Q \in \mathcal{E}_{\operatorname{Pol}_{1}(A)} $. 
\end{theo}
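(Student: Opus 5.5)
Since $\mathcal{LAW}(A)$ is given by generators and relations, a strict $\Bbbk$-linear monoidal functor out of it is determined by assigning objects and morphisms to the generators, provided the assignments have the right parity, source and target, and all defining relations are sent to relations holding in $\mathcal{AW}(A)$. The plan is therefore to check these conditions one family at a time.

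\textbf{Well-definedness and parity.} Every object $Q$ goes to $\mathds{1}$ and $\go$ goes to $\go$. So both red--black crossings become morphisms $\go \to \go$, as required.
\begin{itemize}
\item The crossing $Q\otimes \go \to \go\otimes Q$ maps to $1_\go$, which is even.
\item The crossing $\go\otimes Q \to Q\otimes\go$ maps to the pinned element $Q$, which is even because $Q\in\mathcal{E}_{\operatorname{Pol}_1(A)}$ is even.
\end{itemize}

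\textbf{Relations not involving red strands.} The relations \cref{tokrel1}, \cref{braid2}, \cref{affwreath} hold in $\mathcal{AW}(A)$ verbatim, so they are preserved automatically.

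\textbf{Relations with red strands.}
\begin{itemize}
\item \cref{hw1}, \cref{hw2}: one crossing becomes the identity strand, so these hold trivially. The other crossing becomes a pinned $Q$, and the relations then say that a token $a$ or a dot commutes past $Q$. Because $Q$ is even and central in $\operatorname{Pol}_1(A)$, we have $aQ=Qa$ and $xQ=Qx$ with no sign, which is exactly what is needed.
\item \cref{hw3}: both double crossings map to the composite of $1_\go$ and the pinned $Q$, i.e.\ to the pinned $Q$ itself. This is the right-hand side of each relation, since $\Phi$ is the identity on the pins.
\item \cref{hw5} (first relation): both sides map to the same black crossing, because the red strand disappears and the red crossings become identities. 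The one subtlety is the direction of the red crossing the black strands meet.
  \begin{itemize}
  \item In the first relation, the red strand starts on the left and ends on the right, so each red--black crossing there is of type $Q\otimes\go\to\go\otimes Q$. Both sides then become the black crossing.
  \item In the second relation, the red strand goes from right to left, so each of the two black strands acquires a pin $Q$, placed before or after the black crossing. Concretely, the left side becomes $\sigma_1 Q_1 Q_2$ or a reordering thereof, and the right side becomes $Q_1Q_2\sigma_1$ (with the factors on the appropriate strands).
  \end{itemize}
  Equality follows from \cref{movein} applied to $f=Q_1Q_2$, which is symmetric and central. Lemma \ref{treat2} gives $\partial_1(Q_1Q_2)=0$, so $\sigma_1 Q_1Q_2 = Q_1Q_2\sigma_1$.
\item \cref{green}: the red strand passes from the middle to the left and back, meeting a black crossing. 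After applying $\Phi$, the left side becomes the black crossing with a pin $Q$ on one black strand before the crossing, i.e.\ $\sigma_1 Q_1$ up to the precise reading of which strand carries the pin. The right side becomes $Q_2\sigma_1 - \partial_1(Q_1)$. This equation is exactly \cref{movein} with $f=Q_1$, using $s_1(Q_1)=Q_2$.

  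Care is needed to determine which strand gets the pin on each side. If the reading yields the mirrored form instead, I will invoke the equivalent version \cref{green_alternative} from Remark \ref{int_rmk}, together with $\partial_1(Q_1)=-\partial_1(Q_2)$.
\end{itemize}

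\textbf{Main obstacle.} The main difficulty is the bookkeeping of \cref{green}, and of the second relation in \cref{hw5}: identifying precisely which crossings become pins and on which strands, and at what heights. Once that is settled, each check reduces to \cref{movein} and Lemma \ref{treat2}. No super signs arise, since $Q$ and the crossings are even.
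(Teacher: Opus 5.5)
Your proposal is correct and follows essentially the same route as the paper: the relations \cref{tokrel1}, \cref{braid2}, \cref{affwreath}, \cref{hw1}, \cref{hw2}, \cref{hw3} and the first relation of \cref{hw5} are routine, and the second relation of \cref{hw5} and \cref{green} reduce to \cref{movein} applied to $Q_1Q_2$ and to $Q_1$, respectively. The only cosmetic difference is that the paper gets $\sigma_1 Q_1Q_2 = Q_1Q_2\sigma_1$ by citing Proposition~\ref{aff_center_result}, whose proof is exactly your argument via Lemma~\ref{treat2}; also, you have the two sides of the second relation of \cref{hw5} swapped (the left side gives $Q_1Q_2\sigma_1$ and the right side gives $\sigma_1Q_1Q_2$), which is harmless since only their equality is needed.
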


\begin{proof}
To prove the existence of $ \Phi $, it suffices to verify the relations \cref{tokrel1}, \cref{braid2}, \cref{affwreath}, and \cref{hw1,hw2,hw3,hw5,green}. The relations \cref{tokrel1}, \cref{braid2} and \cref{affwreath} are immediate, and the relations \cref{hw1,hw2,hw3} are straightforward to verify. The first relation in \cref{hw5} is also straightforward to check. Note that for \cref{hw1} and \cref{hw2}, one must make use of the fact that the elements of $ \mathcal{E}_{\operatorname{Pol}_{1}(A)} $ lie in the even center of $ \operatorname{Pol}_{1}(A) $ (see Definition \ref{multi_pen}). In the following, we check \cref{green} and the second relation in \cref{hw5}. For the rest of the proof, we fix an element $ Q \in \mathcal{E}_{\operatorname{Pol}_{1}(A)} $.
\\ \indent To show the second relation of \cref{hw5}, we compute that
\begin{align*}
\Phi \left(\begin{tikzpicture}[centerzero, thick]
        \draw[wipe] (0,-0.5) to[out=up, in=down] (0.4,0) to[out=up,in=down] (0,0.5);
        \draw[-] (0,-0.5) to[out=up, in=down] (0.4,0) to[out=up,in=down] (0,0.5);
        \draw[-] (-0.4,-0.5) -- (0.4,0.5);
        \draw[-, wei] (0.4,-0.5) -- (-0.4,0.5);
        \node at (0.4,-0.7) {\tiny $ Q $};
\end{tikzpicture}\right)
= 
\begin{tikzpicture}[centerzero, thick]
      \draw[-] (0.9,-0.4) -- (0.3,0.4);
      \draw[-] (0.3,-0.4) -- (0.9,0.4);
      \pin{0.45,-0.2}{-0.2,-0.2}{Q};
      \pin{0.75,-0.2}{1.4,-0.2}{Q};
\end{tikzpicture}
= 
\begin{tikzpicture}[centerzero, thick]
      \draw[-] (0.9,-0.4) -- (0.3,0.4);
      \draw[-] (0.3,-0.4) -- (0.9,0.4);
      \pin{0.45,0.2}{-0.2,0.2}{Q};
      \pin{0.75,0.2}{1.4,0.2}{Q};
\end{tikzpicture}
= 
\Phi \left(\begin{tikzpicture}[centerzero, thick]
        \draw[-] (0,-0.5) to[out=up, in=down] (-0.4,0) to[out=up,in=down] (0,0.5);
        \draw[-] (-0.4,-0.5) -- (0.4,0.5);
        \draw[-, wei] (0.4,-0.5) -- (-0.4,0.5);
        \node at (0.4,-0.7) {\tiny $ Q $};
\end{tikzpicture}\right),
\end{align*}
where the second equality can be seen as follows. Since $ Q \in \mathcal{E}_{\operatorname{Pol}_{1}(A)} $, we have by Lemma \ref{uncharted5678} that the elements $ Q_{1}, Q_{2} \in \operatorname{Pol}_{2}(A) $ lie in the center of $ \operatorname{Pol}_{2}(A) $. Hence their product $ Q_{1}Q_{2} $ also lies in the center of $ \operatorname{Pol}_{2}(A) $. Furthermore, the element $ Q_{1}Q_{2} $ is symmetric, and so Proposition \ref{aff_center_result} yields that $ Q_{1}Q_{2} $ is central in $ W_{2}^{\operatorname{aff}}(A) $. In particular, $ Q_{1}Q_{2} $ and $ \sigma_{1} $ commute.
\\ \indent To show \cref{green}, we compute that
\begin{equation*}
\Phi \left(\begin{tikzpicture}[centerzero, thick]
        \draw[-] (0.4,-0.5) -- (-0.4,0.5);
        \draw[-] (-0.4,-0.5) -- (0.4,0.5);
        \node at (0,-0.7) {\tiny $ Q $};
        \draw[-, wei] (0,-0.5) to[out=up, in=down] (-0.4,0) to[out=up,in=down] (0,0.5);
\end{tikzpicture}\right)
= \begin{tikzpicture}[centerzero, thick]
      \draw[-] (0.9,-0.4) -- (0.3,0.4);
      \draw[-] (0.3,-0.4) -- (0.9,0.4);
      \pin{0.45,-0.2}{-0.2,-0.2}{Q};
\end{tikzpicture}
\ \stackrel{\mathclap{\cref{movein}}}{=} \ \begin{tikzpicture}[centerzero, thick]
      \draw[-] (0.9,-0.4) -- (0.3,0.4);
      \draw[-] (0.3,-0.4) -- (0.9,0.4);
      \pin{0.75,0.2}{1.4,0.2}{Q};
\end{tikzpicture}
- 
\begin{tikzpicture}[centerzero, thick]
      \draw[-] (0.7,-0.4) -- (0.7,0.4);
      \draw[-] (0.3,-0.4) -- (0.3,0.4);
        \pinpin{0.3,0}{0.7,0}{1.8,0}{\partial_{1}(Q_{1})};
\end{tikzpicture} 
\ = \Phi \left(\begin{tikzpicture}[centerzero, thick]
        \draw[-] (2,-0.5) -- (1.2,0.5);
        \draw[-] (1.2,-0.5) -- (2,0.5);
        \node at (1.6,-0.7) {\tiny $ Q $};
        \draw[-, wei] (1.6,-0.5) to[out=up, in=down] (2,0) to[out=up,in=down] (1.6,0.5);
\end{tikzpicture}
\ - \
\begin{tikzpicture}[centerzero, thick]
        \node at (0,-0.7) {\tiny $ Q $};
        \draw[-] (-0.6,-0.5) -- (-0.6,0.5);
        \draw[-, wei] (0,-0.5) -- (0,0.5);
        \draw[-] (0.6,-0.5) -- (0.6,0.5);
        \pinpin{0.6,0}{-0.6,0}{1.8,0}{\partial_{1}(Q_{1})};
\end{tikzpicture}\right). \qedhere
\end{equation*}
\end{proof}

\begin{rmk}
If $ \mathcal{C} $ is any category, and $ \mathcal{F} \colon \mathcal{AW}(A) \rightarrow \mathcal{C} $ is any functor, then composing $ \Phi $ with $ \mathcal{F} $ yields a functor $ \mathcal{LAW}(A) \rightarrow \mathcal{C} $. We will use this idea to obtain a polynomial representation of $ \mathcal{LAW}(A) $ in Corollary \ref{poly}.
\end{rmk}

\begin{rmk}
Using a similar proof to Theorem \ref{fruit}, one can show that there is a strict $ \Bbbk $-linear monoidal functor $ \Phi' \colon \mathcal{LAW}(A) \rightarrow \mathcal{AW}(A) $ which sends $ \go $ to $ \go $ and $ Q \in \mathcal{E}_{\operatorname{Pol}_{1}(A)} $ to $ \mathds{1} $, and sends the generating morphisms as follows:
\begin{gather*}
\Phi' \left(\begin{tikzpicture}[centerzero, thick]
      \draw[-] (0.6,-0.4) -- (0.6,0.4);
      \token{0.6,0}{west}{a};
\end{tikzpicture}\right) = \begin{tikzpicture}[centerzero, thick]
      \draw[-] (0.6,-0.4) -- (0.6,0.4);
      \token{0.6,0}{west}{a};
\end{tikzpicture}, 
\quad 
\Phi' \left(\begin{tikzpicture}[centerzero, thick]
      \draw[-] (0.6,-0.4) -- (0.6,0.4);
      \singdot{0.6,0};
\end{tikzpicture}\right) = 
\begin{tikzpicture}[centerzero, thick]
      \draw[-] (0.6,-0.4) -- (0.6,0.4);
      \singdot{0.6,0};
\end{tikzpicture} \ ,
\quad 
\Phi' \left(\begin{tikzpicture}[centerzero, thick]
      \draw[-] (0.9,-0.4) -- (0.3,0.4);
      \draw[-] (0.3,-0.4) -- (0.9,0.4);
\end{tikzpicture}\right) 
=
\begin{tikzpicture}[centerzero, thick]
      \draw[-] (0.9,-0.4) -- (0.3,0.4);
      \draw[-] (0.3,-0.4) -- (0.9,0.4);
\end{tikzpicture} \ ,
\\ \Phi' \left(\begin{tikzpicture}[centerzero, thick]
      \draw[-] (0.9,-0.4) -- (0.3,0.4);
      \draw[-,wei] (0.3,-0.4) -- (0.9,0.4);
      \node at (0.3,-0.6) {\tiny $ Q $};
\end{tikzpicture}\right) 
=
\begin{tikzpicture}[centerzero, thick]
        \draw[-] (0,-0.5) -- (0,0.5);
             \pin{0,0}{.7,0}{Q};
\end{tikzpicture} \ .
\quad 
\Phi' \left(\begin{tikzpicture}[centerzero, thick]
      \draw[-] (0.3,-0.4) -- (0.9,0.4);
      \draw[-,wei] (0.9,-0.4) -- (0.3,0.4);
      \node at (0.9,-0.6) {\tiny $ Q $};
\end{tikzpicture}\right) 
= 
\begin{tikzpicture}[centerzero, thick]
      \draw[-] (0.6,-0.4) -- (0.6,0.4);
\end{tikzpicture} \ ,
\end{gather*}
for all $ a \in A $, $ Q \in \mathcal{E}_{\operatorname{Pol}_{1}(A)} $. 
\end{rmk}

The functor $ \Phi $ is clearly surjective on objects, and we will obtain in Corollary \ref{herd} that $ \Phi $ is faithful. However, the functor $ \Phi $ is not full (and so is not an equivalence of categories). Indeed, let $ Q $ be any element in $ \mathcal{E}_{\operatorname{Pol}_{1}(A)} $ (for example, one could take $ Q = 1 $). Then it follows from the forms of the generating morphisms of $ \mathcal{LAW}(A) $ in \cref{Gen1} and \cref{Gen2} that $ \operatorname{Hom}_{\mathcal{LAW}(A)}(\mathds{1},Q) = 0 $. Thus the induced map
\begin{equation*}
\operatorname{Hom}_{\mathcal{LAW}(A)}(\mathds{1},Q) \rightarrow \operatorname{End}_{\mathcal{AW}(A)}(\mathds{1}) \cong \Bbbk, \quad r \mapsto \Phi(r)
\end{equation*}
is not surjective.

\subsection{Basis theorem} \label{basisthm}

In this section, we find bases of the morphism spaces of $ \mathcal{LAW}(A) $.

\begin{lem} \label{zeromostly2}
Let $ \mathbf{i}, \mathbf{j} \in F(\widehat{\mathcal{E}}_{\operatorname{Pol}_{1}(A)}) $ be two objects in $ \mathcal{LAW}(A) $. Then we have $ \operatorname{Hom}_{\mathcal{LAW}(A)}(\mathbf{i},\mathbf{j}) = 0 $ unless $ \mathbf{i}, \mathbf{j} \in \Gamma_{d,\mathbf{Q}} $ for some $ d \in \mathbb{N} $, $ \mathbf{Q} \in F(\mathcal{E}_{\operatorname{Pol}_{1}(A)}) $.
\end{lem}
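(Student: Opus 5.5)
We read the statement as saying that $\operatorname{Hom}_{\mathcal{LAW}(A)}(\mathbf{i},\mathbf{j})$ vanishes unless $\mathbf{i}$ and $\mathbf{j}$ are shuffles of the \emph{same} pair $(\go^{d},\mathbf{Q})$; otherwise the statement would be vacuous, since every object is a shuffle of its $\go$'s and its red letters. The plan is to exhibit two invariants of objects that every generating morphism preserves.

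For an object $\mathbf{i} \in F(\widehat{\mathcal{E}}_{\operatorname{Pol}_{1}(A)})$, let $d(\mathbf{i})$ be the number of occurrences of $\go$ in $\mathbf{i}$. Let $\mathbf{Q}(\mathbf{i}) \in F(\mathcal{E}_{\operatorname{Pol}_{1}(A)})$ be the subword obtained by deleting every $\go$. Then $\mathbf{i} \in \Gamma_{d(\mathbf{i}),\mathbf{Q}(\mathbf{i})}$. Moreover, $\mathbf{i} \in \Gamma_{d,\mathbf{Q}}$ holds exactly when $d = d(\mathbf{i})$ and $\mathbf{Q} = \mathbf{Q}(\mathbf{i})$. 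Indeed, a $(\go^{d},\mathbf{Q})$-shuffle has length $d+|\mathbf{Q}|$ and contains $\mathbf{Q}$ as a subword, and $\mathbf{Q}$ has no letter $\go$. So the $\go$'s in the shuffle are exactly the $d$ letters not used by $\mathbf{Q}$, and the non-$\go$ letters, read in order, form $\mathbf{Q}$.

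It therefore suffices to show that $\operatorname{Hom}(\mathbf{i},\mathbf{j}) = 0$ unless $d(\mathbf{i}) = d(\mathbf{j})$ and $\mathbf{Q}(\mathbf{i}) = \mathbf{Q}(\mathbf{j})$. Since $\mathcal{LAW}(A)$ is presented by generators and relations, every morphism is a $\Bbbk$-linear combination of composites of tensor products of identity morphisms with generating morphisms. It is enough to check that each generating morphism in \cref{Gen1} and \cref{Gen2}, tensored with identities on either side, has source and target with the same $d$ and the same $\mathbf{Q}$. Both invariants are multiplicative under tensor product, in the sense of adding counts and concatenating words, so we only need to check the generators themselves.

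The checks are immediate. Tokens and dots are maps $\go \to \go$. The black crossing is a map $\go\go \to \go\go$. The red--black crossings are maps $Q\go \to \go Q$ and $\go Q \to Q\go$; each has one $\go$, and deleting it leaves $Q$ on both sides. Identities preserve everything, and composition requires the target of one map to equal the source of the next, so the invariants propagate along any composite. Relations only impose identifications inside spaces that are already spanned by such diagrams, so they cannot create morphisms between objects with different invariants.

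Consequently, the spanning diagrams in $\operatorname{Hom}(\mathbf{i},\mathbf{j})$ exist only when $\mathbf{i}$ and $\mathbf{j}$ share $d$ and $\mathbf{Q}$, which means $\mathbf{i},\mathbf{j} \in \Gamma_{d,\mathbf{Q}}$. There is no real obstacle. The only point needing care is the standard fact that a Hom space of a strict monoidal category presented by generators and relations is spanned by composites of tensor products of generators and identities.
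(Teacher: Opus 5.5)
Your proposal is correct and is the same argument as the paper's, which simply observes that the lemma follows from the form of the generating morphisms in \cref{Gen1} and \cref{Gen2}; you make explicit the two invariants (the number of $\go$'s and the subword of red labels) that those generators preserve. One small slip: when you characterize $\Gamma_{d,\mathbf{Q}}$, you must also use that a shuffle contains $\go^{d}$ as a subword, because the length condition together with containing $\mathbf{Q}$ does not by itself force the remaining $d$ letters to be $\go$.
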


\begin{proof}
This follows from the forms of the generating morphisms of $ \mathcal{LAW}(A) $ in \cref{Gen1} and \cref{Gen2}.
\end{proof}

In light of Lemma \ref{zeromostly2}, for the rest of this section, we fix an integer $ d \in \mathbb{N} $ and a word $ \mathbf{Q} \in F(\mathcal{E}_{\operatorname{Pol}_{1}(A)}) $. We aim to find a basis of $ \operatorname{Hom}_{\mathcal{LAW}(A)}(\mathbf{i}, \mathbf{j}) $ for all $ \mathbf{i}, \mathbf{j} \in \Gamma_{d,\mathbf{Q}} $.

\begin{defn} \label{Generalized_sigma}
For each $ \mathbf{i}, \mathbf{j} \in \Gamma_{d,\mathbf{Q}} $, and each permutation $ w \in S_{d} $, we choose a diagram $ \sigma_{\mathbf{j},w,\mathbf{i}} \in \operatorname{Hom}_{\mathcal{LAW}(A)}(\mathbf{i}, \mathbf{j}) $ with the following properties:
\begin{enumerate}
\item[\textbullet]  The sequence of transpositions on the black strands of $ \sigma_{\mathbf{j},w,\mathbf{i}} $ is a reduced expression for \(w\). 
\item[\textbullet] Each pair of red and black strands cross at most once.
\item[\textbullet] The diagram $ \sigma_{\mathbf{j},w,\mathbf{i}} $ is tokenless and dotless.
\end{enumerate}
\end{defn}

\begin{rmk} \label{greetings}
We note that such a choice of diagram with the above properties is not unique. However, we see no reason to prefer one choice over another.
\end{rmk}

\begin{egg}
Suppose $ d = 3 $ and $ \mathbf{Q} = QQ' $ for some $ Q,Q' \in \mathcal{E}_{\operatorname{Pol}_{1}(A)} $. Let $ \mathbf{i} := \go Q \go Q' \go \in \Gamma_{d,\mathbf{Q}} $ and $ \mathbf{j} := \go Q Q' \go \go \in \Gamma_{d,\mathbf{Q}} $. Set $ w = s_{2}s_{1} \in S_{3} $. Then two possible choices for the element $ \sigma_{\mathbf{j},w,\mathbf{i}} $ are given below:
\begin{equation*}
\begin{tikzpicture}[centerzero, thick]
       \draw[-] (0,-0.7) to[out=up, in=down] (1.6,0.7);
       \draw[-] (0.8,-0.7) to[out=up, in=down] (0,0.7);
       \draw[-,wei] (0.4,-0.7) to[out=up, in=down] (0.7,0) to[out=up, in=down] (0.4,0.7);
       \draw[-,wei] (1.2,-0.7) to[out=up, in=down] (0.8,0.7);
       \draw[-] (1.6,-0.7) to[out=up, in=down] (1.2,0.7);
       \node at (0.4,-0.85) {\tiny $ Q $};
       \node at (1.2,-0.85) {\tiny $ Q' $};
\end{tikzpicture} \ , 
\qquad
\begin{tikzpicture}[centerzero, thick]
       \draw[-] (0,-0.7) to[out=up, in=down] (1.6,0.7);
       \draw[-] (0.8,-0.7) to[out=up, in=down] (0,0.7);
       \draw[-,wei] (0.4,-0.7) to[out=up, in=down] (0.2,0) to[out=up, in=down] (0.4,0.7);
       \draw[-,wei] (1.2,-0.7) to[out=up, in=down] (0.8,0.7);
       \draw[-] (1.6,-0.7) to[out=up, in=down] (1.2,0.7);
       \node at (0.4,-0.85) {\tiny $ Q $};
       \node at (1.2,-0.85) {\tiny $ Q' $};
\end{tikzpicture} \ .
\end{equation*}
\end{egg}

Given an element $ \mathbf{i} \in \Gamma_{d,\mathbf{Q}} $, we have an algebra homomorphism 
\begin{equation} \label{secure2}
A^{\otimes d} \rightarrow \operatorname{End}_{\mathcal{LAW}(A)}(\mathbf{i})
\end{equation}
that sends $ a_{i} $ ($ a \in A $, $ 1 \leq i \leq d $) to the token labelled \(a\) on the \(i\)-th black strand. If $ \mathbf{a} \in A^{\otimes d} $, then we define $ \mathbf{a}_{\mathbf{i}} $ to be the image of $ \mathbf{a} $ under the homomorphism of \cref{secure2}. Also, for $ 1 \leq i \leq d $, we set $ x_{i,\mathbf{i}} \in \operatorname{End}_{\mathcal{LAW}(A)}(\mathbf{i}) $ to be a dot on the $ i $-th black strand. Then if $ \alpha = (\alpha_{1},\ldots,\alpha_{d}) \in \mathbb{N}^{d} $, we define 
\begin{equation}
\mathbf{x}_{\mathbf{i}}^{\alpha} := (x_{1,\mathbf{i}})^{\alpha_{1}}\cdots (x_{d,\mathbf{i}})^{\alpha_{d}} \in \operatorname{End}_{\mathcal{LAW}(A)}(\mathbf{i}).
\end{equation}
We now define the following subset of $ \operatorname{Hom}_{\mathcal{LAW}(A)}(\mathbf{i}, \mathbf{j}) $:
\begin{equation}
_{\mathbf{j}} \mathcal{B}_{\mathbf{i}} := \{\mathbf{a}_{\mathbf{j}}\mathbf{x}_{\mathbf{j}}^{\alpha}\sigma_{\mathbf{j},w,\mathbf{i}} : \mathbf{a} \in B^{\otimes d}, \ \alpha \in \mathbb{N}^{d}, \ w \in S_{d} \}.
\end{equation}

\begin{lem} \label{k-span2}
Let $ \mathbf{i}, \mathbf{j} \in \Gamma_{d,\mathbf{Q}} $. Then the set $ _{\mathbf{j}} \mathcal{B}_{\mathbf{i}} $ spans $ \operatorname{Hom}_{\mathcal{LAW}(A)}(\mathbf{i}, \mathbf{j}) $.
\end{lem}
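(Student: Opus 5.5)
The spanning argument follows the standard straightening pattern for affine Hecke-type diagram categories. Every morphism in $\operatorname{Hom}_{\mathcal{LAW}(A)}(\mathbf{i},\mathbf{j})$ is a $\Bbbk$-linear combination of composites of generating morphisms, that is, of diagrams built from tokens, dots, black crossings and red--black crossings. Let $M$ be the $\Bbbk$-span of all sets $_{\mathbf{l}}\mathcal{B}_{\mathbf{k}}$ over $\mathbf{k},\mathbf{l}\in\Gamma_{d,\mathbf{Q}}$. Identity morphisms lie in $M$: $1_{\mathbf{i}}=\sigma_{\mathbf{i},e,\mathbf{i}}$, since the tokenless, dotless diagram with identity black permutation and no red--black crossings is forced to be the identity. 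It therefore suffices to show that $M$ is closed under composition on the left with each generator, tensored with identities. Equivalently, for every basis element $\mathbf{a}_{\mathbf{j}}\mathbf{x}^{\alpha}_{\mathbf{j}}\sigma_{\mathbf{j},w,\mathbf{i}}$ and every generator $g$, the product $g\circ \mathbf{a}_{\mathbf{j}}\mathbf{x}_{\mathbf{j}}^{\alpha}\sigma_{\mathbf{j},w,\mathbf{i}}$ must lie in $M$.

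I would proceed by induction on a filtration. Define the degree of a diagram as the pair (number of black--black crossings, number of red--black crossings), ordered lexicographically, or simply total crossings. Then show that the span of diagrams of degree at most $N$ equals the span of basis elements of that degree modulo lower degree. The key facts are as follows.
\begin{enumerate}
\item[(i)] Tokens and dots can be moved to the top of any diagram. Tokens pass through all crossings exactly by \cref{braid2}, \cref{tokencross2} and \cref{hw1}. Dots pass through red--black crossings by \cref{hw2}, and through black crossings at the cost of a teleporter term with one fewer crossing, by \cref{affwreath} and \cref{affwreath223}. Once at the top, tokens and dots are reordered using \cref{F1_1} and \cref{F1_2}, giving the form $\mathbf{a}_{\mathbf{j}}\mathbf{x}^{\alpha}_{\mathbf{j}}$.
\item[(ii)] Any tokenless, dotless diagram from $\mathbf{i}$ to $\mathbf{j}$ whose underlying black permutation is $w$ is equal to $\sigma_{\mathbf{j},w,\mathbf{i}}$ plus terms of strictly smaller degree.
\end{enumerate}

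Fact (ii) is the main obstacle. If the diagram is not reduced, there are two cases. Either two black strands cross twice, or a red and a black strand cross twice. After isotoping by the braid-type relations, we can apply \cref{braid2} to remove a black bigon. To remove a red--black bigon we use \cref{hw3}, which replaces it with a pin of $Q$, a polynomial in dots and tokens. That pin is then pushed to the top using (i), producing lower-degree terms. The braid moves are handled as follows. Moves with three black strands are exact. Moves of type black--black--red are exact by \cref{hw5}. Moves of type black--red--black, given by \cref{green} and \cref{green_alternative}, hold up to correction terms with no black crossings and with fewer crossings overall, hence of lower degree. Passing one red strand past another never occurs because there is no red--red crossing.

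Given these moves, the standard argument via Matsumoto's theorem applies to the combined strand permutation, treating red strands as strands that never cross each other. Two reduced diagrams with the same endpoints and the same black permutation, where each red--black pair crosses at most once, are related by braid moves. They are therefore equal modulo lower-degree terms. The same applies to any expression with bigons after bigon removal. Inducting on degree then completes the proof.
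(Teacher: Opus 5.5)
Your proposal is correct and follows essentially the same route as the paper. Both arguments induct on the number of crossings, slide tokens and dots to the top modulo diagrams with fewer crossings, eliminate pairs of strands that cross twice, and then connect any reduced diagram to $\sigma_{\mathbf{j},w,\mathbf{i}}$ using \cref{hw5}, \cref{green} and the braid relation, where the correction terms have fewer crossings. The paper handles the double-crossing step by citing Webster's Lem.~4.10(3), whereas you sketch bigon removal via \cref{braid2} and \cref{hw3}; your appeal to Matsumoto's theorem for the combined red--black permutation (red strands are never inverted, so no red--red crossing ever appears) makes explicit the step the paper states in one line.
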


\begin{proof}
We prove this by induction on the total number of crossings. If \(D\) is a diagram with no crossings, then \(D\) contains only dots and tokens, from which it clearly follows that \(D\) lies in the span of $ _{\mathbf{j}} \mathcal{B}_{\mathbf{i}} $.
\\ \indent Now let $ k > 0 $, and let \(D\) be a diagram with \(k\) crossings. First, by using the relations \cref{braid2}, \cref{tokencross2}, \cref{affwreath}, \cref{affwreath223}, \cref{hw1}, and \cref{hw2}, one can move the dots and tokens to the top of \(D\) modulo terms with fewer crossings, which all lie in the span of $ _{\mathbf{j}} \mathcal{B}_{\mathbf{i}} $ by the induction hypothesis. Now, if there are two strands in \(D\) that cross more than once, then \(D\) can be written as a $ \Bbbk $-linear combination of diagrams with fewer than $ k $ crossings. The proof of this fact is analogous to the proof of \cite[Lem.~4.10(3)]{Webster}. So we may from here on assume that any two strands in \(D\) cross at most once. In this case, the sequence of black strands on \(D\) corresponds to some reduced expression for some element $ w \in S_{d} $. We can now apply the relations \cref{hw5,green} and the second relation in \cref{braid2} to get from \(D\) to $ \sigma_{\mathbf{j},w,\mathbf{i}} $. Note again that this process may create additional terms with fewer crossings (due to \cref{green}), but these terms all lie in the span of $ _{\mathbf{j}} \mathcal{B}_{\mathbf{i}} $ by the induction hypothesis. Thus we have that \(D\) lies in the span of $ _{\mathbf{j}} \mathcal{B}_{\mathbf{i}} $.
\end{proof}

Given elements $ \mathbf{i}, \mathbf{j} \in \Gamma_{d,\mathbf{Q}} $, the functor $ \Phi $ from Theorem \ref{fruit} restricts to a $ \Bbbk $-linear map 
\begin{equation*}
\Phi \colon \operatorname{Hom}_{\mathcal{LAW}(A)}(\mathbf{i}, \mathbf{j}) \rightarrow W_{d}^{\operatorname{aff}}(A).
\end{equation*}

\begin{lem} \label{cactus2}
Let $ \mathbf{i}, \mathbf{j} \in \Gamma_{d,\mathbf{Q}} $ and $ w \in S_{d} $. Then we have
\begin{equation} \label{cactus}
\Phi(\sigma_{\mathbf{j},w,\mathbf{i}}) = h_{w,w}\sigma_{w} + \sum_{\substack{u \in S_{d} \\ L(u) < L(w)}} h_{u,w}\sigma_{u}
\end{equation}
for some $ h_{u,w} \in \operatorname{Pol}_{d}(A) $, where $ h_{w,w} $ is regular in $ \operatorname{Pol}_{d}(A) $. Here, \(L\) is the length function on $ S_{d} $.
\end{lem}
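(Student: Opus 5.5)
Under $\Phi$, a red strand labelled $Q$ disappears and each red--black crossing becomes either an identity strand or a $Q$-pin on the black strand. So $\Phi(\sigma_{\mathbf{j},w,\mathbf{i}})$ is a vertical composite of factors of two kinds:
\begin{enumerate}
\item[\textbullet] black crossings $\sigma_{s}$, whose sequence is a reduced word $s_{i_1}\cdots s_{i_\ell}$ for $w$;
\item[\textbullet] pins $Q_k\in\operatorname{Pol}_d(A)$ from red crossings of the second kind in \cref{Gen2}, where $k$ is the current position of the black strand and $Q\in\mathcal{E}_{\operatorname{Pol}_1(A)}$.
\end{enumerate}
Reading bottom to top, and using that composition in $W_d^{\operatorname{aff}}(A)$ is right-to-left, we write
\[
\Phi(\sigma_{\mathbf{j},w,\mathbf{i}}) = g_\ell\,\sigma_{i_\ell}\, g_{\ell-1}\cdots g_1\,\sigma_{i_1}\, g_0,
\]
where each $g_m$ is a product of elements $(Q^{(r)})_{k}$ with $Q^{(r)}\in\mathcal{E}_{\operatorname{Pol}_1(A)}$. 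By Lemma \ref{uncharted5678}, each $(Q^{(r)})_k$ lies in $\mathcal{E}_{\operatorname{Pol}_d(A)}$. A product of even, central, regular elements is again even, central and regular, so each $g_m\in\mathcal{E}_{\operatorname{Pol}_d(A)}$.

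The next step is to move all polynomial factors to the left of the crossings using \cref{movein}: $\sigma_i f = s_i(f)\sigma_i - \partial_i(f)$. We argue by induction on $\ell$ and prove a slightly stronger statement. For $g_0,\ldots,g_\ell\in\mathcal{E}_{\operatorname{Pol}_d(A)}$ and a reduced word of $w$, the product has the form \cref{cactus} with
\[
h_{w,w}= g_\ell\cdot s_{i_\ell}(g_{\ell-1})\cdot s_{i_\ell}s_{i_{\ell-1}}(g_{\ell-2})\cdots (s_{i_\ell}\cdots s_{i_1})(g_0).
\]
For the inductive step, assume the product $P=g_{\ell-1}\sigma_{i_{\ell-1}}\cdots\sigma_{i_1}g_0$ has leading coefficient $h'$ on $\sigma_{w'}$, where $w'=s_{i_\ell}w$, plus lower terms $h'_u\sigma_u$ with $L(u)<L(w')$. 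Then
\[
g_\ell\,\sigma_{i_\ell}\,P = g_\ell\bigl(s_{i_\ell}(h')\sigma_{i_\ell}\sigma_{w'} - \partial_{i_\ell}(h')\sigma_{w'}\bigr) + \sum_u g_\ell\bigl(s_{i_\ell}(h'_u)\sigma_{i_\ell}\sigma_u - \partial_{i_\ell}(h'_u)\sigma_u\bigr).
\]
Since the word is reduced, $\sigma_{i_\ell}\sigma_{w'}=\sigma_w$. Each $\sigma_{i_\ell}\sigma_u$ equals $\sigma_{s_{i_\ell}u}$ with $L(s_{i_\ell}u)\le L(u)+1<L(w)$. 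The terms $\partial(\cdot)\sigma_{w'}$ and $\partial(\cdot)\sigma_u$ also have length less than $L(w)$. So only the term $g_\ell\, s_{i_\ell}(h')\,\sigma_w$ contributes to $\sigma_w$.

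It remains to show that this leading coefficient is regular. Each $s$-twist $s_{i}(\cdot)$ is a superalgebra automorphism of $\operatorname{Pol}_d(A)$, and automorphisms preserve evenness, centrality and regularity. Hence every $\pi(g_m)$ with $\pi\in S_d$ lies in $\mathcal{E}_{\operatorname{Pol}_d(A)}$. A product of regular elements is regular, since injective maps compose. Therefore $h_{w,w}$ is regular (in fact in $\mathcal{E}_{\operatorname{Pol}_d(A)}$).

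The main point to check carefully is the bookkeeping. First, the coefficients $h'_u$ need not be central, but this is harmless: only the leading coefficient must be regular, and the lower coefficients are only required to lie in $\operatorname{Pol}_d(A)$. Second, $\sigma_u$ for non-reduced products is still $\sigma$ of the product permutation, because $\Bbbk S_d\to W^{\operatorname{aff}}_d(A)$ is a homomorphism. I expect no sign issues, since all the $g_m$ and all crossings are even.
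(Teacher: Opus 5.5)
Your proposal is correct and follows the same route as the paper. Both apply $\Phi$ to get a reduced word of black crossings decorated with pins labelled by elements of $\mathcal{E}_{\operatorname{Pol}_1(A)}$, slide the pins to the top with \cref{movein} at the cost of terms with fewer crossings, and observe that the leading coefficient is a product of pins $Q_k$, each regular by Lemma~\ref{uncharted5678}. Your explicit induction and the formula $h_{w,w}=g_\ell\, s_{i_\ell}(g_{\ell-1})\cdots(s_{i_\ell}\cdots s_{i_1})(g_0)$ spell out bookkeeping the paper leaves implicit; since $\pi(Q_k)=Q_{\pi(k)}$, the twisted factors are again pins.
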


\begin{proof}
Using the definition of $ \Phi $ from Theorem \ref{fruit}, we see that $ \Phi(\sigma_{\mathbf{j},w,\mathbf{i}}) $ is a diagram whose sequence of transpositions on the black strands is a reduced expression for \(w\), where the strands carry pins, and each such pin is labelled by an element of $ \mathcal{E}_{\operatorname{Pol}_{1}(A)} $. For example, if $ Q,Q' \in \mathcal{E}_{\operatorname{Pol}_{1}(A)} $, then
\begin{equation*}
\Phi\left(\begin{tikzpicture}[centerzero, thick]
       \draw[-] (0,-0.7) to[out=up, in=down] (1.6,0.7);
       \draw[-] (0.8,-0.7) to[out=up, in=down] (0,0.7);
       \draw[-,wei] (0.4,-0.7) to[out=up, in=down] (0.2,0) to[out=up, in=down] (0.4,0.7);
       \draw[-,wei] (1.2,-0.7) to[out=up, in=down] (0.8,0.7);
       \draw[-] (1.6,-0.7) to[out=up, in=down] (1.2,0.7);
       \node at (0.4,-0.85) {\tiny $ Q $};
       \node at (1.2,-0.85) {\tiny $ Q' $};
\end{tikzpicture}\right)
=
\begin{tikzpicture}[centerzero, thick]
       \draw[-] (0,-0.7) to[out=up, in=down] (1.6,0.7);
       \draw[-] (0.8,-0.7) to[out=up, in=down] (0,0.7);
       \draw[-] (1.6,-0.7) to[out=up, in=down] (1.2,0.7);
       \pin{0.2,-0.28}{-0.6,-0.28}{Q};
       \pin{1,0.1}{0.6,0.8}{Q'};
\end{tikzpicture} \ .
\end{equation*}
Next, we inductively apply \cref{movein} to slide the pins to the top of the diagram, at the cost of adding a linear combination of diagrams with fewer than $ L(w) $ crossings. It then follows that $ \Phi(\sigma_{\mathbf{j},w,\mathbf{i}}) $ is equal to an expansion of the form given in \cref{cactus}. Note that by this process, we obtain that $ h_{w,w} $ is some composition of pins, each labelled by an element of $ \mathcal{E}_{\operatorname{Pol}_{1}(A)} $. That is, $ h_{w,w} = D_{(1)}\cdots D_{(m)} $, where each $ D_{(k)} \in \operatorname{Pol}_{d}(A) $ is of the form
\begin{equation*}
D_{(k)} = 
\begin{tikzpicture}[centerzero, thick]
        \draw[-] (-2.1,-0.5) -- (-2.1,0.5);
        \node at (-1.5,0) {$ \cdots $};
        \draw[-] (-0.9,-0.5) -- (-0.9,0.5);
        \draw[-] (0.3,-0.5) -- (0.3,0.5);
        \draw[-] (0.8,-0.5) -- (0.8,0.5);
        \node at (1.4,0) {$ \cdots $};
        \pin{0.3,0}{-0.5,0}{Q};
        \draw[-] (2,-0.5) -- (2,0.5);
\end{tikzpicture} 
\end{equation*}
for some $ Q \in \mathcal{E}_{\operatorname{Pol}_{1}(A)} $. By Lemma \ref{uncharted5678}, each $ D_{(k)} $ is regular. Thus, since $ h_{w,w} = D_{(1)}\cdots D_{(m)} $, we obtain that $ h_{w,w} $ is also regular.
\end{proof}

\begin{theo} \label{HL-basis}
Let $ \mathbf{i}, \mathbf{j} \in \Gamma_{d,\mathbf{Q}} $. Then the set $ _{\mathbf{j}} \mathcal{B}_{\mathbf{i}} $ is a $ \Bbbk $-basis of $ \operatorname{Hom}_{\mathcal{LAW}(A)}(\mathbf{i}, \mathbf{j}) $.
\end{theo}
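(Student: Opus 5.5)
Spanning is already given by Lemma \ref{k-span2}, so it remains to prove linear independence of $_{\mathbf{j}}\mathcal{B}_{\mathbf{i}}$. The plan is to push a vanishing linear combination through the functor $\Phi$ of Theorem \ref{fruit} into $W_{d}^{\operatorname{aff}}(A)$. There we can use the basis of Proposition \ref{midnight2} together with the triangularity of Lemma \ref{cactus2}.

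Suppose
\[
\sum_{w\in S_d} \sum_{\mathbf{a},\alpha} \lambda_{\mathbf{a},\alpha,w}\, \mathbf{a}_{\mathbf{j}}\mathbf{x}_{\mathbf{j}}^{\alpha}\sigma_{\mathbf{j},w,\mathbf{i}} = 0
\]
with only finitely many $\lambda_{\mathbf{a},\alpha,w}\in\Bbbk$ nonzero. For each $w$, set $p_w := \sum_{\mathbf{a},\alpha}\lambda_{\mathbf{a},\alpha,w}\,\mathbf{a}\,x_1^{\alpha_1}\cdots x_d^{\alpha_d} \in \operatorname{Pol}_d(A)$. Since $\Phi$ sends tokens and dots on the $k$-th black strand of $\mathbf{j}$ to tokens and dots on the $k$-th strand, we have $\Phi(\mathbf{a}_{\mathbf{j}}\mathbf{x}_{\mathbf{j}}^{\alpha}) = \mathbf{a}x^{\alpha}$. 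Applying $\Phi$ and Lemma \ref{cactus2} then gives
\[
0=\sum_{w} p_w\Big(h_{w,w}\sigma_w+\sum_{L(u)<L(w)}h_{u,w}\sigma_u\Big)
=\sum_{u\in S_d}\Big(\sum_{w} p_w h_{u,w}\Big)\sigma_u,
\]
where $h_{u,w}:=0$ unless $u=w$ or $L(u)<L(w)$. By Proposition \ref{midnight2}, $W_d^{\operatorname{aff}}(A)$ is a free left $\operatorname{Pol}_d(A)$-module with basis $\{\sigma_u\}_{u\in S_d}$. This follows because the basis of Proposition \ref{midnight2} is exactly the $\operatorname{Pol}_d(A)$-basis \cref{Poly_base} times $\sigma_u$. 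Hence $\sum_w p_w h_{u,w}=0$ for every $u$.

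Now argue by downward induction on length. Suppose not all $p_w$ vanish, and choose $w_0$ of maximal length with $p_{w_0}\neq 0$. Taking $u=w_0$, every $w\neq w_0$ with $h_{w_0,w}\neq 0$ satisfies $L(w)>L(w_0)$, so $p_w=0$ for each such $w$. The equation for $u=w_0$ therefore reduces to $p_{w_0}h_{w_0,w_0}=0$. Since $h_{w_0,w_0}$ is regular by Lemma \ref{cactus2}, it is not a right zero-divisor, so $p_{w_0}=0$, a contradiction. Thus all $p_w=0$. Since \cref{Poly_base} is a $\Bbbk$-basis of $\operatorname{Pol}_d(A)$, all $\lambda_{\mathbf{a},\alpha,w}=0$.

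The only delicate point is the order of multiplication. The coefficients $p_w$ multiply $h_{w,w}$ on the left, and we need regularity in the sense of $g\mapsto gh_{w,w}$ being injective. Definition \ref{uncharted_general} provides this two-sided injectivity, and in any case the $h_{w,w}$ are products of central elements by Lemma \ref{uncharted5678}. We also need the freeness of $W_d^{\operatorname{aff}}(A)$ over $\operatorname{Pol}_d(A)$ with coefficients written on the left, which matches the form of Proposition \ref{midnight2}. Beyond this, the argument is a routine triangularity argument, and faithfulness of $\Phi$ on these Hom spaces follows as a byproduct.
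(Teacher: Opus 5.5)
Your proposal is correct and follows essentially the same argument as the paper. Spanning comes from Lemma~\ref{k-span2}. A dependence relation, grouped into $\operatorname{Pol}_d(A)$-coefficients $p_w$ (the paper's $f_w$), is pushed through $\Phi$. Then the triangularity of Lemma~\ref{cactus2}, left $\operatorname{Pol}_d(A)$-freeness of $W_d^{\operatorname{aff}}(A)$ from Proposition~\ref{midnight2}, and regularity of $h_{v,v}$ at a maximal-length $v$ force $p_v=0$, a contradiction.
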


\begin{proof}
We have that $ \operatorname{Hom}_{\mathcal{LAW}(A)}(\mathbf{i}, \mathbf{j}) $ is a left $ \operatorname{Pol}_{d}(A) $-module, with action given by 
\begin{equation*}
\mathbf{a} \cdot y = \mathbf{a}_{\mathbf{j}}y, \quad x_{i} \cdot y = x_{i,\mathbf{j}}y,
\end{equation*}
for all $ \mathbf{a} \in A^{\otimes d} $, $ 1 \leq i \leq d $, $ y \in  \operatorname{Hom}_{\mathcal{LAW}(A)}(\mathbf{i}, \mathbf{j}) $. Then in order to prove the theorem, it suffices to show that $ \operatorname{Hom}_{\mathcal{LAW}(A)}(\mathbf{i}, \mathbf{j}) $ is free as a left $ \operatorname{Pol}_{d}(A) $-module, with basis $ \{\sigma_{\mathbf{j},w,\mathbf{i}} : w \in S_{d}\} $. By Lemma~\ref{k-span2}, the set $ \{\sigma_{\mathbf{j},w,\mathbf{i}} : w \in S_{d}\} $ spans $ \operatorname{Hom}_{\mathcal{LAW}(A)}(\mathbf{i}, \mathbf{j}) $ (as a left $ \operatorname{Pol}_{d}(A) $-module), and so it remains to show linear independence. Thus we assume that we have a nontrivial linear dependence equation
\begin{equation} \label{depequation3975}
\sum_{w \in S_{d}} f_{w}\sigma_{\mathbf{j},w,\mathbf{i}} = 0,
\end{equation}
where $ f_{w} \in \operatorname{Pol}_{d}(A) $. Choose an element $ v \in S_{d} $ of maximal length such that $ f_{v} \neq 0 $. Then \cref{depequation3975} becomes
\begin{equation} \label{depequation3976}
\sum_{\substack{w \in S_{d} \\ L(w) \leq L(v)}} f_{w}\sigma_{\mathbf{j},w,\mathbf{i}} = 0.
\end{equation}
Then computing the image of \cref{depequation3976} under $ \Phi $, and using \cref{cactus}, we have in $ W_{d}^{\operatorname{aff}}(A) $ that
\begin{equation} \label{nook3975}
\sum_{\substack{w \in S_{d} \\ L(w) \leq L(v)}} f_{w}\left( h_{w,w}\sigma_{w} + \sum_{\substack{u \in S_{d} \\ L(u) < L(w)}} h_{u,w}\sigma_{u}\right) = 0.
\end{equation}
Then \cref{nook3975} and Proposition \ref{midnight2} imply that $ f_{v}h_{v,v} = 0 $. Since $ h_{v,v} $ is regular (see Lemma \ref{cactus2}), we obtain that $ f_{v} = 0 $, which is a contradiction. The result now follows.
\end{proof}

\begin{cor} \label{herd}
The functor $ \Phi $ is faithful.
\end{cor}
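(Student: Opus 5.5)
To show that $\Phi$ is faithful, I will show that for all objects $\mathbf{i},\mathbf{j}$ of $\mathcal{LAW}(A)$ the induced $\Bbbk$-linear map $\operatorname{Hom}_{\mathcal{LAW}(A)}(\mathbf{i},\mathbf{j}) \to \operatorname{Hom}_{\mathcal{AW}(A)}(\Phi(\mathbf{i}),\Phi(\mathbf{j}))$ is injective.

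First I reduce to the interesting case. By Lemma \ref{zeromostly2}, the Hom space is zero, and injectivity is trivial, unless $\mathbf{i},\mathbf{j} \in \Gamma_{d,\mathbf{Q}}$ for some $d \in \mathbb{N}$ and $\mathbf{Q} \in F(\mathcal{E}_{\operatorname{Pol}_{1}(A)})$. So I fix such $\mathbf{i},\mathbf{j}$. Then $\Phi(\mathbf{i}) = \Phi(\mathbf{j}) = \go^{\otimes d}$, and $\Phi$ restricts to a map into $W_d^{\operatorname{aff}}(A)$.

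Next I set up the computation. Let $y \in \operatorname{Hom}_{\mathcal{LAW}(A)}(\mathbf{i},\mathbf{j})$ with $\Phi(y)=0$. By Theorem \ref{HL-basis}, the proof of that theorem shows that the Hom space is a free left $\operatorname{Pol}_d(A)$-module with basis $\{\sigma_{\mathbf{j},w,\mathbf{i}} : w \in S_d\}$. So I write $y = \sum_w f_w \sigma_{\mathbf{j},w,\mathbf{i}}$ with $f_w \in \operatorname{Pol}_d(A)$. The functor $\Phi$ sends tokens and dots on black strands to tokens and dots, and it is monoidal. It therefore sends $\mathbf{a}_{\mathbf{j}}\mathbf{x}^{\alpha}_{\mathbf{j}}$ to $\mathbf{a}x^{\alpha}$. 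Hence $\Phi(f_w\sigma_{\mathbf{j},w,\mathbf{i}}) = f_w \Phi(\sigma_{\mathbf{j},w,\mathbf{i}})$.

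Now I repeat the triangularity argument from the proof of Theorem \ref{HL-basis}. Suppose $y \neq 0$, and choose $v$ of maximal length with $f_v \neq 0$. Expanding each $\Phi(\sigma_{\mathbf{j},w,\mathbf{i}})$ by Lemma \ref{cactus2}, the coefficient of $\sigma_v$ in $\Phi(y)$ is $f_v h_{v,v}$. This uses the basis of Proposition \ref{midnight2}, which says $W_d^{\operatorname{aff}}(A)$ is free as a left $\operatorname{Pol}_d(A)$-module on the $\sigma_u$. So $\Phi(y)=0$ forces $f_v h_{v,v}=0$. Since $h_{v,v}$ is regular, $f_v = 0$, a contradiction. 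Hence $y=0$ and $\Phi$ is faithful.

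There is no real obstacle here; the corollary is essentially a restatement of the linear-independence half of the proof of Theorem \ref{HL-basis}. The only point needing care is the compatibility $\Phi(f y) = f\Phi(y)$ for $f \in \operatorname{Pol}_d(A)$, and this is immediate from how $\Phi$ acts on dots and tokens.
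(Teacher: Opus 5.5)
Your proposal is correct and follows the paper's route: the paper proves this corollary simply by noting that it follows from the proof of Theorem~\ref{HL-basis}, and your argument spells that out exactly. You reduce via Lemma~\ref{zeromostly2}, expand in the $\sigma_{\mathbf{j},w,\mathbf{i}}$, use the triangularity of Lemma~\ref{cactus2}, and cancel the regular element $h_{v,v}$.
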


\begin{proof}
This follows from the proof of Theorem \ref{HL-basis}.
\end{proof}

Given an integer $ n \in \mathbb{N}_{+} $ and a $ \Bbbk $-algebra \(F\), we define $ M_{n}(F) $ to be the algebra of $ n \times n $ matrices with entries in \(F\).

\begin{cor} \label{apple_tree}
Let $ n := |\Gamma_{d,\mathbf{Q}}| $. Then $ W_{d,\mathbf{Q}}^{\operatorname{aff}}(A) $ is isomorphic to a subalgebra of $ M_{n}(W_{d}^{\operatorname{aff}}(A)) $.
\end{cor}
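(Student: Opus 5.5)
The plan is to build an explicit algebra homomorphism
\[
W_{d,\mathbf{Q}}^{\operatorname{aff}}(A) \to M_{n}(W_{d}^{\operatorname{aff}}(A))
\]
from the functor $\Phi$ of Theorem~\ref{fruit}, and then deduce injectivity from Corollary~\ref{herd}.

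First, unpack the definition of the path algebra. By \cref{HLAWPA2} and the definition of the additive envelope, an element of $W_{d,\mathbf{Q}}^{\operatorname{aff}}(A)$ is a matrix $(y_{\mathbf{j},\mathbf{i}})_{\mathbf{j},\mathbf{i} \in \Gamma_{d,\mathbf{Q}}}$ with $y_{\mathbf{j},\mathbf{i}} \in \operatorname{Hom}_{\mathcal{LAW}(A)}(\mathbf{i},\mathbf{j})$. Multiplication is matrix multiplication, using composition in $\mathcal{LAW}(A)$ for the entries. Fix a bijection $\Gamma_{d,\mathbf{Q}} \cong \{1,\ldots,n\}$ to index the rows and columns.

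Second, apply $\Phi$ entrywise. For every $\mathbf{i} \in \Gamma_{d,\mathbf{Q}}$ we have $\Phi(\mathbf{i}) = \go^{\otimes d}$, because $\Phi$ sends each red object $Q$ to $\mathds{1}$ and $\go$ to $\go$. Hence $\Phi$ restricts to $\Bbbk$-linear maps
\[
\operatorname{Hom}_{\mathcal{LAW}(A)}(\mathbf{i},\mathbf{j}) \to \operatorname{End}_{\mathcal{AW}(A)}(\go^{\otimes d}) = W_{d}^{\operatorname{aff}}(A).
\]
Define $\widetilde{\Phi}\big((y_{\mathbf{j},\mathbf{i}})\big) := (\Phi(y_{\mathbf{j},\mathbf{i}}))$. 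This map is a unital algebra homomorphism for the following reasons.
\begin{enumerate}
\item[\textbullet] Functoriality gives $\Phi(y \circ y') = \Phi(y)\Phi(y')$, so the $(\mathbf{k},\mathbf{i})$ entry of a product $\sum_{\mathbf{j}} y_{\mathbf{k},\mathbf{j}} \circ y'_{\mathbf{j},\mathbf{i}}$ is sent to $\sum_{\mathbf{j}} \Phi(y_{\mathbf{k},\mathbf{j}})\Phi(y'_{\mathbf{j},\mathbf{i}})$, which is the matrix product.
\item[\textbullet] $\Phi(1_{\mathbf{i}}) = 1_{\go^{\otimes d}}$, so the identity matrix is preserved.
\item[\textbullet] $\Phi$ is parity-preserving and $\Bbbk$-linear, so $\widetilde{\Phi}$ is a superalgebra map once $M_n(W_d^{\operatorname{aff}}(A))$ carries the entrywise grading.
\end{enumerate}

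Third, injectivity. $\widetilde{\Phi}$ is injective exactly when $\Phi$ is injective on each $\operatorname{Hom}_{\mathcal{LAW}(A)}(\mathbf{i},\mathbf{j})$, and that is Corollary~\ref{herd}. The image of $\widetilde{\Phi}$ is therefore a subalgebra isomorphic to $W_{d,\mathbf{Q}}^{\operatorname{aff}}(A)$.

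The only point needing care is the faithfulness of $\Phi$, and it is already available. The proof of Theorem~\ref{HL-basis} shows that if $\Phi\big(\sum_w f_w \sigma_{\mathbf{j},w,\mathbf{i}}\big) = 0$, then leading-term extraction via Lemma~\ref{cactus2} and Proposition~\ref{midnight2}, together with regularity of $h_{v,v}$, forces every $f_w$ to vanish. Since $\{\sigma_{\mathbf{j},w,\mathbf{i}}\}$ spans $\operatorname{Hom}_{\mathcal{LAW}(A)}(\mathbf{i},\mathbf{j})$ as a left $\operatorname{Pol}_d(A)$-module, $\Phi$ has trivial kernel on each hom space.
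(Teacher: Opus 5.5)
Your proposal is correct and follows essentially the same route as the paper. The faithful functor $\Phi$ (Corollary~\ref{herd}) sends every $\mathbf{i}\in\Gamma_{d,\mathbf{Q}}$ to $\go^{\otimes d}$, and so induces entrywise an injective algebra homomorphism from $\operatorname{End}_{\operatorname{Add}(\mathcal{LAW}(A))}\bigl(\bigoplus_{\mathbf{i}}\mathbf{i}\bigr)$ into $M_{n}(W_{d}^{\operatorname{aff}}(A))$. Your verification of multiplicativity, unitality and injectivity simply spells out the steps the paper leaves implicit.
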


\begin{proof}
The faithful functor $ \Phi \colon \mathcal{LAW}(A) \rightarrow \mathcal{AW}(A) $ induces an injective algebra homomorphism
\begin{equation*}
W_{d,\mathbf{Q}}^{\operatorname{aff}}(A) \quad \stackrel{\mathclap{\cref{HLAWPA2}}}{=} \quad \operatorname{End}_{\operatorname{Add}(\mathcal{LAW}(A))}\left(\bigoplus_{\mathbf{i} \in \Gamma_{d,\mathbf{Q}}}\mathbf{i}\right) \rightarrow \operatorname{End}_{\operatorname{Add}(\mathcal{AW}(A))}\left(\bigoplus_{i=1}^{n} \go^{\otimes d}\right) = M_{n}(W_{d}^{\operatorname{aff}}(A)).
\end{equation*}
The result now follows.
\end{proof}

\subsection{A polynomial representation} \label{poly_representation}

In this section, we give a polynomial representation of $ \mathcal{LAW}(A) $. We start by giving a polynomial representation of $ \mathcal{AW}(A) $. We define $ \Bbbk\text{-smod} $ to be the category of $ \Bbbk $-supermodules and $ \Bbbk $-linear maps (which need not be parity-preserving). In this subsection, we identify $ \operatorname{Pol}_{n}(A) $ with $ \operatorname{Pol}_{1}(A)^{\otimes n} $ as in Lemma \ref{identification}. 

\begin{prop} \label{Polyrepp-1}
We have a $ \Bbbk $-linear monoidal functor 
\begin{equation*}
P \colon \mathcal{AW}(A) \rightarrow \Bbbk\text{-smod}
\end{equation*}
which sends $ \go $ to $ \operatorname{Pol}_{1}(A) $, and sends the generating morphisms as follows:
\begin{align*}
P \left(\begin{tikzpicture}[centerzero, thick]
      \draw[-] (0.6,-0.4) -- (0.6,0.4);
      \token{0.6,0}{west}{a};
\end{tikzpicture}\right) &\colon \operatorname{Pol}_{1}(A) \rightarrow \operatorname{Pol}_{1}(A), \quad f \mapsto af, 
\\ P \left(\begin{tikzpicture}[centerzero, thick]
      \draw[-] (0.6,-0.4) -- (0.6,0.4);
      \singdot{0.6,0};
\end{tikzpicture}\right) &\colon \operatorname{Pol}_{1}(A) \rightarrow \operatorname{Pol}_{1}(A), \quad f \mapsto xf,
\\ P \left(\begin{tikzpicture}[centerzero, thick]
      \draw[-] (0.9,-0.4) -- (0.3,0.4);
      \draw[-] (0.3,-0.4) -- (0.9,0.4);
\end{tikzpicture}\right) &\colon \operatorname{Pol}_{2}(A) \rightarrow \operatorname{Pol}_{2}(A), \quad f \mapsto s_{1}(f) - \partial_{1}(f), 
\end{align*}
for all $ a \in A $. Furthermore, the functor \(P\) is faithful.
\end{prop}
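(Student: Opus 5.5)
The plan is to build $P$ one endomorphism algebra at a time from an induced module, so that the relations hold automatically. Then I check that the resulting actions are compatible with the tensor product, and finally prove faithfulness by a leading-term argument.

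\textbf{Existence.} For each $n$, consider the induced module $M_n := W_n^{\operatorname{aff}}(A) \otimes_{\Bbbk S_n} \Bbbk_{\mathrm{triv}}$, where $\Bbbk S_n \to W_n^{\operatorname{aff}}(A)$ sends $s_i \mapsto \sigma_i$. By Proposition \ref{midnight2}, $W_n^{\operatorname{aff}}(A)$ is free as a right $\Bbbk S_n$-module with basis given by the monomials $\mathbf{a}x_1^{k_1}\cdots x_n^{k_n}$. Hence $f \mapsto f \otimes 1$ identifies $\operatorname{Pol}_n(A)$ with $M_n$.

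Under this identification, tokens and dots act by left multiplication. By \cref{movein}, the crossing acts by
\[
\sigma_i(f\otimes 1) = s_i(f)\sigma_i\otimes 1 - \partial_i(f)\otimes 1 = \bigl(s_i(f)-\partial_i(f)\bigr)\otimes 1 .
\]
So we obtain an algebra homomorphism $W_n^{\operatorname{aff}}(A) \to \operatorname{End}_\Bbbk(\operatorname{Pol}_n(A))$ with the prescribed formulas, and all relations of $\mathcal{AW}(A)$ hold for free.

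\textbf{Monoidality.} It remains to check that these actions come from a single monoidal functor. Concretely, after the identification $\operatorname{Pol}_n(A) \cong \operatorname{Pol}_1(A)^{\otimes n}$ of Lemma \ref{identification}, I must show that each generator on strand $i$ (or on strands $i,i+1$) acts as $\operatorname{id}^{\otimes(i-1)} \otimes P(\text{generator}) \otimes \operatorname{id}^{\otimes \cdots}$, with the Koszul sign conventions for $\Bbbk\text{-smod}$.

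For tokens and dots this is immediate from \cref{F1_2}, since a dot of parity $\varepsilon$ passing factors $1,\dots,i-1$ produces exactly the Koszul sign. For the crossing, I would check on pure tensors $g\otimes h\otimes k$, with $h \in \operatorname{Pol}_2(A)$ in positions $i,i+1$, that
\[
\partial_i(g\otimes h\otimes k) = (-1)^{\varepsilon \bar g}\, g\otimes \partial_1(h)\otimes k .
\]
This follows by induction using the twisted Leibniz rule \cref{leibniz2} and the values \cref{autumn22}, since $t_{i,i+1}$ has parity $\varepsilon$ and $\partial_i$ kills $A^{\otimes n}$ and $x_j$ for $j\neq i,i+1$. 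The analogous statement for $s_i$ is clear.

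Since $\operatorname{Hom}_{\mathcal{AW}(A)}(\go^{\otimes m},\go^{\otimes n})=0$ for $m\neq n$, Proposition \ref{zebra} shows that a monoidal functor is the same as these compatible algebra maps. This gives $P$.

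\textbf{Faithfulness.} This is the main obstacle. It suffices to show that $W_n^{\operatorname{aff}}(A)$ acts faithfully on $\operatorname{Pol}_n(A)$. Suppose $y=\sum_w f_w\sigma_w$ acts by zero, and grade $\operatorname{Pol}_n(A)$ by total $x$-degree.

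Since $\partial_i$ lowers degree by one, $\sigma_w$ acts on a homogeneous $g$ as $w(g)$ plus terms of lower degree. Take $g=x_1^{N}x_2^{2N}\cdots x_n^{nN}$ with $N$ larger than every $x$-degree occurring in the $f_w$. The images $w(g)$ are then signed monomials with pairwise distinct exponent vectors, and those exponent vectors remain distinguishable after multiplying by the $f_w$.

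Take the top-degree component of $y\cdot g = 0$, and expand each $f_w$ in the basis \cref{Poly_base}, moving the coefficients of $A^{\otimes n}$ to the left via \cref{F1_2}. Comparing basis coefficients, where the exponents determine $w$, gives that the top-degree part of each $f_w$ vanishes. Inducting downward on degree (or choosing $w$ and the degree maximal) yields $f_w=0$ for all $w$.

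The care needed here is in handling the signs and the twist by $\psi$ when commuting tokens past $x$'s. These only change coefficients by units and $\psi$-automorphisms, so they do not affect the vanishing argument.
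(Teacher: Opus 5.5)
Your proof is correct in outline, with one sign error to fix. Your faithfulness argument is the same as the paper's: the leading-term argument from Kleshchev's Thm.~3.2.2 applied to $x_1^{N}x_2^{2N}\cdots x_n^{nN}$. Your existence argument takes a different route.

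\textbf{Comparison of the existence arguments.} The paper proves existence by directly checking \cref{tokrel1}, \cref{braid2} and \cref{affwreath} for the proposed maps. You instead realize $\operatorname{Pol}_n(A)$ as the induced module $W_n^{\operatorname{aff}}(A)\otimes_{\Bbbk S_n}\Bbbk_{\mathrm{triv}}$ via Proposition \ref{midnight2}, and read off the crossing action from \cref{movein}. The relations then hold automatically, and only the monoidal compatibility check remains.

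This spares you the most tedious verifications, namely the braid relations and $\sigma_i^2=1$ for $s_i-\partial_i$. The price is that you use the basis theorem as input. The paper's direct route does not; combined with the faithfulness argument, it actually reproves linear independence of the basis in Proposition \ref{midnight2}, which is how Kleshchev uses it. Your route could not serve that purpose without circularity. Since the paper imports Proposition \ref{midnight2} from Savage and Mendon\c{c}a, your argument is legitimate here.

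\textbf{The sign error.} Your identity $\partial_i(g\otimes h\otimes k)=(-1)^{\varepsilon\bar g}\,g\otimes\partial_1(h)\otimes k$ is false when $\varepsilon=\bar 1$. The operator $\partial_i$ is \emph{even}: it sends $x_i$ (parity $\varepsilon$) to $t_{i,i+1}$ (also parity $\varepsilon$), it kills $A^{\otimes n}$, and the twisted Leibniz rule \cref{leibniz2} carries no sign. The parity of $t_{i,i+1}$ therefore produces no Koszul sign. For example, with $n=3$, $\partial_2(x_1x_2)=s_2(x_1)\partial_2(x_2)=x_1t_{2,3}$, whereas your formula gives $(-1)^{\varepsilon}x_1t_{2,3}$.

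The correct identity is $\partial_i(g\otimes h\otimes k)=g\otimes\partial_1(h)\otimes k$. It follows in one line from \cref{leibniz2}, since $\partial_i(g)=0$, $s_i(g)=g$ and $\partial_i(k)=0$. This sign-free version is exactly what monoidality requires. The crossing is even, so $s_1-\partial_1$ is even, and $\operatorname{id}\otimes(s_1-\partial_1)\otimes\operatorname{id}$ carries no Koszul sign. With your sign, the $s_i$-part and the $\partial_i$-part would acquire different signs and the compatibility would fail. With the correction, your argument goes through.
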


\begin{proof}
The proof uses standard techniques, and so we only sketch the argument here. The existence of $ P $ can be shown by directly verifying the relations \cref{tokrel1}, \cref{braid2}, and \cref{affwreath}. To prove that \(P\) is faithful, it suffices to show that the induced algebra homomorphism
\begin{equation*}
W_{n}^{\operatorname{aff}}(A) \rightarrow \operatorname{End}_{\Bbbk}(\operatorname{Pol}_{n}(A)), \quad z \mapsto P(z) 
\end{equation*}
is injective for all $ n \in \mathbb{N}_{+} $. Here, $ \operatorname{End}_{\Bbbk}(\operatorname{Pol}_{n}(A)) $ is the set of all $ \Bbbk $-linear maps $ \operatorname{Pol}_{n}(A) \rightarrow \operatorname{Pol}_{n}(A) $. Thus we fix an element $ n \in \mathbb{N}_{+} $, and we assume that $ P(z) = 0 $ for some $ z \in W_{n}^{\operatorname{aff}}(A) $. Then, as in the proof of \cite[Thm.~3.2.2]{Kleshchev}, one can consider $ P(z)(x_{1}^{N}x_{2}^{2N}\cdots x_{n}^{nN}) $ for $ N \gg 0 $ to obtain that $ z = 0 $.
\end{proof}

\begin{cor} \label{Polyrepp}
Let $ n \in \mathbb{N}_{+} $. Then we have an action of $ W_{n}^{\operatorname{aff}}(A) $ on $ \operatorname{Pol}_{n}(A) $, given by 
\begin{equation} \label{Polyrepp2}
g \cdot f = gf, \quad \sigma_{i} \cdot f = s_{i}(f) - \partial_{i}(f),
\end{equation}
for all $ g,f \in \operatorname{Pol}_{n}(A) $ and $ 1 \leq i \leq n-1 $. Furthermore, this action is faithful.
\end{cor}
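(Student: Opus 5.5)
This corollary should follow directly from Proposition \ref{Polyrepp-1}, by restricting the faithful monoidal functor $P$ to the endomorphism algebra of $\go^{\otimes n}$. Since $P$ is monoidal, it sends $\go^{\otimes n}$ to $\operatorname{Pol}_{1}(A)^{\otimes n}$, which we identify with $\operatorname{Pol}_{n}(A)$ via Lemma \ref{identification}. So $P$ induces an algebra homomorphism
\begin{equation*}
W_{n}^{\operatorname{aff}}(A) = \operatorname{End}_{\mathcal{AW}(A)}(\go^{\otimes n}) \rightarrow \operatorname{End}_{\Bbbk}(\operatorname{Pol}_{n}(A)),
\end{equation*}
and this makes $\operatorname{Pol}_{n}(A)$ a $W_{n}^{\operatorname{aff}}(A)$-module. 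Faithfulness of the action is then exactly faithfulness of $P$ on this endomorphism space, which is part of Proposition \ref{Polyrepp-1} (indeed, its proof works precisely by establishing injectivity of this map for each $n$).

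It remains to check that the action of the generators is as in \cref{Polyrepp2}. The algebra $W_{n}^{\operatorname{aff}}(A)$ is generated by the elements $a_{i}$, $x_{i}$ and $\sigma_{i}$, and these are the images of a token, a dot, or a crossing tensored with identities. First I would treat $a_{i}$ and $x_{i}$. By monoidality, $P(a_{i})$ is $\operatorname{id}^{\otimes(i-1)}\otimes(f\mapsto af)\otimes\operatorname{id}^{\otimes(n-i)}$, and $P(x_{i})$ has the same form with left multiplication by $x$. Here the tensor product of linear maps on super spaces carries the Koszul sign convention. Under the identification of Lemma \ref{identification}, applying this map to $f_{1}\otimes\cdots\otimes f_{n}$ gives $(-1)^{\bar a(\bar f_1+\cdots+\bar f_{i-1})}f_{1}\otimes\cdots\otimes af_{i}\otimes\cdots\otimes f_{n}$. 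This is exactly left multiplication by $a_{i}$ in $\operatorname{Pol}_{1}(A)^{\otimes n}$, by the multiplication rule for super tensor products; the argument for $x_{i}$ is identical, with parity $\varepsilon$. Hence every $g\in\operatorname{Pol}_{n}(A)$ acts by left multiplication, since such $g$ are generated by these elements.

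Next I would treat $\sigma_{i}$. Here I need the identity
\begin{equation*}
\bigl(\operatorname{id}^{\otimes(i-1)}\otimes P(\sigma_{1})\otimes\operatorname{id}^{\otimes(n-i-1)}\bigr)(f) = s_{i}(f)-\partial_{i}(f).
\end{equation*}
This is the step needing the most care, but the crossing is even, so no Koszul signs arise. It therefore suffices to note the following. Both sides are $\Bbbk$-linear and agree on $\operatorname{Pol}_{n}(A)$ written as pure tensors with the two relevant factors in positions $i,i+1$. Moreover, $s_{i}$ and $\partial_{i}$ act only on those two factors and commute with the embedding $\operatorname{Pol}_{2}(A)\hookrightarrow \operatorname{Pol}_{n}(A)$ on strands $i,i+1$. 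For $s_{i}$ this is immediate from the definition of the superpermutation action. For $\partial_{i}$ I would use the twisted Leibniz rule \cref{leibniz2}: $\partial_{i}$ kills the factors outside positions $i,i+1$, and $s_{i}$ fixes them, which gives $\partial_{i}(gh)=g\,\partial_{i}(h)$ for $g$ built from the other factors. This last point requires checking that $\partial_{i}(a_{j})=0$ and $\partial_{i}(x_{j})=0$ for $j\ne i,i+1$, and both hold by \cref{autumn22}.
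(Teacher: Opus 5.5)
Your proposal is correct and takes the same route as the paper. The paper simply deduces the corollary from Proposition~\ref{Polyrepp-1}: it restricts the faithful monoidal functor $P$ to $\operatorname{End}_{\mathcal{AW}(A)}(\go^{\otimes n})$ and identifies $\operatorname{Pol}_1(A)^{\otimes n}$ with $\operatorname{Pol}_n(A)$ via Lemma~\ref{identification}. Your additional checks make explicit what the paper leaves implicit, namely the Koszul signs for tokens and dots, and the compatibility of $s_i$ and $\partial_i$ with the embedding of $\operatorname{Pol}_2(A)$ on strands $i,i+1$.
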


\begin{proof}
This follows from Proposition \ref{Polyrepp-1}. 
\end{proof}

\begin{cor} \label{poly}
We have a $ \Bbbk $-linear monoidal functor 
\begin{equation*}
P' \colon \mathcal{LAW}(A) \rightarrow \Bbbk\text{-smod}
\end{equation*}
which sends $ \go $ to $ \operatorname{Pol}_{1}(A) $ and $ Q \in \mathcal{E}_{\operatorname{Pol}_{1}(A)} $ to $ \Bbbk $, and sends the generating morphisms as follows:
\begin{align*}
P'\left(\begin{tikzpicture}[centerzero, thick]
      \draw[-] (0.6,-0.4) -- (0.6,0.4);
      \token{0.6,0}{west}{a};
\end{tikzpicture}\right) &\colon \operatorname{Pol}_{1}(A) \rightarrow \operatorname{Pol}_{1}(A), \quad f \mapsto af, 
\\ P' \left(\begin{tikzpicture}[centerzero, thick]
      \draw[-] (0.6,-0.4) -- (0.6,0.4);
      \singdot{0.6,0};
\end{tikzpicture}\right) &\colon \operatorname{Pol}_{1}(A) \rightarrow \operatorname{Pol}_{1}(A), \quad f \mapsto xf,
\\ P' \left(\begin{tikzpicture}[centerzero, thick]
      \draw[-] (0.9,-0.4) -- (0.3,0.4);
      \draw[-] (0.3,-0.4) -- (0.9,0.4);
\end{tikzpicture}\right) &\colon \operatorname{Pol}_{2}(A) \rightarrow \operatorname{Pol}_{2}(A), \quad f \mapsto s_{1}(f) - \partial_{1}(f), 
\\ P' \left(\begin{tikzpicture}[centerzero, thick]
      \draw[-] (0.9,-0.4) -- (0.3,0.4);
      \draw[-,wei] (0.3,-0.4) -- (0.9,0.4);
      \node at (0.3,-0.6) {\tiny $ Q $};
\end{tikzpicture}\right) &\colon \operatorname{Pol}_{1}(A) \rightarrow \operatorname{Pol}_{1}(A), \quad f \mapsto f, 
\\ P' \left(\begin{tikzpicture}[centerzero, thick]
      \draw[-] (0.3,-0.4) -- (0.9,0.4);
      \draw[-,wei] (0.9,-0.4) -- (0.3,0.4);
      \node at (0.9,-0.6) {\tiny $ Q $};
\end{tikzpicture}\right) &\colon \operatorname{Pol}_{1}(A) \rightarrow \operatorname{Pol}_{1}(A), \quad f \mapsto Qf,
\end{align*}
for all $ a \in A $, $ Q \in \mathcal{E}_{\operatorname{Pol}_{1}(A)} $. Furthermore, the functor $ P' $ is faithful.
\end{cor}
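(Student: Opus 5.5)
The natural approach is to take $P' := P \circ \Phi$, where $\Phi \colon \mathcal{LAW}(A) \rightarrow \mathcal{AW}(A)$ is the functor of Theorem \ref{fruit} and $P \colon \mathcal{AW}(A) \rightarrow \Bbbk\text{-smod}$ is the polynomial representation of Proposition \ref{Polyrepp-1}. Since $\Phi$ is strict monoidal and $P$ is $\Bbbk$-linear monoidal, the composite is a $\Bbbk$-linear monoidal functor. This avoids verifying any relations of $\mathcal{LAW}(A)$ directly; they were all handled in the proof of Theorem \ref{fruit}.

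Next I would compute $P'$ on objects and generators. On objects, $P'(\go) = P(\go) = \operatorname{Pol}_{1}(A)$, and for $Q \in \mathcal{E}_{\operatorname{Pol}_{1}(A)}$ we get $P'(Q) = P(\mathds{1}) = \Bbbk$, the monoidal unit of $\Bbbk\text{-smod}$. On the token, dot and black crossing, $\Phi$ acts as the identity on the same-named generators, so the formulas follow directly from Proposition \ref{Polyrepp-1}.

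For the crossing $Q \otimes \go \to \go \otimes Q$, $\Phi$ gives the identity strand, so $P'$ of it is $f \mapsto f$. (Here $\Bbbk \otimes \operatorname{Pol}_{1}(A)$ and $\operatorname{Pol}_{1}(A) \otimes \Bbbk$ are identified with $\operatorname{Pol}_{1}(A)$ via the unit isomorphisms.) For the crossing $\go \otimes Q \to Q \otimes \go$, $\Phi$ gives the strand pinned with $Q = \sum_r a_{(r)}x^r$, namely $\sum_r (\text{token } a_{(r)}) \circ (\text{dot})^r$. Applying $P$ gives $f \mapsto \sum_r a_{(r)}x^r f = Qf$, because vertical composition becomes composition of maps, so the dots act by left multiplication by $x$ first and the token by left multiplication by $a_{(r)}$ afterwards.

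Faithfulness comes from composing two faithful functors: $\Phi$ is faithful by Corollary \ref{herd}, and $P$ is faithful by Proposition \ref{Polyrepp-1}. The only step needing care is the monoidal structure of $P$ in the super setting. There, $P(f\otimes g)$ carries the Koszul sign, and the identification $\operatorname{Pol}_{n}(A) \cong \operatorname{Pol}_{1}(A)^{\otimes n}$ of Lemma \ref{identification} is used to make sense of $P$ on $\go^{\otimes n}$. Since this is already built into Proposition \ref{Polyrepp-1}, I expect no real obstacle: the corollary is essentially bookkeeping of the images under $\Phi$.
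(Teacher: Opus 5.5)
Your proposal is correct and is essentially the paper's proof: the paper likewise takes $P' = P \circ \Phi$ and deduces faithfulness from the faithfulness of $\Phi$ (Corollary~\ref{herd}) and of $P$ (Proposition~\ref{Polyrepp-1}). Your explicit check that the crossing $\go \otimes Q \to Q \otimes \go$ goes to $f \mapsto Qf$ under $P \circ \Phi$ simply fills in the computation the paper calls straightforward.
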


\begin{proof}
It is straightforward to see that $ P' $ is equal to the composition of $ \Phi $ and $ P $, where the functor $ \Phi \colon \mathcal{LAW}(A) \rightarrow \mathcal{AW}(A) $ is given in Theorem \ref{fruit}, and $ P \colon \mathcal{AW}(A) \rightarrow \Bbbk\text{-smod} $ is the functor of Proposition \ref{Polyrepp-1}. The fact that $ P' $ is faithful then immediately follows from the fact that both $ \Phi $ and $ P $ are faithful; see Corollary \ref{herd} and Proposition \ref{Polyrepp-1}.
\end{proof}

\subsection{Description of the center} \label{h-affwreathcenter}

In this section, we fix a positive integer $ d \in \mathbb{N}_{+} $ and a word $ \mathbf{Q} \in F(\mathcal{E}_{\operatorname{Pol}_{1}(A)}) $. We aim to find the center of the algebra $ W_{d,\mathbf{Q}}^{\operatorname{aff}}(A) $, in the sense of \cref{centerdef}. For any elements $ \mathbf{i}, \mathbf{j} \in \Gamma_{d,\mathbf{Q}} $, we define
\begin{equation*}
_{\mathbf{i}}W_{d,\mathbf{Q}}^{\operatorname{aff}}(A)_{\mathbf{j}} = 1_{\mathbf{i}}W_{d,\mathbf{Q}}^{\operatorname{aff}}(A)1_{\mathbf{j}},
\end{equation*}
where $ 1_{\mathbf{i}} \in \operatorname{End}_{\mathcal{LAW}(A)}(\mathbf{i}) $ and $ 1_{\mathbf{j}} \in \operatorname{End}_{\mathcal{LAW}(A)}(\mathbf{j}) $ are the identity morphisms. Define $ \boldsymbol{\omega} := \mathbf{Q}\go^{d} \in \Gamma_{d,\mathbf{Q}} $ to be the concatenation of the words $ \mathbf{Q} $ and $ \go^{d} $.

\begin{lem} \label{flour}
We have an isomorphism of algebras $ W_{d}^{\operatorname{aff}}(A) \cong {_{\boldsymbol{\omega}}}W_{d,\mathbf{Q}}^{\operatorname{aff}}(A)_{\boldsymbol{\omega}} $.
\end{lem}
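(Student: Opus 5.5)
We construct the isomorphism explicitly from the natural functor $\mathcal{AW}(A) \rightarrow \mathcal{LAW}(A)$ sending generators to generators of the same name. Tensoring on the left with $1_{\mathbf{Q}}$ then gives an algebra homomorphism
\begin{equation*}
\Theta \colon W_{d}^{\operatorname{aff}}(A) = \operatorname{End}_{\mathcal{AW}(A)}(\go^{\otimes d}) \rightarrow \operatorname{End}_{\mathcal{LAW}(A)}(\boldsymbol{\omega}) = {_{\boldsymbol{\omega}}}W_{d,\mathbf{Q}}^{\operatorname{aff}}(A)_{\boldsymbol{\omega}}, \quad z \mapsto 1_{\mathbf{Q}} \otimes z,
\end{equation*}
where $1_{\mathbf{Q}}$ is the identity on the red strands. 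Since $1_{\mathbf{Q}}$ is even, the super interchange law introduces no signs, so $\Theta$ respects composition. Concretely, $\Theta$ places $d$ black strands to the right of all red strands.

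The plan is to show that $\Theta$ carries the basis of Proposition \ref{midnight2} bijectively onto the basis ${_{\boldsymbol{\omega}}}\mathcal{B}_{\boldsymbol{\omega}}$ of Theorem \ref{HL-basis}. Because Definition \ref{Generalized_sigma} allows a choice, we take $\sigma_{\boldsymbol{\omega},w,\boldsymbol{\omega}} := 1_{\mathbf{Q}} \otimes \sigma_{w}$, where $\sigma_w$ is built from a fixed reduced expression of $w$. This diagram satisfies the required properties: its black strands realise a reduced word for $w$, no red strand crosses a black strand (so each red/black pair crosses at most zero times), and it has no dots or tokens. We also have $\Theta(\mathbf{a}x_{1}^{k_1}\cdots x_{d}^{k_d}\sigma_w) = \mathbf{a}_{\boldsymbol{\omega}}\mathbf{x}_{\boldsymbol{\omega}}^{\alpha}\sigma_{\boldsymbol{\omega},w,\boldsymbol{\omega}}$ with $\alpha = (k_1,\ldots,k_d)$. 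Thus $\Theta$ is a bijection between bases, hence a linear isomorphism, and therefore an algebra isomorphism.

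The main point needing care is that Theorem \ref{HL-basis} holds for \emph{any} admissible choice of the diagrams $\sigma_{\mathbf{j},w,\mathbf{i}}$, since Remark \ref{greetings} leaves the choice free and the proofs of Lemma \ref{k-span2} and Lemma \ref{cactus2} never depend on it. This lets us use our specific choice above.

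As a cross-check on injectivity, $\Phi \circ \Theta = \operatorname{id}$, because $\Phi$ sends red strands to $\mathds{1}$ and fixes black generators. This already shows $\Theta$ is injective, so the basis theorem is needed only for surjectivity.
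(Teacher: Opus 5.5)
Your proof is correct and is essentially the paper's argument: the paper uses the same map that adds $|\mathbf{Q}|$ red strands on the left, and concludes it is an isomorphism from Proposition~\ref{midnight2} and Theorem~\ref{HL-basis}. You add the justification that the choice $\sigma_{\boldsymbol{\omega},w,\boldsymbol{\omega}} = 1_{\mathbf{Q}} \otimes \sigma_w$ is admissible, and a separate injectivity check via $\Phi \circ \Theta = \operatorname{id}$; both are sound, though the paper leaves them implicit.
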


\begin{proof}
There is an obvious algebra homomorphism $ W_{d}^{\operatorname{aff}}(A) \rightarrow {_{\boldsymbol{\omega}}}W_{d,\mathbf{Q}}^{\operatorname{aff}}(A)_{\boldsymbol{\omega}} $ that adds $ |\mathbf{Q}| $ red strands to the left of each diagram in $ W_{d}^{\operatorname{aff}}(A) $. It follows from Proposition \ref{midnight2} and Theorem \ref{HL-basis} that this map is an isomorphism.
\end{proof}

Given $ \mathbf{a} \in A^{\otimes d} $, $ \mathbf{i} \in \Gamma_{d,\mathbf{Q}} $, and $ 1 \leq i \leq d $, recall that we have the elements $ \mathbf{a}_{\mathbf{i}}, x_{i,\mathbf{i}} \in W_{d,\mathbf{Q}}^{\operatorname{aff}}(A) $ (see Section \ref{basisthm}). Then, by Theorem \ref{HL-basis}, we have an injective algebra homomorphism $ \operatorname{Pol}_{d}(A) \rightarrow W_{d,\mathbf{Q}}^{\operatorname{aff}}(A) $ given by
\begin{equation} \label{lifeguard}
\mathbf{a} \mapsto \sum_{\mathbf{i} \in \Gamma_{d,\mathbf{Q}}} \mathbf{a}_{\mathbf{i}}, \qquad x_{i} \mapsto \sum_{\mathbf{i} \in \Gamma_{d,\mathbf{Q}}} x_{i,\mathbf{i}}.
\end{equation}
For the rest of this section, we view $ \operatorname{Pol}_{d}(A) $ as a subalgebra of $ W_{d,\mathbf{Q}}^{\operatorname{aff}}(A) $ under this algebra homomorphism. We also view $ Z(\operatorname{Pol}_{d}(A))^{S_{d}} $ and $ Z(\operatorname{Pol}_{d}(A)) $ as subalgebras of $ W_{d,\mathbf{Q}}^{\operatorname{aff}}(A) $ under the inclusions 
\begin{equation}
Z(\operatorname{Pol}_{d}(A))^{S_{d}} \subseteq Z(\operatorname{Pol}_{d}(A)) \subseteq \operatorname{Pol}_{d}(A) \subseteq W_{d,\mathbf{Q}}^{\operatorname{aff}}(A).
\end{equation}

\begin{prop} \label{higher_center}
The center of $ W_{d,\mathbf{Q}}^{\operatorname{aff}}(A) $ is equal to $ Z(\operatorname{Pol}_{d}(A))^{S_{d}} $.
\end{prop}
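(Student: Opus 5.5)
The plan is to prove the two inclusions separately, using the faithful functor $\Phi$ of Theorem \ref{fruit} (Corollary \ref{herd}) to move computations into $W_{d}^{\operatorname{aff}}(A)$, where the center is known by Proposition \ref{aff_center_result}.

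\emph{Inclusion $Z(\operatorname{Pol}_{d}(A))^{S_{d}} \subseteq Z(W_{d,\mathbf{Q}}^{\operatorname{aff}}(A))$.} Let $f \in Z(\operatorname{Pol}_{d}(A))^{S_{d}}$. Viewed inside $W_{d,\mathbf{Q}}^{\operatorname{aff}}(A)$ via \cref{lifeguard}, it is $f = \sum_{\mathbf{i}} f_{\mathbf{i}}$, where $f_{\mathbf{i}}$ denotes the image of $f$ in $\operatorname{End}_{\mathcal{LAW}(A)}(\mathbf{i})$. It suffices to show that $f_{\mathbf{j}}\, y = (-1)^{\bar f \bar y}\, y\, f_{\mathbf{i}}$ for every homogeneous $y \in \operatorname{Hom}_{\mathcal{LAW}(A)}(\mathbf{i},\mathbf{j})$ with $\mathbf{i},\mathbf{j} \in \Gamma_{d,\mathbf{Q}}$.

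Since $\Phi$ is strict monoidal and kills red strands, $\Phi(f_{\mathbf{i}}) = f \in W_{d}^{\operatorname{aff}}(A)$ for every $\mathbf{i}$. Hence
\begin{equation*}
\Phi(f_{\mathbf{j}} y) = f\,\Phi(y) \quad \text{and} \quad \Phi(y f_{\mathbf{i}}) = \Phi(y)\, f .
\end{equation*}
By Proposition \ref{aff_center_result}, $f$ is central in $W_{d}^{\operatorname{aff}}(A)$, so these agree up to the expected sign. Faithfulness of $\Phi$ then gives the desired identity in $\mathcal{LAW}(A)$.

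\emph{Reverse inclusion.} Let $z$ be central in $W_{d,\mathbf{Q}}^{\operatorname{aff}}(A)$. Since $z$ commutes with each idempotent $1_{\mathbf{i}}$, we have $z = \sum_{\mathbf{i}} z_{\mathbf{i}}$ with $z_{\mathbf{i}} := 1_{\mathbf{i}} z 1_{\mathbf{i}}$.

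The component $z_{\boldsymbol{\omega}}$ is central in ${_{\boldsymbol{\omega}}}W_{d,\mathbf{Q}}^{\operatorname{aff}}(A)_{\boldsymbol{\omega}}$. By Lemma \ref{flour} and Proposition \ref{aff_center_result}, $z_{\boldsymbol{\omega}} = f_{\boldsymbol{\omega}}$ for some $f \in Z(\operatorname{Pol}_{d}(A))^{S_{d}}$.

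Next fix $\mathbf{i} \in \Gamma_{d,\mathbf{Q}}$. Let $\tau \in \operatorname{Hom}_{\mathcal{LAW}(A)}(\boldsymbol{\omega},\mathbf{i})$ be the dotless, tokenless diagram with no black–black crossings that moves the red strands rightward into the positions they occupy in $\mathbf{i}$. This $\tau$ uses only crossings of the form $Q \otimes \go \to \go \otimes Q$, so Theorem \ref{fruit} gives $\Phi(\tau) = 1_{\go^{\otimes d}}$.

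Centrality of $z$, together with the first inclusion applied to $f$, gives
\begin{equation*}
z_{\mathbf{i}}\tau = \tau z_{\boldsymbol{\omega}} = \tau f_{\boldsymbol{\omega}} = f_{\mathbf{i}}\tau .
\end{equation*}
Applying $\Phi$ yields $\Phi(z_{\mathbf{i}}) = \Phi(f_{\mathbf{i}})$, since $\Phi(\tau)$ is the identity. Faithfulness then gives $z_{\mathbf{i}} = f_{\mathbf{i}}$. Summing over $\mathbf{i}$, we get $z = f$.

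\emph{Expected main obstacle.} The delicate point is choosing the connecting diagram in the correct direction, so that its image under $\Phi$ is the identity rather than a product of pins. With the other direction one would instead need to cancel a regular element $h$. That is still possible, because $W_{d}^{\operatorname{aff}}(A)$ is free as a left $\operatorname{Pol}_{d}(A)$-module by Proposition \ref{midnight2}, but it is more awkward. A second, minor point is tracking the super signs in the first inclusion; these are handled uniformly because $\Phi$ preserves parity.
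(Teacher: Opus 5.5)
Your proof is correct. The reverse inclusion follows the paper's argument. You identify $z1_{\boldsymbol{\omega}}$ via Lemma~\ref{flour} and Proposition~\ref{aff_center_result}, and your $\tau$ is exactly the paper's $T=\sigma_{\mathbf{i},1,\boldsymbol{\omega}}$.

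The only difference there is how $T$ is cancelled. The paper asserts that $y\mapsto yT$ is injective ``by Theorem~\ref{HL-basis}''. You instead use $\Phi(\tau)=1_{\go^{\otimes d}}$ together with Corollary~\ref{herd}. Your version is the cleanest way to make that step precise, because a basis element times $T$ generally contains red--black double crossings, so injectivity cannot be read off from the basis directly.

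The real divergence is in the forward inclusion. The paper checks that $f$ commutes with a generating set: the idempotents $1_{\mathbf{i}}$, $\operatorname{Pol}_d(A)$, red--black crossings via \cref{hw1} and \cref{hw2}, and black--black crossings via Proposition~\ref{aff_center_result}. You instead push the identity $f_{\mathbf{j}}y=(-1)^{\bar f\bar y}yf_{\mathbf{i}}$ into $W_d^{\operatorname{aff}}(A)$ and pull it back by faithfulness.

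Each route has its merits. The paper's is local and uses only the defining relations. However, it leaves implicit why commutation with a black crossing in $W_d^{\operatorname{aff}}(A)$ persists in $\operatorname{End}_{\mathcal{LAW}(A)}(\mathbf{i})$ when red strands sit among the black ones. Your route handles every morphism at once. Its price is dependence on the faithfulness of $\Phi$, which is already available from the proof of Theorem~\ref{HL-basis}.
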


\begin{proof}
Let $ z \in Z(\operatorname{Pol}_{d}(A))^{S_{d}} $. Then in order to show that \(z\) lies in the center of $ W_{d,\mathbf{Q}}^{\operatorname{aff}}(A) $, it suffices to show that \(z\) commutes with the identity morphisms $ 1_{\mathbf{i}} $, the elements of $ \operatorname{Pol}_{d}(A) $, the red-black crossings, and the black-black crossings (since $ W_{d,\mathbf{Q}}^{\operatorname{aff}}(A) $ is generated by these elements). It is clear that \(z\) commutes with the identity morphisms $ 1_{\mathbf{i}} $ and the elements of $ \operatorname{Pol}_{d}(A) $. Next, we have by the relations \cref{hw1} and \cref{hw2} that \(z\) commutes with red-black crossings. Finally, \(z\) commutes with black-black crossings by Proposition \ref{aff_center_result}.
\\ \indent Conversely, let $ z \in Z(W_{d,\mathbf{Q}}^{\operatorname{aff}}(A)) $. Then $ z1_{\boldsymbol{\omega}} \in Z({_{\boldsymbol{\omega}}}W_{d,\mathbf{Q}}^{\operatorname{aff}}(A)_{\boldsymbol{\omega}}) $, and so since the isomorphism of Lemma \ref{flour} restricts to an isomorphism between $ Z(W_{d}^{\operatorname{aff}}(A)) $ and $ Z({_{\boldsymbol{\omega}}}W_{d,\mathbf{Q}}^{\operatorname{aff}}(A)_{\boldsymbol{\omega}}) $, we obtain by Proposition \ref{aff_center_result} that $ z1_{\boldsymbol{\omega}} = f1_{\boldsymbol{\omega}} $ for some $ f \in Z(\operatorname{Pol}_{d}(A))^{S_{d}} $. Let $ \mathbf{i} \in \Gamma_{d,\mathbf{Q}} $, and set $ T = \sigma_{\mathbf{i},1,\boldsymbol{\omega}} $ (see Definition \ref{Generalized_sigma}). Then
\begin{equation*}
z1_{\mathbf{i}}T = Tz1_{\boldsymbol{\omega}} = Tf1_{\boldsymbol{\omega}} = f1_{\mathbf{i}}T.
\end{equation*}
This implies that $ z1_{\mathbf{i}} = f1_{\mathbf{i}} $, since the map $ {_{\mathbf{i}}}W_{d,\mathbf{Q}}^{\operatorname{aff}}(A)_{\mathbf{i}} \rightarrow {_{\mathbf{i}}}W_{d,\mathbf{Q}}^{\operatorname{aff}}(A)_{\boldsymbol{\omega}} $, $ y \mapsto yT $ is injective by Theorem \ref{HL-basis}. Therefore,
\begin{equation*}
z = \sum_{\mathbf{i} \in \Gamma_{d,\mathbf{Q}}} z1_{\mathbf{i}} = \sum_{\mathbf{i} \in \Gamma_{d,\mathbf{Q}}} f1_{\mathbf{i}} = f \in Z(\operatorname{Pol}_{d}(A))^{S_{d}}. \qedhere
\end{equation*}
\end{proof}

\begin{rmk}
Let $ \mathbf{Q}, \mathbf{Q}' \in F(\mathcal{E}_{\operatorname{Pol}_{1}(A)}) $. Then, by Proposition \ref{higher_center}, we have an isomorphism of algebras $ Z(W_{d,\mathbf{Q}}^{\operatorname{aff}}(A)) \cong Z(W_{d,\mathbf{Q}'}^{\operatorname{aff}}(A)) $.
\end{rmk}

\subsection{Independence of the trace map} \label{trace_independent}

We have by \cite[Lem.~3.1.3]{Mendonca} that, up to isomorphism, the affine wreath product category $ \mathcal{AW}(A) $ and the affine wreath product algebra $ W_{n}^{\operatorname{aff}}(A) $ depend only on the underlying superalgebra \(A\), and not on the choice of trace map. This result is also given in \cite[Lem.~3.2]{Savage} in the case when one restricts to even trace maps. In this section, we find an analogue of these results in our higher-level setting.
\\ \indent Recall that, throughout this paper, \(A\) is a Frobenius superalgebra with trace map $ \operatorname{tr} \colon A \rightarrow \Bbbk $ of parity $ \varepsilon $ and Nakayama automorphism $ \psi \colon A \rightarrow A $. Let $ \operatorname{tr}' \colon A \rightarrow \Bbbk $ be another homogeneous trace map. We denote the parity of $ \operatorname{tr}' $ by $ \varepsilon' $, and we denote the Nakayama automorphism associated to $ \operatorname{tr}' $ by $ \psi' \colon A \rightarrow A $. We set \(A'\) to be the Frobenius algebra which is obtained by equipping \(A\) with $ \operatorname{tr}' $ instead of $ \operatorname{tr} $. The trace maps $ \operatorname{tr} $ and $ \operatorname{tr}' $ are related as follows.

\begin{lem} \label{twotrace232}
There exists a homogeneous invertible $ u \in A $ of parity $ \overline{u} = \varepsilon + \varepsilon' $ such that $ \operatorname{tr}'(a) = \operatorname{tr}(au) $ for all $ a \in A $. Furthermore, we have $ \psi'(a) = (-1)^{\bar{u}\bar{a}}u\psi(a)u^{-1} $ for all $ a \in A $.
\end{lem}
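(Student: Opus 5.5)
The idea is to produce $u$ by representing $\operatorname{tr}'$ through the nondegenerate pairing attached to $\operatorname{tr}$, and then to read off $\psi'$ by comparing the two defining conditions \cref{supersymmetric}.

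\emph{Existence of $u$.} Define
\[
u := \sum_{b \in B} \operatorname{tr}'(b)\, b^{\vee}.
\]
By \cref{expansion}, for any $a \in A$ we get $\operatorname{tr}(au) = \sum_{b} \operatorname{tr}'(b)\operatorname{tr}(ab^{\vee})$. Writing $a = \sum_{c} \operatorname{tr}(c^{\vee}a)c$ and using linearity of $\operatorname{tr}'$ then yields $\operatorname{tr}(au) = \operatorname{tr}'(a)$.

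A more conceptual route avoids the index bookkeeping. The map $A \to A^{*}$, $v \mapsto \operatorname{tr}(-\,v)$, is an isomorphism, because $A$ is Frobenius with respect to $\operatorname{tr}$ (this is what the dual bases encode). Since $\operatorname{tr}' \in A^{*}$ is homogeneous of parity $\varepsilon'$, it has a unique preimage $u$. Because $\operatorname{tr}$ has parity $\varepsilon$, the equation $\operatorname{tr}(au) = \operatorname{tr}'(a)$ forces $u$ to be homogeneous with $\bar u = \varepsilon + \varepsilon'$.

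\emph{Invertibility of $u$.} Apply the same construction with the roles of $\operatorname{tr}$ and $\operatorname{tr}'$ exchanged; this uses that $\operatorname{tr}'$ is also nondegenerate. It gives $u'$ with $\operatorname{tr}(a) = \operatorname{tr}'(au')$. Then
\[
\operatorname{tr}(a) = \operatorname{tr}(au'u) \quad \text{for all } a \in A,
\]
so nondegeneracy of $\operatorname{tr}$ gives $u'u = 1$. To get a two-sided inverse I will argue symmetrically on the other side. Alternatively, note that right multiplication $R_u$ corresponds, under the two isomorphisms $A \cong A^{*}$, to an isomorphism, so $R_u$ is bijective; hence $u$ has a right inverse, and combined with the left inverse $u'$ it is invertible.

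I expect this to be the only subtle point. With $\Bbbk$ an arbitrary commutative ring, one cannot use finite-dimensionality to pass from a one-sided inverse to a two-sided one. So I will insist on the argument that produces both one-sided inverses explicitly.

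\emph{Formula for $\psi'$.} This is a sign computation. For $a, b \in A$,
\[
\operatorname{tr}'(ab) = \operatorname{tr}(abu) = (-1)^{\bar a(\bar b + \bar u)} \operatorname{tr}(bu\psi(a)) = (-1)^{\bar a(\bar b + \bar u)} \operatorname{tr}\bigl(b\,(u\psi(a)u^{-1})\,u\bigr),
\]
which equals $(-1)^{\bar a\bar b}\operatorname{tr}'\bigl(b \cdot (-1)^{\bar u\bar a} u\psi(a)u^{-1}\bigr)$. By the uniqueness of the Nakayama automorphism in \cref{supersymmetric}, applied to $\operatorname{tr}'$ (valid since $\operatorname{tr}'$ is nondegenerate), this gives $\psi'(a) = (-1)^{\bar u\bar a}u\psi(a)u^{-1}$. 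Here I use that $\psi$ is parity preserving, so that $\overline{\psi(a)} = \bar a$.
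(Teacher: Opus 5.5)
Your proof is correct in substance, and for the first statement it takes a different route from the paper. The paper does not prove existence of $u$. It cites \cite[Prop.~4.7]{Pike-Savage} for a homogeneous invertible $u$ of parity $\varepsilon+\varepsilon'$ with $\operatorname{tr}'(a)=\operatorname{tr}(au)$. It then does exactly the sign computation you give for $\psi'$. You instead derive existence, parity and invertibility directly from the nondegeneracy of $\operatorname{tr}$ and $\operatorname{tr}'$, via the isomorphisms $A\to A^{*}$, $v\mapsto\operatorname{tr}(-\,v)$ and $v\mapsto\operatorname{tr}'(-\,v)$. The citation keeps the paper short and places the fact in the broader setting of twisted Frobenius extensions of graded superrings. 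Your version is self-contained, works over an arbitrary commutative ring using only dual bases, and makes the two-sidedness of the inverse explicit.

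Two details need repair.

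First, your explicit formula $u=\sum_{b\in B}\operatorname{tr}'(b)\,b^{\vee}$ is wrong. The coefficient of $b$ in $a$ is $\operatorname{tr}(b^{\vee}a)$, not $\operatorname{tr}(ab^{\vee})$. Using the defining property of $\psi$, one finds for homogeneous $a$
\[
\operatorname{tr}(au)=(-1)^{\bar a(1+\varepsilon)}\operatorname{tr}'(\psi(a)).
\]
This is not $\operatorname{tr}'$ in general. For purely even non-symmetric $A$ your element represents $\operatorname{tr}'\circ\psi$. For symmetric $A$ with $\varepsilon=\bar 0$ and $\varepsilon'=\bar 1$ it represents $-\operatorname{tr}'$. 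The correct explicit choice is $u=\sum_{b\in B}\operatorname{tr}'(b^{\vee})\,b$, for which $\operatorname{tr}(c^{\vee}u)=\operatorname{tr}'(c^{\vee})$ for all $c\in B$. Alternatively, drop that paragraph, since your conceptual route already suffices.

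Second, in your alternative invertibility argument, bijectivity of $R_u$ gives (from surjectivity) a \emph{left} inverse, not a right one. The clean finish is the symmetric computation you allude to: $\operatorname{tr}'(a)=\operatorname{tr}(au)=\operatorname{tr}'(auu')$ for all $a$. Nondegeneracy of $\operatorname{tr}'$ then gives $uu'=1$, which together with $u'u=1$ shows $u$ is invertible.
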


\begin{proof}
The first statement is given in \cite[Prop.~4.7]{Pike-Savage}. For the second statement, we compute for all $ a, b \in A $ that
\begin{equation*}
\operatorname{tr}'(ab) = \operatorname{tr}(abu) = (-1)^{(\bar{b}+\bar{u})\bar{a}}\operatorname{tr}(bu\psi(a)) = (-1)^{\bar{b}\bar{a}}\operatorname{tr}'(b(-1)^{\bar{u}\bar{a}}u\psi(a)u^{-1}), 
\end{equation*}
and so the result follows.
\end{proof}

For the rest of this section, the element $ u \in A $ is as in Lemma \ref{twotrace232}. Recall that we have the categories $ \mathpzc{Pol}(A) $ and $ \mathpzc{Pol}(A') $ from Definition \ref{summer}. These categories are related as follows.

\begin{lem} \label{radiant}
We have an isomorphism of monoidal categories $ \Psi \colon  \mathpzc{Pol}(A) \rightarrow \mathpzc{Pol}(A') $ sending the generating object $ \go $ to $ \go $, and sending the generating morphisms as follows:
\begin{equation*}
\begin{tikzpicture}[centerzero, thick]
      \draw[-] (0.6,-0.4) -- (0.6,0.4);
      \token{0.6,0}{west}{a};
\end{tikzpicture}
\mapsto 
\begin{tikzpicture}[centerzero, thick]
      \draw[-] (0.6,-0.4) -- (0.6,0.4);
      \token{0.6,0}{west}{a};
\end{tikzpicture}, 
\quad 
\begin{tikzpicture}[centerzero, thick]
      \draw[-] (0.6,-0.4) -- (0.6,0.4);
      \singdot{0.6,0};
\end{tikzpicture}
\mapsto (-1)^{\bar{u}} \ 
\begin{tikzpicture}[centerzero, thick]
      \draw[-] (0.6,-0.4) -- (0.6,0.4);
      \singdot{0.6,0.15};
      \token{0.6,-0.15}{west}{u};
\end{tikzpicture} \ , \qquad a \in A.
\end{equation*}
Its inverse $ \Psi' \colon  \mathpzc{Pol}(A') \rightarrow \mathpzc{Pol}(A) $ sends $ \go $ to $ \go $, and sends the generating morphisms as follows:
\begin{equation*}
\begin{tikzpicture}[centerzero, thick]
      \draw[-] (0.6,-0.4) -- (0.6,0.4);
      \token{0.6,0}{west}{a};
\end{tikzpicture}
\mapsto 
\begin{tikzpicture}[centerzero, thick]
      \draw[-] (0.6,-0.4) -- (0.6,0.4);
      \token{0.6,0}{west}{a};
\end{tikzpicture}, 
\quad 
\begin{tikzpicture}[centerzero, thick]
      \draw[-] (0.6,-0.4) -- (0.6,0.4);
      \singdot{0.6,0};
\end{tikzpicture}
\mapsto (-1)^{\bar{u}} \ 
\begin{tikzpicture}[centerzero, thick]
      \draw[-] (0.6,-0.4) -- (0.6,0.4);
      \singdot{0.6,0.15};
      \token{0.6,-0.15}{west}{u^{-1}};
\end{tikzpicture} \ , \qquad a \in A.
\end{equation*}
\end{lem}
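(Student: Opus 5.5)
To prove Lemma \ref{radiant}, I would first show that $\Psi$ and $\Psi'$ are well-defined strict monoidal functors, and then check that they are mutually inverse on generators. Since $\mathpzc{Pol}(A)$ is presented by one object, the dot and the tokens, subject to \cref{tokrel1} and \cref{fox}, well-definedness of $\Psi$ amounts to two checks. First, the proposed images respect the parities: the image of the dot is $(-1)^{\bar u} x u$, which has parity $\varepsilon' + \bar u = \varepsilon$, the parity of the dot in $\mathpzc{Pol}(A)$. Second, the images satisfy \cref{tokrel1} and \cref{fox}. The relations \cref{tokrel1} are immediate, since tokens are sent to tokens with the same label. So the content lies in \cref{fox}.

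For \cref{fox}, I will work inside $\operatorname{Pol}_1(A')$, where the relation reads $a x = (-1)^{\varepsilon'\bar a} x \psi'(a)$. I must show that the image of the left side of \cref{fox}, namely $(-1)^{\bar u} a x u$, equals the image of the right side, namely $(-1)^{\varepsilon\bar a}(-1)^{\bar u} x u \psi(a)$. Applying the relation in $A'$ gives
\[
a x u = (-1)^{\varepsilon'\bar a} x \psi'(a) u .
\]
By Lemma \ref{twotrace232}, $\psi'(a)u = (-1)^{\bar u\bar a} u\psi(a)$, so
\[
a x u = (-1)^{\varepsilon'\bar a + \bar u \bar a} x u \psi(a) .
\]
Since $\bar u = \varepsilon + \varepsilon'$, the sign $(-1)^{\varepsilon'\bar a + \bar u\bar a}$ equals $(-1)^{\varepsilon\bar a}$, which is exactly what is needed. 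The symmetric computation handles $\Psi'$, using $u^{-1}$ and the inverse form of Lemma \ref{twotrace232}, $\psi(a) = (-1)^{\bar u\bar a}u^{-1}\psi'(a)u$. This step is where the sign bookkeeping is delicate, and I regard it as the main point to check carefully.

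Finally, I would check that the composites $\Psi'\Psi$ and $\Psi\Psi'$ are the identity on generators. Tokens are clearly fixed. For the dot,
\[
\Psi'\Psi(x) = (-1)^{\bar u}\,\Psi'(x)\,u = (-1)^{\bar u}(-1)^{\bar u}\, x u^{-1} u = x,
\]
and the same computation with the roles of $u$ and $u^{-1}$ swapped gives $\Psi\Psi'(x) = x$. Since both composites are strict monoidal functors that fix all generators, they are the identity functors, and so $\Psi$ is an isomorphism of monoidal categories with inverse $\Psi'$.
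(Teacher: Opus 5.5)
Your proposal is correct and follows the same route as the paper: you check \cref{tokrel1} and \cref{fox} for $\Psi$ using Lemma~\ref{twotrace232}, with the same sign computation $(-1)^{\varepsilon'\bar{a}+\bar{u}\bar{a}}=(-1)^{\varepsilon\bar{a}}$. You then obtain $\Psi'$ by symmetry and verify the composites on generators, as the paper does; your explicit check that $(-1)^{\bar{u}}xu$ has parity $\varepsilon$ is a worthwhile detail the paper leaves implicit.
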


\begin{proof}
To prove the existence of $ \Psi $, it suffices to check the relations \cref{tokrel1} and \cref{fox}. The relation \cref{tokrel1} is immediate, and one can show that \cref{fox} holds by using Lemma \ref{twotrace232}. The existence of $ \Psi' $ then follows by symmetry. Finally, it is straightforward to check that $ \Psi $ and $ \Psi' $ are mutual inverses.
\details{Let $ a \in A $. Then using Lemma \ref{twotrace232}, we compute that
\begin{equation} \label{dotproof}
\Psi\left(\begin{tikzpicture}[centerzero, thick]
      \draw[-] (0.6,-0.4) -- (0.6,0.4);
      \singdot{0.6,-0.15};
      \token{0.6,0.15}{west}{a};
\end{tikzpicture}\right)
=
(-1)^{\bar{u}} \
\begin{tikzpicture}[centerzero, thick]
      \draw[-] (0.6,-0.4) -- (0.6,0.4);
      \singdot{0.6,0};
      \token{0.6,0.25}{west}{a};
      \token{0.6,-0.25}{west}{u};
\end{tikzpicture}
=
(-1)^{\bar{u}+\varepsilon' \bar{a}} \
\begin{tikzpicture}[centerzero, thick]
      \draw[-] (0.6,-0.4) -- (0.6,0.4);
      \singdot{0.6,0.15};
      \token{0.6,-0.15}{west}{\psi'(a)u};
\end{tikzpicture}
=
(-1)^{\bar{u}+\varepsilon \bar{a}} \
\begin{tikzpicture}[centerzero, thick]
      \draw[-] (0.6,-0.4) -- (0.6,0.4);
      \singdot{0.6,0.15};
      \token{0.6,-0.15}{west}{u\psi(a)};
\end{tikzpicture}
= 
\Psi\left( (-1)^{\varepsilon \bar{a}} \ 
\begin{tikzpicture}[centerzero, thick]
      \draw[-] (0.6,-0.4) -- (0.6,0.4);
      \singdot{0.6,0.15};
      \token{0.6,-0.15}{west}{\psi(a)};
\end{tikzpicture}\right).
\end{equation}}
\end{proof}

\details{We state some useful properties of the functor $ \Psi $ here, which we will apply in the proof of Proposition \ref{ind_trace_map}. The functor $ \Psi $ induces an algebra isomorphism $ \Psi \colon \operatorname{Pol}_{n}(A) \rightarrow \operatorname{Pol}_{n}(A') $ for each $ n \in \mathbb{N}_{+} $. If $ 1 \leq i \leq n-1 $, then we have
\begin{align} 
\Psi(s_{i}(f)) &= s_{i}(\Psi(f)), \qquad f \in \operatorname{Pol}_{n}(A). \label{jolly2}
\end{align}
Also, it is straightforward to see that, if $ n \in \mathbb{N}_{+} $ and $ 1 \leq i \leq n $, then we have in $ \operatorname{Pol}_{n}(A') $ that
\begin{equation} \label{crispy}
\Psi(f_{i}) = \Psi(f)_{i}, \qquad f \in \operatorname{Pol}_{1}(A).
\end{equation}
Recall that, for $ n \in \mathbb{N}_{+} $ and $ 1 \leq i \leq n-1 $, we have the Frobenius Demazure operators $ \partial_{i} \colon \operatorname{Pol}_{n}(A) \rightarrow \operatorname{Pol}_{n}(A) $ from Definition \ref{autumn}. We set $ \partial_{i,A} \colon \operatorname{Pol}_{n}(A) \rightarrow \operatorname{Pol}_{n}(A) $ to be the Frobenius Demazure operators associated to $ \operatorname{Pol}_{n}(A) $, and we set $ \partial_{i,A'} \colon \operatorname{Pol}_{n}(A') \rightarrow \operatorname{Pol}_{n}(A') $ to be the Frobenius Demazure operators associated to $ \operatorname{Pol}_{n}(A') $. This is to avoid confusion between the various Demazure operators. Then for $ n \in \mathbb{N}_{+} $ and $ 1 \leq i \leq n-1 $, one can show by induction that the induced algebra isomorphism $ \Psi \colon \operatorname{Pol}_{n}(A) \rightarrow \operatorname{Pol}_{n}(A') $ satisfies
\begin{align} 
\Psi(\partial_{i,A}(f)) &= \partial_{i,A'}(\Psi(f)), \qquad f \in \operatorname{Pol}_{n}(A). \label{jolly}
\end{align}
Indeed, first note that, if $ f \in A^{\otimes n} $ or $ f = x_{j} $ for some $ j \neq i,i+1 $, then \cref{jolly} holds, since in this case we have that both sides of \cref{jolly} are equal to zero. We next show that \cref{jolly} holds when $ f = x_{i} $. First note that the left dual basis of \(B\), with respect to $ \operatorname{tr}' $, is given by $ (Bu)^{\vee} = \{(bu)^{\vee} : b \in B\} $. We now compute that
\begin{multline*}
\partial_{i,A'}(\Psi(x_{i})) = (-1)^{\bar{u}}\partial_{i,A'}(x_{i}u_{i}) = (-1)^{\bar{u}}t_{i,i+1}u_{i} = (-1)^{\bar{u}} \sum_{b \in B} (-1)^{\varepsilon'\bar{b}}b_{i}((bu)^{\vee})_{i+1}u_{i} 
\\ = \sum_{b \in B} (-1)^{\varepsilon (\bar{u} + \bar{b})}(bu)_{i}((bu)^{\vee})_{i+1} = \Psi(t_{i,i+1}) = \Psi(\partial_{i,A}(x_{i})).
\end{multline*}
Here, the third equality follows from \cref{wicked} and the fact that the left dual basis of \(B\), with respect to $ \operatorname{tr}' $, is given by $ (Bu)^{\vee} = \{(bu)^{\vee} : b \in B\} $. The fourth equality follows from the fact that $ \varepsilon' = \bar{u} + \varepsilon $ (see Lemma \ref{twotrace232}) and $ \overline{(bu)^{\vee}} = \bar{b} + \bar{u} + \varepsilon $ (see \cref{dualparity}). By a similar computation, one can show that \cref{jolly} holds for $ f = x_{i+1} $. Now suppose that \cref{jolly} holds for some $ f,g \in \operatorname{Pol}_{n}(A) $. Then we have
\begin{align*}
\Psi(\partial_{i,A}(fg)) &= \Psi(\partial_{i,A}(f)g + s_{i}(f)\partial_{i,A}(g)) 
\\ &= \partial_{i,A'}(\Psi(f))\Psi(g) + s_{i}(\Psi(f))\partial_{i,A'}(\Psi(g))
\\ &= \partial_{i,A'}(\Psi(fg)),
\end{align*}
where the first and third equalities follow by \cref{leibniz2}, and the second equality is obtained by using \cref{jolly2}. Thus \cref{jolly} now follows by induction.} 

Recall that we have the sets $ \mathcal{E}_{\operatorname{Pol}_{1}(A)} $ and $ \mathcal{E}_{\operatorname{Pol}_{1}(A')} $ from Definition \ref{multi_pen}. Then the induced algebra isomorphism $ \Psi \colon \operatorname{Pol}_{1}(A) \rightarrow \operatorname{Pol}_{1}(A') $ satisfies
\begin{equation} \label{grasp}
\Psi(\mathcal{E}_{\operatorname{Pol}_{1}(A)}) = \mathcal{E}_{\operatorname{Pol}_{1}(A')}.
\end{equation}

\begin{prop} \label{ind_trace_map}
We have an isomorphism of monoidal categories $ \mathcal{F} \colon \mathcal{LAW}(A) \rightarrow \mathcal{LAW}(A') $ which maps $ \go $ to $ \go $ and $ Q \in \mathcal{E}_{\operatorname{Pol}_{1}(A)} $ to $ \Psi(Q) \in \mathcal{E}_{\operatorname{Pol}_{1}(A')} $, and sends the generating morphisms as follows:
\begin{gather*}
\mathcal{F} \left(\begin{tikzpicture}[centerzero, thick]
      \draw[-] (0.6,-0.4) -- (0.6,0.4);
      \token{0.6,0}{west}{a};
\end{tikzpicture}\right) = \begin{tikzpicture}[centerzero, thick]
      \draw[-] (0.6,-0.4) -- (0.6,0.4);
      \token{0.6,0}{west}{a};
\end{tikzpicture}, 
\quad 
\mathcal{F} \left(\begin{tikzpicture}[centerzero, thick]
      \draw[-] (0.6,-0.4) -- (0.6,0.4);
      \singdot{0.6,0};
\end{tikzpicture}\right) = (-1)^{\bar{u}} \
\begin{tikzpicture}[centerzero, thick]
      \draw[-] (0.6,-0.4) -- (0.6,0.4);
      \singdot{0.6,0.15};
      \token{0.6,-0.15}{west}{u};
\end{tikzpicture} \ ,
\quad 
\mathcal{F} \left(\begin{tikzpicture}[centerzero, thick]
      \draw[-] (0.9,-0.4) -- (0.3,0.4);
      \draw[-] (0.3,-0.4) -- (0.9,0.4);
\end{tikzpicture}\right) 
=
\begin{tikzpicture}[centerzero, thick]
      \draw[-] (0.9,-0.4) -- (0.3,0.4);
      \draw[-] (0.3,-0.4) -- (0.9,0.4);
\end{tikzpicture} \ ,
\\ \mathcal{F} \left(\begin{tikzpicture}[centerzero, thick]
      \draw[-] (0.9,-0.4) -- (0.3,0.4);
      \draw[-,wei] (0.3,-0.4) -- (0.9,0.4);
      \node at (0.3,-0.6) {\tiny $ Q $};
\end{tikzpicture}\right) 
=
\begin{tikzpicture}[centerzero, thick]
      \draw[-] (0.9,-0.4) -- (0.3,0.4);
      \draw[-,wei] (0.3,-0.4) -- (0.9,0.4);
      \node at (0.3,-0.6) {\tiny $ \Psi(Q) $};
\end{tikzpicture} \ ,
\quad 
\mathcal{F}\left(\begin{tikzpicture}[centerzero, thick]
      \draw[-] (0.3,-0.4) -- (0.9,0.4);
      \draw[-,wei] (0.9,-0.4) -- (0.3,0.4);
      \node at (0.9,-0.6) {\tiny $ Q $};
\end{tikzpicture}\right) 
= 
\begin{tikzpicture}[centerzero, thick]
      \draw[-] (0.3,-0.4) -- (0.9,0.4);
      \draw[-,wei] (0.9,-0.4) -- (0.3,0.4);
      \node at (0.9,-0.6) {\tiny $ \Psi(Q) $};
\end{tikzpicture} \ ,
\end{gather*}
for all $ a \in A $. Its inverse $ \mathcal{G} \colon \mathcal{LAW}(A') \rightarrow \mathcal{LAW}(A) $ maps $ \go $ to $ \go $ and $ Q \in \mathcal{E}_{\operatorname{Pol}_{1}(A')} $ to $ \Psi^{-1}(Q) \in \mathcal{E}_{\operatorname{Pol}_{1}(A)} $, and sends the generating morphisms as follows:
\begin{gather*}
\mathcal{G} \left(\begin{tikzpicture}[centerzero, thick]
      \draw[-] (0.6,-0.4) -- (0.6,0.4);
      \token{0.6,0}{west}{a};
\end{tikzpicture}\right) = \begin{tikzpicture}[centerzero, thick]
      \draw[-] (0.6,-0.4) -- (0.6,0.4);
      \token{0.6,0}{west}{a};
\end{tikzpicture}, 
\quad 
\mathcal{G} \left(\begin{tikzpicture}[centerzero, thick]
      \draw[-] (0.6,-0.4) -- (0.6,0.4);
      \singdot{0.6,0};
\end{tikzpicture}\right) = (-1)^{\bar{u}} \
\begin{tikzpicture}[centerzero, thick]
      \draw[-] (0.6,-0.4) -- (0.6,0.4);
      \singdot{0.6,0.15};
      \token{0.6,-0.15}{west}{u^{-1}};
\end{tikzpicture} \ ,
\quad 
\mathcal{G} \left(\begin{tikzpicture}[centerzero, thick]
      \draw[-] (0.9,-0.4) -- (0.3,0.4);
      \draw[-] (0.3,-0.4) -- (0.9,0.4);
\end{tikzpicture}\right) 
=
\begin{tikzpicture}[centerzero, thick]
      \draw[-] (0.9,-0.4) -- (0.3,0.4);
      \draw[-] (0.3,-0.4) -- (0.9,0.4);
\end{tikzpicture} \ ,
\\ \mathcal{G} \left(\begin{tikzpicture}[centerzero, thick]
      \draw[-] (0.9,-0.4) -- (0.3,0.4);
      \draw[-,wei] (0.3,-0.4) -- (0.9,0.4);
      \node at (0.3,-0.6) {\tiny $ Q $};
\end{tikzpicture}\right) 
=
\begin{tikzpicture}[centerzero, thick]
      \draw[-] (0.9,-0.4) -- (0.3,0.4);
      \draw[-,wei] (0.3,-0.4) -- (0.9,0.4);
      \node at (0.3,-0.6) {\tiny $ \Psi^{-1}(Q) $};
\end{tikzpicture} \ ,
\quad 
\mathcal{G}\left(\begin{tikzpicture}[centerzero, thick]
      \draw[-] (0.3,-0.4) -- (0.9,0.4);
      \draw[-,wei] (0.9,-0.4) -- (0.3,0.4);
      \node at (0.9,-0.6) {\tiny $ Q $};
\end{tikzpicture}\right) 
= 
\begin{tikzpicture}[centerzero, thick]
      \draw[-] (0.3,-0.4) -- (0.9,0.4);
      \draw[-,wei] (0.9,-0.4) -- (0.3,0.4);
      \node at (0.9,-0.6) {\tiny $ \Psi^{-1}(Q) $};
\end{tikzpicture} \ ,
\end{gather*}
for all $ a \in A $. 
\end{prop}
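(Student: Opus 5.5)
The proof will check that $\mathcal{F}$ and $\mathcal{G}$ respect the defining relations, and then that they are mutually inverse on generators.

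\textbf{Existence of $\mathcal{F}$.} The functor is defined on generating objects and morphisms, so it suffices to verify that the images satisfy \cref{tokrel1}, \cref{braid2}, \cref{affwreath}, and \cref{hw1,hw2,hw3,hw5,green}. The relations involving only black strands hold by the known isomorphism $\mathcal{AW}(A)\cong\mathcal{AW}(A')$ of \cite[Lem.~3.1.3]{Mendonca}. This isomorphism has the same formulas on tokens, dots and black crossings, so \cref{tokrel1}, \cref{braid2} and \cref{affwreath} transfer directly.

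For the mixed relations I use the isomorphism $\Psi$ of Lemma \ref{radiant} on $\operatorname{Pol}_n(A)$. On any diagram with no black--black crossings, $\mathcal{F}$ acts on dots and tokens exactly as $\Psi$ does. The remaining relations are handled as follows.
\begin{itemize}
\item Relations \cref{hw1} and the first relation of \cref{hw5} are immediate, since tokens and the black crossing are unchanged.
\item For \cref{hw2}, the image of a dot is $(-1)^{\bar u}$ times a dot times a token $u$. This slides through a red--black crossing in $\mathcal{LAW}(A')$ because dots and tokens separately do, by \cref{hw1} and \cref{hw2} in $\mathcal{LAW}(A')$.
\item For \cref{hw3}, the image of the left side is a double red--black crossing labelled $\Psi(Q)$. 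In $\mathcal{LAW}(A')$ this equals a pin $\Psi(Q)$. Applying $\mathcal{F}$ to the pin $Q$ also gives the pin $\Psi(Q)$, because $\mathcal{F}$ acts on the subalgebra generated by dots and tokens on one strand as $\Psi$ does.
\item For the second relation of \cref{hw5}, there are no dots, so it is immediate.
\end{itemize}
Identity \cref{grasp} guarantees that $\Psi(Q)\in\mathcal{E}_{\operatorname{Pol}_1(A')}$, so the target objects are legitimate.

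\textbf{The main obstacle: \cref{green}.} Applying $\mathcal{F}$ to \cref{green} gives the corresponding relation of $\mathcal{LAW}(A')$ for the label $\Psi(Q)$, except that the correction term becomes $\Psi(\partial_{1,A}(Q_1))$. We therefore need
\[
\Psi(\partial_{1,A}(Q_1))=\partial_{1,A'}(\Psi(Q)_1).
\]
By $\Psi(f_i)=\Psi(f)_i$, this reduces to showing $\Psi\circ\partial_{i,A}=\partial_{i,A'}\circ\Psi$ on $\operatorname{Pol}_n(A)$.

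Both sides obey the twisted Leibniz rule \cref{leibniz2}, and $\Psi$ commutes with $s_i$. So it suffices to check the identity on generators. On $A^{\otimes n}$ and on $x_j$ with $j\ne i,i+1$, both sides vanish. The substantive computation is the case $x_i$ (and symmetrically $x_{i+1}$), where we must show
\[
(-1)^{\bar u}\,t'_{i,i+1}\,u_i=\Psi(t_{i,i+1}).
\]
Here $t'$ is built from a basis of $A$ and its dual basis with respect to $\operatorname{tr}'$. I would use that the $\operatorname{tr}'$-dual basis of $\{bu^{-1}\}$ relates to $B^\vee$, or equivalently that $\{(bu)^\vee\}$ is dual to $B$ for $\operatorname{tr}'$. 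The sign is then controlled using $\varepsilon'=\varepsilon+\bar u$ together with \cref{dualparity}. This sign bookkeeping is where I expect the real work to be.

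\textbf{Inverse.} By symmetry, swapping the roles of $\operatorname{tr}$ and $\operatorname{tr}'$ and replacing $u$ by $u^{-1}$, the same argument shows that $\mathcal{G}$ exists. On generators, $\mathcal{G}\mathcal{F}$ fixes tokens and crossings. On objects it sends $Q$ to $\Psi^{-1}\Psi(Q)=Q$. On the dot,
\[
\mathcal{G}\mathcal{F}(\text{dot})=(-1)^{\bar u}\,\mathcal{G}(\text{dot})\,u=\Psi'\Psi(\text{dot})=\text{dot},
\]
using Lemma \ref{radiant}. The composite $\mathcal{F}\mathcal{G}$ is handled in the same way. Hence $\mathcal{F}$ and $\mathcal{G}$ are mutually inverse isomorphisms of monoidal categories.
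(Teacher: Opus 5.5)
Your proposal is correct and follows essentially the same route as the paper. You verify the defining relations and reduce \cref{green}, via $\Psi(Q_1)=\Psi(Q)_1$ and the twisted Leibniz rule, to $\Psi\circ\partial_{i,A}=\partial_{i,A'}\circ\Psi$, whose only substantive case is $(-1)^{\bar u}t'_{i,i+1}u_i=t_{i,i+1}$ (using the $\operatorname{tr}'$-dual basis $\{(bu)^\vee\}$ and $\varepsilon'=\varepsilon+\bar u$), and you get $\mathcal{G}$ by symmetry. The one difference is that you import \cref{affwreath} from \cite[Lem.~3.1.3]{Mendonca}, which requires that lemma's isomorphism to match these exact formulas and sign conventions; the paper checks it directly, and it follows immediately from the same identity $(-1)^{\bar u}t'_{1,2}u_1=t_{1,2}$ after sliding the token $u$ through the crossing, so the citation can be dropped.
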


\begin{proof}
To prove the existence of $ \mathcal{F} $, we need to check the relations \cref{tokrel1}, \cref{braid2}, \cref{affwreath}, and \cref{hw1,hw2,hw3,hw5,green}. These are straightforward, so we omit them here. It then follows by symmetry that the functor $ \mathcal{G} \colon \mathcal{LAW}(A') \rightarrow \mathcal{LAW}(A) $ exists. Finally, it is straightforward to see that $ \mathcal{F} $ and $ \mathcal{G} $ are mutual inverses.
\details{\emph{Relation} \cref{affwreath}: We compute that
\begin{align*}
\mathcal{F}\left(\begin{tikzpicture}[centerzero, thick]
        \draw[-] (0.3,-0.4) -- (-0.3,0.4);
        \draw[-] (-0.3,-0.4) -- (0.3,0.4);
        \singdot{-0.15,-0.2};
\end{tikzpicture}
-
\begin{tikzpicture}[centerzero, thick]
        \draw[-] (-0.3,-0.4) -- (0.3,0.4);
        \draw[-] (0.3,-0.4) -- (-0.3,0.4);
        \singdot{0.171,0.228};
\end{tikzpicture}\right)
&=
(-1)^{\bar{u}}
\begin{tikzpicture}[centerzero, thick]
        \draw[-] (0.3,-0.4) -- (-0.3,0.4);
        \draw[-] (-0.3,-0.4) -- (0.3,0.4);
        \token{-0.2,-0.26666666}{east}{u};
        \singdot{-0.1,-0.133333333};
\end{tikzpicture}
- (-1)^{\bar{u}}
\begin{tikzpicture}[centerzero, thick]
        \draw[-] (0.3,-0.4) -- (-0.3,0.4);
        \draw[-] (-0.3,-0.4) -- (0.3,0.4);
        \singdot{0.2,0.26666666};
        \token{0.1,0.133333333}{west}{u};
\end{tikzpicture}
\\ &= - (-1)^{\bar{u}} \ 
\begin{tikzpicture}[centerzero]
        \draw (-0.2,-0.4) -- (-0.2,0.4);
        \draw (0.2,-0.4) -- (0.2,0.4);
        \teleport{-0.2,0.1}{0.2,-0.1};
        \token{-0.2,-0.2}{east}{u};
\end{tikzpicture}
\\ &= -(-1)^{\bar{u}}\sum_{b \in B} (-1)^{\varepsilon' \bar{b}}
\begin{tikzpicture}[centerzero, thick]
        \draw[-] (0.2,-0.4) -- (0.2,0.4);
        \draw[-] (-0.2,-0.4) -- (-0.2,0.4);
        \token{-0.2,0.2}{east}{b};
        \token{0.2,0}{west}{(bu)^{\vee}};
        \token{-0.2,-0.2}{east}{u};
\end{tikzpicture}
\\ &= -\sum_{b \in B} (-1)^{\varepsilon(\bar{u}+\bar{b})}
\begin{tikzpicture}[centerzero, thick]
        \draw[-] (0.2,-0.4) -- (0.2,0.4);
        \draw[-] (-0.2,-0.4) -- (-0.2,0.4);
        \token{-0.2,0.2}{east}{bu};
        \token{0.2,-0.2}{west}{(bu)^{\vee}};
\end{tikzpicture}
\\ &= \mathcal{F}\left( - \ \begin{tikzpicture}[centerzero]
        \draw (-0.2,-0.4) -- (-0.2,0.4);
        \draw (0.2,-0.4) -- (0.2,0.4);
        \teleport{-0.2,0.1}{0.2,-0.1};
\end{tikzpicture}\right).
\end{align*}
Here, the third equality follows from \cref{teleporter66}, and the fact that the left dual basis of \(B\), with respect to $ \operatorname{tr}' $, is $ (Bu)^{\vee} = \{(bu)^{\vee} : b \in B\} $. The fourth equality follows from the fact that $ \varepsilon' = \bar{u} + \varepsilon $ (see Lemma \ref{twotrace232}) and $ \overline{(bu)^{\vee}} = \bar{b} + \bar{u} + \varepsilon $ (see \cref{dualparity}). For the second relation in \cref{affwreath}, we compute using Lemma \ref{twotrace232} that, for all $ a \in A $,
\begin{equation*} 
\mathcal{F}\left(\begin{tikzpicture}[centerzero, thick]
      \draw[-] (0.6,-0.4) -- (0.6,0.4);
      \singdot{0.6,-0.15};
      \token{0.6,0.15}{west}{a};
\end{tikzpicture}\right)
=
(-1)^{\bar{u}} \
\begin{tikzpicture}[centerzero, thick]
      \draw[-] (0.6,-0.4) -- (0.6,0.4);
      \singdot{0.6,0};
      \token{0.6,0.25}{west}{a};
      \token{0.6,-0.25}{west}{u};
\end{tikzpicture}
=
(-1)^{\bar{u}+\varepsilon' \bar{a}} \
\begin{tikzpicture}[centerzero, thick]
      \draw[-] (0.6,-0.4) -- (0.6,0.4);
      \singdot{0.6,0.15};
      \token{0.6,-0.15}{west}{\psi'(a)u};
\end{tikzpicture}
=
(-1)^{\bar{u}+\varepsilon \bar{a}} \
\begin{tikzpicture}[centerzero, thick]
      \draw[-] (0.6,-0.4) -- (0.6,0.4);
      \singdot{0.6,0.15};
      \token{0.6,-0.15}{west}{u\psi(a)};
\end{tikzpicture}
= 
\mathcal{F}\left( (-1)^{\varepsilon \bar{a}} \ 
\begin{tikzpicture}[centerzero, thick]
      \draw[-] (0.6,-0.4) -- (0.6,0.4);
      \singdot{0.6,0.15};
      \token{0.6,-0.15}{west}{\psi(a)};
\end{tikzpicture}\right).
\end{equation*}
\emph{Relation} \cref{hw3}: We only prove the first relation in \cref{hw3}, since the second relation is similar. We compute that 
\begin{equation*}
\mathcal{F}\left(\begin{tikzpicture}[centerzero, thick]
        \draw[-] (-0.2,-0.5) to[out=up,in=down] (0.2,0) to[out=up,in=down] (-0.2,0.5);
        \draw[-, wei] (0.2,-0.5) to[out=up,in=down] (-0.2,0) to[out=up,in=down] (0.2,0.5);
        \node at (0.2,-0.7) {\tiny $ Q $};
\end{tikzpicture}\right) 
= 
\begin{tikzpicture}[centerzero, thick]
        \draw[-] (-0.2,-0.5) to[out=up,in=down] (0.2,0) to[out=up,in=down] (-0.2,0.5);
        \draw[-, wei] (0.2,-0.5) to[out=up,in=down] (-0.2,0) to[out=up,in=down] (0.2,0.5);
        \node at (0.2,-0.7) {\tiny $ \Psi(Q) $};
\end{tikzpicture}
= 
\begin{tikzpicture}[centerzero, thick]
        \pin{-0.2,0}{-1,0}{\Psi(Q)};
        \draw[-] (-0.2,-0.5) -- (-0.2,0.5);
        \draw[-, wei] (0.2,-0.5) -- (0.2,0.5);
        \node at (0.2,-0.7) {\tiny $ \Psi(Q) $};
\end{tikzpicture}.
\end{equation*}
Thus the relation \cref{hw3} reduces to checking that 
\begin{equation} \label{light_switch}
\mathcal{F}\left(\begin{tikzpicture}[centerzero, thick]
        \pin{-0.2,0}{-1,0}{Q};
        \draw[-] (-0.2,-0.5) -- (-0.2,0.5);
        \draw[-, wei] (0.2,-0.5) -- (0.2,0.5);
        \node at (0.2,-0.7) {\tiny $ Q $};
\end{tikzpicture}\right)
= 
\begin{tikzpicture}[centerzero, thick]
        \pin{-0.2,0}{-1,0}{\Psi(Q)};
        \draw[-] (-0.2,-0.5) -- (-0.2,0.5);
        \draw[-, wei] (0.2,-0.5) -- (0.2,0.5);
        \node at (0.2,-0.7) {\tiny $ \Psi(Q) $};
\end{tikzpicture}.
\end{equation}
The equation \cref{light_switch} will follow if we are able to show that
\begin{equation} \label{flakes}
\mathcal{F}\left(\begin{tikzpicture}[centerzero, thick]
        \pin{-0.2,0}{-1,0}{f};
        \draw[-] (-0.2,-0.5) -- (-0.2,0.5);
        \draw[-, wei] (0.2,-0.5) -- (0.2,0.5);
        \node at (0.2,-0.7) {\tiny $ Q $};
\end{tikzpicture}\right)
= 
\begin{tikzpicture}[centerzero, thick]
        \pin{-0.2,0}{-1,0}{\Psi(f)};
        \draw[-] (-0.2,-0.5) -- (-0.2,0.5);
        \draw[-, wei] (0.2,-0.5) -- (0.2,0.5);
        \node at (0.2,-0.7) {\tiny $ \Psi(Q) $};
\end{tikzpicture}, \qquad f \in \operatorname{Pol}_{1}(A).
\end{equation}
The equation \cref{flakes} is straightforward to see.
\\ \indent \emph{Relation} \cref{green}: We compute that 
\begin{equation*}
\mathcal{F}\left(\begin{tikzpicture}[centerzero, thick]
        \draw[-] (0.4,-0.5) -- (-0.4,0.5);
        \draw[-] (-0.4,-0.5) -- (0.4,0.5);
        \node at (0,-0.7) {\tiny $ Q $};
        \draw[-, wei] (0,-0.5) to[out=up, in=down] (-0.4,0) to[out=up,in=down] (0,0.5);
\end{tikzpicture}\right)
= 
\begin{tikzpicture}[centerzero, thick]
        \draw[-] (0.4,-0.5) -- (-0.4,0.5);
        \draw[-] (-0.4,-0.5) -- (0.4,0.5);
        \node at (0,-0.7) {\tiny $ \Psi(Q) $};
        \draw[-, wei] (0,-0.5) to[out=up, in=down] (-0.4,0) to[out=up,in=down] (0,0.5);
\end{tikzpicture}
\quad \stackrel{\mathclap{\cref{green}}}{=} \quad
\begin{tikzpicture}[centerzero, thick]
        \draw[-] (2,-0.5) -- (1.2,0.5);
        \draw[-] (1.2,-0.5) -- (2,0.5);
        \node at (1.6,-0.7) {\tiny $ \Psi(Q) $};
        \draw[-, wei] (1.6,-0.5) to[out=up, in=down] (2,0) to[out=up,in=down] (1.6,0.5);
\end{tikzpicture}
\ - \
\begin{tikzpicture}[centerzero, thick]
        \node at (0,-0.7) {\tiny $ \Psi(Q) $};
        \draw[-] (-0.6,-0.5) -- (-0.6,0.5);
        \draw[-, wei] (0,-0.5) -- (0,0.5);
        \draw[-] (0.6,-0.5) -- (0.6,0.5);
        \pinpin{0.6,0}{-0.6,0}{2.2,0}{\partial_{1,A'}(\Psi(Q)_{1})};
\end{tikzpicture}
\end{equation*}
and
\begin{equation*}
\mathcal{F}\left(\begin{tikzpicture}[centerzero, thick]
        \draw[-] (2,-0.5) -- (1.2,0.5);
        \draw[-] (1.2,-0.5) -- (2,0.5);
        \node at (1.6,-0.7) {\tiny $ Q $};
        \draw[-, wei] (1.6,-0.5) to[out=up, in=down] (2,0) to[out=up,in=down] (1.6,0.5);
\end{tikzpicture}
\ - \
\begin{tikzpicture}[centerzero, thick]
        \node at (0,-0.7) {\tiny $ Q $};
        \draw[-] (-0.6,-0.5) -- (-0.6,0.5);
        \draw[-, wei] (0,-0.5) -- (0,0.5);
        \draw[-] (0.6,-0.5) -- (0.6,0.5);
        \pinpin{0.6,0}{-0.6,0}{1.8,0}{\partial_{1,A}(Q_{1})};
\end{tikzpicture}\right)
=
\begin{tikzpicture}[centerzero, thick]
        \draw[-] (2,-0.5) -- (1.2,0.5);
        \draw[-] (1.2,-0.5) -- (2,0.5);
        \node at (1.6,-0.7) {\tiny $ \Psi(Q) $};
        \draw[-, wei] (1.6,-0.5) to[out=up, in=down] (2,0) to[out=up,in=down] (1.6,0.5);
\end{tikzpicture}
-
\mathcal{F}\left(
\begin{tikzpicture}[centerzero, thick]
        \node at (0,-0.7) {\tiny $ Q $};
        \draw[-] (-0.6,-0.5) -- (-0.6,0.5);
        \draw[-, wei] (0,-0.5) -- (0,0.5);
        \draw[-] (0.6,-0.5) -- (0.6,0.5);
        \pinpin{0.6,0}{-0.6,0}{1.8,0}{\partial_{1,A}(Q_{1})};
\end{tikzpicture}\right).
\end{equation*}
Thus to prove \cref{green}, we must show that 
\begin{equation} \label{charger0.5}
\mathcal{F}\left(
\begin{tikzpicture}[centerzero, thick]
        \node at (0,-0.7) {\tiny $ Q $};
        \draw[-] (-0.6,-0.5) -- (-0.6,0.5);
        \draw[-, wei] (0,-0.5) -- (0,0.5);
        \draw[-] (0.6,-0.5) -- (0.6,0.5);
        \pinpin{0.6,0}{-0.6,0}{1.8,0}{\partial_{1,A}(Q_{1})};
\end{tikzpicture}\right)
= 
\begin{tikzpicture}[centerzero, thick]
        \node at (0,-0.7) {\tiny $ \Psi(Q) $};
        \draw[-] (-0.6,-0.5) -- (-0.6,0.5);
        \draw[-, wei] (0,-0.5) -- (0,0.5);
        \draw[-] (0.6,-0.5) -- (0.6,0.5);
        \pinpin{0.6,0}{-0.6,0}{2.2,0}{\partial_{1,A'}(\Psi(Q)_{1})};
\end{tikzpicture} \ .
\end{equation}
Due to \cref{jolly} and the fact that $ \Psi(Q_{1}) = \Psi(Q)_{1} $ (see \cref{crispy}), we have that \cref{charger0.5} will follow if we are able to show that
\begin{equation} \label{charger2}
\mathcal{F}\left(
\begin{tikzpicture}[centerzero, thick]
        \node at (0,-0.7) {\tiny $ Q $};
        \draw[-] (-0.6,-0.5) -- (-0.6,0.5);
        \draw[-, wei] (0,-0.5) -- (0,0.5);
        \draw[-] (0.6,-0.5) -- (0.6,0.5);
        \pinpin{0.6,0}{-0.6,0}{1.4,0}{f};
\end{tikzpicture}\right)
= 
\begin{tikzpicture}[centerzero, thick]
        \node at (0,-0.7) {\tiny $ \Psi(Q) $};
        \draw[-] (-0.6,-0.5) -- (-0.6,0.5);
        \draw[-, wei] (0,-0.5) -- (0,0.5);
        \draw[-] (0.6,-0.5) -- (0.6,0.5);
        \pinpin{0.6,0}{-0.6,0}{1.4,0}{\Psi(f)};
\end{tikzpicture} \ , \qquad f \in \operatorname{Pol}_{2}(A).
\end{equation}
The equation \cref{charger2} is straightforward to see.}
\end{proof}

By restricting to objects, $ \mathcal{F} $ induces a monoid isomorphism $ \mathcal{F} \colon F(\widehat{\mathcal{E}}_{\operatorname{Pol}_{1}(A)}) \rightarrow F(\widehat{\mathcal{E}}_{\operatorname{Pol}_{1}(A')}) $. This monoid isomorphism satisfies $ \mathcal{F}(\mathbf{Q}) \in F(\mathcal{E}_{\operatorname{Pol}_{1}(A')}) $ for all $ \mathbf{Q} \in F(\mathcal{E}_{\operatorname{Pol}_{1}(A)}) $.

\begin{cor} \label{ind_trace_map_2}
Let $ d \in \mathbb{N}_{+} $ and $ \mathbf{Q} \in F(\mathcal{E}_{\operatorname{Pol}_{1}(A)}) $. Then we have an isomorphism of algebras 
\begin{equation*}
W_{d,\mathbf{Q}}^{\operatorname{aff}}(A) \cong W_{d,\mathcal{F}(\mathbf{Q})}^{\operatorname{aff}}(A').
\end{equation*}
\end{cor}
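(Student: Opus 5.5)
The plan is to deduce the corollary directly from Proposition \ref{ind_trace_map}, by applying the monoidal isomorphism $ \mathcal{F} \colon \mathcal{LAW}(A) \rightarrow \mathcal{LAW}(A') $ to the object $ \bigoplus_{\mathbf{i} \in \Gamma_{d,\mathbf{Q}}} \mathbf{i} $ of the additive envelope.

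First I extend $ \mathcal{F} $ to a $ \Bbbk $-linear isomorphism of additive envelopes, $ \operatorname{Add}(\mathcal{F}) \colon \operatorname{Add}(\mathcal{LAW}(A)) \rightarrow \operatorname{Add}(\mathcal{LAW}(A')) $. It acts on formal direct sums objectwise and on matrices of morphisms entrywise. Its inverse is $ \operatorname{Add}(\mathcal{G}) $, and being a functor it induces an algebra isomorphism
\begin{equation*}
\operatorname{End}_{\operatorname{Add}(\mathcal{LAW}(A))}\Big(\bigoplus_{\mathbf{i} \in \Gamma_{d,\mathbf{Q}}} \mathbf{i}\Big) \xrightarrow{\cong} \operatorname{End}_{\operatorname{Add}(\mathcal{LAW}(A'))}\Big(\bigoplus_{\mathbf{i} \in \Gamma_{d,\mathbf{Q}}} \mathcal{F}(\mathbf{i})\Big).
\end{equation*}
Since $ \mathcal{F} $ is even (it preserves parities of generators), this is an isomorphism of superalgebras.

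Next I identify the target with $ W_{d,\mathcal{F}(\mathbf{Q})}^{\operatorname{aff}}(A') $. It suffices to show that the monoid isomorphism $ \mathcal{F} \colon F(\widehat{\mathcal{E}}_{\operatorname{Pol}_{1}(A)}) \rightarrow F(\widehat{\mathcal{E}}_{\operatorname{Pol}_{1}(A')}) $ restricts to a bijection $ \Gamma_{d,\mathbf{Q}} \rightarrow \Gamma_{d,\mathcal{F}(\mathbf{Q})} $.

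\begin{itemize}
\item[\textbullet] $ \mathcal{F} $ is letterwise: it sends $ \go \mapsto \go $ and $ Q \mapsto \Psi(Q) $. So it preserves length.
\item[\textbullet] By \cref{grasp}, it restricts to a bijection between $ \mathcal{E}_{\operatorname{Pol}_{1}(A)} $ and $ \mathcal{E}_{\operatorname{Pol}_{1}(A')} $.
\item[\textbullet] Because it is a letterwise bijection, it maps subwords to subwords. It also sends $ \go^{d} $ to $ \go^{d} $.
\end{itemize}

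Hence a $ (\go^{d},\mathbf{Q}) $-shuffle maps to a $ (\go^{d},\mathcal{F}(\mathbf{Q})) $-shuffle, and applying $ \mathcal{G} $ (letterwise $ \Psi^{-1} $) gives the inverse. Reindexing the direct sum along this bijection gives exactly the object defining $ W_{d,\mathcal{F}(\mathbf{Q})}^{\operatorname{aff}}(A') $ in \cref{HLAWPA2}.

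There is no real obstacle here. The only point needing care is that the additive envelope construction is functorial and preserves isomorphisms of $ \Bbbk $-linear supercategories, which is standard. The shuffle bijection is immediate from the letterwise description of $ \mathcal{F} $ on objects.
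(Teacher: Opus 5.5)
Your proposal is correct and matches the paper's argument. The paper likewise shows $\mathcal{F}(\Gamma_{d,\mathbf{Q}}) = \Gamma_{d,\mathcal{F}(\mathbf{Q})}$ using that $\mathcal{F}$ is a monoid isomorphism on objects fixing $\go$, then extends $\mathcal{F}$ to the additive envelopes and reindexes the direct sum to obtain $W_{d,\mathbf{Q}}^{\operatorname{aff}}(A) \cong W_{d,\mathcal{F}(\mathbf{Q})}^{\operatorname{aff}}(A')$.
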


\begin{proof}
This follows from Proposition \ref{ind_trace_map}.
\end{proof}

\details{We have that 
\begin{equation} \label{pencil_sharp}
\mathcal{F}(\Gamma_{d,\mathbf{Q}}) = \Gamma_{d,\mathcal{F}(\mathbf{Q})}.
\end{equation}
Indeed, we compute that 
\begin{gather*}
\mathcal{F}(\Gamma_{d,\mathbf{Q}}) \ \stackrel{\mathclap{\cref{fluffy2}}}{=} \ \mathcal{F}(F(\widehat{\mathcal{E}}_{\operatorname{Pol}_{1}(A)})_{\go^{d},\mathbf{Q}}) = F(\widehat{\mathcal{E}}_{\operatorname{Pol}_{1}(A')})_{\mathcal{F}(\go^{d}),\mathcal{F}(\mathbf{Q})} = F(\widehat{\mathcal{E}}_{\operatorname{Pol}_{1}(A')})_{\go^{d},\mathcal{F}(\mathbf{Q})} \ \stackrel{\mathclap{\cref{fluffy2}}}{=} \ \Gamma_{d,\mathcal{F}(\mathbf{Q})}.
\end{gather*}
Here, the third equality follows from the fact that the functor $ \mathcal{F} $ fixes $ \go $ (see Proposition \ref{ind_trace_map}), and the second equality follows from the fact that $ \mathcal{F} \colon F(\widehat{\mathcal{E}}_{\operatorname{Pol}_{1}(A)}) \rightarrow F(\widehat{\mathcal{E}}_{\operatorname{Pol}_{1}(A')}) $ is a monoid isomorphism. 
\\ \indent Now, the functor $ \mathcal{F} $ induces an isomorphism between the additive envelopes $ \operatorname{Add}(\mathcal{LAW}(A)) $ and $ \operatorname{Add}(\mathcal{LAW}(A')) $. This isomorphism in turn induces an algebra isomorphism 
\begin{gather*}
W_{d,\mathbf{Q}}^{\operatorname{aff}}(A) \quad \stackrel{\mathclap{\cref{HLAWPA2}}}{=} \quad \operatorname{End}_{\operatorname{Add}(\mathcal{LAW}(A))}\left(\bigoplus_{\mathbf{i} \in \Gamma_{d,\mathbf{Q}}} \mathbf{i} \right) \cong \operatorname{End}_{\operatorname{Add}(\mathcal{LAW}(A'))}\left(\bigoplus_{\mathbf{i} \in \Gamma_{d,\mathbf{Q}}} \mathcal{F}(\mathbf{i}) \right)
\\ \stackrel{\mathclap{\cref{pencil_sharp}}}{=} \quad \operatorname{End}_{\operatorname{Add}(\mathcal{LAW}(A'))}\left(\bigoplus_{\mathbf{j} \in \Gamma_{d,\mathcal{F}(\mathbf{Q})}} \mathbf{j}\right) \quad \stackrel{\mathclap{\cref{HLAWPA2}}}{=} \quad W_{d,\mathcal{F}(\mathbf{Q})}^{\operatorname{aff}}(A')
\end{gather*}
as required.}

\subsection{Cyclotomic quotients} \label{Cyclotomic_quotients}

In this section, we define cyclotomic quotients of higher-level affine wreath product algebras. These are natural analogues of the cyclotomic quotients for tensor product algebras; see \cite[Def.~4.7]{Webster}. Also, they are analogues of the cyclotomic quotients for higher-level affine Hecke algebras; see \cite[Def.~6.1]{Maksimau-Stroppel} and \cite[Def.~5.6]{Webster2}. Given an element $ \mathbf{i} \in F(\widehat{\mathcal{E}}_{\operatorname{Pol}_{1}(A)}) $, we define $ \mathbf{i}_{1} \in \widehat{\mathcal{E}}_{\operatorname{Pol}_{1}(A)} $ to be the first entry in the word $ \mathbf{i} $.

\begin{defn} \label{Highercycl}
Let $ d \in \mathbb{N}_{+} $ and $ \mathbf{Q} \in F(\mathcal{E}_{\operatorname{Pol}_{1}(A)}) $, and assume that $ |\mathbf{Q}| \geq 1 $. Then we define the \emph{cyclotomic $ (d,\mathbf{Q}) $-wreath product algebra} $ W_{d,\mathbf{Q}}^{\operatorname{cyc}}(A) $ to be the quotient of $ W_{d,\mathbf{Q}}^{\operatorname{aff}}(A) $ by the two-sided ideal generated by the elements of the set $ \{1_{\mathbf{i}} : \mathbf{i} \in \Gamma_{d,\mathbf{Q}}, \ \mathbf{i}_{1} = \go \} $.
\end{defn}

In other words, any diagram in $ W_{d,\mathbf{Q}}^{\operatorname{aff}}(A) $ that has a piece of black strand to the left of all the red strands is equal to \(0\) in the quotient $ W_{d,\mathbf{Q}}^{\operatorname{cyc}}(A) $. 

\begin{egg}
Suppose $ d = 2 $ and $ \mathbf{Q} = QQ' $ for some $ Q, Q' \in \mathcal{E}_{\operatorname{Pol}_{1}(A)} $. Let $ a \in A $, and consider the elements $ D_{1}, D_{2} \in W_{d,\mathbf{Q}}^{\operatorname{aff}}(A) $ given below:
\begin{equation*} 
D_{1} = 
\begin{tikzpicture}[centerzero, thick]
         \draw[-] (0.6,-0.7) to[out=up, in=down] (0.9,0) to[out=up, in=down] (0.3,0.7);
         \draw[-] (0.9,-0.7) to[out=up, in=down] (0,0.7);
         \draw[-,wei] (0,-0.7) to[out=up, in=down] (0.6,0.7);
         \draw[-,wei] (0.3,-0.7) to[out=up, in=down] (0.9,0.7);
         \singdot{0.9,0};
         \node at (0,-0.9) {\tiny $ Q $};
         \node at (0.3,-0.9) {\tiny $ Q' $};
\end{tikzpicture}
+ 3
\begin{tikzpicture}[centerzero, thick]
         \draw[-] (0,-0.7) to[out=up, in=down] (0.3,0.7);
         \draw[-,wei] (0.3,-0.7) to[out=up, in=down] (-0.3,0.7);
         \draw[-,wei] (0.6,-0.7) -- (0.6,0.7);
         \draw[-] (0.9,-0.7) -- (0.9,0.7);
         \token{0.9,0.3}{west}{a};
         \singdot{0.9,-0.2};
         \node at (0.3,-0.9) {\tiny $ Q $};
         \node at (0.6,-0.9) {\tiny $ Q' $};
\end{tikzpicture} \ , \quad 
D_{2} = 
\begin{tikzpicture}[centerzero, thick]
         \draw[-] (0.6,-0.7) to[out=up, in=down] (-0.3,0) to[out=up, in=down] (0.6,0.7);
         \draw[-] (0.9,-0.7) -- (0.9,0.7);
         \draw[-,wei] (0,-0.7) -- (0,0.7);
         \draw[-,wei] (0.3,-0.7) -- (0.3,0.7);
         \node at (0,-0.9) {\tiny $ Q $};
         \node at (0.3,-0.9) {\tiny $ Q' $};
\end{tikzpicture} \ .
\end{equation*}
Then both $ D_{1} $ and $ D_{2} $ lie in the kernel of the projection map $ W_{d,\mathbf{Q}}^{\operatorname{aff}}(A) \rightarrow W_{d,\mathbf{Q}}^{\operatorname{cyc}}(A) $.
\end{egg}

We can also define cyclotomic quotients of the affine wreath product algebras $ W_{n}^{\operatorname{aff}}(A) $ from Definition \ref{pluto2}.

\begin{defn}[{\cite[§6]{Savage}}]
Let $ n \in \mathbb{N}_{+} $ and $ Q \in \mathcal{E}_{\operatorname{Pol}_{1}(A)} $. Then we define the \emph{cyclotomic wreath product algebra} $ W_{n}^{Q}(A) $ to be the quotient of $ W_{n}^{\operatorname{aff}}(A) $ by the two-sided ideal generated by the element $ Q_{1} \in \operatorname{Pol}_{n}(A) $ (see \cref{fog}).
\end{defn}

The following theorem shows that, when $ \mathbf{Q} \in F(\mathcal{E}_{\operatorname{Pol}_{1}(A)}) $ is a word of length one, the cyclotomic $ (d,\mathbf{Q}) $-wreath product algebra is isomorphic to a cyclotomic wreath product algebra. It can be viewed as an analogue of \cite[Thm.~4.18]{Webster}, \cite[Prop.~5.7]{Webster2}, and \cite[Lem.~6.9]{Maksimau-Stroppel}.

\begin{theo} \label{cyclotomic_aff_thm}
Let $ d \in \mathbb{N}_{+} $ and $ Q \in \mathcal{E}_{\operatorname{Pol}_{1}(A)} $. Then we have an isomorphism of algebras 
\begin{equation*}
W_{d,Q}^{\operatorname{cyc}}(A) \cong W_{d}^{Q}(A).
\end{equation*}
\end{theo}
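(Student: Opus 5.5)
We use the idempotent $e := 1_{\boldsymbol{\omega}}$, where $\boldsymbol{\omega} = Q\go^{d}$. Write $\pi \colon W_{d,Q}^{\operatorname{aff}}(A) \to W_{d,Q}^{\operatorname{cyc}}(A)$ for the projection. Since $|\mathbf{Q}| = 1$, the word $\boldsymbol{\omega}$ is the only element of $\Gamma_{d,Q}$ that does not begin with $\go$. Hence in $W_{d,Q}^{\operatorname{cyc}}(A)$ we have $1 = \sum_{\mathbf{i}} \pi(1_{\mathbf{i}}) = \pi(e)$, and so
\begin{equation*}
W_{d,Q}^{\operatorname{cyc}}(A) = \pi\bigl(e\, W_{d,Q}^{\operatorname{aff}}(A)\, e\bigr).
\end{equation*}
By Lemma \ref{flour}, $e\, W_{d,Q}^{\operatorname{aff}}(A)\, e \cong W_{d}^{\operatorname{aff}}(A)$. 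This gives a surjective algebra homomorphism $\theta \colon W_{d}^{\operatorname{aff}}(A) \to W_{d,Q}^{\operatorname{cyc}}(A)$, and its kernel is $W_{d}^{\operatorname{aff}}(A) \cap J$, where $J$ is the two-sided ideal generated by the idempotents $1_{\mathbf{i}}$ with $\mathbf{i}_{1} = \go$, viewed inside $e W e$. It therefore suffices to show that $eJe$ corresponds exactly to the ideal of $W_{d}^{\operatorname{aff}}(A)$ generated by $Q_{1}$.

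First, $Q_{1}$ lies in the kernel. Let $Y \colon \boldsymbol{\omega} \to \go Q \go^{d-1}$ be the single red--black crossing moving the first black strand to the left of the red strand, and let $X \colon \go Q \go^{d-1} \to \boldsymbol{\omega}$ be the crossing in the opposite direction. Relation \cref{hw3} shows that $XY$ is the identity on $\boldsymbol{\omega}$ with $Q$ pinned on the first black strand, that is, $XY = Q_{1}e$ under Lemma \ref{flour}. Since $Y$ factors through $1_{\go Q\go^{d-1}}$, this element is killed, so $\theta(Q_{1}) = 0$. Thus $\theta$ factors through $W_{d}^{Q}(A)$.

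Second, and this is the main step, we must show that every element of the form $e\,D\,1_{\mathbf{i}}\,D'\,e$ with $\mathbf{i}_{1} = \go$ lies in $W_{d}^{\operatorname{aff}}(A)\,Q_{1}\,W_{d}^{\operatorname{aff}}(A)$ after transport along Lemma \ref{flour}. By Lemma \ref{k-span2}, $1_{\mathbf{i}}D'e$ is a combination of diagrams $\mathbf{a}\mathbf{x}^{\alpha}\sigma_{\mathbf{i},w,\boldsymbol{\omega}}$, and we may choose $\sigma_{\mathbf{i},w,\boldsymbol{\omega}}$ (Remark \ref{greetings}) so that its red--black crossings occur at the bottom. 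Because $\mathbf{i}$ begins with $\go$, the red strand must cross at least the black strand that ends in position $1$. Using \cref{hw5}, \cref{green}, and induction on crossings, which is the argument of Lemma \ref{k-span2}, we can rearrange such a diagram as (black-only diagram) $\circ$ (the red strand passing to the right of the first $k \geq 1$ black strands) $\circ$ (black-only diagram), up to lower terms of the same shape. The diagram $eD1_{\mathbf{i}}$ is handled symmetrically. Composing the two pieces produces a red strand that goes right over $k$ black strands and then returns left. We then apply \cref{hw3} and \cref{green} repeatedly to cancel the red strand, and the bigon with the first black strand contributes a factor $Q_{1}$. Pins of $Q$ on other strands arise as $\sigma$-conjugates of $Q_{1}$ modulo terms produced by \cref{movein}. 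The correction terms from \cref{green} involve $\partial_{1}(Q_{1})$, and tracking these is the main obstacle. The cleanest way to deal with them is via Remark \ref{int_rmk}: since $\partial_{1}(Q_{1}) = -\partial_{1}(Q_{2})$ and $\sigma_{1}Q_{1} = Q_{2}\sigma_{1} - \partial_{1}(Q_{1})$, we get $\partial_{1}(Q_{1}) = Q_{2}\sigma_{1} - \sigma_{1}Q_{1}$. This lies in the ideal generated by $Q_{1}$, because $Q_{2} = \sigma_{1}Q_{1}\sigma_{1}$ up to $\partial$ terms handled inductively. Consequently every term remains in $\langle Q_{1}\rangle$.

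Combining the two steps, $\ker\theta = \langle Q_{1}\rangle$, and hence $W_{d,Q}^{\operatorname{cyc}}(A) \cong W_{d}^{Q}(A)$.
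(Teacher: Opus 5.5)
You set up the reduction correctly. Since $\boldsymbol{\omega}=Q\go^{d}$ is the only word of $\Gamma_{d,Q}$ not beginning with $\go$, you have $\pi(e)=1$, so $\theta$ is surjective and its kernel corresponds to $eJe=\sum_{\mathbf{i}_{1}=\go}eW1_{\mathbf{i}}We$. Your first step ($XY=\varphi(Q_{1})$ by \cref{hw3}) is exactly the paper's argument that $\langle Q_{1}\rangle\subseteq\ker\theta$.

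\textbf{The gap.} It is in the main step, where you claim that the correction term $\partial_{1}(Q_{1})$ from \cref{green}, and the pins $Q_{j}$ with $j\geq 2$, lie in $\langle Q_{1}\rangle$. This is false. Take $A=\Bbbk$, $d\geq 2$, $Q=x-q$. Then $\partial_{1}(Q_{1})=t_{1,2}=1$. On the other hand, the homomorphism $W_{d}^{\operatorname{aff}}(\Bbbk)\to\Bbbk S_{d}$ given by $\sigma_{i}\mapsto s_{i}$ and $x_{i}\mapsto q+L_{i}$ (with $L_{i}$ the Jucys--Murphy elements) kills $Q_{1}$ but not $1$. Likewise $Q_{2}=\sigma_{1}Q_{1}\sigma_{1}+\sigma_{1}\notin\langle Q_{1}\rangle$. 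Your derivation is also circular. Substituting $Q_{2}=\sigma_{1}Q_{1}\sigma_{1}+\partial_{1}(Q_{1})\sigma_{1}$, which is what \cref{movein} gives, into $\partial_{1}(Q_{1})=Q_{2}\sigma_{1}-\sigma_{1}Q_{1}$ just returns $\partial_{1}(Q_{1})=\partial_{1}(Q_{1})$, and there is no decreasing quantity to induct on. So ``every term remains in $\langle Q_{1}\rangle$'' is unjustified.

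\textbf{How to repair it.} The missing idea is that correction terms must never be treated in isolation.

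The paper inducts, over diagrams $\boldsymbol{\omega}\to\boldsymbol{\omega}$ that have a black segment to the left of the red strand, on $c=$ (half the red--black crossings) $+$ (black--black crossings left of the red strand). The $\partial_{1}(Q_{1})$-term of \cref{green} still has a black segment left of the red strand and has smaller $c$, so it stays inside the induction. A factor $Q_{1}$ only appears at the very end, from a bigon next to the far-left red strand via \cref{hw3}.

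Alternatively, your own factorization works without \cref{green} at all.
\begin{itemize}
\item For $\mathbf{i}=\go^{k}Q\go^{d-k}$ with $k\geq1$, let $R_{k}\colon\boldsymbol{\omega}\to\mathbf{i}$ move the red strand right across black strands $1,\dots,k$, and let $L_{k}$ be its reverse.
\item Then $R_{k}\varphi(\sigma_{w})$ is a legitimate choice of $\sigma_{\mathbf{i},w,\boldsymbol{\omega}}$, with the red--black crossings at the top. Putting them at the bottom, as you proposed, is impossible unless $w$ preserves $\{1,\dots,k\}$.
\item Tokens and dots pass freely through red--black crossings by \cref{hw1} and \cref{hw2}. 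So Theorem \ref{HL-basis} gives exactly $1_{\mathbf{i}}We=R_{k}\varphi(W_{d}^{\operatorname{aff}}(A))$ and $eW1_{\mathbf{i}}=\varphi(W_{d}^{\operatorname{aff}}(A))L_{k}$, with no lower-order terms.
\item The innermost bigon of $L_{k}R_{k}$ is with strand $k$, not strand $1$. Applying \cref{hw3} and sliding the resulting pin down repeatedly gives $L_{k}R_{k}=\varphi(Q_{1}Q_{2}\cdots Q_{k})$.
\item This lies in $\varphi(\langle Q_{1}\rangle)$ because of the factor $Q_{1}$, even though $Q_{2},\dots,Q_{k}$ individually do not. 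Hence $eJe=\varphi(\langle Q_{1}\rangle)$.
\end{itemize}
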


\begin{proof}
We have a non-unital injective algebra homomorphism $ \varphi \colon W_{d}^{\operatorname{aff}}(A) \rightarrow W_{d,Q}^{\operatorname{aff}}(A) $ that adds a single red strand to the left of each diagram. Composing with the projection map $ p \colon W_{d,Q}^{\operatorname{aff}}(A) \rightarrow W_{d,Q}^{\operatorname{cyc}}(A) $ yields a unital algebra homomorphism $ \phi \colon W_{d}^{\operatorname{aff}}(A) \rightarrow W_{d,Q}^{\operatorname{cyc}}(A) $. So we have the following commutative diagram of maps:
\begin{equation*}
\begin{tikzpicture}[auto]
  \node (A) at (0,0) {$ W_{d}^{\operatorname{aff}}(A) $};
  \node (B) at (2.5,0) {$ W_{d,Q}^{\operatorname{aff}}(A) $};
  \node (C) at (5,0) {$ W_{d,Q}^{\operatorname{cyc}}(A) $};
  \draw[->] (A) -- node[above] {$ \varphi $} (B);
  \draw[->] (B) -- node[above] {$ p $} (C);
  \draw[->, bend left=40] (A) to node[above] {$ \phi $} (C);
\end{tikzpicture}
\end{equation*}
By Theorem \ref{HL-basis}, there is a spanning set of $ W_{d,Q}^{\operatorname{cyc}}(A) $ that lies in the image of $ \phi $. This gives that $ \phi $ is surjective. Set $ \mathcal{I} $ to be the two-sided ideal of $ W_{d}^{\operatorname{aff}}(A) $ generated by $ Q_{1} $. Then in order to prove the result, it only remains to show that the kernel of $ \phi $ is equal to $ \mathcal{I} $. That the latter is contained in the former follows by an application of \cref{hw3}. Namely, we have
\begin{equation*}
\phi\left(\begin{tikzpicture}[centerzero, thick]
        \draw[-] (0.3,-0.5) -- (0.3,0.5);
        \draw[-] (0.8,-0.5) -- (0.8,0.5);
        \node at (1.4,0) {$ \cdots $};
        \pin{0.3,0}{-0.5,0}{Q};
        \draw[-] (2,-0.5) -- (2,0.5);
\end{tikzpicture}\ \right)
\ =
\begin{tikzpicture}[centerzero, thick]
        \draw[wei] (-0.9,-0.5) -- (-0.9,0.5);
        \draw[-] (0.3,-0.5) -- (0.3,0.5);
        \draw[-] (0.8,-0.5) -- (0.8,0.5);
        \node at (1.4,0) {$ \cdots $};
        \node at (-0.9,-0.7) {\tiny $ Q $};
        \pin{0.3,0}{-0.5,0}{Q};
        \draw[-] (2,-0.5) -- (2,0.5);
\end{tikzpicture}
\ =
\begin{tikzpicture}[centerzero, thick]
        \draw[-] (0.2,-0.5) to[out=up,in=down] (-0.2,0) to[out=up,in=down] (0.2,0.5);
        \draw[-, wei] (-0.2,-0.5) to[out=up,in=down] (0.2,0) to[out=up,in=down] (-0.2,0.5);
        \draw[-] (0.7,-0.5) -- (0.7,0.5);
        \draw[-] (1.9,-0.5) -- (1.9,0.5);
        \node at (1.3,0) {$ \cdots $};        
        \node at (-0.2,-0.7) {\tiny $ Q $};
\end{tikzpicture}
\ = 0.
\end{equation*}
On the other hand, suppose $ y \in \operatorname{ker}(\phi) $. Then by the injectivity of $ \varphi $, it suffices to show that $ \varphi(y) \in \varphi(\mathcal{I}) $.
\\ \indent Note first that since $ y \in \operatorname{ker}(\phi) $, and $ \phi = p \circ \varphi $, we have that $ \varphi(y) $ is in the kernel of the projection map \(p\). So $ \varphi(y) $ is a $ \Bbbk $-linear combination of diagrams with a piece of black strand to the left of the red strand. If $ D $ is such a diagram, then we prove by induction that $ D \in \varphi(\mathcal{I}) $, from which it will follow by $ \Bbbk $-linearity that $ \varphi(y) \in \varphi(\mathcal{I}) $. The value $ c $ that we induct on is half the number of red-black crossings plus the number of black-black crossings to the left of the red strand in \(D\). For the base case, if $ c = 1 $, then there is a single black strand in $ D $ which crosses over the red strand and immediately crosses back:
\begin{equation*} 
\begin{tikzpicture}[centerzero, thick]
        \draw[-] (0.2,-0.5) to[out=up,in=down] (-0.2,0) to[out=up,in=down] (0.2,0.5);
        \draw[-, wei] (-0.2,-0.5) to[out=up,in=down] (0.2,0) to[out=up,in=down] (-0.2,0.5);
        \node at (-0.2,-0.7) {\tiny $ Q $};
\end{tikzpicture} \ .
\end{equation*}
Note that this double crossing may contain some tokens or dots on the black strand, but these can be moved out by the relations \cref{hw1} and \cref{hw2}. Then an application of \cref{hw3} shows that $ D \in \varphi(\mathcal{I}) $, giving that the base case holds. Next, if $ c > 1 $, then \(D\) either has a red-black double crossing or a red-black triangle, where the red strand is on the right:
\begin{equation*}
\begin{tikzpicture}[centerzero, thick]
        \draw[-] (0.2,-0.5) to[out=up,in=down] (-0.2,0) to[out=up,in=down] (0.2,0.5);
        \draw[-, wei] (-0.2,-0.5) to[out=up,in=down] (0.2,0) to[out=up,in=down] (-0.2,0.5);
        \node at (-0.2,-0.7) {\tiny $ Q $};
\end{tikzpicture} \ ,
\qquad 
\begin{tikzpicture}[centerzero, thick]
        \draw[-] (2,-0.5) -- (1.2,0.5);
        \draw[-] (1.2,-0.5) -- (2,0.5);
        \node at (1.6,-0.7) {\tiny $ Q $};
        \draw[-, wei] (1.6,-0.5) to[out=up, in=down] (2,0) to[out=up,in=down] (1.6,0.5);
\end{tikzpicture} \ .
\end{equation*}
Again, this double crossing or triangle may contain tokens or dots on the black strands, but these can be moved out by the relations \cref{hw1} and \cref{hw2}. Then applying \cref{hw3} if \(D\) has a double crossing, or applying \cref{green} if \(D\) has a triangle, shows that \(D\) can be written as a $ \Bbbk $-linear combination of diagrams with smaller \(c\). Thus by the induction hypothesis, we have $ D \in \varphi(\mathcal{I}) $.
\end{proof}

\section{Affine Frobenius Hecke algebras}

In this section, we provide a recap of the affine Frobenius Hecke algebras. These algebras were first defined and studied in \cite{Rosso-Savage}. In this section, we fix a scalar $ z \in \Bbbk $, and we assume that the Frobenius algebra \(A\) is symmetric and carries a parity-preserving trace map. In other words, we have $ \psi = 1 $ and $ \varepsilon = \bar{0} $. 

\begin{defn} 
We define the category $ \mathcal{P}(A) $ to be the strict $ \Bbbk $-linear monoidal supercategory generated by one object $ \go $ and morphisms
\begin{equation*}
    \begin{tikzpicture}[anchorbase, thick]
        \draw[-] (0,-0.3) -- (0,0.3);
        \token{0,0}{west}{a};
    \end{tikzpicture}
    \colon \go \to \go, \quad
 \begin{tikzpicture}[anchorbase, thick]
        \draw[-] (0,0) -- (0,0.6);
        \singdot{0,0.3};
\end{tikzpicture}
\ \colon
\go \to \go, \quad
\begin{tikzpicture}[anchorbase, thick]
    \draw[-] (0,0) -- (0,0.6);
    \multdot{0,0.3}{west}{-1};
\end{tikzpicture}
\ \colon
\go \to \go, \quad a \in A,
\end{equation*}
modulo the relations \cref{tokrel1} and
\begin{gather} \label{P(A)_relations}
\begin{tikzpicture}[centerzero, thick]
        \draw[-] (0,-0.4) -- (0,0.4);
        \singdot{0,-0.15};
        \multdot{0,0.15}{west}{-1};
\end{tikzpicture}
\ = \
\begin{tikzpicture}[centerzero, thick]
        \draw[-] (0,-0.4) -- (0,0.4);
        \multdot{0,-0.15}{west}{-1};
        \singdot{0,0.15};
\end{tikzpicture}
\ = \
\begin{tikzpicture}[centerzero, thick]
        \draw[-] (0,-0.4) -- (0,0.4);
\end{tikzpicture}
\ ,\quad
\begin{tikzpicture}[centerzero, thick]
        \draw[-] (0,-0.4) -- (0,0.4);
        \token{0,0.15}{west}{a};
        \singdot{0,-0.15};
\end{tikzpicture}
\ =\
\begin{tikzpicture}[anchorbase, thick]
        \draw[-] (0,-0.4) -- (0,0.4);
        \token{0,-0.15}{west}{a};
        \singdot{0,0.15};
\end{tikzpicture}
\ ,\quad a \in A.
\end{gather}
The morphism $ \begin{tikzpicture}[anchorbase, thick]
        \draw[-] (0,-0.2) -- (0,0.2);
        \token{0,0}{west}{a};
    \end{tikzpicture} $ is called a \emph{token}, the morphism $ \begin{tikzpicture}[anchorbase, thick]
        \draw[-] (0,0) -- (0,0.6);
        \singdot{0,0.3};
    \end{tikzpicture} $
is called the \emph{dot}, and the morphism 
$ \begin{tikzpicture}[anchorbase, thick]
        \draw[-] (0,-0.3) -- (0,0.3);
        \multdot{0,0}{west}{-1};
\end{tikzpicture} $
is called the \emph{inverse dot}. The dot and inverse dot are even, and the parity of a token $ \begin{tikzpicture}[anchorbase, thick]
        \draw[-] (0,-0.2) -- (0,0.2);
        \token{0,0}{west}{a};
    \end{tikzpicture} $ is equal to the parity of \(a\). For $ n \in \mathbb{N}_{+} $, we define
\begin{equation*}
P_{n}(A) := \operatorname{End}_{\mathcal{P}(A)}(\go^{\otimes n}).
\end{equation*}
\end{defn}

Before stating the following proposition, recall that, throughout this paper, we number strands from \emph{left to right}.

\begin{prop} \label{verma}
Let $ n \in \mathbb{N}_{+} $. Then $ P_{n}(A) $ is isomorphic to the free product of $ \Bbbk[X_{1}^{\pm 1},\ldots,X_{n}^{\pm 1}] $ and $ A^{\otimes n} $ (where the $ X_{i} $ are understood to be even), subject to the relations
\begin{equation} \label{verma_2}
\mathbf{a}X_{i} = X_{i}\mathbf{a}, \qquad \mathbf{a} \in A^{\otimes n}, \ 1 \leq i \leq n.
\end{equation}
Under this isomorphism, $ a_{i} $, for $ 1 \leq i \leq n $, corresponds to the token labelled \(a\) on the \(i\)-th strand, and $ X_{i} $, for $ 1 \leq i \leq n $, corresponds to the dot on the \(i\)-th strand.
\end{prop}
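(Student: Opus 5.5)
The plan is to apply Proposition \ref{zebra} to the presentation of $\mathcal{P}(A)$, exactly as was done for $\operatorname{Pol}_{n}(A)$ in Proposition \ref{Polrel}, and then simplify the resulting relations.

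\emph{Generators.} By Proposition \ref{zebra}, $P_{n}(A)$ is generated by three families of elements: the tokens $a_{i}$, the dots $X_{i}$, and the inverse dots, which we denote $Y_{i}$, each placed on the $i$-th strand for $1 \leq i \leq n$.

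\emph{Relations from \cref{zebra1}.} These are the relations of $\mathcal{P}(A)$ placed on a single strand.
\begin{enumerate}
\item[\textbullet] The token relations \cref{tokrel1} on each strand.
\item[\textbullet] The relations $X_{i}Y_{i} = Y_{i}X_{i} = 1$, so $Y_{i} = X_{i}^{-1}$.
\item[\textbullet] The relations $a_{i}X_{i} = X_{i}a_{i}$.
\end{enumerate}

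\emph{Relations from \cref{zebra2}.} These are the super interchange relations for generators on different strands. Since dots and inverse dots are even, they give:
\begin{enumerate}
\item[\textbullet] $X_{i}^{\pm 1}$ commutes with $X_{j}^{\pm 1}$ for $i \neq j$;
\item[\textbullet] $X_{i}^{\pm 1}$ commutes with $a_{j}$ for $i \neq j$;
\item[\textbullet] $a_{i}b_{j} = (-1)^{\bar{a}\bar{b}} b_{j}a_{i}$ for $i \neq j$.
\end{enumerate}

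\emph{Regrouping.} The token relations \cref{tokrel1}, together with the supercommutation of tokens on distinct strands, give exactly the presentation of $A^{\otimes n}$; this is the isomorphism \cref{tensorisomorp}. The relations among the $X_{i}^{\pm 1}$ give exactly the presentation of the commutative Laurent polynomial ring $\Bbbk[X_{1}^{\pm 1},\ldots,X_{n}^{\pm 1}]$. The remaining relations say that each $X_{i}$ commutes with each $a_{j}$, whether $i = j$ or $i \neq j$. Since $A^{\otimes n}$ is generated by the $a_{j}$ and every $X_{i}$ is even, this is equivalent to \cref{verma_2}, namely $\mathbf{a}X_{i} = X_{i}\mathbf{a}$ for all $\mathbf{a} \in A^{\otimes n}$.

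This yields exactly the presentation claimed in the statement, with $a_{i}$ and $X_{i}$ corresponding to a token and a dot on strand $i$.

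There is no real obstacle. The only point needing care is the bookkeeping of the sign conventions in \cref{zebra2}. Those signs are trivial for the even generators $X_{i}^{\pm 1}$, and for the tokens they reproduce the usual superalgebra structure on $A^{\otimes n}$.
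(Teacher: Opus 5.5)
Your proposal is correct and takes the same approach as the paper, whose proof is simply the sentence ``This follows from Proposition \ref{zebra}.'' You write out the generators and relations that proposition produces and regroup them into the presentation of $A^{\otimes n}$ (as in \cref{tensorisomorp}), the presentation of the Laurent polynomial ring, and the commutation relations \cref{verma_2}; this is exactly the bookkeeping the paper leaves implicit.
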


\begin{proof}
This follows from Proposition \ref{zebra}.
\end{proof}

In what follows, we identify $ P_{n}(A) $ with the algebra given in Proposition \ref{verma}. We have an isomorphism of algebras $ P_{n}(A) \cong A^{\otimes n} \otimes \Bbbk[X_{1}^{\pm 1},\ldots,X_{n}^{\pm 1}] $, and $ P_{n}(A) $ has basis
\begin{equation*}
\{\mathbf{a}X_{1}^{k_{1}}\cdots X_{n}^{k_{n}} : \mathbf{a} \in B^{\otimes n}, \ k_{1},\ldots, k_{n} \in \mathbb{Z}\},
\end{equation*}
where recall that \(B\) is some basis of \(A\).

\begin{defn} \label{television}
Let $ n \in \mathbb{N}_{+} $. Then we set $ \Delta_{i} \colon P_{n}(A) \rightarrow P_{n}(A) $, $ 1 \leq i \leq n-1 $, to be the $ \Bbbk $-linear operators given by
\begin{equation} \label{television_2}
\Delta_{i}(\mathbf{a}p) = t_{i,i+1}\mathbf{a}\frac{X_{i+1}(s_{i}(p)-p)}{X_{i}-X_{i+1}}, \quad  \mathbf{a} \in A^{\otimes n}, \ p \in \Bbbk[X_{1}^{\pm 1}, \ldots, X_{n}^{\pm 1}].
\end{equation}
\end{defn}

%
%

\begin{defn} \label{quantum affine wreath product category}
We define the \emph{affine Frobenius Hecke category} (or \emph{quantum affine wreath category}) $ \mathcal{AH}(A,z) $ to be the strict $ \Bbbk $-linear monoidal supercategory generated by one object $ \go $ and morphisms
\begin{equation} \label{quantum affine wreath product category_ob}
    \begin{tikzpicture}[centerzero, thick]
        \draw[-] (0.2,-0.2) -- (-0.2,0.2);
        \draw[wipe] (-0.2,-0.2) -- (0.2,0.2);
        \draw[-] (-0.2,-0.2) -- (0.2,0.2);
     \end{tikzpicture}
     \ ,\
    \begin{tikzpicture}[centerzero, thick]
        \draw[-] (-0.2,-0.2) -- (0.2,0.2);
        \draw[wipe] (0.2,-0.2) -- (-0.2,0.2);
        \draw[-] (0.2,-0.2) -- (-0.2,0.2);
    \end{tikzpicture}
      \ \colon
    \go \otimes \go \to \go \otimes \go,
    \quad
    \begin{tikzpicture}[anchorbase, thick]
        \draw[-] (0,-0.3) -- (0,0.3);
        \token{0,0}{west}{a};
    \end{tikzpicture}
    \colon \go \to \go, \quad
 \begin{tikzpicture}[anchorbase, thick]
        \draw[-] (0,0) -- (0,0.6);
        \singdot{0,0.3};
\end{tikzpicture}
\ \colon
\go \to \go, \quad
\begin{tikzpicture}[anchorbase, thick]
    \draw[-] (0,0) -- (0,0.6);
    \multdot{0,0.3}{west}{-1};
\end{tikzpicture}
\ \colon
\go \to \go, \quad a \in A.
\end{equation}
We refer to the generators above as a \emph{positive black-black crossing}, a \emph{negative black-black crossing}, a \emph{token}, a \emph{dot}, and an \emph{inverse dot}. The relations are given by \cref{tokrel1}, \cref{P(A)_relations}, and
\begin{gather}
\label{braid}
\begin{tikzpicture}[anchorbase, thick]
        \draw[-] (0.2,-0.5) to[out=up,in=down] (-0.2,0) to[out=up,in=down] (0.2,0.5);
        \draw[wipe] (-0.2,-0.5) to[out=up,in=down] (0.2,0) to[out=up,in=down] (-0.2,0.5);
        \draw[-] (-0.2,-0.5) to[out=up,in=down] (0.2,0) to[out=up,in=down] (-0.2,0.5);
\end{tikzpicture}
\ =\
\begin{tikzpicture}[anchorbase, thick]
        \draw[-] (-0.2,-0.5) -- (-0.2,0.5);
        \draw[-] (0.2,-0.5) -- (0.2,0.5);
\end{tikzpicture}
\ =\
\begin{tikzpicture}[anchorbase, thick]
        \draw[-] (-0.2,-0.5) to[out=up,in=down] (0.2,0) to[out=up,in=down] (-0.2,0.5);
        \draw[wipe] (0.2,-0.5) to[out=up,in=down] (-0.2,0) to[out=up,in=down] (0.2,0.5);
        \draw[-] (0.2,-0.5) to[out=up,in=down] (-0.2,0) to[out=up,in=down] (0.2,0.5);
\end{tikzpicture}
\ ,\quad
\begin{tikzpicture}[anchorbase, thick]
        \draw[-] (0.4,-0.5) -- (-0.4,0.5);
        \draw[wipe] (0,-0.5) to[out=up, in=down] (-0.4,0) to[out=up,in=down] (0,0.5);
        \draw[-] (0,-0.5) to[out=up, in=down] (-0.4,0) to[out=up,in=down] (0,0.5);
        \draw[wipe] (-0.4,-0.5) -- (0.4,0.5);
        \draw[-] (-0.4,-0.5) -- (0.4,0.5);
\end{tikzpicture}
\ =\
\begin{tikzpicture}[anchorbase, thick]
        \draw[-] (0.4,-0.5) -- (-0.4,0.5);
        \draw[wipe] (0,-0.5) to[out=up, in=down] (0.4,0) to[out=up,in=down] (0,0.5);
        \draw[-] (0,-0.5) to[out=up, in=down] (0.4,0) to[out=up,in=down] (0,0.5);
        \draw[wipe] (-0.4,-0.5) -- (0.4,0.5);
        \draw[-] (-0.4,-0.5) -- (0.4,0.5);
\end{tikzpicture}
\ ,\quad
\begin{tikzpicture}[centerzero, thick]
        \draw[-] (0.3,-0.4) -- (-0.3,0.4);
        \draw[wipe] (-0.3,-0.4) -- (0.3,0.4);
        \draw[-] (-0.3,-0.4) -- (0.3,0.4);
        \token{-0.15,-0.2}{east}{a};
\end{tikzpicture}
\ =\
\begin{tikzpicture}[centerzero, thick]
        \draw[-] (0.3,-0.4) -- (-0.3,0.4);
        \draw[wipe] (-0.3,-0.4) -- (0.3,0.4);
        \draw[-] (-0.3,-0.4) -- (0.3,0.4);
        \token{0.15,0.2}{west}{a};
\end{tikzpicture}
\ , \quad a \in A,
\\ \label{preskein}
\begin{tikzpicture}[centerzero, thick]
        \draw[-] (0.3,-0.3) -- (-0.3,0.3);
        \draw[wipe] (-0.3,-0.3) -- (0.3,0.3);
        \draw[-] (-0.3,-0.3) -- (0.3,0.3);
\end{tikzpicture}
\ -\
\begin{tikzpicture}[centerzero, thick]
        \draw[-] (-0.3,-0.3) -- (0.3,0.3);
        \draw[wipe] (0.3,-0.3) -- (-0.3,0.3);
        \draw[-] (0.3,-0.3) -- (-0.3,0.3);
\end{tikzpicture}
= z \
\begin{tikzpicture}[centerzero, thick]
        \draw[-] (-0.2,-0.3) -- (-0.2,0.3);
        \draw[-] (0.2,-0.3) -- (0.2,0.3);
        \teleport{-0.2,0.1}{0.2,-0.1};
\end{tikzpicture} \ , 
\quad
\begin{tikzpicture}[centerzero, thick]
        \draw[-] (0.3,-0.4) -- (-0.3,0.4);
        \draw[wipe] (-0.3,-0.4) -- (0.3,0.4);
        \draw[-] (-0.3,-0.4) -- (0.3,0.4);
        \singdot{-0.15,-0.2};
\end{tikzpicture}
\ =\
\begin{tikzpicture}[centerzero, thick]
        \draw[-] (-0.3,-0.4) -- (0.3,0.4);
        \draw[wipe] (0.3,-0.4) -- (-0.3,0.4);
        \draw[-] (0.3,-0.4) -- (-0.3,0.4);
        \singdot{0.171,0.228};
\end{tikzpicture}
\ .
\end{gather}
Here, the morphism $ \begin{tikzpicture}[centerzero, thick]
        \draw[-] (-0.2,-0.3) -- (-0.2,0.3);
        \draw[-] (0.2,-0.3) -- (0.2,0.3);
        \teleport{-0.2,0.1}{0.2,-0.1};
\end{tikzpicture} $ is defined as in \cref{teleporter66}. The dot, inverse dot, and crossings are even, and the parity of a token $ \begin{tikzpicture}[anchorbase, thick]
        \draw[-] (0,-0.2) -- (0,0.2);
        \token{0,0}{west}{a};
    \end{tikzpicture} $ is equal to the parity of \(a\). For $ n \in \mathbb{N} $, we define the \emph{affine Frobenius Hecke algebra} (or \emph{quantum affine wreath algebra}) to be
\begin{equation*}
H_{n}^{\operatorname{aff}}(A,z) := \operatorname{End}_{\mathcal{AH}(A,z)}(\go^{\otimes n}).
\end{equation*}
\end{defn}

By \cref{braid}, \cref{preskein}, and \cref{teleporter1} (recall that $ \psi = 1 $ and $ \varepsilon = \bar{0} $ in this section), we have that
\begin{equation} \label{token crossing}
\begin{tikzpicture}[centerzero, thick]
        \draw[-] (-0.3,-0.4) -- (0.3,0.4);
        \draw[wipe] (0.3,-0.4) -- (-0.3,0.4);
        \draw[-] (0.3,-0.4) -- (-0.3,0.4);
        \token{-0.15,-0.2}{east}{a};
\end{tikzpicture}
\ =\
\begin{tikzpicture}[centerzero, thick]
        \draw[-] (-0.3,-0.4) -- (0.3,0.4);
        \draw[wipe] (0.3,-0.4) -- (-0.3,0.4);
        \draw[-] (0.3,-0.4) -- (-0.3,0.4);
        \token{0.15,0.2}{west}{a};
\end{tikzpicture}
\ , \quad
\begin{tikzpicture}[centerzero, thick]
        \draw[-] (0.3,-0.4) -- (-0.3,0.4);
        \draw[wipe] (-0.3,-0.4) -- (0.3,0.4);
        \draw[-] (-0.3,-0.4) -- (0.3,0.4);
        \token{0.15,-0.2}{west}{a};
\end{tikzpicture}
\ =\
\begin{tikzpicture}[centerzero, thick]
        \draw[-] (0.3,-0.4) -- (-0.3,0.4);
        \draw[wipe] (-0.3,-0.4) -- (0.3,0.4);
        \draw[-] (-0.3,-0.4) -- (0.3,0.4);
        \token{-0.15,0.2}{east}{a};
\end{tikzpicture}
\ , \quad
\begin{tikzpicture}[centerzero, thick]
        \draw[-] (-0.3,-0.4) -- (0.3,0.4);
        \draw[wipe] (0.3,-0.4) -- (-0.3,0.4);
        \draw[-] (0.3,-0.4) -- (-0.3,0.4);
        \token{0.15,-0.2}{west}{a};
\end{tikzpicture}
\ =\
\begin{tikzpicture}[centerzero, thick]
        \draw[-] (-0.3,-0.4) -- (0.3,0.4);
        \draw[wipe] (0.3,-0.4) -- (-0.3,0.4);
        \draw[-] (0.3,-0.4) -- (-0.3,0.4);
        \token{-0.15,0.2}{east}{a};
\end{tikzpicture}
\ .
\end{equation}
Furthermore, we have that
\begin{equation} \label{dot slide general}
\begin{tikzpicture}[centerzero, thick]
        \draw[-] (0.3,-0.4) -- (-0.3,0.4);
        \draw[wipe] (-0.3,-0.4) -- (0.3,0.4);
        \draw[-] (-0.3,-0.4) -- (0.3,0.4);
        \singdot{-0.15,0.2};
\end{tikzpicture}
\ =\
\begin{tikzpicture}[centerzero, thick]
        \draw[-] (-0.3,-0.4) -- (0.3,0.4);
        \draw[wipe] (0.3,-0.4) -- (-0.3,0.4);
        \draw[-] (0.3,-0.4) -- (-0.3,0.4);
        \singdot{0.171,-0.228};
\end{tikzpicture}
\ , \quad
\begin{tikzpicture}[centerzero, thick]
        \draw[-] (0.3,-0.4) -- (-0.3,0.4);
        \draw[wipe] (-0.3,-0.4) -- (0.3,0.4);
        \draw[-] (-0.3,-0.4) -- (0.3,0.4);
        \multdot{0.171,0.228}{west}{-1};
\end{tikzpicture}
\ =\
\begin{tikzpicture}[centerzero, thick]
        \draw[-] (-0.3,-0.4) -- (0.3,0.4);
        \draw[wipe] (0.3,-0.4) -- (-0.3,0.4);
        \draw[-] (0.3,-0.4) -- (-0.3,0.4);
        \multdot{-0.15,-0.2}{east}{-1};
\end{tikzpicture}
\ ,\quad
\begin{tikzpicture}[centerzero, thick]
        \draw[-] (0.3,-0.4) -- (-0.3,0.4);
        \draw[wipe] (-0.3,-0.4) -- (0.3,0.4);
        \draw[-] (-0.3,-0.4) -- (0.3,0.4);
        \multdot{0.171,-0.228}{west}{-1};
\end{tikzpicture}
\ =\
\begin{tikzpicture}[centerzero, thick]
        \draw[-] (-0.3,-0.4) -- (0.3,0.4);
        \draw[wipe] (0.3,-0.4) -- (-0.3,0.4);
        \draw[-] (0.3,-0.4) -- (-0.3,0.4);
        \multdot{-0.15,0.2}{east}{-1};
\end{tikzpicture}
\ .
\end{equation}
Note that, in \cite[Def.~2.2]{Rosso-Savage}, the affine Frobenius Hecke algebra was defined via generators and relations rather than as an endomorphism algebra of a category. However, it follows from Proposition~\ref{zebra} that the two algebras are indeed isomorphic.

\section{Higher-level affine Frobenius Hecke algebras} \label{HLAtitle}

In this section, we define the higher-level affine Frobenius Hecke algebras. Throughout this section, we fix a scalar $ z \in \Bbbk $, and we assume that the Frobenius algebra \(A\) is symmetric and carries a parity-preserving trace map. In other words, we have $ \psi = 1 $ and $ \varepsilon = \bar{0} $. The main case of interest that these assumptions on \(A\) exclude is the Clifford superalgebra from Example \ref{Cliff_parity_pres} and Example \ref{Cliff_parity_rev}, where the higher-level affine Frobenius Hecke algebra defined below would correspond to a higher-level version of the affine Hecke-Clifford superalgebra. We hope to explore the Clifford superalgebra case in the future.

\subsection{Higher-level affine Frobenius Hecke algebras}

Recall that, for an algebra \(F\), we have the sets $ \mathcal{E}_{F}  $ and $ \widehat{\mathcal{E}}_{F} $ from Definition \ref{multi_pen}.

\begin{defn} \label{HLAFHC}
We define the \emph{higher-level affine Frobenius Hecke category}  $ \mathcal{LAH}(A,z) $ to be the strict $ \Bbbk $-linear monoidal supercategory defined as follows. The objects are generated by the elements in the set $ \widehat{\mathcal{E}}_{P_{1}(A)} $. The morphisms are generated by
\begin{gather}
\begin{tikzpicture}[centerzero, thick]
        \draw[-] (0.2,-0.2) -- (-0.2,0.2);
        \draw[wipe] (-0.2,-0.2) -- (0.2,0.2);
        \draw[-] (-0.2,-0.2) -- (0.2,0.2);
     \end{tikzpicture}
     \ ,\
    \begin{tikzpicture}[centerzero, thick]
        \draw[-] (-0.2,-0.2) -- (0.2,0.2);
        \draw[wipe] (0.2,-0.2) -- (-0.2,0.2);
        \draw[-] (0.2,-0.2) -- (-0.2,0.2);
    \end{tikzpicture}
      \ \colon
    \go \otimes \go \to \go \otimes \go,
    \quad
    \begin{tikzpicture}[anchorbase, thick]
        \draw[-] (0,-0.3) -- (0,0.3);
        \token{0,0}{west}{a};
    \end{tikzpicture}
    \colon \go \to \go, \quad
 \begin{tikzpicture}[anchorbase, thick]
        \draw[-] (0,0) -- (0,0.6);
        \singdot{0,0.3};
\end{tikzpicture}
\ \colon
\go \to \go, \label{Gen1_quantum} 
\quad
\begin{tikzpicture}[anchorbase, thick]
    \draw[-] (0,0) -- (0,0.6);
    \multdot{0,0.3}{west}{-1};
\end{tikzpicture}
\ \colon
\go \to \go,
\\
\begin{tikzpicture}[centerzero, thick]
        \draw[-] (0.2,-0.2) -- (-0.2,0.2);
        \draw[-, wei] (-0.2,-0.2) -- (0.2,0.2);
        \node at (-0.2,-0.4) {\tiny $ Q $};
\end{tikzpicture}
\ \colon Q \otimes \go \to \go \otimes Q \qquad
\begin{tikzpicture}[centerzero, thick]
        \draw[-] (-0.2,-0.2) -- (0.2,0.2);
        \draw[-,wei] (0.2,-0.2) -- (-0.2,0.2);
        \node at (0.2,-0.4) {\tiny $ Q $};
\end{tikzpicture}
\colon \go \otimes Q \to Q \otimes \go, \label{Gen2_quantum}
\end{gather}
for all $ a \in A $, $ Q \in \mathcal{E}_{P_{1}(A)} $. The crossings and dot are even, and the parity of a token $ \begin{tikzpicture}[anchorbase, thick]
        \draw[-] (0,-0.2) -- (0,0.2);
        \token{0,0}{west}{a};
    \end{tikzpicture} $ is equal to the parity of \(a\). The relations on the morphisms are given by \cref{tokrel1}, \cref{braid}, \cref{preskein}, and
\begin{gather}
\begin{tikzpicture}[centerzero, thick]
        \draw[-] (-0.3,-0.4) -- (0.3,0.4);
        \draw[-, wei] (0.3,-0.4) -- (-0.3,0.4);
        \token{-0.15,-0.2}{east}{a};
        \node at (0.3,-0.6) {\tiny $ Q $};
\end{tikzpicture}
= 
\begin{tikzpicture}[centerzero, thick]
        \draw[-] (-0.3,-0.4) -- (0.3,0.4);
        \draw[-, wei] (0.3,-0.4) -- (-0.3,0.4);
        \token{0.15,0.2}{west}{a};
        \node at (0.3,-0.6) {\tiny $ Q $};
\end{tikzpicture}
\ , \quad
\begin{tikzpicture}[centerzero, thick]
        \draw[-] (0.3,-0.4) -- (-0.3,0.4);
        \draw[-, wei] (-0.3,-0.4) -- (0.3,0.4);
        \token{0.15,-0.2}{west}{a};
        \node at (-0.3,-0.6) {\tiny $ Q $};
\end{tikzpicture}
= 
\begin{tikzpicture}[centerzero, thick]
        \draw[-] (0.3,-0.4) -- (-0.3,0.4);
        \draw[-, wei] (-0.3,-0.4) -- (0.3,0.4);
        \token{-0.15,0.2}{east}{a};
        \node at (-0.3,-0.6) {\tiny $ Q $};
\end{tikzpicture} \ , \label{hw11}
\\
\begin{tikzpicture}[centerzero, thick]
        \draw[-] (-0.3,-0.4) -- (0.3,0.4);
        \draw[-, wei] (0.3,-0.4) -- (-0.3,0.4);
        \singdot{-0.15,-0.2};
        \node at (0.3,-0.6) {\tiny $ Q $};
\end{tikzpicture}
= 
\begin{tikzpicture}[centerzero, thick]
        \draw[-] (-0.3,-0.4) -- (0.3,0.4);
        \draw[-, wei] (0.3,-0.4) -- (-0.3,0.4);
        \singdot{0.15,0.2};
        \node at (0.3,-0.6) {\tiny $ Q $};
\end{tikzpicture}
\ , \quad
\begin{tikzpicture}[centerzero, thick]
        \draw[-] (0.3,-0.4) -- (-0.3,0.4);
        \draw[-, wei] (-0.3,-0.4) -- (0.3,0.4);
        \singdot{0.15,-0.2};
        \node at (-0.3,-0.6) {\tiny $ Q $};
\end{tikzpicture}
= 
\begin{tikzpicture}[centerzero, thick]
        \draw[-] (0.3,-0.4) -- (-0.3,0.4);
        \draw[-, wei] (-0.3,-0.4) -- (0.3,0.4);
        \singdot{-0.15,0.2};
        \node at (-0.3,-0.6) {\tiny $ Q $};
\end{tikzpicture} \ , \label{hw21}
\\ 
\begin{tikzpicture}[centerzero, thick]
        \draw[-] (-0.2,-0.5) to[out=up,in=down] (0.2,0) to[out=up,in=down] (-0.2,0.5);
        \draw[-, wei] (0.2,-0.5) to[out=up,in=down] (-0.2,0) to[out=up,in=down] (0.2,0.5);
        \node at (0.2,-0.7) {\tiny $ Q $};
\end{tikzpicture}
=
\begin{tikzpicture}[centerzero, thick]
        \pin{-0.3,0}{-1.1,0}{Q};
        \draw[-] (-0.3,-0.5) -- (-0.3,0.5);
        \draw[-, wei] (0.3,-0.5) -- (0.3,0.5);
        \node at (0.3,-0.7) {\tiny $ Q $};
\end{tikzpicture} \ , \label{hw31}
\quad
\begin{tikzpicture}[centerzero, thick]
        \draw[-] (0.2,-0.5) to[out=up,in=down] (-0.2,0) to[out=up,in=down] (0.2,0.5);
        \draw[-, wei] (-0.2,-0.5) to[out=up,in=down] (0.2,0) to[out=up,in=down] (-0.2,0.5);
        \node at (-0.2,-0.7) {\tiny $ Q $};
\end{tikzpicture}
\ = \
\begin{tikzpicture}[centerzero, thick]
        \draw[-, wei] (-0.3,-0.5) -- (-0.3,0.5);
        \draw[-] (0.3,-0.5) -- (0.3,0.5);
        \node at (-0.3,-0.7) {\tiny $ Q $};
        \pin{0.3,0}{1.1,0}{Q};
\end{tikzpicture} \ , 
\\ 
\begin{tikzpicture}[centerzero, thick]
        \draw[-] (0.4,-0.5) -- (-0.4,0.5);
        \draw[wipe] (0,-0.5) to[out=up, in=down] (-0.4,0) to[out=up,in=down] (0,0.5);
        \draw[-] (0,-0.5) to[out=up, in=down] (-0.4,0) to[out=up,in=down] (0,0.5);
        \draw[wipe] (-0.4,-0.5) -- (0.4,0.5);
        \draw[-, wei] (-0.4,-0.5) -- (0.4,0.5);
        \node at (-0.4,-0.7) {\tiny $ Q $};
\end{tikzpicture}
\ =\
\begin{tikzpicture}[centerzero, thick]
        \draw[-] (0.4,-0.5) -- (-0.4,0.5);
        \draw[wipe] (0,-0.5) to[out=up, in=down] (0.4,0) to[out=up,in=down] (0,0.5);
        \draw[-] (0,-0.5) to[out=up, in=down] (0.4,0) to[out=up,in=down] (0,0.5);
        \draw[wipe] (-0.4,-0.5) -- (0.4,0.5);
        \draw[-, wei] (-0.4,-0.5) -- (0.4,0.5);
        \node at (-0.4,-0.7) {\tiny $ Q $};
\end{tikzpicture} \ , \label{hw51}
\quad
\begin{tikzpicture}[centerzero, thick]
        \draw[-] (0,-0.5) to[out=up, in=down] (-0.4,0) to[out=up,in=down] (0,0.5);
        \draw[wipe] (-0.4,-0.5) -- (0,0);
        \draw[-] (-0.4,-0.5) -- (0.4,0.5);
        \draw[-, wei] (0.4,-0.5) -- (-0.4,0.5);
        \node at (0.4,-0.7) {\tiny $ Q $};
\end{tikzpicture}
\ =\
\begin{tikzpicture}[centerzero, thick]
        \draw[wipe] (0,-0.5) to[out=up, in=down] (0.4,0) to[out=up,in=down] (0,0.5);
        \draw[-] (0,-0.5) to[out=up, in=down] (0.4,0) to[out=up,in=down] (0,0.5);
        \draw[wipe] (-0.4,-0.5) -- (0.4,0.5);
        \draw[-] (-0.4,-0.5) -- (0.4,0.5);
        \draw[-, wei] (0.4,-0.5) -- (-0.4,0.5);
        \node at (0.4,-0.7) {\tiny $ Q $};
\end{tikzpicture}
\ , 
\\ \label{green1}
\begin{tikzpicture}[centerzero, thick]
        \draw[-] (0.4,-0.5) -- (-0.4,0.5);
        \draw[wipe] (-0.4,-0.5) -- (0.4,0.5);
        \draw[-] (-0.4,-0.5) -- (0.4,0.5);
        \node at (0,-0.7) {\tiny $ Q $};
        \draw[-, wei] (0,-0.5) to[out=up, in=down] (-0.4,0) to[out=up,in=down] (0,0.5);
\end{tikzpicture}
\ =\
\begin{tikzpicture}[centerzero, thick]
        \draw[-] (2,-0.5) -- (1.2,0.5);
        \draw[wipe] (1.2,-0.5) -- (2,0.5);
        \draw[-] (1.2,-0.5) -- (2,0.5);
        \node at (1.6,-0.7) {\tiny $ Q $};
        \draw[-, wei] (1.6,-0.5) to[out=up, in=down] (2,0) to[out=up,in=down] (1.6,0.5);
\end{tikzpicture}
\ + z \
\begin{tikzpicture}[centerzero, thick]
        \node at (0,-0.7) {\tiny $ Q $};
        \draw[-] (-0.6,-0.5) -- (-0.6,0.5);
        \draw[-, wei] (0,-0.5) -- (0,0.5);
        \draw[-] (0.6,-0.5) -- (0.6,0.5);
        \pinpin{0.6,0}{-0.6,0}{1.8,0}{\Delta_{1}(Q_{1})};
\end{tikzpicture} \ ,
\end{gather}
for all $ a \in A $, $ Q \in \mathcal{E}_{P_{1}(A)} $. 
\end{defn}

Recall that, for a given set \(Z\), $ F(Z) $ is the free monoid on \(Z\). Then the set of objects in $ \mathcal{LAH}(A,z) $ is $ F(\widehat{\mathcal{E}}_{P_{1}(A)}) $. 

\begin{defn} \label{fluffy}
Given a positive integer $ d \in \mathbb{N}_{+} $ and a word $ \mathbf{Q} \in F(\mathcal{E}_{P_{1}(A)}) \subseteq F(\widehat{\mathcal{E}}_{P_{1}(A)}) $, we define $ \Lambda_{d,\mathbf{Q}} $ to be the set of all $ (\go^{d},\mathbf{Q}) $-shuffles in $ F(\widehat{\mathcal{E}}_{P_{1}(A)}) $. That is, 
\begin{equation} \label{fluffy2_quantum}
\Lambda_{d,\mathbf{Q}} = F(\widehat{\mathcal{E}}_{P_{1}(A)})_{\go^{d},\mathbf{Q}},
\end{equation}
where $ F(\widehat{\mathcal{E}}_{P_{1}(A)})_{\go^{d},\mathbf{Q}} $ is given in Definition \ref{snowing}.
\end{defn}

\begin{defn} \label{HLAFHA2}
Let $ d \in \mathbb{N} $ and $ \mathbf{Q} \in F(\mathcal{E}_{P_{1}(A)}) $. Then we define the $ (d,\mathbf{Q}) $-\emph{affine Frobenius Hecke algebra} to be 
\begin{equation} \label{HLAFHA222}
H_{d,\mathbf{Q}}^{\operatorname{aff}}(A,z) := \operatorname{End}_{\operatorname{Add}(\mathcal{LAH}(A,z))}\left(\bigoplus_{\mathbf{i} \in \Lambda_{d,\mathbf{Q}} }\mathbf{i}\right).
\end{equation}
Here, $ \operatorname{Add}(\mathcal{LAH}(A,z)) $ is the additive envelope of $ \mathcal{LAH}(A,z) $. We say that $ |\mathbf{Q}| $ is the \emph{level} of $ H_{d,\mathbf{Q}}^{\operatorname{aff}}(A,z) $.
\end{defn}

Using language similar to that of \cite{Maksimau-Stroppel}, we will often refer to the algebra $ H_{d,\mathbf{Q}}^{\operatorname{aff}}(A,z) $ as a \emph{higher-level affine Frobenius Hecke algebra}. 

\begin{rmk} \label{A=K_case_quantum}
If $ \mathbf{Q} \in F(\mathcal{E}_{P_{1}(A)}) $ is the empty word, then the diagrams in $ H_{d,\mathbf{Q}}^{\operatorname{aff}}(A,z) $ contain only black strands. It will follow from the basis theorem (Theorem \ref{HL-basis_quantum}) that, in this case, $ H_{d,\mathbf{Q}}^{\operatorname{aff}}(A,z) $ is isomorphic to the affine Frobenius Hecke algebra $ H_{d}^{\operatorname{aff}}(A,z)$ from Definition \ref{quantum affine wreath product category}.
\end{rmk}

\begin{rmk} \label{usualaff}
Suppose $ A = \Bbbk $, and assume that $ z = \mathbf{q}-\mathbf{q}^{-1} $ for some $ \mathbf{q} \in \Bbbk^{\times} $. Then under certain assumptions on the word $ \mathbf{Q} $, the algebra $ H_{d,\mathbf{Q}}^{\operatorname{aff}}(\Bbbk,z) $ is isomorphic to the higher-level affine Hecke algebra from \cite[Def.~2.7]{Maksimau-Stroppel}. Here, one should take $ q = \mathbf{q}^{2} $ in \cite[Def.~2.7]{Maksimau-Stroppel}, and one should identify their $ \begin{tikzpicture}[centerzero, thick]
        \draw[-] (0.2,-0.2) -- (-0.2,0.2);
        \draw[-] (-0.2,-0.2) -- (0.2,0.2);
     \end{tikzpicture} $ 
with our 
$ \mathbf{q} \begin{tikzpicture}[centerzero, thick]
        \draw[-] (0.2,-0.2) -- (-0.2,0.2);
        \draw[wipe] (-0.2,-0.2) -- (0.2,0.2);
        \draw[-] (-0.2,-0.2) -- (0.2,0.2);
     \end{tikzpicture} $.
\end{rmk}

\begin{rmk}
As seen in Proposition \ref{ind_trace_map} and Corollary \ref{ind_trace_map_2}, the higher-level affine wreath product category and higher-level affine wreath product algebras depend only, up to isomorphism, on the underlying algebra \(A\), and not on the choice of trace map. However, in the quantum setting of the current section, there do not seem to be obvious isomorphisms between higher-level affine Frobenius Hecke algebras corresponding to the same algebra but with different trace maps. This is even true for the affine Frobenius Hecke algebras; see \cite[Rmk.~2.2]{Rosso-Savage}.
\end{rmk}

\subsection{Basis theorem} \label{basisthm_quantum}

In this section, we find bases of the morphism spaces of $ \mathcal{LAH}(A,z) $.

\begin{lem} \label{zeromostly2_quantum}
Let $ \mathbf{i}, \mathbf{j} \in F(\widehat{\mathcal{E}}_{P_{1}(A)}) $ be two objects in $ \mathcal{LAH}(A,z) $. Then we have $ \operatorname{Hom}_{\mathcal{LAH}(A,z)}(\mathbf{i},\mathbf{j}) = 0 $ unless $ \mathbf{i}, \mathbf{j} \in \Lambda_{d,\mathbf{Q}} $ for some $ d \in \mathbb{N} $, $ \mathbf{Q} \in F(\mathcal{E}_{P_{1}(A)}) $.
\end{lem}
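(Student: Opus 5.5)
The plan is to mirror the argument for Lemma \ref{zeromostly2}: the statement is a counting/invariance argument on objects, not a computation. For an object $\mathbf{i} \in F(\widehat{\mathcal{E}}_{P_{1}(A)})$, let $d(\mathbf{i})$ be the number of occurrences of the letter $\go$ in $\mathbf{i}$. Let $\mathbf{Q}(\mathbf{i}) \in F(\mathcal{E}_{P_{1}(A)})$ be the subword obtained by deleting every occurrence of $\go$. Then every word $\mathbf{i}$ is a $(\go^{d(\mathbf{i})},\mathbf{Q}(\mathbf{i}))$-shuffle, so $\mathbf{i} \in \Lambda_{d(\mathbf{i}),\mathbf{Q}(\mathbf{i})}$ by Definition \ref{snowing}. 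It therefore suffices to show that $\operatorname{Hom}_{\mathcal{LAH}(A,z)}(\mathbf{i},\mathbf{j}) = 0$ unless $d(\mathbf{i}) = d(\mathbf{j})$ and $\mathbf{Q}(\mathbf{i}) = \mathbf{Q}(\mathbf{j})$. In that case both words lie in $\Lambda_{d,\mathbf{Q}}$ with $d = d(\mathbf{i})$ and $\mathbf{Q} = \mathbf{Q}(\mathbf{i})$.

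The key step is to check that the pair $(d(-),\mathbf{Q}(-))$ is preserved by every generating morphism in \cref{Gen1_quantum} and \cref{Gen2_quantum}.
\begin{itemize}
\item[] The black-black crossings, tokens, dots and inverse dots are endomorphisms of $\go$ or $\go\otimes\go$, so they change nothing.
\item[] The red-black crossings $Q\otimes\go \to \go\otimes Q$ and $\go\otimes Q \to Q\otimes\go$ only swap an adjacent $\go$ with a red letter. This preserves the number of $\go$'s and leaves the ordered subword of red letters unchanged. Crucially, no generator swaps two red letters or changes a red label.
\end{itemize}
Since $\otimes$ with identity morphisms, composition and $\Bbbk$-linear combinations also preserve these invariants, every nonzero morphism space of the free $\Bbbk$-linear monoidal supercategory on these generators satisfies the condition. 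Passing to the quotient by the relations can only make hom spaces smaller, so $\mathcal{LAH}(A,z)$ satisfies it as well.

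To make this rigorous, one route is to note that a morphism $\mathbf{i}\to\mathbf{j}$ in the category presented by generators and relations is a linear combination of composites of morphisms of the form $1_{\mathbf{k}}\otimes g\otimes 1_{\mathbf{l}}$, with $g$ a generator. Each such factor has source and target with equal invariants. Alternatively, one can view $\mathcal{LAH}(A,z)$ as graded by the monoid $\mathbb{N}\times F(\mathcal{E}_{P_{1}(A)})$ on objects.

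There is no real obstacle here. The only point needing care is making precise that the hom spaces of a presented monoidal category are spanned by such composites (string diagrams). This is standard and was implicitly used in Lemma \ref{zeromostly2}, so I would simply cite the same reasoning.
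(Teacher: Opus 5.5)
Your proposal is correct and is essentially the paper's argument: the paper simply says the lemma follows from the forms of the generating morphisms in \cref{Gen1_quantum} and \cref{Gen2_quantum}. Your invariant $(d(\mathbf{i}),\mathbf{Q}(\mathbf{i}))$ makes that remark precise. It is preserved by every generator, and a word lies in $\Lambda_{d,\mathbf{Q}}$ exactly when its invariant is $(d,\mathbf{Q})$, since $\go \notin \mathcal{E}_{P_{1}(A)}$.
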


\begin{proof}
This follows from the forms of the generating morphisms of $ \mathcal{LAH}(A,z) $ in \cref{Gen1_quantum} and \cref{Gen2_quantum}.
\end{proof}

In light of Lemma \ref{zeromostly2_quantum}, for the rest of this section, we fix an integer $ d \in \mathbb{N} $ and a word $ \mathbf{Q} \in F(\mathcal{E}_{P_{1}(A)}) $. We will give a basis of $ \operatorname{Hom}_{\mathcal{LAH}(A,z)}(\mathbf{i}, \mathbf{j}) $ for all $ \mathbf{i}, \mathbf{j} \in \Lambda_{d,\mathbf{Q}} $.

\begin{defn} \label{Generalized_sigma_quantum}
For each $ \mathbf{i}, \mathbf{j} \in \Lambda_{d,\mathbf{Q}} $, and each permutation $ w \in S_{d} $, we choose a diagram $ T_{\mathbf{j},w,\mathbf{i}} \in \operatorname{Hom}_{\mathcal{LAH}(A,z)}(\mathbf{i}, \mathbf{j}) $ with the following properties:
\begin{enumerate}
\item[\textbullet]  The sequence of transpositions on the black strands of $ T_{\mathbf{j},w,\mathbf{i}} $ is a reduced expression for \(w\). 
\item[\textbullet] Each pair of red and black strands cross at most once.
\item[\textbullet] The diagram $ T_{\mathbf{j},w,\mathbf{i}} $ is tokenless and dotless, and contains no negative black-black crossings.
\end{enumerate}
\end{defn}

\begin{egg}
Suppose $ d = 3 $ and $ \mathbf{Q} = QQ' $ for some $ Q,Q' \in \mathcal{E}_{P_{1}(A)} $. Let $ \mathbf{i} := \go Q \go Q' \go \in \Lambda_{d,\mathbf{Q}} $ and $ \mathbf{j} := \go Q Q' \go \go \in \Lambda_{d,\mathbf{Q}} $. Set $ w = s_{2}s_{1} \in S_{3} $. Then two possible choices for the element $ T_{\mathbf{j},w,\mathbf{i}} $ are given below:
\begin{equation*}
\begin{tikzpicture}[centerzero, thick]
       \draw[-] (0.8,-0.7) to[out=up, in=down] (0,0.7);
       \draw[wipe] (0,-0.7) to[out=up, in=down] (1.6,0.7);
       \draw[-] (1.6,-0.7) to[out=up, in=down] (1.2,0.7);
       \draw[wipe] (0,-0.7) to[out=up, in=down] (1.6,0.7);       
       \draw[-] (0,-0.7) to[out=up, in=down] (1.6,0.7);
       \draw[-,wei] (0.4,-0.7) to[out=up, in=down] (0.9,0) to[out=up, in=down] (0.4,0.7);
       \draw[-,wei] (1.2,-0.7) to[out=up, in=down] (1.2,-0.1) to[out=up, in=down] (0.8,0.7);
       \node at (0.4,-0.85) {\tiny $ Q $};
       \node at (1.2,-0.85) {\tiny $ Q' $};
\end{tikzpicture} \ , 
\qquad
\begin{tikzpicture}[centerzero, thick]
       \draw[-] (0.8,-0.7) to[out=up, in=down] (0,0.7);
       \draw[wipe] (0,-0.7) to[out=up, in=down] (1.6,0.7);
       \draw[-] (1.6,-0.7) to[out=up, in=down] (1.2,0.7);
       \draw[wipe] (0,-0.7) to[out=up, in=down] (1.6,0.7);  
       \draw[-] (0,-0.7) to[out=up, in=down] (1.6,0.7);
       \draw[-,wei] (0.4,-0.7) to[out=up, in=down] (0.2,0) to[out=up, in=down] (0.4,0.7);
       \draw[-,wei] (1.2,-0.7) to[out=up, in=down] (0.8,0.7);
       \node at (0.4,-0.85) {\tiny $ Q $};
       \node at (1.2,-0.85) {\tiny $ Q' $};
\end{tikzpicture} \ .
\end{equation*}
\end{egg}

We now define the following subset of $ \operatorname{Hom}_{\mathcal{LAH}(A,z)}(\mathbf{i}, \mathbf{j}) $:
\begin{equation}
_{\mathbf{j}} \mathcal{C}_{\mathbf{i}} := \{\mathbf{a}_{\mathbf{j}}\mathbf{x}_{\mathbf{j}}^{\alpha}T_{\mathbf{j},w,\mathbf{i}} : \mathbf{a} \in B^{\otimes d}, \ \alpha \in \mathbb{Z}^{d}, \ w \in S_{d} \}.
\end{equation}

\begin{theo} \label{HL-basis_quantum}
Let $ \mathbf{i}, \mathbf{j} \in \Lambda_{d,\mathbf{Q}} $. Then the set $ _{\mathbf{j}} \mathcal{C}_{\mathbf{i}} $ is a $ \Bbbk $-basis of $ \operatorname{Hom}_{\mathcal{LAH}(A,z)}(\mathbf{i}, \mathbf{j}) $.
\end{theo}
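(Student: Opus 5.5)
We will mirror the proof of Theorem \ref{HL-basis}: prove spanning by a diagrammatic straightening argument, then prove linear independence with a faithful-on-the-relevant-spaces functor to the affine Frobenius Hecke category $\mathcal{AH}(A,z)$. Throughout, $P_d(A)$ acts on the left of $\operatorname{Hom}_{\mathcal{LAH}(A,z)}(\mathbf{i},\mathbf{j})$ via tokens and (inverse) dots on the black strands at the top. It then suffices to show that this Hom space is a free left $P_d(A)$-module with basis $\{T_{\mathbf{j},w,\mathbf{i}} : w \in S_d\}$, since $P_d(A)$ has basis $\{\mathbf{a}X^{\alpha}\}$ with $\alpha\in\mathbb{Z}^d$.

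\emph{Spanning.} We induct on the number of crossings. Tokens, dots and inverse dots are moved to the top using \cref{token crossing}, \cref{dot slide general}, \cref{preskein}, \cref{hw11} and \cref{hw21}. Moving dots through black–black crossings only produces error terms with fewer crossings, via the teleporter term of \cref{preskein}. A negative black–black crossing is rewritten, by the skein relation \cref{preskein}, as a positive crossing plus a crossingless term. Red–black double crossings are removed by \cref{hw31}, which gives pins, that is, Laurent polynomials in the dots with coefficients in $A$. Black–black bigons are removed by \cref{braid} after converting them to positive/negative pairs, at the cost of lower terms. Once each pair of strands crosses at most once, we use the braid relations \cref{braid}, \cref{hw51} and \cref{green1} to pass between different diagrams with the same underlying reduced word. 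The extra term in \cref{green1} has fewer crossings, so this reaches $T_{\mathbf{j},w,\mathbf{i}}$. Eliminating general multiple crossings, as in \cite[Lem.~4.10(3)]{Webster}, is the same combinatorial argument as in Lemma \ref{k-span2}.

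\emph{Independence.} We construct a strict monoidal functor $\Phi_q\colon \mathcal{LAH}(A,z)\to\mathcal{AH}(A,z)$, analogous to Theorem \ref{fruit}. It sends $Q$ to $\mathds{1}$, fixes black generators, sends one red–black crossing to the identity strand, and sends the other to the pin labelled $Q$. To check \cref{hw51} and \cref{green1}, we need the analogue of \cref{movein} in $H_2^{\operatorname{aff}}(A,z)$: for $f\in P_2(A)$, the positive crossing $T_1$ satisfies $T_1 f = s_1(f)T_1 + z\,\Delta_1(f)$ (up to the paper's sign conventions). This is proved from \cref{preskein} by checking on $f=X_1^{\pm1},X_2^{\pm1},\mathbf{a}$ and verifying that $\Delta_1$ satisfies the matching twisted Leibniz rule. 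We also need that $Q_1Q_2$ commutes with $T_1$, which uses centrality and symmetry of $Q_1Q_2$ as in Lemma \ref{treat2}. This is the key computation, and I expect it to be the main obstacle, particularly matching the sign and the position of $X_{i+1}$ in \eqref{television_2}.

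Given $\Phi_q$, the rest follows as in Lemma \ref{cactus2}: $\Phi_q(T_{\mathbf{j},w,\mathbf{i}})=h_{w,w}T_w+\sum_{L(u)<L(w)}h_{u,w}T_u$, where $h_{w,w}$ is a product of pins $Q_k$, each regular in $P_d(A)$ by the analogue of Lemma \ref{uncharted5678}. The analogue of Lemma \ref{uncharted5678} holds because $P_d(A)\cong P_1(A)^{\otimes d}$ with $P_1(A)$ free over $\Bbbk$. Finally, we invoke the basis $\{\mathbf{a}X^{\alpha}T_w\}$ of $H_d^{\operatorname{aff}}(A,z)$ from \cite{Rosso-Savage}. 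A top-length coefficient argument identical to the proof of Theorem \ref{HL-basis} then gives $f_vh_{v,v}=0$, hence $f_v=0$.
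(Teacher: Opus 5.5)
Your proposal is correct and follows essentially the same route as the paper, whose proof of this theorem is the one ``analogous to Theorem~\ref{HL-basis}'': spanning by induction on crossings (first converting negative crossings to positive ones via the skein relation), and linear independence via the strict monoidal functor $\mathcal{LAH}(A,z)\to\mathcal{AH}(A,z)$ that sends red strands to $\mathds{1}$, followed by the triangular expansion with regular leading coefficient and the Rosso--Savage basis of $H_d^{\operatorname{aff}}(A,z)$. Your sign worry is unfounded: the dot-slide and skein relations give $T_1X_1 = X_2T_1^{-1} = X_2T_1 - z\,t_{1,2}X_2 = s_1(X_1)T_1 + z\Delta_1(X_1)$, so $T_1f = s_1(f)T_1 + z\Delta_1(f)$ holds exactly as you stated it, which is what the red--black triangle relation requires.
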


\begin{proof}
The proof of this theorem is analogous to the proof of Theorem~\ref{HL-basis}. Hence we omit the details here.
\end{proof}

\details{We explain here how one obtains linear independence of the elements in $ _{\mathbf{j}} \mathcal{C}_{\mathbf{i}} $. There is a strict $ \Bbbk $-linear monoidal functor
\begin{equation*}
\Omega \colon \mathcal{LAH}(A,z) \rightarrow \mathcal{AH}(A,z)
\end{equation*}
which sends $ \go $ to $ \go $ and $ Q \in \mathcal{E}_{P_{1}(A)}  $ to $ \mathds{1} $ (the unit object of $ \mathcal{AH}(A,z) $), and sends the generating morphisms as follows:
\begin{gather*}
\Omega \left(\begin{tikzpicture}[centerzero, thick]
      \draw[-] (0.6,-0.4) -- (0.6,0.4);
      \token{0.6,0}{west}{a};
\end{tikzpicture}\right) = \begin{tikzpicture}[centerzero, thick]
      \draw[-] (0.6,-0.4) -- (0.6,0.4);
      \token{0.6,0}{west}{a};
\end{tikzpicture}, 
\quad 
\Omega \left(\begin{tikzpicture}[centerzero, thick]
      \draw[-] (0.6,-0.4) -- (0.6,0.4);
      \singdot{0.6,0};
\end{tikzpicture}\right) = 
\begin{tikzpicture}[centerzero, thick]
      \draw[-] (0.6,-0.4) -- (0.6,0.4);
      \singdot{0.6,0};
\end{tikzpicture} \ ,
\quad 
\Omega \left(\begin{tikzpicture}[centerzero, thick]
      \draw[-] (0.9,-0.4) -- (0.3,0.4);
       \draw[wipe] (0.3,-0.4) -- (0.9,0.4);
      \draw[-] (0.3,-0.4) -- (0.9,0.4);
\end{tikzpicture}\right) 
=
\begin{tikzpicture}[centerzero, thick]
      \draw[-] (0.9,-0.4) -- (0.3,0.4);
      \draw[wipe] (0.3,-0.4) -- (0.9,0.4);
      \draw[-] (0.3,-0.4) -- (0.9,0.4);
\end{tikzpicture} \ ,
\quad 
\Omega \left(\begin{tikzpicture}[centerzero, thick]
      \draw[-] (0.3,-0.4) -- (0.9,0.4);
      \draw[wipe] (0.9,-0.4) -- (0.3,0.4);
      \draw[-] (0.9,-0.4) -- (0.3,0.4);
\end{tikzpicture}\right) 
=
\begin{tikzpicture}[centerzero, thick]
      \draw[-] (0.3,-0.4) -- (0.9,0.4);
      \draw[wipe] (0.9,-0.4) -- (0.3,0.4);
      \draw[-] (0.9,-0.4) -- (0.3,0.4);
\end{tikzpicture} \ ,
\\ \Omega \left(\begin{tikzpicture}[centerzero, thick]
      \draw[-] (0.9,-0.4) -- (0.3,0.4);
      \draw[-,wei] (0.3,-0.4) -- (0.9,0.4);
      \node at (0.3,-0.6) {\tiny $ Q $};
\end{tikzpicture}\right) 
=
\begin{tikzpicture}[centerzero, thick]
      \draw[-] (0.6,-0.4) -- (0.6,0.4);
\end{tikzpicture} \ ,
\quad 
\Omega \left(\begin{tikzpicture}[centerzero, thick]
      \draw[-] (0.3,-0.4) -- (0.9,0.4);
      \draw[-,wei] (0.9,-0.4) -- (0.3,0.4);
      \node at (0.9,-0.6) {\tiny $ Q $};
\end{tikzpicture}\right) 
= 
\begin{tikzpicture}[centerzero, thick]
        \draw[-] (0,-0.5) -- (0,0.5);
             \pin{0,0}{.7,0}{Q};
\end{tikzpicture} \ ,
\end{gather*}
for all $ a \in A $, $ Q \in \mathcal{E}_{P_{1}(A)} $. This functor restricts to a $ \Bbbk $-linear map 
\begin{equation*}
\Omega \colon \operatorname{Hom}_{\mathcal{LAH}(A,z)}(\mathbf{i}, \mathbf{j}) \rightarrow H_{d}^{\operatorname{aff}}(A,z).
\end{equation*}
Then
\begin{equation} \label{cactus_quantum}
\Omega(T_{\mathbf{j},w,\mathbf{i}}) = h_{w,w}T_{w} + \sum_{\substack{u \in S_{d} \\ L(u) < L(w)}} h_{u,w}T_{u}
\end{equation}
for some $ h_{u,w} \in P_{d}(A) $, where $ h_{w,w} $ is regular in $ P_{d}(A) $. Now assume that there is a linear dependence equation
\begin{equation} \label{depequation3975_quantum}
\sum_{w \in S_{d}} f_{w}T_{\mathbf{j},w,\mathbf{i}} = 0,
\end{equation}
where $ f_{w} \in P_{d}(A) $. Then, by using \cref{cactus_quantum} and \cite[Cor~3.11]{Rosso-Savage}, one can compute the image of \cref{depequation3975_quantum} under $ \Omega $ and then deduce $ f_{w} = 0 $ for all $ w \in S_{d} $.}

\details{We prove here that the set $ _{\mathbf{j}} \mathcal{C}_{\mathbf{i}} $ spans $ \operatorname{Hom}_{\mathcal{LAH}(A,z)}(\mathbf{i}, \mathbf{j}) $. We prove this by induction on the total number of crossings. If \(D\) is a diagram with no crossings, then \(D\) contains only dots and tokens, from which it clearly follows that \(D\) lies in the span of $ _{\mathbf{j}} \mathcal{C}_{\mathbf{i}} $.
\\ \indent Now let $ k > 0 $, and let \(D\) be a diagram with \(k\) crossings. First, by using the relations \cref{braid,preskein,token crossing,dot slide general}, one can move the dots and tokens to the top of \(D\) modulo terms with fewer crossings, which all lie in the span of $ _{\mathbf{j}} \mathcal{C}_{\mathbf{i}} $ by the induction hypothesis. Furthermore, one can apply \cref{preskein} to remove any negative black-black crossings in \(D\). Now, if there are two strands in \(D\) that cross more than once, then \(D\) can be written as a $ \Bbbk $-linear combination of diagrams with fewer than $ k $ crossings. The proof of this fact is analogous to the proof of \cite[Lem.~4.10(3)]{Webster}. So we may from here on assume that any two strands in \(D\) cross at most once. In this case, the sequence of black strands on \(D\) corresponds to some reduced expression for some element $ w \in S_{d} $. We can now apply the relations \cref{hw51}, \cref{green1}, and the second relation in \cref{braid} to get from \(D\) to $ T_{\mathbf{j},w,\mathbf{i}} $. Note again that this process may create additional terms with fewer crossings (due to \cref{green1}), but these terms all lie in the span of $ _{\mathbf{j}} \mathcal{C}_{\mathbf{i}} $ by the induction hypothesis. Thus we have that \(D\) lies in the span of $ _{\mathbf{j}} \mathcal{C}_{\mathbf{i}} $.}

Recall that, for an integer $ n \in \mathbb{N}_{+} $ and a $ \Bbbk $-algebra \(F\), we define $ M_{n}(F) $ to be the algebra of $ n \times n $ matrices with entries in \(F\). Then we have the following analogue of Corollary~\ref{apple_tree}.

\begin{prop}
Let $ n := |\Lambda_{d,\mathbf{Q}}| $. Then the algebra $ H_{d,\mathbf{Q}}^{\operatorname{aff}}(A,z) $ is isomorphic to a subalgebra of $ M_{n}(H_{d}^{\operatorname{aff}}(A,z)) $.
\end{prop}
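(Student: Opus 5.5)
The plan is to mirror the proof of Corollary~\ref{apple_tree}, replacing $\Phi$ by its quantum analogue. First I would construct a strict $\Bbbk$-linear monoidal functor
\[
\Omega \colon \mathcal{LAH}(A,z) \rightarrow \mathcal{AH}(A,z).
\]
It sends $\go$ to $\go$ and each $Q \in \mathcal{E}_{P_{1}(A)}$ to $\mathds{1}$. Tokens, dots, inverse dots and both black-black crossings go to the generators of the same name. The red-black crossing $Q \otimes \go \to \go \otimes Q$ goes to the identity strand, and the crossing $\go \otimes Q \to Q \otimes \go$ goes to the strand pinned with $Q$.

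Checking that $\Omega$ respects the defining relations goes as in Theorem~\ref{fruit}. The relations \cref{tokrel1}, \cref{braid}, \cref{preskein} are immediate. Relations \cref{hw11,hw21,hw31} follow because $Q$ is even and central in $P_{1}(A)$. The first relation of \cref{hw51} is trivial. The second relation of \cref{hw51} requires that $Q_{1}Q_{2}$ commute with the positive crossing. The relation \cref{green1} requires the quantum analogue of \cref{movein}: for $f \in P_{2}(A)$, the positive crossing $T_{1}$ satisfies $T_{1} f = s_{1}(f) T_{1} + z\,\Delta_{1}(f)$, with sign and placement to be matched against \cref{green1}. I would prove this identity from \cref{preskein}, \cref{token crossing} and \cref{dot slide general}. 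The key step is to check it on $X_{1}$ and $X_{1}^{-1}$, with tokens handled by \cref{token crossing}. One then extends to arbitrary $f$ via the twisted Leibniz-type rule that $\Delta_{1}$ satisfies, which is immediate from the formula \cref{television_2}. Applying this identity to $f = Q_{1}Q_{2}$, which is symmetric, gives $\Delta_{1}(Q_{1}Q_{2}) = 0$, and hence the second relation of \cref{hw51}.

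Next I would show that $\Omega$ is faithful, following Theorem~\ref{HL-basis} and Corollary~\ref{herd}. View $\operatorname{Hom}_{\mathcal{LAH}(A,z)}(\mathbf{i},\mathbf{j})$ as a left $P_{d}(A)$-module. By Theorem~\ref{HL-basis_quantum} it is free with basis $\{T_{\mathbf{j},w,\mathbf{i}}\}$. Sliding pins upward with the identity above gives
\[
\Omega(T_{\mathbf{j},w,\mathbf{i}}) = h_{w,w} T_{w} + \sum_{L(u)<L(w)} h_{u,w} T_{u},
\]
where $h_{w,w}$ is a product of elements $Q_{k}$ and is therefore regular in $P_{d}(A)$ by the analogue of Lemma~\ref{uncharted5678}. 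Now suppose $\sum_{w} f_{w}\,\Omega(T_{\mathbf{j},w,\mathbf{i}}) = 0$. Take $v$ of maximal length with $f_{v} \neq 0$. The basis of $H_{d}^{\operatorname{aff}}(A,z)$ given by $P_{d}(A)$ times $\{T_{w}\}$ (\cite[Cor.~3.11]{Rosso-Savage}) forces $f_{v}h_{v,v}=0$, so $f_{v}=0$ by regularity, a contradiction. Hence $\Omega$ is injective on each Hom space.

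Finally, $\Omega$ extends to the additive envelopes. Since every $\mathbf{i} \in \Lambda_{d,\mathbf{Q}}$ maps to $\go^{\otimes d}$, it induces an injective algebra homomorphism
\[
H_{d,\mathbf{Q}}^{\operatorname{aff}}(A,z) \rightarrow \operatorname{End}_{\operatorname{Add}(\mathcal{AH}(A,z))}\Bigl(\bigoplus_{k=1}^{n}\go^{\otimes d}\Bigr) = M_{n}(H_{d}^{\operatorname{aff}}(A,z)),
\]
whose image is the desired subalgebra. I expect the main obstacle to be the commutation identity $T_{1} f = s_{1}(f)T_{1} + z\Delta_{1}(f)$ with exactly the normalization of \cref{television_2}. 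Getting the factor $X_{2}/(X_{1}-X_{2})$ and the sign right is what makes \cref{green1} hold under $\Omega$, so I would first check the case $f = X_{1}$ by hand.
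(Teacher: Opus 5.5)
Your proposal follows the paper's proof essentially verbatim: the paper also builds $\Omega\colon\mathcal{LAH}(A,z)\to\mathcal{AH}(A,z)$ with exactly these assignments. It gets faithfulness from the triangularity $\Omega(T_{\mathbf{j},w,\mathbf{i}})=h_{w,w}T_{w}+\sum_{L(u)<L(w)}h_{u,w}T_{u}$ with $h_{w,w}$ regular, together with \cite[Cor.~3.11]{Rosso-Savage}, and then reads off the injective homomorphism into $M_{n}(H_{d}^{\operatorname{aff}}(A,z))$.

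One small fix is needed in your check of the second relation of \cref{hw51}. Symmetry of $Q_{1}Q_{2}$ alone does not force $\Delta_{1}(Q_{1}Q_{2})=0$: for $A=\Bbbk G$ and non-central $h\in G$, the element $(h\otimes 1)X_{1}+(1\otimes h)X_{2}$ is symmetric but has $\Delta_{1}$-image $t_{1,2}\bigl(1\otimes h-h\otimes 1\bigr)X_{2}\neq 0$. You must also use that $Q_{1}Q_{2}$ is central in $P_{2}(A)$; then the argument of Lemma~\ref{treat2}, or \cite[Thm.~3.16]{Rosso-Savage}, gives the vanishing.
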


\details{The faithful functor $ \Omega \colon \mathcal{LAH}(A,z) \rightarrow \mathcal{AH}(A,z) $ given above induces an injective algebra homomorphism
\begin{equation*}
H_{d,\mathbf{Q}}^{\operatorname{aff}}(A,z) \quad \stackrel{\mathclap{\cref{HLAFHA222}}}{=} \quad \operatorname{End}_{\operatorname{Add}(\mathcal{LAH}(A,z))}\left(\bigoplus_{\mathbf{i} \in \Lambda_{d,\mathbf{Q}}}\mathbf{i}\right) \rightarrow \operatorname{End}_{\operatorname{Add}(\mathcal{AH}(A,z))}\left(\bigoplus_{i=1}^{n} \go^{\otimes d}\right) = M_{n}(H_{d}^{\operatorname{aff}}(A,z)).
\end{equation*}
The result now follows.}

\subsection{Description of the center} 

In this section, we fix a positive integer $ d \in \mathbb{N}_{+} $ and a word $ \mathbf{Q} \in F(\mathcal{E}_{P_{1}(A)}) $. We will state the center of the algebra $ H_{d,\mathbf{Q}}^{\operatorname{aff}}(A,z) $, in the sense of \cref{centerdef}. By Theorem \ref{HL-basis_quantum}, we have an injective algebra homomorphism $ P_{d}(A) \rightarrow H_{d,\mathbf{Q}}^{\operatorname{aff}}(A,z) $ given by
\begin{equation} \label{lifeguard_quantum}
\mathbf{a} \mapsto \sum_{\mathbf{i} \in \Lambda_{d,\mathbf{Q}}} \mathbf{a}_{\mathbf{i}}, \qquad x_{i} \mapsto \sum_{\mathbf{i} \in \Lambda_{d,\mathbf{Q}}} x_{i,\mathbf{i}}.
\end{equation}
We view $ P_{d}(A) $ as a subalgebra of $ H_{d,\mathbf{Q}}^{\operatorname{aff}}(A,z) $ under this algebra homomorphism. We also view $ Z(P_{d}(A))^{S_{d}} $ and $ Z(P_{d}(A)) $ as subalgebras of $ H_{d,\mathbf{Q}}^{\operatorname{aff}}(A,z) $ under the inclusions 
\begin{equation}
Z(P_{d}(A))^{S_{d}} \subseteq Z(P_{d}(A)) \subseteq P_{d}(A) \subseteq H_{d,\mathbf{Q}}^{\operatorname{aff}}(A,z).
\end{equation}

\begin{prop} \label{higher_center_quantum}
The center of $ H_{d,\mathbf{Q}}^{\operatorname{aff}}(A,z) $ is equal to $ Z(P_{d}(A))^{S_{d}} = P_{d}(Z(A))^{S_{d}} $.
\end{prop}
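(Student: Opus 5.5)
The proof will follow the template of Proposition~\ref{higher_center}, with the affine wreath product algebra replaced by $H_{d}^{\operatorname{aff}}(A,z)$. I would first establish the non-higher-level statement: the center of $H_{d}^{\operatorname{aff}}(A,z)$ is $Z(P_{d}(A))^{S_{d}}$. This is the analogue of Proposition~\ref{aff_center_result}, and I would take it from \cite{Rosso-Savage}, where the center of the affine Frobenius Hecke algebra is computed.

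Next, I would prove the identification $Z(P_{d}(A)) = P_{d}(Z(A))$. Since $P_{d}(A) \cong A^{\otimes d} \otimes \Bbbk[X_{1}^{\pm 1},\ldots,X_{d}^{\pm 1}]$ and the $X_{i}$ are even and central by \cref{verma_2}, an element $\sum_{\alpha} \mathbf{a}_{\alpha} X^{\alpha}$ is central if and only if each coefficient $\mathbf{a}_{\alpha}$ is central in $A^{\otimes d}$. The algebra $A$ is free over $\Bbbk$, so the super center of the tensor product is $Z(A)^{\otimes d}$. To see this, commute with $a_{i}$ for one tensor factor at a time and expand in a basis of the other factors. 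This gives $Z(P_{d}(A))^{S_{d}} = P_{d}(Z(A))^{S_{d}}$.

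For the inclusion $Z(P_{d}(A))^{S_{d}} \subseteq Z(H_{d,\mathbf{Q}}^{\operatorname{aff}}(A,z))$, take $f \in Z(P_{d}(A))^{S_{d}}$. It suffices to check that $f$ commutes with the generators:
\begin{itemize}
\item with the idempotents $1_{\mathbf{i}}$ and with the elements of $P_{d}(A)$, which is clear;
\item with the red-black crossings, using \cref{hw11}, \cref{hw21} and the analogous slides for inverse dots. The inverse-dot slides follow from the dot slides by inverting, since the crossing relations are compatible with invertible elements;
\item with the positive and negative black-black crossings, which holds because $f$ is central in $H_{d}^{\operatorname{aff}}(A,z)$ by the first step, and this is a local statement inserted into the diagram.
\end{itemize}
The one subtlety in the last point is that $f$ on $\mathbf{i}$ is a sum of dots and tokens on the black strands of $\mathbf{i}$. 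Commuting it past a black-black crossing uses only black-strand relations, which coincide with those of $\mathcal{AH}(A,z)$, so the identity holds in the local picture.

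For the reverse inclusion, set $\boldsymbol{\omega} = \mathbf{Q}\go^{d}$. By Theorem~\ref{HL-basis_quantum}, adding red strands on the left gives an isomorphism $H_{d}^{\operatorname{aff}}(A,z) \cong 1_{\boldsymbol{\omega}} H_{d,\mathbf{Q}}^{\operatorname{aff}}(A,z) 1_{\boldsymbol{\omega}}$, together with the basis of $H_{d}^{\operatorname{aff}}(A,z)$ from \cite[Cor.~3.11]{Rosso-Savage}. Hence for central $z$ we have $z1_{\boldsymbol{\omega}} = f1_{\boldsymbol{\omega}}$ for some $f \in Z(P_{d}(A))^{S_{d}}$.

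For arbitrary $\mathbf{i}$, set $T = T_{\mathbf{i},1,\boldsymbol{\omega}}$. Then
\[
z1_{\mathbf{i}}T = Tz1_{\boldsymbol{\omega}} = Tf1_{\boldsymbol{\omega}} = f1_{\mathbf{i}}T,
\]
where the last equality uses the centrality of $f$ proved above. Right multiplication by $T$ is injective on $1_{\mathbf{i}}H1_{\mathbf{i}}$. By Theorem~\ref{HL-basis_quantum}, $\operatorname{Hom}(\mathbf{i},\mathbf{i})$ is free over $P_{d}(A)$ with basis $T_{\mathbf{i},w,\mathbf{i}}$. Each product $T_{\mathbf{i},w,\mathbf{i}}T$ reduces, via the red-strand relations, to a regular element of $P_{d}(A)$ times $T_{\mathbf{i},w,\boldsymbol{\omega}}$ plus shorter terms. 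This regularity comes from the double-crossing relation \cref{hw31}, which produces pins by regular $Q$'s, just as in Lemma~\ref{cactus2}. Summing over $\mathbf{i}$ then gives $z = f$.

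I expect the main obstacle to be this injectivity argument. In the paper's degenerate case it was asserted to follow directly from the basis theorem, and the same leading-term/regularity argument should work here. The other point needing care is making sure the Rosso–Savage description of the center matches $Z(P_{d}(A))^{S_{d}}$ in the super setting with $\varepsilon = \bar 0$.
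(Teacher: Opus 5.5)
For the main equality $Z(H_{d,\mathbf{Q}}^{\operatorname{aff}}(A,z))=Z(P_d(A))^{S_d}$, your argument is the paper's proof. It uses the Rosso--Savage center of $H_d^{\operatorname{aff}}(A,z)$, the check on generators, the corner isomorphism at $\boldsymbol{\omega}=\mathbf{Q}\go^d$, and transport along $T=T_{\mathbf{i},1,\boldsymbol{\omega}}$. Your remark that inverse dots also slide through red-black crossings fills a small omission, since elements of $Z(P_d(A))^{S_d}$ involve $X_i^{-1}$. Your leading-term proof that $y\mapsto yT$ is injective works, but there is a shorter route. Every crossing in $T$ is of type $Q\otimes\go\to\go\otimes Q$. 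The quantum analogue $\Omega$ of the functor $\Phi$ of Theorem~\ref{fruit} sends such crossings to identities, so $\Omega(yT)=\Omega(y)$, and $\Omega$ is faithful by the argument of Corollary~\ref{herd}.

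The step that fails as written is your proof of $Z(P_d(A))=P_d(Z(A))$, i.e.\ $Z(A^{\otimes d})=Z(A)^{\otimes d}$ ``because $A$ is free''. Commuting with tokens one factor at a time only shows that a central element lies in each $A^{\otimes(i-1)}\otimes Z(A)\otimes A^{\otimes(d-i)}$. The intersection of these is the image of $Z(A)^{\otimes d}$ when $Z(A)$ is a $\Bbbk$-direct summand of $A$ (e.g.\ $\Bbbk$ a field or a PID), but not over an arbitrary commutative ring, which is the paper's setting.

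Here is a counterexample. Over $\Bbbk=\mathbb{F}_2[\epsilon]/(\epsilon^2)$, let $A$ be purely even with basis $1,u,v,uv$, relations $u^2=v^2=1$ and $vu=(1+\epsilon)uv$, and trace $\operatorname{tr}(1)=1$, $\operatorname{tr}(u)=\operatorname{tr}(v)=\operatorname{tr}(uv)=0$.
\begin{itemize}
\item This is a symmetric Frobenius algebra, since the Gram matrix is diagonal with unit entries.
\item One has $[u,v]=\epsilon uv$, and hence $Z(A)=\Bbbk 1\oplus\epsilon(\Bbbk u\oplus\Bbbk v\oplus\Bbbk uv)$.
\item The element $\epsilon\,u\otimes u$ is central and $S_2$-invariant in $A^{\otimes 2}$, because its commutators with the generators pick up $\epsilon^2=0$. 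So it lies in $Z(P_2(A))^{S_2}$, and it is even central in $H_{2,\mathbf{Q}}^{\operatorname{aff}}(A,z)$.
\item It is not in the image $\Bbbk(1\otimes 1)+\epsilon(A\otimes 1+1\otimes A)$ of $Z(A)\otimes Z(A)$.
\end{itemize}
So the second equality in the statement needs an extra hypothesis, such as $\Bbbk$ being a field. The paper asserts it without proof, so the fix is that hypothesis rather than a better argument. The first equality, which your main argument establishes, is unaffected.
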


\begin{proof}
The proof of this result is analogous to the proof of Proposition \ref{higher_center}. Hence it will be omitted.
\end{proof}

\details{For $ \mathbf{i}, \mathbf{j} \in \Lambda_{d,\mathbf{Q}} $, we define
\begin{equation*}
_{\mathbf{i}}H_{d,\mathbf{Q}}^{\operatorname{aff}}(A,z)_{\mathbf{j}} = 1_{\mathbf{i}}H_{d,\mathbf{Q}}^{\operatorname{aff}}(A,z)1_{\mathbf{j}},
\end{equation*}
where $ 1_{\mathbf{i}} \in \operatorname{End}_{\mathcal{LAH}(A,z)}(\mathbf{i}) $ and $ 1_{\mathbf{j}} \in \operatorname{End}_{\mathcal{LAH}(A,z)}(\mathbf{j}) $ are the identity morphisms. Define $ \boldsymbol{\omega} := \mathbf{Q}\go^{d} \in \Lambda_{d,\mathbf{Q}} $ to be the concatenation of the words $ \mathbf{Q} $ and $ \go^{d} $. Then, as in Lemma \ref{flour}, we have an isomorphism of algebras
\begin{equation} \label{flour_quantum}
H_{d}^{\operatorname{aff}}(A,z) \cong {_{\boldsymbol{\omega}}}H_{d,\mathbf{Q}}^{\operatorname{aff}}(A,z)_{\boldsymbol{\omega}}.
\end{equation}
We now prove Proposition \ref{higher_center_quantum}. Let $ f \in Z(P_{d}(A))^{S_{d}} $. Then in order to show that \(f\) lies in the center of $ H_{d,\mathbf{Q}}^{\operatorname{aff}}(A,z) $, it suffices to show that \(f\) commutes with the identity morphisms $ 1_{\mathbf{i}} $, the elements of $ P_{d}(A) $, the red-black crossings, and the black-black crossings (since $ H_{d,\mathbf{Q}}^{\operatorname{aff}}(A,z) $ is generated by the elements). Since $ f \in Z(P_{d}(A)) $, we have that \(f\) commutes with the identity morphisms $ 1_{\mathbf{i}} $ and the elements of $ P_{d}(A) $. Next, we have by the relations \cref{hw11} and \cref{hw21} that \(f\) commutes with red-black crossings. Finally, \(f\) commutes with black-black crossings by \cite[Thm.~3.16]{Rosso-Savage}.
\\ \indent Conversely, let $ f \in Z(H_{d,\mathbf{Q}}^{\operatorname{aff}}(A,z)) $. Then we have $ f1_{\boldsymbol{\omega}} \in Z({_{\boldsymbol{\omega}}}H_{d,\mathbf{Q}}^{\operatorname{aff}}(A,z)_{\boldsymbol{\omega}}) $, and so since the isomorphism \cref{flour_quantum} restricts to an isomorphism between $ Z(H_{d}^{\operatorname{aff}}(A,z)) $ and $ Z({_{\boldsymbol{\omega}}}H_{d,\mathbf{Q}}^{\operatorname{aff}}(A,z)_{\boldsymbol{\omega}}) $, we obtain by \cite[Thm.~3.16]{Rosso-Savage} that $ f1_{\boldsymbol{\omega}} = g1_{\boldsymbol{\omega}} $ for some $ g \in Z(P_{d}(A))^{S_{d}} $. Let $ \mathbf{i} \in \Lambda_{d,\mathbf{Q}} $, and set $ D = T_{\mathbf{i},1,\boldsymbol{\omega}} $ (see Definition \ref{Generalized_sigma_quantum}). Then we have
\begin{equation*}
f1_{\mathbf{i}}D = Df1_{\boldsymbol{\omega}} = Dg1_{\boldsymbol{\omega}} = g1_{\mathbf{i}}D.
\end{equation*}
This implies that $ f1_{\mathbf{i}} = g1_{\mathbf{i}} $, since the map $ {_{\mathbf{i}}}H_{d,\mathbf{Q}}^{\operatorname{aff}}(A,z)_{\mathbf{i}} \rightarrow {_{\mathbf{i}}}H_{d,\mathbf{Q}}^{\operatorname{aff}}(A,z)_{\boldsymbol{\omega}} $, $ y \mapsto yD $ is injective by Theorem \ref{HL-basis_quantum}. Therefore, we have
\begin{equation*}
f = \sum_{\mathbf{i} \in \Lambda_{d,\mathbf{Q}}} f1_{\mathbf{i}} = \sum_{\mathbf{i} \in \Lambda_{d,\mathbf{Q}}} g1_{\mathbf{i}} = g \in Z(P_{d}(A))^{S_{d}}. 
\end{equation*}}

\begin{rmk}
Let $ \mathbf{Q}, \mathbf{Q}' \in F(\mathcal{E}_{P_{1}(A)}) $. Then, by Proposition \ref{higher_center_quantum}, we have an isomorphism of algebras $ Z(H_{d,\mathbf{Q}}^{\operatorname{aff}}(A,z)) \cong Z(H_{d,\mathbf{Q}'}^{\operatorname{aff}}(A,z)) $.
\end{rmk}

\subsection{Cyclotomic quotients} \label{Cyclotomic_quotients_quantum}

In this section, we define cyclotomic quotients of the higher-level affine Frobenius Hecke algebras. These algebras are natural generalizations of the cyclotomic quotients of higher-level affine Hecke algebras from \cite[Def.~6.1]{Maksimau-Stroppel} and \cite[Def.~4.7]{Webster}. Given an element $ \mathbf{i} \in F(\widehat{\mathcal{E}}_{P_{1}(A)}) $, we define $ \mathbf{i}_{1} \in \widehat{\mathcal{E}}_{P_{1}(A)} $ to be the first entry in the word $ \mathbf{i} $.

\begin{defn} 
Let $ d \in \mathbb{N}_{+} $ and $ \mathbf{Q} \in F(\mathcal{E}_{P_{1}(A)}) $, and assume that $ |\mathbf{Q}| \geq 1 $. Then we define the \emph{cyclotomic $ (d,\mathbf{Q}) $-Frobenius Hecke algebra} $ H_{d,\mathbf{Q}}^{\operatorname{cyc}}(A,z) $ to be the quotient of $ H_{d,\mathbf{Q}}^{\operatorname{aff}}(A,z) $ by the two-sided ideal generated by the elements of the set $ \{1_{\mathbf{i}} : \mathbf{i} \in \Lambda_{d,\mathbf{Q}}, \ \mathbf{i}_{1} = \go \} $.
\end{defn}

In other words, any diagram in $ H_{d,\mathbf{Q}}^{\operatorname{aff}}(A,z) $ that has a piece of black strand to the left of all the red strands is equal to \(0\) in the quotient $ H_{d,\mathbf{Q}}^{\operatorname{cyc}}(A,z) $. We can also define cyclotomic quotients of the affine Frobenius Hecke algebras $ H_{n}^{\operatorname{aff}}(A,z) $ from Definition \ref{quantum affine wreath product category}.

\begin{defn}[{\cite[§4]{Rosso-Savage}}]
Let $ n \in \mathbb{N}_{+} $ and $ Q \in \mathcal{E}_{P_{1}(A)} $. Then we define the \emph{cyclotomic Frobenius Hecke algebra} $ H_{n}^{Q}(A,z) $ to be the quotient of $ H_{n}^{\operatorname{aff}}(A,z) $ by the two-sided ideal generated by the element $ Q_{1} \in P_{n}(A) $.
\end{defn}

The following theorem shows that, when $ \mathbf{Q} \in F(\mathcal{E}_{P_{1}(A)}) $ is a word of length one, the cyclotomic $ (d,\mathbf{Q}) $-Frobenius Hecke algebra is isomorphic to a cyclotomic Frobenius Hecke algebra. When $ A = \Bbbk $, this theorem recovers \cite[Lem.~6.9]{Maksimau-Stroppel}.

\begin{theo} \label{quantum_cyclotomic_thm}
Let $ d \in \mathbb{N}_{+} $ and $ Q \in \mathcal{E}_{P_{1}(A)} $. Then we have an isomorphism of algebras 
\begin{equation*}
H_{d,Q}^{\operatorname{cyc}}(A,z) \cong H_{d}^{Q}(A,z).
\end{equation*}
\end{theo}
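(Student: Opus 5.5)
I will mirror the proof of Theorem~\ref{cyclotomic_aff_thm}. First, there is a non-unital algebra homomorphism $\varphi \colon H_{d}^{\operatorname{aff}}(A,z) \rightarrow H_{d,Q}^{\operatorname{aff}}(A,z)$ that adds a single red strand labelled $Q$ to the left of each diagram. By Theorem~\ref{HL-basis_quantum} and the basis of $H_{d}^{\operatorname{aff}}(A,z)$ from \cite[Cor.~3.11]{Rosso-Savage}, $\varphi$ is injective, since it sends the basis $\{\mathbf{a}X^{\alpha}T_{w}\}$ bijectively onto ${}_{\boldsymbol{\omega}}\mathcal{C}_{\boldsymbol{\omega}}$ with $\boldsymbol{\omega} = Q\go^{d}$. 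Let $p$ denote the projection onto $H_{d,Q}^{\operatorname{cyc}}(A,z)$ and set $\phi := p \circ \varphi$. This map is unital, because every idempotent $1_{\mathbf{i}}$ with $\mathbf{i} \neq \boldsymbol{\omega}$ has $\mathbf{i}_{1} = \go$ and so dies in the quotient.

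For surjectivity I use Theorem~\ref{HL-basis_quantum}. Any basis element $\mathbf{a}_{\mathbf{j}}\mathbf{x}_{\mathbf{j}}^{\alpha}T_{\mathbf{j},w,\mathbf{i}}$ with $\mathbf{i}$ or $\mathbf{j}$ different from $\boldsymbol{\omega}$ factors through $1_{\mathbf{i}}$ or $1_{\mathbf{j}}$, so it is zero in the quotient. The remaining basis elements have $\mathbf{i} = \mathbf{j} = \boldsymbol{\omega}$ and lie in the image of $\varphi$.

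Next I show $\mathcal{I} := (Q_{1}) \subseteq \ker\phi$. The first relation of \cref{hw31}, read with the red strand on the left (the second relation), rewrites $\varphi(Q_{1})$ as the diagram in which the first black strand crosses to the left of the red strand and back. That diagram factors through $1_{\go Q \go^{d-1}}$, so it vanishes in the cyclotomic quotient.

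For the reverse inclusion, take $y \in \ker\phi$. Then $\varphi(y)$ is a linear combination of diagrams containing a piece of black strand to the left of the red strand. I induct on $c$, defined as half the number of red-black crossings plus the number of black-black crossings (of either sign) lying to the left of the red strand.
\begin{enumerate}
\item[\textbullet] Dots, inverse dots and tokens are first moved out of the relevant region using \cref{hw11} and \cref{hw21}. For inverse dots, these relations imply the analogous slides because the dot is invertible.
\item[\textbullet] A red-black bigon with red on the left is resolved by \cref{hw31}, producing a pin $Q$. That pin lies in $\varphi(\mathcal{I})$, since the two-sided ideal in $H_{d}^{\operatorname{aff}}(A,z)$ generated by $Q_{1}$ contains $Q_{k}$ for every $k$, by conjugating with crossings and using \cref{token crossing} and \cref{dot slide general}.
\item[\textbullet] A triangle formed by a black-black crossing to the left of the red strand is resolved by \cref{green1} (and its mirrored variant for negative crossings, obtained from \cref{preskein}). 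This gives a diagram with the crossing on the right, plus $z$ times a term with fewer crossings.
\item[\textbullet] A negative black-black crossing on the left can also be converted into a positive one via \cref{preskein}; the teleporter correction term has smaller $c$.
\end{enumerate}

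The main obstacle is guaranteeing that every diagram with black to the left of red either has a bigon or triangle available, or reduces to these cases. This requires the combinatorial claim already used in the proof of Theorem~\ref{cyclotomic_aff_thm}. I must also check that the extra term $z\,\Delta_{1}(Q_{1})$ decreases $c$. It does: that term removes two red-black crossings and a black-black crossing, and its pin is an element of $P_{2}(A)$ that can be slid past red strands by \cref{hw11} and \cref{hw21}.

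Once these reductions are in place, induction gives $\varphi(y) \in \varphi(\mathcal{I})$. Injectivity of $\varphi$ then yields $y \in \mathcal{I}$, so $\ker\phi = \mathcal{I}$ and $\phi$ induces the isomorphism $H_{d}^{Q}(A,z) \cong H_{d,Q}^{\operatorname{cyc}}(A,z)$.
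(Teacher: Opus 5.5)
Your overall strategy is the paper's own; there the proof is omitted as analogous to Theorem~\ref{cyclotomic_aff_thm}, and the hidden details match your outline: the maps $\varphi$, $p$ and $\phi=p\circ\varphi$, surjectivity from Theorem~\ref{HL-basis_quantum}, $\mathcal{I}\subseteq\ker\phi$ via \cref{hw31}, and induction on $c$ using \cref{hw31} and \cref{green1}. Your treatment of unitality, inverse dots, negative crossings and the $z\,\Delta_1(Q_1)$ term is fine.

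The problem is the justification in your bigon step. The two-sided ideal of $H_{d}^{\operatorname{aff}}(A,z)$ generated by $Q_1$ does not contain $Q_k$ for $k>1$ in general. Conjugating by a crossing does not carry $Q_1$ to $Q_2$, because sliding a dot through a crossing flips its sign. Writing $T_1$ for the positive crossing and $X_i$ for the dots, \cref{preskein} gives $T_1X_1T_1=X_2$, hence $T_1X_1T_1^{-1}=X_2T_1^{-2}$. Here is a concrete counterexample. Take $A=\Bbbk$, $z\neq 0$, $d=2$ and $Q=X-1$, so that $T_1^2=zT_1+1$. Consider the homomorphism from $H_2^{\operatorname{aff}}(\Bbbk,z)$ to the finite Hecke algebra with basis $\{1,T_1\}$ given by $X_1\mapsto 1$, $X_2\mapsto T_1^2$, $T_1\mapsto T_1$. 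It kills $Q_1$ but sends $Q_2$ to $T_1^2-1=zT_1\neq 0$. Moreover, if the bigon sits at a height where other black strands are already to the left of the red strand, the resolved diagram is not of the form $\varphi(uQ_kv)$ at all.

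The fix is to run the induction as the paper does, treating the bigon like the triangle: a move that lowers $c$, followed by the inductive hypothesis. Since $D$ runs from $\boldsymbol{\omega}$ to $\boldsymbol{\omega}$, every black strand crosses the red one an even number of times, so any such diagram with $c\geq 1$ has black to the left of red.
\begin{enumerate}
\item[\textbullet] Resolving a bigon lowers $c$ by one, so for $c\geq 2$ the result still has $c\geq 1$ and the induction hypothesis applies.
\item[\textbullet] A triangle forces at least four red--black crossings, so even its $z\,\Delta_1(Q_1)$ term keeps $c\geq 1$.
\end{enumerate}
The ideal is only used directly in the base case $c=1$. There a single black strand crosses out and back while no other black strand lies to the left of red. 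That strand is therefore the one adjacent to the leftmost red strand, and \cref{hw31} turns $D$ into $\varphi(u\,Q_1\,v)$ with exactly $Q_1$. With this change your argument coincides with the paper's.
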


\begin{proof}
The proof of this theorem is analogous to the proof of Theorem \ref{cyclotomic_aff_thm}. Hence it will be omitted.
\end{proof}

\details{
We have a non-unital injective algebra homomorphism $ \varphi \colon H_{d}^{\operatorname{aff}}(A,z) \rightarrow H_{d,Q}^{\operatorname{aff}}(A,z) $ that adds a single red strand to the left of each diagram. Composing with the projection map $ p \colon H_{d,Q}^{\operatorname{aff}}(A,z) \rightarrow H_{d,Q}^{\operatorname{cyc}}(A,z) $ yields a unital algebra homomorphism $ \phi \colon H_{d}^{\operatorname{aff}}(A,z) \rightarrow H_{d,Q}^{\operatorname{cyc}}(A,z) $. So we have the following commutative diagram of maps:
\begin{equation*}
\begin{tikzpicture}[auto]
  \node (A) at (0,0) {$ H_{d}^{\operatorname{aff}}(A,z) $};
  \node (B) at (2.5,0) {$ H_{d,Q}^{\operatorname{aff}}(A,z) $};
  \node (C) at (5,0) {$ H_{d,Q}^{\operatorname{cyc}}(A,z) $};
  \draw[->] (A) -- node[above] {$ \varphi $} (B);
  \draw[->] (B) -- node[above] {$ p $} (C);
  \draw[->, bend left=40] (A) to node[above] {$ \phi $} (C);
\end{tikzpicture}
\end{equation*}
By Theorem \ref{HL-basis_quantum}, there is a spanning set of $ H_{d,Q}^{\operatorname{cyc}}(A,z) $ that lies in the image of $ \phi $. This gives that $ \phi $ is surjective. Set $ \mathcal{I} $ to be the two-sided ideal of $ H_{d}^{\operatorname{aff}}(A,z) $ generated by $ Q_{1} $. Then in order to prove the result, it only remains to show that the kernel of $ \phi $ is equal to $ \mathcal{I} $. That the latter is contained in the former follows by an application of \cref{hw31}. Namely, we have
\begin{equation*}
\phi\left(\begin{tikzpicture}[centerzero, thick]
        \draw[-] (0.3,-0.5) -- (0.3,0.5);
        \draw[-] (0.8,-0.5) -- (0.8,0.5);
        \node at (1.4,0) {$ \cdots $};
        \pin{0.3,0}{-0.5,0}{Q};
        \draw[-] (2,-0.5) -- (2,0.5);
\end{tikzpicture}\ \right)
\ =
\begin{tikzpicture}[centerzero, thick]
        \draw[wei] (-0.9,-0.5) -- (-0.9,0.5);
        \draw[-] (0.3,-0.5) -- (0.3,0.5);
        \draw[-] (0.8,-0.5) -- (0.8,0.5);
        \node at (1.4,0) {$ \cdots $};
        \node at (-0.9,-0.7) {\tiny $ Q $};
        \pin{0.3,0}{-0.5,0}{Q};
        \draw[-] (2,-0.5) -- (2,0.5);
\end{tikzpicture}
\ =
\begin{tikzpicture}[centerzero, thick]
        \draw[-] (0.2,-0.5) to[out=up,in=down] (-0.2,0) to[out=up,in=down] (0.2,0.5);
        \draw[-, wei] (-0.2,-0.5) to[out=up,in=down] (0.2,0) to[out=up,in=down] (-0.2,0.5);
        \draw[-] (0.7,-0.5) -- (0.7,0.5);
        \draw[-] (1.9,-0.5) -- (1.9,0.5);
        \node at (1.3,0) {$ \cdots $};        
        \node at (-0.2,-0.7) {\tiny $ Q $};
\end{tikzpicture}
\ = 0.
\end{equation*}
On the other hand, suppose $ y \in \operatorname{ker}(\phi) $. Then by the injectivity of $ \varphi $, it suffices to show that $ \varphi(y) \in \varphi(\mathcal{I}) $.
\\ \indent Note first that since $ y \in \operatorname{ker}(\phi) $, and $ \phi = p \circ \varphi $, we have that $ \varphi(y) $ is in the kernel of the projection map \(p\). So $ \varphi(y) $ is a $ \Bbbk $-linear combination of diagrams with a piece of black strand to the left of the red strand. If $ D $ is such a diagram, then we prove by induction that $ D \in \varphi(\mathcal{I}) $, from which it will follow by $ \Bbbk $-linearity that $ \varphi(y) \in \varphi(\mathcal{I}) $. The value $ c $ that we induct on is half the number of red-black crossings plus the number of black-black crossings to the left of the red strand in \(D\). For the base case, if $ c = 1 $, then there is a single black strand in $ D $ which crosses over the red strand and immediately crosses back:
\begin{equation*} 
\begin{tikzpicture}[centerzero, thick]
        \draw[-] (0.2,-0.5) to[out=up,in=down] (-0.2,0) to[out=up,in=down] (0.2,0.5);
        \draw[-, wei] (-0.2,-0.5) to[out=up,in=down] (0.2,0) to[out=up,in=down] (-0.2,0.5);
        \node at (-0.2,-0.7) {\tiny $ Q $};
\end{tikzpicture} \ .
\end{equation*}
Note that this double crossing may contain some tokens or dots on the black strand, but these can be moved out by the relations \cref{hw11} and \cref{hw21}. Then an application of \cref{hw31} shows that $ D \in \varphi(\mathcal{I}) $, giving that the base case holds. Next, if $ c > 1 $, then \(D\) either has a red-black double crossing or a red-black triangle, where the red strand is on the right:
\begin{equation*}
\begin{tikzpicture}[centerzero, thick]
        \draw[-] (0.2,-0.5) to[out=up,in=down] (-0.2,0) to[out=up,in=down] (0.2,0.5);
        \draw[-, wei] (-0.2,-0.5) to[out=up,in=down] (0.2,0) to[out=up,in=down] (-0.2,0.5);
        \node at (-0.2,-0.7) {\tiny $ Q $};
\end{tikzpicture} \ ,
\qquad 
\begin{tikzpicture}[centerzero, thick]
        \draw[-] (2,-0.5) -- (1.2,0.5);
        \draw[wipe] (1.2,-0.5) -- (2,0.5);
        \draw[-] (1.2,-0.5) -- (2,0.5);
        \node at (1.6,-0.7) {\tiny $ Q $};
        \draw[-, wei] (1.6,-0.5) to[out=up, in=down] (2,0) to[out=up,in=down] (1.6,0.5);
\end{tikzpicture} \ .
\end{equation*}
Again, this double crossing or triangle may contain tokens or dots on the black strands, but these can be moved out by the relations \cref{hw11} and \cref{hw21}. Then applying \cref{hw31} if \(D\) has a double crossing, or applying \cref{green1} if \(D\) has a triangle, shows that \(D\) can be written as a $ \Bbbk $-linear combination of diagrams with smaller \(c\). Thus by the induction hypothesis, we have $ D \in \varphi(\mathcal{I}) $.}


\bibliographystyle{alphaurl}
\bibliography{references}

\begin{thebibliography}{DKM25}

\bibitem[BE17]{Brundan-Ellis}
Jonathan Brundan and Alexander~P. Ellis.
\newblock Monoidal supercategories.
\newblock {\em Comm. Math. Phys.}, 351(3):1045--1089, 2017.
\newblock \href {http://arxiv.org/abs/1603.05928} {\path{arXiv:1603.05928}},
  \href {https://doi.org/10.1007/s00220-017-2850-9}
  {\path{doi:10.1007/s00220-017-2850-9}}.

\bibitem[BSW21]{Brundan-Savage-Webster2}
Jonathan Brundan, Alistair Savage, and Ben Webster.
\newblock Foundations of {F}robenius {H}eisenberg categories.
\newblock {\em J. Algebra}, 578:115--185, 2021.
\newblock \href {http://arxiv.org/abs/2007.01642} {\path{arXiv:2007.01642}},
  \href {https://doi.org/10.1016/j.jalgebra.2021.02.025}
  {\path{doi:10.1016/j.jalgebra.2021.02.025}}.

\bibitem[BSW22]{Brundan-Savage-Webster}
Jonathan Brundan, Alistair Savage, and Ben Webster.
\newblock Quantum {F}robenius {H}eisenberg categorification.
\newblock {\em J. Pure Appl. Algebra}, 226(1):Paper No. 106792, 50, 2022.
\newblock \href {http://arxiv.org/abs/2009.06690} {\path{arXiv:2009.06690}},
  \href {https://doi.org/10.1016/j.jpaa.2021.106792}
  {\path{doi:10.1016/j.jpaa.2021.106792}}.

\bibitem[Cou16]{Couture}
Chad Couture.
\newblock Skew-zigzag algebras.
\newblock {\em SIGMA Symmetry Integrability Geom. Methods Appl.}, 12:Paper No.
  062, 19, 2016.
\newblock \href {http://arxiv.org/abs/1509.08405} {\path{arXiv:1509.08405}},
  \href {https://doi.org/10.3842/SIGMA.2016.062}
  {\path{doi:10.3842/SIGMA.2016.062}}.

\bibitem[CPd14]{Chlouveraki-d'Andecy}
Maria Chlouveraki and Lo\"ic Poulain~d'Andecy.
\newblock Representation theory of the {Y}okonuma-{H}ecke algebra.
\newblock {\em Adv. Math.}, 259:134--172, 2014.
\newblock \href {http://arxiv.org/abs/1302.6225} {\path{arXiv:1302.6225}},
  \href {https://doi.org/10.1016/j.aim.2014.03.017}
  {\path{doi:10.1016/j.aim.2014.03.017}}.

\bibitem[DKM25]{Davidson-Kujawa-Muth}
Nicholas Davidson, J~Kujawa, and Robert Muth.
\newblock Superalgebra deformations of web categories: Affine and cyclotomic
  webs, 2025.
\newblock \href {http://arxiv.org/abs/2511.21671} {\path{arXiv:2511.21671}}.

\bibitem[HK01]{Huerfano-Khovanov}
Ruth~Stella Huerfano and Mikhail Khovanov.
\newblock A category for the adjoint representation.
\newblock {\em J. Algebra}, 246(2):514--542, 2001.
\newblock \href {http://arxiv.org/abs/math/0002060}
  {\path{arXiv:math/0002060}}, \href {https://doi.org/10.1006/jabr.2001.8962}
  {\path{doi:10.1006/jabr.2001.8962}}.

\bibitem[Kle05]{Kleshchev}
Alexander Kleshchev.
\newblock {\em Linear and projective representations of symmetric groups},
  volume 163 of {\em Cambridge Tracts in Mathematics}.
\newblock Cambridge University Press, Cambridge, 2005.
\newblock \href {https://doi.org/10.1017/CBO9780511542800}
  {\path{doi:10.1017/CBO9780511542800}}.

\bibitem[KM19]{Kleshchev-Muth}
Alexander Kleshchev and Robert Muth.
\newblock Affine zigzag algebras and imaginary strata for {KLR} algebras.
\newblock {\em Trans. Amer. Math. Soc.}, 371(7):4535--4583, 2019.
\newblock \href {http://arxiv.org/abs/1511.05905} {\path{arXiv:1511.05905}},
  \href {https://doi.org/10.1090/tran/7464} {\path{doi:10.1090/tran/7464}}.

\bibitem[Liu18]{Liu}
Bingyan Liu.
\newblock Presentations of linear monoidal categories and their endomorphism
  algebras, 2018.
\newblock \href {http://arxiv.org/abs/1810.10988} {\path{arXiv:1810.10988}}.

\bibitem[LNX24]{Lai-Nakano-Ziqing}
Chun-Ju Lai, Daniel~K. Nakano, and Ziqing Xiang.
\newblock Quantum wreath products and {S}chur-{W}eyl duality {I}.
\newblock {\em Forum Math. Sigma}, 12:Paper No. e108, 38, 2024.
\newblock \href {http://arxiv.org/abs/2304.14181} {\path{arXiv:2304.14181}},
  \href {https://doi.org/10.1017/fms.2024.103}
  {\path{doi:10.1017/fms.2024.103}}.

\bibitem[Men20]{Mendonca_thesis}
Eduardo~Monteiro Mendon{\c{c}}a.
\newblock Affine wreath product algebras with trace maps of generic parity.
\newblock Master's thesis, University of S{\~a}o Paulo, Brazil, 2020.
\newblock URL:
  \url{https://www.teses.usp.br/teses/disponiveis/45/45131/tde-07082020-121337/pt-br.php}.

\bibitem[Men22]{Mendonca}
Eduardo~Monteiro Mendon{\c{c}}a.
\newblock Affine wreath product algebras with trace maps of generic parity.
\newblock {\em Comm. Algebra}, 50(12):5217--5245, 2022.
\newblock \href {https://doi.org/10.1080/00927872.2022.2083629}
  {\path{doi:10.1080/00927872.2022.2083629}}.

\bibitem[MS21]{Maksimau-Stroppel}
Ruslan Maksimau and Catharina Stroppel.
\newblock Higher level affine {S}chur and {H}ecke algebras.
\newblock {\em J. Pure Appl. Algebra}, 225(8):Paper No. 106442, 44, 2021.
\newblock \href {http://arxiv.org/abs/1805.02425} {\path{arXiv:1805.02425}},
  \href {https://doi.org/10.1016/j.jpaa.2020.106442}
  {\path{doi:10.1016/j.jpaa.2020.106442}}.

\bibitem[Naz97]{Nazarov}
Maxim Nazarov.
\newblock Young's symmetrizers for projective representations of the symmetric
  group.
\newblock {\em Adv. Math.}, 127(2):190--257, 1997.
\newblock \href {https://doi.org/10.1006/aima.1997.1621}
  {\path{doi:10.1006/aima.1997.1621}}.

\bibitem[PS16]{Pike-Savage}
Jeffrey Pike and Alistair Savage.
\newblock Twisted {F}robenius extensions of graded superrings.
\newblock {\em Algebr. Represent. Theory}, 19(1):113--133, 2016.
\newblock \href {http://arxiv.org/abs/1502.00590} {\path{arXiv:1502.00590}},
  \href {https://doi.org/10.1007/s10468-015-9565-4}
  {\path{doi:10.1007/s10468-015-9565-4}}.

\bibitem[RS20]{Rosso-Savage}
Daniele Rosso and Alistair Savage.
\newblock Quantum affine wreath algebras.
\newblock {\em Doc. Math.}, 25:425--456, 2020.
\newblock \href {http://arxiv.org/abs/1902.00143} {\path{arXiv:1902.00143}}.

\bibitem[Sav19]{Savage2}
Alistair Savage.
\newblock Frobenius {H}eisenberg categorification.
\newblock {\em Algebr. Comb.}, 2(5):937--967, 2019.
\newblock \href {http://arxiv.org/abs/1802.01626} {\path{arXiv:1802.01626}},
  \href {https://doi.org/10.5802/alco.73} {\path{doi:10.5802/alco.73}}.

\bibitem[Sav20]{Savage}
Alistair Savage.
\newblock Affine wreath product algebras.
\newblock {\em Int. Math. Res. Not. IMRN}, (10):2977--3041, 2020.
\newblock \href {http://arxiv.org/abs/1709.02998} {\path{arXiv:1709.02998}},
  \href {https://doi.org/10.1093/imrn/rny092} {\path{doi:10.1093/imrn/rny092}}.

\bibitem[SW24]{Song-Wang2}
Linliang Song and Weiqiang Wang.
\newblock Affine and cyclotomic {S}chur categories, 2024.
\newblock \href {http://arxiv.org/abs/2407.10119} {\path{arXiv:2407.10119}}.

\bibitem[Wan09]{Wang}
Weiqiang Wang.
\newblock Double affine {H}ecke algebras for the spin symmetric group.
\newblock {\em Math. Res. Lett.}, 16(6):1071--1085, 2009.
\newblock \href {http://arxiv.org/abs/math/0608074}
  {\path{arXiv:math/0608074}}, \href
  {https://doi.org/10.4310/MRL.2009.v16.n6.a14}
  {\path{doi:10.4310/MRL.2009.v16.n6.a14}}.

\bibitem[Web17]{Webster}
Ben Webster.
\newblock Knot invariants and higher representation theory.
\newblock {\em Mem. Amer. Math. Soc.}, 250(1191):v+141, 2017.
\newblock \href {http://arxiv.org/abs/1309.3796} {\path{arXiv:1309.3796}},
  \href {https://doi.org/10.1090/memo/1191} {\path{doi:10.1090/memo/1191}}.

\bibitem[Web20]{Webster2}
Ben Webster.
\newblock On graded presentations of {H}ecke algebras and their
  generalizations.
\newblock {\em Algebr. Comb.}, 3(1):1--38, 2020.
\newblock \href {http://arxiv.org/abs/1305.0599} {\path{arXiv:1305.0599}},
  \href {https://doi.org/10.5802/alco.84} {\path{doi:10.5802/alco.84}}.

\bibitem[WW08]{Wan-Wang}
Jinkui Wan and Weiqiang Wang.
\newblock Modular representations and branching rules for wreath {H}ecke
  algebras.
\newblock {\em Int. Math. Res. Not. IMRN}, pages Art. ID rnn128, 31, 2008.
\newblock \href {http://arxiv.org/abs/0806.0196} {\path{arXiv:0806.0196}},
  \href {https://doi.org/10.1093/imrn/rnn128} {\path{doi:10.1093/imrn/rnn128}}.

\end{thebibliography}

\end{document}